\title[Scl in graphs of groups]{Scl in graphs of groups}
\author{Lvzhou Chen}
\address{Department of Mathematics, University of Chicago, Chicago, Illinois, 60637}
\email{lzchen@math.uchicago.edu}
\newcommand{\defeq}{\vcentcolon=}
\newcommand{\scl}{\mathrm{scl}}
\newcommand{\BS}{\mathrm{BS}}
\newcommand{\conv}{\mathrm{conv}}
\newcommand{\Dom}{\mathrm{Dom}}
\renewcommand{\Im}{\mathrm{Im}}
\newcommand{\supp}{{\rm supp}}
\newcommand{\bdry}{\partial}
\newcommand{\ug}{\underline{g}}
\newcommand{\uga}{\underline{\gamma}}
\newtheorem{thm}{Theorem}[section]
\newtheorem{lemma}[thm]{Lemma}
\newtheorem{prop}[thm]{Proposition}
\theoremstyle{definition}
\newtheorem{definition}[thm]{Definition}
\newtheorem{remark}[thm]{Remark}
\newtheorem{example}[thm]{Example}
\newtheorem{cor}[thm]{Corollary}
\numberwithin{equation}{section}
\begin{document}

\begin{abstract}    
	Let $G$ be a group acting on a tree with cyclic edge and vertex stabilizers. Then stable commutator length (scl) is rational in $G$. Furthermore, scl varies predictably and converges to rational limits in so-called ``surgery'' families. This is a homological analog of the phenomenon of geometric convergence in hyperbolic Dehn surgery.
\end{abstract}

\maketitle

\tableofcontents

\section{Introduction}\label{sec: intro}
If $X$ is a space, there is a (pseudo-)norm on $H_2(X)$ whose value on an integral class $\alpha$ is the infimum of $-\chi(S)/2n$ over all $n>0$ and all aspherical surfaces $S$ representing $n\alpha$. This is called the Gromov--Thurston norm. If $X$ is a $3$-manifold the unit ball is a rational polyhedron \cite{Thurston norm}.

There is a relative Gromov norm called \emph{stable commutator length} (hereafter scl), which measures the minimal complexity of a surface with prescribed boundary. More precisely, scl of a null-homologous loop $\gamma$ is the infimum of $-\chi(S)/2n$ over all $n>0$ and all surfaces $S$ that contains no sphere or disk components and whose boundary represents $n\gamma$. It only depends on $\pi_1$, and can therefore be thought of as a function on groups, where it can be given a purely algebraic definition. Unfortunately exact calculations are very hard and it is not known how to compute scl even in a closed surface group.

Calegari \cite{Cal:freegroup PQL} gave an algorithm to compute scl in free groups. The nature of the algorithm shows the unit ball is a rational polyhedron, and Calegari asked if the same might be true for wider classes of groups, eg hyperbolic groups or groups acting nicely on trees. In this paper, we give a positive answer for groups acting on trees when edge and vertex stabilizers are cyclic. This is a large and interesting class of groups, including eg the Baumslag--Solitar groups (some authors prefer to use $t^{-1}$ as the generator)
$$\BS(M,L)\defeq\left<a,t\ |\ a^M=ta^Lt^{-1}\right>.$$

The method of proof is very geometric, and our techniques give us a way of understanding all (relative) maps of surface pairs $(S,\bdry S)$ to $2$-complexes presenting such groups. In the case of free groups this is analogous to eg the normal forms obtained by Wicks \cite{Wicks} and Culler \cite{Culler}. 

There is a tension here. The set of all homotopy classes of maps of all surfaces to a fixed $2$-complex has a lot of information but very little structure. More structure arises when we allow more operations: cut-and-paste, compression, and passing to finite covers. The last operation is rather analogous to working over $\Q$ instead of $\Z$, and indeed mirrors the relative complexity of linear and integer programming respectively.

A group acting on a tree is encoded by a graph of groups. Our groups are those whose edge and vertex groups are cyclic. For a fixed graph one obtains families of groups, by varying the edge homomorphisms. This gives rise to a linear structure on this set, and it makes sense to study how scl varies with respect to these parameters.

For certain families---so called surgery families---we are able to show that scl varies \emph{quasirationally}, ie as a ratio of quasipolynomials. Analogous phenomena arise in Ehrhart theory \cite{Ehrhart} and in parametric families of integer programming problems \cite{CW:Inthull}.


\subsection{Statement of results}
\subsubsection{Rationality}
In this paper we develop a method to compute scl in many graphs of groups using linear programming. The nature of the algorithm shows that scl is piecewise rational linear on chains (See Subsection \ref{subsec: scl} for definitions).
\newtheorem*{thm: rational}{Theorem \ref{thm: rational} (rationality)}

The main result of the paper is:
\begin{thm: rational}
	Let $G$ be a graph of groups with vertex groups $\{G_v\}$ and edge groups $\{G_e\}$ where
	\begin{enumerate}
		\item $\scl_{G_v}\equiv0$; and \label{item: ratthm cond 1}
		\item the images of the edge groups in each vertex group are \emph{central} and \emph{mutually commensurable}.
	\end{enumerate}
	Then $\scl_G$ is piecewise rational linear, and $\scl_G(c)$ can be computed via linear programming for each rational chain $c\in B_1^H(G)$.
\end{thm: rational}

The theorem above applies to many interesting groups. Bullet (\ref{item: ratthm cond 1}) holds true for amenable groups \cite[Theorem 2.47]{Cal:sclbook}, irreducible lattices in higher rank Lie groups \cite{BM1,BM2} (see also \cite[Theorem 5.26]{Cal:sclbook}), and certain transformation groups like $\rm{Homeo}^+(S^1)$ \cite[Proposition 5.11]{Ghys} and subgroups of $\rm{PL}^+(I)$ \cite[Theorem A]{Cal:sclinPL}. In particular, Theorem \ref{thm: rational} applies to all graphs of groups with vertex and edge groups isomorphic to $\Z$ (also known as generalized Baumslag--Solitar groups). See Example \ref{ex: groups} for more groups covered by this theorem.

A map of a surface group to a graph of groups is represented geometrically by a map of a surface to a graph of spaces. The surface can be cut into pieces along curves mapping to the edge spaces. If we understand the edge groups we can give conditions under which such pieces may be reassembled. When edge groups are infinite, such gluing conditions depend a priori on an infinite amount of data which we refer to as ``winding numbers''.

The key to our method is to keep track of a \emph{sufficient} but \emph{finite} amount of information about winding numbers. What makes this approach possible is a method to solve gluing conditions \emph{asymptotically}. This is a geometric covering space technique and depends on residual properties of the fundamental group of the underlying graph. It also relies on an elementary but crucial observation about \emph{stability} of \emph{virtual isomorphisms} of abelian groups (see Subsection \ref{subsec: stability}).

\subsubsection{Extremal surfaces}
\begin{definition}
	If $-\chi(S)/2n$ achieves the infimum in the definition of scl, we say $S$ is \emph{extremal}.
\end{definition}

Extremal surfaces are $\pi_1$-injective. Techniques developed to find them have lead to significant progress on Gromov's question of finding surface subgroups in hyperbolic groups \cite{Cal:sshom,CW: RandSurf,Wilton}. A necessary condition for an extremal surface to exist is that $\scl\in\Q$. Thus a natural question in view of Theorem \ref{thm: rational} is whether extremal surfaces exist for rational chains. 

The answer is negative in general, but we have an algorithmic criterion (Theorem \ref{thm: extremal surfaces}) in the special case of Baumslag--Solitar groups. For any \emph{reduced} rational chain $c$, the criterion for the existence of extremal surfaces is expressed in terms of branched surfaces canonically built from the result of the linear programming problem computing $\scl_{\BS(M,L)}(c)$. Here a rational chain $c=\sum r_i g_i$ with $r_i\in \Q_{>0}$ is \emph{reduced} if no positive powers of $g_i$ and $g_j$ ``almost'' cobound an annulus (See Definition \ref{def: reduced chains} and Lemma \ref{lemma: reduced chains}). 

To describe a special case, let $h:\BS(M,L)\to \Z$ be the homomorphism with $h(a)=0$ and $h(t)=1$ in terms of the standard presentation. Its kernel $\ker h=\langle a_k,k\in \Z\ |\ a_k^M=a_{k+1}^L\rangle$ sits inside $\BS(M,L)$ via $a_k\mapsto t^k a t^{-k}$.
\newtheorem*{cor: extremal surfaces}{Corollary \ref{cor: extremal surfaces} (extremal surfaces)}
\begin{cor: extremal surfaces}
	Let $M\neq\pm L$. For a reduced rational chain $c=\sum r_i g_i$, an extremal surface exists if each $g_i$ either has $h(g_i)\neq0$ or is of the form $a_{k_1}^{u_1}\ldots a_{k_n}^{u_n}\in \ker h$ with 
	$$\sum_j u_j\left(\frac{M}{L}\right)^{k_j}=0.$$
\end{cor: extremal surfaces}

\subsubsection{Comparison theorems}
There is a striking relation between scl in $\BS(M,L)$ and in $\Z/M\Z*\Z/L\Z$. A word $w\in\BS(M,L) =\left<a,t\ |\ a^M=ta^Lt^{-1}\right>$ is \emph{$t$-alternating} if it can be written as $w=a^{u_1}ta^{v_1}t^{-1}a^{u_2}ta^{v_2}t^{-1}\ldots a^{u_n}ta^{v_n}t^{-1}$, where the generator $t$ alternates between $t$ and $t^{-1}$.
 
\newtheorem*{cor: t-alt}{Corollary \ref{cor: t-alt}}
\begin{cor: t-alt}
	Let $\Z/M\Z*\Z/L\Z = \left<x,y\ |\ x^M=y^L=1\right>$. For any $t$-alternating word in $\BS(M,L)$, we have
	$$\scl_{\BS(M,L)}(a^{u_1}ta^{v_1}t^{-1}a^{u_2}ta^{v_2}t^{-1}\cdots a^{u_n}ta^{v_n}t^{-1})=\scl_{\Z/M\Z*\Z/L\Z}(x^{u_1}y^{v_1}\cdots x^{u_n}y^{v_n}).$$
\end{cor: t-alt}

For any group $G$, the \emph{scl spectrum} is the image of $\scl_G$ in $\R$. Corollary \ref{cor: t-alt} implies that the spectrum of $\BS(M,L)$ contains the spectrum of $\Z/M\Z*\Z/L\Z$. Proposition \ref{prop: t-alt} is the more general statement with $\Z$ replaced by an arbitrary abelian group. This is a special case of the isometric embedding theorems we prove in Section \ref{sec: surf in gog}.

A different kind of relationship is expressed in the next theorem. The family of Baumslag--Solitar groups $\BS(M,L)$ with the obvious generators converges to the free group $F_2$ as marked groups \cite{Marked groups} when $M,L\to \infty$. This convergence is reflected in the behavior of scl.

\newtheorem*{thm: surgery}{Theorem \ref{thm: surgery} (convergence)}
\begin{thm: surgery}
	For any chain $c\in B_1^H(F_2)$, let $\bar{c}$ be its image in $\BS(M,L)$. If $\gcd(M,L)\to \infty$, then $\scl_{\BS(M,L)}(\bar{c})$ converges to $\scl_{F_2}(c)$. Moreover, for $M,L$ fixed, the sequence $\scl_{\BS(dM,dL)}(\bar{c})$ is eventually quasirational in $d$ and converges to $\scl_{F_2}(c)$.
\end{thm: surgery}
This resembles the geometric convergence witnessed in hyperbolic Dehn surgery. However in Theorem \ref{thm: surgery}, it is important for the parameters $(M,L)$ to go to infinity in a specific way. An example (Proposition \ref{prop: eg1}) shows that such an assumption is necessary, and scl is not a continuous function on the space of marked groups.

\subsection{Comparison with previous results}
Here is a list of groups where scl is previously shown to be piecewise rational linear.
\begin{enumerate}
	\item Free groups, by Calegari \cite{Cal:freegroup PQL}.\label{item: prev results 1}
	\item Free products of cyclic groups, by Walker \cite{Wal:scylla}.\label{item: prev results 2}
	\item Free products of free abelian groups, by Calegari \cite{Cal:sss}.\label{item: prev results 3}
	\item Free products $*_\lambda G_\lambda$ with $\scl_{G_\lambda}\equiv 0$ for all $\lambda$, by the author \cite{Chen:sclfp}.\label{item: prev results 4}
	\item Amalgams of free abelian groups, by Susse \cite{Susse}.\label{item: prev results 5}
	\item $t$-alternating words in Baumslag--Solitar groups, by Clay--Forester--Louwsma \cite{CFL}.\label{item: prev results 6}
\end{enumerate}

Theorem \ref{thm: rational} is a generalization of all the rationality results above. Corollary \ref{cor: t-alt} provides an easier way to understand and compute scl of $t$-alternating words in Baumslag--Solitar groups.

Regarding extremal surfaces, they exist for any rational chains in bullet (\ref{item: prev results 1}) and (\ref{item: prev results 2}). In all other results, extremal surfaces do not exist in general. In bullet (\ref{item: prev results 6}), however, a criterion \cite[Theorem 5.7]{CFL} is provided for $t$-alternating words that bound extremal surfaces. Our criterion (Theorem \ref{thm: extremal surfaces}) extends this to general words and rational chains.

A few previous results are similar to the convergence Theorem \ref{thm: surgery}, expressing scl in a group $G$ as the limit of scl in a certain family of quotients of $G$.
\begin{enumerate}
	\item \cite[Theorem 4.13]{Cal:sss} deals with $G=A*B\to A'*B$ where $A,A',B$ are free abelian with $\rm{rank}(A)=\rm{rank}(A')+1$ and the maps are induced by a linear family of surjective homomorphisms $A\to A'$ and $id:B\to B$;
	\item \cite[Corollary 4.12]{Susse} handles the case $G=F_2=\left<x,y\right>\to \Z*_{\Z}\Z$ by adding the relation $x^M=y^L$ with $M,L\to \infty$; and
	\item \cite[Theorem 6.4]{Chen:sclfp} proves the case $G=F_2=\left<x,y\right>\to \Z/M\Z*\Z/L\Z$ by adding the relation $x^M=y^L=1$ with $M,L\to \infty$.
\end{enumerate}
The convergences in these results are all quasirational in the parameters. Note that how $M,L$ go to infinity matters in Theorem \ref{thm: surgery} but not in the results above. These results together suggest a more general phenomenon to be discovered.

\subsection{Organization of the paper}
To ease into our discussion, we introduce the notion of \emph{relative stable commutator length} and develop its basic properties in Section \ref{sec: background}, where we also review some elements of scl and graphs of groups. In Section \ref{sec: surf in gog} we study surfaces in graphs of groups and their (simple) normal forms, from which we obtain two isometric embedding theorems. In Section \ref{sec: asym prom} we define disk-like pieces. These hold a finite amount of information about winding numbers, which turns out to be sufficient to solve the gluing conditions asymptotically. Finally in Section \ref{sec: sclBS}, we focus on the case of Baumslag--Solitar groups, investigating when the asymptotic realization can terminate at a finite stage resulting in extremal surfaces. Then we establish explicit formulas, prove the convergence theorem, and give an implementation with low time complexity.

\subsection{Acknowledgment}
I would like to thank Danny Calegari for his consistent encouragements and guidance. I also thank Matt Clay, Max Forester, Joel Louwsma and Tim Susse for helpful conversations on their related studies.
Finally I would like to thank Benson Farb, Martin Kassabov, Jason Manning, and Alden Walker for useful discussions, and thank the anonymous referee for good suggestions improving the paper.

\section{Background}\label{sec: background}
\subsection{Stable commutator length}\label{subsec: scl}
We review some basics of scl (stable commutator length) and set up notation. For a group $G$, let $C_1(G)$ be the $1$-chains in $G$, namely the $\R$-vector space with basis $G$. Taking group homology is a linear map $h_G:C_1(G)\to H_1(G;\R)$ whose kernel contains $H(G)$, the subspace of $C_1(G)$ spanned by elements of forms $g^n-ng$ or $hgh^{-1}-g$ for some $g,h\in G$ and $n\in\Z$. Thus, on the quotient $C_1^H(G):=C_1(G)/H(G)$, we have a well-defined linear map $\bar{h}_G:C_1^H(G)\to H_1(G;\R)$. Denote the kernel of $\bar{h}_G$ by $B_1^H(G)$.

\begin{definition}
	Let $X$ be a $K(G,1)$ space. For an integral chain $c=\sum_{i\in I} g_i$, let $\gamma_i$ be a loop in $X$ representing $g_i$. An admissible surface for $c$ is a map $f:(S,\bdry S)\to (X,\sqcup \gamma_i)$ from a compact oriented surface $S$, such that the following diagram commutes and $\bdry f_*[\bdry S]=n(S)[\sqcup S^1]$.
	\[
	\begin{CD}
	\bdry S @>{i}>> S\\
	@V{\bdry f}VV @V{f}VV\\
	\sqcup S^1 @>{\sqcup \gamma_j}>> X
	\end{CD}
	\]
\end{definition}

For a rational chain $c$, a certain multiple $mc$ with $m\in\Z_+$ is integral. A surface $S$ is admissible for $c$ of degree $n$ if it is admissible for some $mc$ of degree $k$ such that $n=mk$.

Admissible surfaces exist when the rational chain $c$ is null-homologous, ie $c\in B_1^H(G)$. We measure the complexity of a surface $S$ by $-\chi^{-}(S)$, where $\chi^-(\Sigma)=\min(0,\chi(\Sigma))$ for each connected component $\Sigma$, and $\chi^-(S)$ is the sum of $\chi^-(\Sigma)$ over all its components. Equivalently, $\chi^{-}(S)$ is the Euler characteristic of $S$ neglecting sphere and disk components.

\begin{definition}
	The \emph{stable commutator length} of a null-homologous rational chain $c$, denoted $\scl_G(c)$, is defined as
	$$\scl_G(c)\defeq\inf_{S} \frac{-\chi^{-}(S)}{2n(S)},$$
	where the infimum is taken over all admissible surfaces $S$ of degree $n(S)\ge 1$.
\end{definition}

This agrees with the algebraic definition of scl as the limit of commutator lengths. See \cite[Chapter 2]{Cal:sclbook}.

Throughout this paper, we only consider admissible surfaces where each boundary component is an orientation preserving covering map of some loop in the chain. This does not affect the computation of scl \cite[Proposition 2.13]{Cal:sclbook}.

It immediately follows by pushing forward admissible surfaces that scl is \emph{monotone}: For any homomorphism $\phi: G\to H$, we have $\scl_G(c)\ge \scl_H(\phi(c))$ for any chain $c$. The equality holds if $\phi$ is an isomorphism, or more generally, if $\phi$ admits a retract. In particular, scl is \emph{invariant under conjugation}.

Scl extends continuously in a unique way to all null-homologous real chains and induces a pseudo-norm on $B_1^H(G)$ \cite[Chapter 2]{Cal:sclbook}. As a consequence, removing finite order elements from the chain $c$ does not affect $\scl_G(c)$. We make the convention that $\scl_G(c)=+\infty$ if $c$ has non-trivial homology. 

We say $\scl_G$ is \emph{piecewise rational linear} if it is a piecewise rational linear function on each rational finite-dimensional subspace of $B_1^H(G)$.


\begin{definition}
	We say a homomorphism $\phi: G\to H$ is an \emph{isometric embedding}, if $\phi$ is injective and
	$$\scl_G(c)=\scl_H(\phi(c)),$$
	for all $c\in B_1^H(G)$.
\end{definition}

\begin{example}\label{ex: isom emb}
	Here are some occasions where we know an injection $\phi: G\to H$ is an isometric embedding.
	\begin{enumerate}
		\item $\phi$ admits a retract $r: H\to G$, that is $r\circ \phi=id$. This follows from monotonicity.
		\item $G$ is abelian. In this case, we have $B_1^H(G)=0$.
		\item $\scl_G\equiv0$.
	\end{enumerate}
\end{example}

Isometric embeddings allow us to pull back admissible surfaces at arbitrarily small cost.
\begin{lemma}\label{lemma: pull back surf}
	Let $\phi: G\to H$ be an isometric embedding, realized by a map $\phi: X_G\to X_H$ between $K(G,1)$ and $K(H,1)$ spaces. Suppose $f':S'\to X_H$ is a surface without sphere components in $X_H$ such that $\bdry f'\bdry S'=\phi(\uga)$ for a collection of loops $\uga$ in $X_G$ whose sum is a null-homologous chain $c$. Then for any $\epsilon>0$, there is a surface $f:S\to X_G$ satisfying the following properties:
	\begin{enumerate}
		\item $S$ has no sphere components;
		\item There is a (disconnected) covering map $\pi: \bdry S\to \bdry S'$ of a certain degree $n>0$ such that the following diagram commutes;
		\[
		\begin{CD}
		\bdry S @>{\pi}>> \bdry S'\\
		@V{\bdry f}VV @V{\bdry f'}VV\\
		X_G @>{\phi}>> X_H
		\end{CD}
		\]
		\item We have $$\frac{-\chi(S)}{n}\le -\chi(S')+\epsilon.$$		
	\end{enumerate}
\end{lemma}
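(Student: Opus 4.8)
The plan is to transfer an almost-extremal surface for $c$ from $X_G$ back along $\phi$, using the defining property of an isometric embedding, and then to build the covering $\pi$ directly from the boundary behaviour of the transferred surface.

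First I would normalize the data. We may assume that no loop in $\uga$ is null-homotopic, since deleting trivial loops changes neither $\scl_G(c)$ nor $\scl_H(\phi(c))$, while the disk components of $S'$ sitting over such loops can be matched, at the very end, by attaching to $S$ the corresponding number of disk components mapping to those (now null-homotopic) loops in $X_G$; comparing $\chi$ and $\chi^{-}$ shows this leaves the final estimate intact. With this normalization, since $\phi$ is injective every boundary component of $S'$ maps to a non-null-homotopic loop $\phi(\gamma_i)$, so $S'$ has no disk components; together with the hypothesis that $S'$ has no sphere components this gives $\chi^{-}(S')=\chi(S')$.

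Now the transfer. Since $\phi$ is an isometric embedding, $\scl_G(c)=\scl_H(\phi(c))$, and since $f'\colon S'\to X_H$ is an admissible surface for $\phi(c)$ of degree $1$, we get $\scl_G(c)\le -\chi(S')/2$. By the definition of $\scl_G(c)$, for the given $\epsilon>0$ there is an admissible surface $f\colon S\to X_G$ for $c$ of some degree $n\ge 1$ with $-\chi^{-}(S)/(2n)\le\scl_G(c)+\epsilon/2$; discarding sphere components of $S$ (which changes neither $\chi^{-}(S)$ nor admissibility) we may assume $S$ has none. Then, using $\chi(S)\ge\chi^{-}(S)$ and the two displayed inequalities, $-\chi(S)/n\le -\chi^{-}(S)/n\le 2\scl_G(c)+\epsilon\le -\chi(S')+\epsilon$. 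This $S$, with the map $f$, is the surface we want, so it remains to construct $\pi$.

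Finally, the covering. By our standing convention each component $C$ of $\bdry S$ is an orientation-preserving degree-$d_C$ covering of one of the loops $\gamma_{i(C)}$ in $\uga$; letting $\delta_i\subset\bdry S'$ be the boundary circle with $f'\circ\delta_i=\phi\circ\gamma_i$, define $\pi|_C\colon C\to\delta_{i(C)}$ to be the degree-$d_C$ covering of the circle chosen compatibly with these parametrizations. Then $f'\circ\pi|_C$ and $\phi\circ\bdry f|_C$ are both the $d_C$-fold iterate of the loop $\phi\circ\gamma_{i(C)}$, so the square commutes; and the degree of $\pi$ over each $\delta_i$ equals the total winding of $\bdry f$ about $\gamma_i$, which is $n$ because $S$ is admissible of degree $n$. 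Hence $\pi$ is a (possibly disconnected) covering of degree $n$, as required. The one place needing genuine care is the normalization step---tracking disk and sphere components and null-homotopic loops so that $\chi$ and $\chi^{-}$ compare in the favourable direction; the real content is just the equality $\scl_G(c)=\scl_H(\phi(c))$, which is exactly what an isometric embedding supplies, so no step is a serious obstacle.
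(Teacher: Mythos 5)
Your transfer step (using $\scl_G(c)=\scl_H(\phi(c))$) and your explicit construction of the covering map $\pi$ from the admissibility of $S$ are both correct and essentially match what the paper does. The gap is in the normalization. You delete the null-homotopic loops from $\uga$ and plan to match ``the disk components of $S'$ sitting over such loops'' at the end, which tacitly assumes that the boundary circles of $S'$ mapping to null-homotopic $\phi(\gamma_i)$'s are exactly the boundaries of disk components. That correspondence holds only one way. Every disk component of $S'$ does have boundary mapping to a null-homotopic $\phi(\gamma_i)$ (and hence, by $\pi_1$-injectivity, $\gamma_i$ is trivial); but a trivial $\gamma_i$'s circle may perfectly well lie on a non-disk component of $S'$, say an annulus or a higher-genus piece. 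For such a circle, deleting $\gamma_i$ from $\uga$ leaves $S'$ with an extra boundary component that no longer matches the chain, and you have given no operation (capping off, etc.) to repair this. Consequently the key step ``$S'$ has no disk components, so $\chi^-(S')=\chi(S')$'' is not established for the original $S'$: if $S'$ actually has $k>0$ disk components, then $-\chi^-(S')=-\chi(S')+k$, your chain of inequalities only yields $-\chi(S)/n\le -\chi^-(S')+\epsilon=-\chi(S')+k+\epsilon$, and the $k$ does not disappear unless the reattached disks are accounted for against a correctly normalized $S'$.

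The paper reverses the reduction, and this is what makes it clean. Set $D'$ to be the disk components of $S'$ and $S'_0=S'\setminus D'$; then automatically $\chi(S'_0)=\chi^-(S'_0)$, no assumption needed. The loops in $\uga$ corresponding to $\bdry D'$ are shown to be null-homotopic via $\pi_1$-injectivity, so they bound disks $D$ in $X_G$, and the chain $c_0$ carried by $\bdry S'_0$ is still equivalent to $c$ in $B_1^H(G)$ even though $c_0$ may still contain trivial loops living on non-disk components---that causes no trouble. Pulling back $S'_0$ to $S_0$ using the scl equality and setting $S=S_0\sqcup nD$ makes the Euler characteristic contributions of $nD$ and $D'$ cancel exactly in the final estimate. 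If you want to keep your order of operations, you would have to explicitly cap off the null-homotopic boundary circles sitting on non-disk components and redo the $\chi$ bookkeeping (deleting a disk decreases $\chi$ by $1$, capping a circle increases $\chi$ by $1$); the estimate survives, but this is precisely the care you flagged as needed and did not carry out.
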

\begin{proof}
	Let $D'$ be the union of disk components in $S'$. Since $\phi$ is an isometric embedding and thus $\pi_1$-injective, the loops in $\uga$ corresponding to $\bdry D'$ bound a collection of disks $D$ in $X_G$ accordingly. Let $S_0'$ be the remaining components of $S'$. Then $\bdry S_0'$ represents a null-homologous chain $c_0$ equivalent to $c$ in $B_1^H(G)$ and $\chi(S'_0)=\chi^{-}(S'_0)$. Since $\phi$ preserves scl, there is an admissible surface $S_0$ for $c_0$ of a certain degree $n>0$ without disk or sphere components such that
	$$\frac{-\chi(S_0)}{n}\le 2\scl_G(c_0)+\epsilon=2\scl_H(\phi(c_0))+\epsilon\le -\chi(S_0')+\epsilon.$$
	Then the surface $S=S_0\sqcup nD$ has the desired properties.
\end{proof}


\subsection{Relative stable commutator length}\label{subsec: relscl}
It is natural and convenient for our discussion to introduce \emph{relative stable commutator length}. Several previous work \cite{CF,CFL,Chen:sclfp,IK} contains similar thoughts without formally formulating this notion.
\begin{definition}
	Let $\mathcal{G}=\{G_\lambda\}_{\lambda\in\Lambda}$ be a collection of subgroups of a group $G$. For a chain $c\in C_1(G)$, its \emph{stable commutator length relative to} $\mathcal{G}$ is 
	$$\scl_{(G,\mathcal{G})}(c)\defeq\inf\left\{ \scl\left(c+\sum c_\lambda\right): c_\lambda \in C_1(G_{\lambda})\right\},$$
	where each summation contains only finitely many non-zero $c_\lambda$.
\end{definition}

Recall that we have a linear map $\bar{h}_G:C_1^H(G)\to H_1(G;\R)$ whose kernel is denoted as $B_1^H(G)$ in Subsection \ref{subsec: scl}. Let $H_1(\mathcal{G})\le H_1(G)$ be the image of the inclusion $\oplus_{\lambda}H_1(G_\lambda)\to H_1(G)$. Denote $\bar{h}_G^{-1}H_1(\mathcal{G})$ by $B_1^H(G,\mathcal{G})$, which contains $B_1^H(G)$ as a subspace.

We summarize some basic properties of relative scl in the following lemma.
\begin{lemma}
	With notation as above, we have:
	\begin{enumerate}
		\item A chain $c$ has finite $\scl_{(G,\mathcal{G})}(c)$ if and only if its homology class $[c]$ lies in $H_1(\mathcal{G})$.
		\item $\scl_{(G,\mathcal{G})}$ is a well defined pseudo norm on $B_1^H(G,\mathcal{G})$.
		\item $\scl_{G}(c)\ge \scl_{(G,\mathcal{G})}(c)$.
		\item For any homomorphism $\phi: G\to H$, we have 
		$$\scl_{(G,\mathcal{G})}(c)\ge\scl_{(H,\phi(\mathcal{G}))}(c).$$ If $\phi$ is an isomorphism, then equality holds.
		\item If $g$ conjugates into some $G_\lambda$, then $\scl_{(G,\mathcal{G})}(g)=0$.
	\end{enumerate}
\end{lemma}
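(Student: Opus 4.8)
The plan is to deduce all five items directly from the formal properties of absolute $\scl$ recalled in Subsection~\ref{subsec: scl}, together with the defining formula $\scl_{(G,\mathcal G)}(c)=\inf\{\scl(c+\sum c_\lambda):c_\lambda\in C_1(G_\lambda)\}$. The facts I will lean on are: (a) $\scl$ is non-negative, homogeneous and subadditive, and is unchanged when an element of $H(G)$ is added to its argument, so that it descends to a pseudo-norm on $C_1^H(G)$; (b) $\scl$ is monotone under pushforward by any homomorphism, with equality for isomorphisms; and (c) $\scl_G(c')$ is finite precisely when $[c']=0$ in $H_1(G;\R)$, and equals $+\infty$ otherwise. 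Each item is then a short manipulation of the infimum, and I would present them in the stated order.

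For item (1), the chain $c+\sum c_\lambda$ has homology class $[c]+\sum[c_\lambda]$ in $H_1(G;\R)$, and as $c_\lambda$ ranges over $C_1(G_\lambda)$ its class ranges over all of $H_1(G_\lambda;\R)$ (group homology in degree $1$ is spanned by the classes of single group elements); hence the attainable values of $\sum[c_\lambda]$ are exactly the elements of $H_1(\mathcal G)$, the image of $\oplus_\lambda H_1(G_\lambda;\R)$ in $H_1(G;\R)$. By (c), some term of the infimum is finite iff $-[c]\in H_1(\mathcal G)$, i.e. iff $[c]\in H_1(\mathcal G)$; otherwise every term is $+\infty$. For item (2), $H(G)$-invariance of $\scl$ shows the infimum is unaffected by replacing $c$ with $c+z$, $z\in H(G)$, so $\scl_{(G,\mathcal G)}$ is well defined on $C_1^H(G)$ and, by item (1), finite-valued on $B_1^H(G,\mathcal G)=\bar{h}_G^{-1}H_1(\mathcal G)$; non-negativity is inherited from $\scl$. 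Homogeneity follows because for $\lambda\neq 0$ the map $c_\lambda\mapsto\lambda c_\lambda$ is a bijection of $C_1(G_\lambda)$, reducing to homogeneity of $\scl$ (and $\lambda=0$ is immediate). Subadditivity follows by adding near-optimal families $\{c_\lambda\}$ for $c$ and $\{c_\lambda'\}$ for $c'$, applying subadditivity of $\scl$ to $c+c'+\sum(c_\lambda+c_\lambda')$, and letting the errors go to $0$. Item (3) is the observation that the choice $c_\lambda=0$ for all $\lambda$ exhibits $\scl_G(c)$ as one of the competitors in the infimum.

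For item (4), monotonicity (b) gives $\scl_G(c+\sum c_\lambda)\ge\scl_H(\phi(c)+\sum\phi(c_\lambda))$ for each choice of $\{c_\lambda\}$, and $\phi(c_\lambda)\in C_1(\phi(G_\lambda))$; since the infimum defining $\scl_{(H,\phi(\mathcal G))}(\phi(c))$ ranges over the (a priori larger) set of all chains in the groups $\phi(G_\lambda)$, the right-hand side is at least $\scl_{(H,\phi(\mathcal G))}(\phi(c))$, and taking the infimum over $\{c_\lambda\}$ yields the inequality (in the statement the argument of $\scl_{(H,\phi(\mathcal G))}$ is the image chain $\phi(c)$). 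When $\phi$ is an isomorphism, applying the same inequality to $\phi^{-1}$, together with $\phi^{-1}(\phi(G_\lambda))=G_\lambda$, gives equality. For item (5), if $h^{-1}gh=:g_0\in G_\lambda$ then $g=g_0$ in $C_1^H(G)$, so by item (2) we may compute with $g_0$; taking $c_\lambda=-g_0\in C_1(G_\lambda)$ and all other summands zero gives $\scl_{(G,\mathcal G)}(g_0)\le\scl(g_0-g_0)=\scl(0)=0$, which with non-negativity forces the value to be $0$.

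I do not expect a genuine obstacle: the content of the lemma is exactly that relative $\scl$ inherits the formal features of $\scl$, and the argument is bookkeeping. The only points deserving a moment's care are, in item (1), that $1$-chains supported in $G_\lambda$ realize all of $H_1(G_\lambda;\R)$, and, in item (4), that $\phi$ need not carry $C_1(G_\lambda)$ onto $C_1(\phi(G_\lambda))$ — which is harmless, since on the target side one is comparing against a larger infimum.
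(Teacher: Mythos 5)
Your proposal is correct. The paper states this lemma without proof (it is presented as a summary of "basic properties" immediately following the definition of relative scl), and your argument is exactly the routine bookkeeping one would expect: item (1) from the fact that $\scl$ is finite precisely on null-homologous chains together with the observation that $1$-chains in $G_\lambda$ realize all of $H_1(G_\lambda;\R)$, item (2) from $H(G)$-invariance plus homogeneity and subadditivity of $\scl$, item (3) by taking $c_\lambda=0$, item (4) from monotonicity of $\scl$ under pushforward applied termwise (with the isomorphism case by symmetry), and item (5) by replacing $g$ with a conjugate $g_0\in G_\lambda$ and choosing $c_\lambda=-g_0$. Your parenthetical remark that the argument of $\scl_{(H,\phi(\mathcal G))}$ in item (4) should be read as $\phi(c)$ correctly identifies a minor abuse of notation in the paper's statement.
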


When $c$ is a rational chain, its relative scl can be described in terms of \emph{relative admissible surfaces} as follows. 

\begin{definition}
	Let $X$ be a topological space with subspaces $X_\lambda$ such that $\pi_1(X)\cong G$ and under this isomorphism $\pi_1(X_\lambda)$ represents the conjugacy class of $G_\lambda$. Then a \emph{relative admissible surface} for a rational chain $c$ of \emph{degree} $n>0$ is a continuous map $f:S\to X$ from a compact oriented surface $S$ with a specified collection of boundary components $\bdry_0\subset\bdry S$ such that $f(\bdry_0)$ represents $nc$ and $f(C)\subset X_\lambda$ for some $\lambda=\lambda(C)$ for each boundary component $C\subset \bdry S$ outside of $\bdry_0$.
\end{definition}

\begin{prop}\label{prop: relative admissible surf}
	For any rational chain $c\in C_1(G)$, we have 
	$$\scl_{(G,\mathcal{G})}(c)=\inf \frac{-\chi^-(S)}{2n(S)},$$ where the infimum is taken over all relative admissible surfaces for $c$.
\end{prop}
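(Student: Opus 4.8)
The plan is to prove both inequalities between $\scl_{(G,\mathcal{G})}(c)$ and the infimum of normalized complexity over relative admissible surfaces, reducing each to the already-established properties of ordinary scl via admissible surfaces. For the direction $\scl_{(G,\mathcal{G})}(c) \le \inf \frac{-\chi^-(S)}{2n(S)}$, I would start with any relative admissible surface $f: S \to X$ of degree $n$ with specified boundary $\bdry_0$. The components of $\bdry S$ outside $\bdry_0$ are partitioned by which $X_\lambda$ they map into; collecting the loops $f(C)$ for $C$ in a fixed $\lambda$ gives a chain $-c_\lambda \in C_1(G_\lambda)$ (up to the usual sign/orientation bookkeeping, since $\bdry f_*[\bdry S] = 0$ in homology forces $f(\bdry_0)$ representing $nc$ to be balanced against $\sum_\lambda n c_\lambda$). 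Then $S$, viewed with \emph{all} of $\bdry S$ as its boundary, is an honest admissible surface in $X$ for the chain $c + \sum_\lambda c_\lambda$ of degree $n$, so $\scl_G(c + \sum c_\lambda) \le \frac{-\chi^-(S)}{2n}$, and taking the infimum over $c_\lambda$ and then over $S$ gives the bound.

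For the reverse inequality $\scl_{(G,\mathcal{G})}(c) \ge \inf \frac{-\chi^-(S)}{2n(S)}$, fix any choice of chains $c_\lambda \in C_1(G_\lambda)$ with $c + \sum c_\lambda$ null-homologous and rational, and any admissible surface $T$ for $c + \sum c_\lambda$ of degree $n$ without sphere or disk components. Realize $X_\lambda \subset X$ as a $K(G_\lambda,1)$ inside the $K(G,1)$. The boundary components of $T$ fall into those wrapping loops of $c$ and those wrapping loops of $c_\lambda$; the latter map into $X_\lambda$ up to homotopy, and since $X_\lambda \subset X$ is $\pi_1$-injective I can homotope $f$ rel nothing so that those boundary curves actually lie in $X_\lambda$. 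Designating the $c$-boundary as $\bdry_0$ then exhibits $T$ as a relative admissible surface for $c$ of degree $n$, with the same $-\chi^-$, giving $\frac{-\chi^-(T)}{2n} \ge \inf_S \frac{-\chi^-(S)}{2n(S)}$; taking the infimum over $T$ and $c_\lambda$ yields the claim. One mild subtlety to handle here is that adding disk components to a relative admissible surface does not change $-\chi^-$, so we may freely assume $T$ (hence the resulting $S$) has none, keeping the bookkeeping of $\chi^-$ versus $\chi$ clean.

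The main obstacle I anticipate is the careful accounting of boundary orientations and multiplicities when passing between the two pictures: a relative admissible surface of degree $n$ has $f(\bdry_0)$ representing $nc$, and I need the collection of \emph{other} boundary loops, read with their induced orientations and covering degrees, to assemble into exactly $n$ times some chain $\sum_\lambda c_\lambda$ with $c_\lambda \in C_1(G_\lambda)$ — this requires invoking the convention (stated in the excerpt) that every boundary component is an orientation-preserving cover of a loop in the relevant chain, and checking that this convention is compatible in both directions without changing scl, citing \cite[Proposition 2.13]{Cal:sclbook}. The homotopy pushing boundary curves into the subspaces $X_\lambda$ is routine given $\pi_1$-injectivity, and the Euler characteristic comparison is immediate, so once the degree/orientation dictionary is set up correctly the proof is short.
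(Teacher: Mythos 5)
Your approach matches the paper's on both sides, but the ``$\ge$'' direction has a genuine gap. You only consider \emph{rational} chains $c_\lambda\in C_1(G_\lambda)$, yet $\scl_{(G,\mathcal{G})}(c)$ is by definition an infimum over \emph{all} real chains $c_\lambda$. Proving that $\scl_G(c+\sum c_\lambda)$ dominates the right-hand side for rational $c_\lambda$ bounds only the infimum over the rational subfamily from below, and since restricting the family only \emph{raises} the infimum, this does not bound $\scl_{(G,\mathcal{G})}(c)=\inf_{\text{all }c_\lambda}\scl_G(c+\sum c_\lambda)$ from below. The paper closes this gap in one line: the inequality for rational $c_\lambda$ extends to arbitrary real $c_\lambda$ by continuity of $\scl_G$ on $B_1^H(G)$ (approximate $c_\lambda$ by rational chains inside the rational subspace of null-homologous chains and pass to the limit). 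Without invoking this continuity your reverse inequality is incomplete; you should state it explicitly.

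Two smaller remarks. In the ``$\le$'' direction, the sign $-c_\lambda$ and the appeal to $\partial f_*[\partial S]=0$ in homology are unnecessary: the non-specified boundary components simply represent some chain $n\sum c_\lambda$ with $c_\lambda\in C_1(G_\lambda)$ by definition, and that is all you need. In the ``$\ge$'' direction, the homotopy pushing boundary curves into $X_\lambda$ is also unnecessary: when forming an admissible surface for $c+\sum c_\lambda$ you may (and should) choose the representing loops for the elements of $c_\lambda$ to lie in $X_\lambda$ from the start, so the boundary components already land in $X_\lambda$. Likewise the citation of the orientation-preserving cover convention is tangential here; the paper's proof does not require it for this proposition.
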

\begin{proof}
	On the one hand, any relative admissible surface $S$ for $c$ of degree $n$ is admissible for a chain $c+\sum c_\lambda$ of degree $n$, where $n\sum c_\lambda$ is the chain represented by the boundary components of $S$ outside of the specified components $\bdry_0$. This proves the ``$\le$'' direction.
	
	On the other hand, consider a chain $c+\sum c_\lambda$ with all $c_\lambda$ rational. Any admissible surface $S$ for $c+\sum c_\lambda$ of degree $n$ is a relative admissible surface for $c$ of degree $n$ by taking the boundary components representing $nc$ to be the specified components $\bdry_0$. Thus for such a rational chain, $\scl(c+\sum c_\lambda)$ is no less than the right-hand side of the desired equality. Then the ``$\ge$'' direction follows by continuity. 
\end{proof}

With appropriate homology conditions, we have $\scl_{(G,\mathcal{G})}(c)=\scl_G(c)$ for $c\in C_1(G)$ if $\scl_G$ vanishes on each $H\in \mathcal{G}$. In this case, one can compute $\scl_G$ using relative admissible surfaces, which has the advantage of not closing up additional boundary components representing chains with trivial scl. Proposition \ref{prop: rel vertex} handles the case we need for graphs of groups.

A relative admissible surface is called \emph{extremal} if it obtains the infimum in Proposition \ref{prop: relative admissible surf}. For scl in the absolute sense, extremal surfaces are $\pi_1$-injective \cite[Proposition 2.104]{Cal:sclbook}. This generalizes to the relative case with the same proof using the fact that free groups are LERF.

\begin{prop}
	If $f: S\to X$ is an extremal relative admissible surface for a rational chain $c\in B_1^H(G,\mathcal{G})$, then $f_*:\pi_1(S)\to \pi_1(X)$ is injective.
\end{prop}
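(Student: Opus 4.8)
The plan is to mimic the proof that extremal surfaces in the absolute sense are $\pi_1$-injective, as given in \cite[Proposition 2.104]{Cal:sclbook}, adapting it to the relative setting. Suppose for contradiction that $f:S\to X$ is extremal for $c$ but $f_*:\pi_1(S)\to\pi_1(X)$ is not injective. Since $S$ is a compact oriented surface with boundary (its connected components with $\chi<0$ have free fundamental group), a nontrivial element of $\ker f_*$ is represented by an essential simple closed curve or an essential properly embedded arc in $S$; this is where one invokes the loop theorem / the standard fact that on a surface a nontrivial element of the kernel of a map to any space can be realized by an essential embedded curve (using that surface groups of finite type with boundary are free, so one can use LERF / Stallings-type arguments, exactly as Calegari does). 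Call this curve or arc $\alpha$.

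Next I would surger $S$ along $\alpha$. If $\alpha$ is a closed curve, since $f(\alpha)$ is null-homotopic in $X$ we can compress: cut $S$ along $\alpha$ and glue in two disks mapped into $X$ via a null-homotopy. If $\alpha$ is an arc with endpoints on $\bdry S$, the arc must have both endpoints on boundary components \emph{not} in $\bdry_0$ — one checks that an essential arc touching $\bdry_0$ cannot lie in the kernel because the boundary components in $\bdry_0$ are traversed with total degree recording the fixed chain $nc$; more carefully, an arc with an endpoint on $\bdry_0$ would, after surgery, change the boundary so that it no longer represents a multiple of $c$, so such arcs are irrelevant, and if they are the only essential embedded kernel elements one argues $f_*$ is already injective on each piece that matters. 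So $\alpha$ has both endpoints on components of $\bdry S$ outside $\bdry_0$, i.e. on components $C$ with $f(C)\subset X_\lambda$. After cutting along $\alpha$, the two boundary arcs that appear get capped; since $f(\alpha)$ is null-homotopic and these boundary components map into subspaces $X_\lambda$, the capping can be done keeping the new boundary components mapped into the appropriate $X_\lambda$ (using $\pi_1$-injectivity of $X_\lambda\hookrightarrow X$, which holds since subgroups of a graph of groups are appropriately embedded; more simply, one only needs that the new boundary curves are conjugate into $G_\lambda$, which follows because they lie on pieces coming from $C$). In either case the result is a new surface $S'$ which is still relative admissible for $c$ of the same degree $n$, with $\chi(S')=\chi(S)+2>\chi(S)$ in the closed-curve case and $\chi(S')=\chi(S)+1$ in the arc case, and crucially $-\chi^-(S')\le-\chi^-(S)$ with strict inequality unless the surgery only creates disk components — which, by the essentiality of $\alpha$, it does not. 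Hence $-\chi^-(S')/2n<-\chi^-(S)/2n$, contradicting extremality.

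The main obstacle I expect is the arc case: one must verify both that essential arcs can be taken disjoint from $\bdry_0$ (equivalently, that kernel elements hitting $\bdry_0$ do not obstruct injectivity of the relevant part of $f_*$) and that after capping an arc the new boundary components still map into the prescribed subspaces $X_\lambda$, so that $S'$ remains \emph{relative} admissible rather than merely admissible. The resolution is that an essential embedded arc realizing a kernel element may be pushed off $\bdry_0$ because its endpoints lie on boundary components that $f$ wraps around loops in the chain $c$ with fixed multiplicity, so any compression involving $\bdry_0$ would alter the boundary class; and the capping-into-$X_\lambda$ issue is handled exactly as in the absolute case by the $\pi_1$-injectivity coming from the fact that free groups are LERF, which is explicitly the remark preceding the statement. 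Once these two points are pinned down, the Euler characteristic bookkeeping and the contradiction with extremality are routine, completing the proof.
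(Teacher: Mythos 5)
The paper's own proof is deliberately terse: it simply asserts that the argument of \cite[Proposition 2.104]{Cal:sclbook} goes through verbatim in the relative setting, the key input being that free groups are LERF. Your sketch has the right overall shape (compress along a kernel curve, contradict extremality), and you correctly observe that an interior compression leaves $\bdry S$ untouched, so the compressed surface is still relative admissible of the same degree. But there is a genuine gap in your first step. You assert that a nontrivial element of $\ker f_*$ is represented by an essential \emph{embedded} simple closed curve (or arc) \emph{in $S$ itself}. This is false in general: already for a pair of pants every simple closed curve is boundary-parallel or null-homotopic, so a map sending $a,b\mapsto 1\in \Z$ kills $ab^{-1}$ yet has no embedded loop in its kernel. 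LERF is invoked precisely to repair this: one passes to a \emph{finite cover} $\tilde S\to S$ of some degree $m$ in which the chosen kernel element is represented by an embedded essential loop, compresses $\tilde S$ (relative admissible of degree $mn$, with $-\chi^-(\tilde S)=m(-\chi^-(S))$), and derives $-\chi^-(S')/(2mn)<-\chi^-(S)/(2n)$. Your write-up conflates ``LERF lets you embed in a finite cover'' with ``the loop is already embedded in $S$,'' and then all subsequent surgery and bookkeeping are carried out on $S$ rather than on $\tilde S$.

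The arc case should be dropped entirely; it is a wrong turn, not a subtlety to be resolved. A properly embedded arc does not represent an element of $\pi_1(S)$, so it cannot witness non-injectivity of $f_*$; boundary compressions are relevant to other incompressibility statements, not to $\pi_1$-injectivity. Consequently the worries you raise --- whether an essential arc can be pushed off $\bdry_0$, whether the capped-off boundary still maps into the $X_\lambda$ --- are solutions to a problem that never arises. What actually makes the absolute proof carry over with no modification, and what the paper is implicitly relying on, is exactly the observation you make in the closed-curve case: an interior compression (on the finite cover) fixes the boundary, its partition into $\bdry_0$ and $X_\lambda$-components, and the degree, so relative admissibility is automatic.
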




\subsection{Graphs of groups}\label{subsec: gog}
Throughout this paper, edges on graphs are oriented. 
In this subsection, we consider graphs $\Gamma=(V,E)$ where $E$ includes both orientations of edges. So we have an involution $e\mapsto \bar{e}$ on $E$ without fixed point by reversing edge orientations, and maps $o,t:E\to V$ taking origin and terminus vertices of edges respectively, such that $t(e)=o(\bar{e})$.

Suppose we have a graph $\Gamma=(V,E)$ and two collections of groups $\{G_v\}_{v\in V}$ and $\{G_e\}_{e\in E}$ indexed by vertices and edges such that $G_e=G_{\bar{e}}$. Suppose we also have injections $o_e:G_e\to G_{o(e)}$ and $ t_e: G_e\to G_{t(e)}$ for each oriented edge $e$ satisfying $o_{\bar{e}}=t_e$. Let $X_v$ and $X_e=X_{\bar{e}}$ be $K(G_v,1)$ and $K(G_e,1)$ spaces with base points $b_v$ and $b_e=b_{\bar{e}}$ respectively. For each edge $e$, the injections $o_e$ and $t_e$ determine (up to homotopy) maps $o_{e}: (X_e,b_e)\to (X_{o(e)},b_{o(e)})$ and $t_{e}: (X_e,b_e)\to (X_{t(e)},b_{t(e)})$ respectively. Let $X$ be the space obtained from the disjoint union $(\sqcup_{v\in V} X_v)\sqcup (\sqcup_{e\in E} X_e\times[-1,1])$ by identifying $X_e\times \{t\}$ with $X_{\bar{e}\times \{-t\}}$ and gluing $X_e\times\{-1\}$ and $X_e\times\{1\}$ to $X_{o(e)}$ and $X_{t(e)}$ via $o_{e}$ and $t_{e}$, respectively.

We call $X$ the \emph{graph of spaces} associated to the given data. Identify $X_v$ with its image in $X$, referred to as the \emph{vertex space}. For each vertex $v$, the image of $X_v\cup (\cup_{t(e)=v} X_e\times  [0,1))$ is homotopic to $X_v$, so its completion is too; we refer to both as the \emph{thickened vertex space} $N(X_v)$. Identify $X_e$ with the image of $X_e\times \{0\}$, called the \emph{edge space}.

When $\Gamma$ is connected, we call $G=\pi_1(X)$ the (fundamental group of) \emph{graph of groups} and $X$ the \emph{standard realization} of $G$. We use the notation $G=\mathcal{G}(\Gamma, \{G_v\}, \{G_e\})$ to specify the underlying data. It is a fact that $G_v$ and $G_e$ sit inside $G$ as subgroups via the inclusions, referred to as \emph{vertex groups} and \emph{edge groups}. Graphs of groups are natural generalizations of amalgams and HNN extensions. For more details, especially their actions on trees, see \cite{Serre}.

We say an element $g\in G$ and its conjugacy class are \emph{elliptic} if $g$ conjugates into some vertex group, otherwise they are \emph{hyperbolic}. Geometrically, $g$ is elliptic if and only if it is represented by a loop supported in some vertex space.

Let $\gamma$ be a loop in $X$ representing the conjugacy class of an element $g\in G$. We can homotope $\gamma$ so that it is either disjoint from $X_e\times\{t\}$ or intersects it only at $b_e\times\{t\}$ transversely, for all $t\in(-1,1)$ and any edge $e$. Then the edge spaces cut $\gamma$ into finitely many arcs, unless $\gamma$ is supported in a vertex space. 

Each arc $a$ is supported in some thickened vertex space $N(X_v)$ and decomposes into three parts (see Figure \ref{fig: simplify backtrack}): an arc parameterizing $b_e\times[0,1]$, a based loop in $X_v$, and an arc parameterizing $b_{e'}\times[-1,0]$, where $t(e)=v=o(e')$. We refer to the element $w(a)\in G_v$ represented by the based loop as \emph{the winding number} of $a$, and denote $e,e'$ by $e_{in}(a),e_{out}(a)$ respectively. We say $\gamma$ \emph{trivially backtracks} if for some arc $a$ as above $\overline{e_{out}(a)}=e_{in}(a)$ and $w(\alpha)$ lies in $t_e(G_e)$. In this case, $\gamma$ can be simplified by a homotopy reducing the number of arcs. See Figure \ref{fig: simplify backtrack}. After finitely many simplifications, the loop $\gamma$ does not trivially backtrack, which we call a \emph{tight} loop. 

\begin{figure} 
	\centering
	
	\subfloat[]{
		\labellist
		\small \hair 2pt
		\pinlabel $X_v$ at 120 190
		\pinlabel \textcolor{red}{$a\subset\gamma$} at 188 95
		\pinlabel $X_e\times[0,1]$ at 188 15
		\endlabellist
		\includegraphics[scale=0.6]{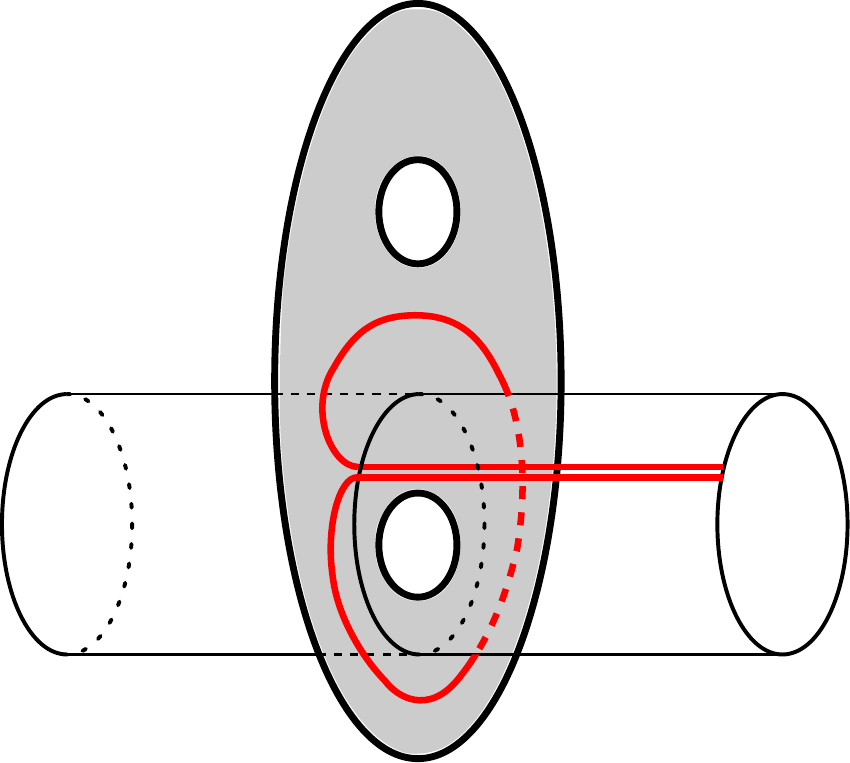}
		\label{fig: before pull}
	}
	\hfill
	\subfloat[]{
		\labellist
		\small \hair 2pt
		\pinlabel $X_v$ at 120 190
		\pinlabel \textcolor{red}{$a\subset\gamma$} at 195 120
		\pinlabel $X_e\times[0,1]$ at 188 15
		\endlabellist
		\includegraphics[scale=0.6]{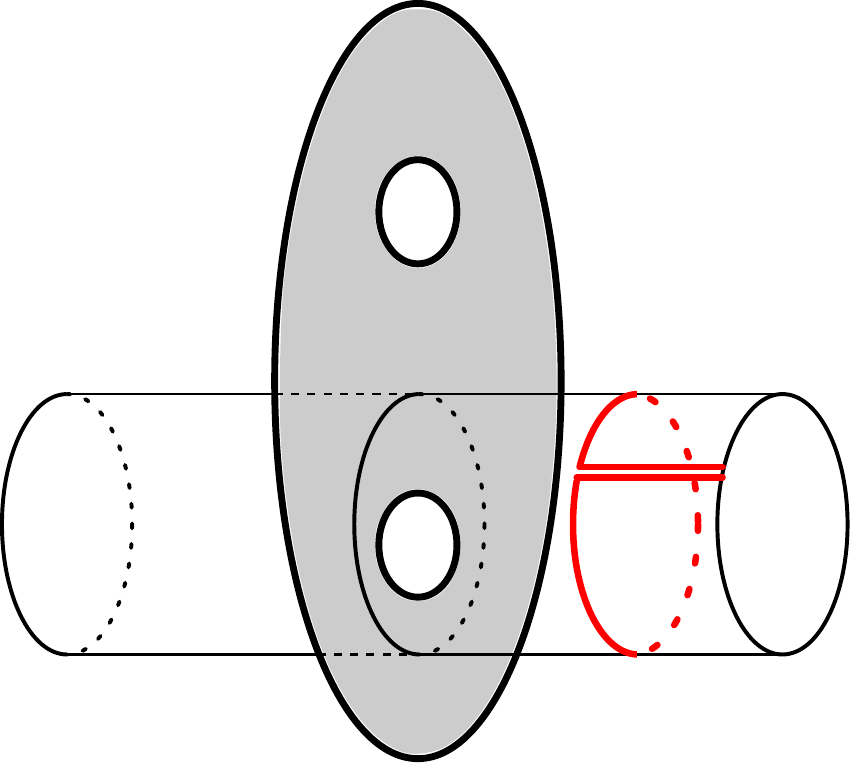}
		\label{fig: after pull}
	}

	\caption{A loop $\gamma$ trivially backtracks at an arc $a$ supported in the thickened vertex space $N(X_v)$ as in (\ref{fig: before pull}). It can be pushed off the vertex space $X_v$ by a homotopy as in (\ref{fig: after pull}).}
	\label{fig: simplify backtrack}
\end{figure}

In the case where $g$ is elliptic, $\gamma$ is tight if and only if it is supported in some vertex space. Moreover, instead of a collection of arcs, we have a single loop $\gamma$, whose winding number $w(\gamma)$ is only well-defined up to conjugacy.

The following proposition allows us to use relative admissible surfaces to compute scl.

\begin{prop}\label{prop: rel vertex}
	Let $G=\mathcal{G}(\Gamma, \{G_v\}, \{G_e\})$ be a a graph of groups with underlying graph $\Gamma=(V,E)$. Suppose $\scl_{G}(c)=0$ for any $c\in B_1^H(G_v)$ and vertex $v$. Then for any null-homologous chain $c\in B_1^H(G)$, we have
	$$\scl_{(G,\{G_v\})}(c)=\scl_G(c).$$
\end{prop}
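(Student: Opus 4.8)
The plan is to show both inequalities. The inequality $\scl_{(G,\{G_v\})}(c)\le \scl_G(c)$ is immediate from item (3) of the basic properties lemma for relative scl, so the content is the reverse inequality $\scl_G(c)\ge \scl_{(G,\{G_v\})}(c)$. By Proposition \ref{prop: relative admissible surf}, it suffices to take a relative admissible surface $f:S\to X$ for $c$ of degree $n$ with $-\chi^-(S)/2n$ close to $\scl_{(G,\{G_v\})}(c)$, and convert it into an honest admissible surface for $c$ of comparable complexity, losing only an arbitrarily small amount. The extra boundary components of $S$ (those in $\bdry S\setminus\bdry_0$) each map into a vertex space $X_v$, hence represent elliptic chains; write $\bdry(S\setminus\bdry_0)$ as a chain $n\sum_\lambda c_\lambda$ with each $c_\lambda\in C_1(G_v)$ for the appropriate $v$. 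So $S$ is an admissible surface for $c+\sum c_\lambda$, and we need to cap off the $c_\lambda$ part cheaply.

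First I would reduce to the case where each $c_\lambda$ is null-homologous \emph{in $G_v$}, i.e. lies in $B_1^H(G_v)$. Since the original chain $c$ is null-homologous in $G$, the total extra boundary $\sum c_\lambda$ is null-homologous in $G$; however, an individual $c_\lambda$ need not be null-homologous in its own vertex group $G_v$. To handle this, I would use the structure of $H_1$ of a graph of groups: $H_1(G)$ is a quotient of $\bigoplus_v H_1(G_v)$ by relations coming from the edge maps and the cycles of $\Gamma$. Concretely, pick a spanning tree $T$ of $\Gamma$; the homology class of $\sum c_\lambda$ vanishing in $H_1(G)$ means the vertex-group homology classes can be ``pushed around'' the graph through edge spaces. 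Geometrically I would modify $S$ by attaching annuli and ``tripods'' (thrice-punctured spheres / pairs of pants) running through edge spaces $X_e\times[-1,1]$ to transfer homology between vertex spaces and around loops of $\Gamma$, at the cost of a bounded number of pieces each of Euler characteristic $0$ or $-1$ — crucially a number bounded in terms of $\Gamma$ and the fixed chain $c$, hence a quantity that is $o(n)$ after dividing by $2n$ and letting $n\to\infty$. After these modifications the surface is admissible for $c+\sum c'_\lambda$ with each $c'_\lambda\in B_1^H(G_v)$.

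Next, with each $c'_\lambda$ null-homologous in $G_v$, I invoke the hypothesis $\scl_G(c')=0$ for all $c'\in B_1^H(G_v)$: this means I can find, for any $\epsilon>0$, admissible surfaces $T_\lambda\to X$ for $c'_\lambda$ of some degree $m_\lambda$ with $-\chi^-(T_\lambda)/2m_\lambda<\epsilon$. Passing to a common degree (replacing $S$ and the $T_\lambda$ by finite covers and disjoint copies) and gluing, I obtain an admissible surface for $c$ whose normalized complexity is at most $(-\chi^-(S) + \text{bounded correction})/2n + \sum(\text{small contributions})$. This is the mechanism already used in Lemma \ref{lemma: pull back surf} and in Example \ref{ex: isom emb}(3); the only new wrinkle here is the homology-transfer step. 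Letting $n\to\infty$ along a sequence of near-optimal relative admissible surfaces and then $\epsilon\to 0$ gives $\scl_G(c)\le \scl_{(G,\{G_v\})}(c)$, completing the proof.

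The main obstacle I anticipate is the homology bookkeeping in the second paragraph: making precise that the obstruction to each $c_\lambda$ being individually null-homologous in $G_v$ is exactly resolved by attaching a controlled (in particular $n$-independent) number of surface pieces threading through the edge and loop structure of $\Gamma$. One clean way to organize this is to first homotope $f$ so that, after the cut-and-paste along edge spaces, the ``arc'' combinatorics of $f$ are understood (as set up in the discussion of tight loops and winding numbers preceding the proposition), and then observe that the elliptic boundary pieces, grouped by vertex, carry a well-defined homology class in $\bigoplus_v H_1(G_v)$ whose image in $H_1(G)$ is $[c]=0$; a chase through the Mayer--Vietoris / graph-of-groups presentation of $H_1(G)$ then exhibits the needed correction chains supported on finitely many edges, realized geometrically by pairs of pants and annuli. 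Everything else is a routine application of results already in the paper.
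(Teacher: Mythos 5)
Your approach is sound and identifies the right key input — the homology computation \eqref{eqn: homology of G} for a graph of groups — but it is geometrically more elaborate than necessary; the paper's proof is purely algebraic and constructs no surfaces. Given any chain $c+\sum c_i$ appearing in the infimum defining $\scl_{(G,\{G_v\})}(c)$ (each $c_i$ conjugate into a vertex group $G_{v_i}$), the paper uses \eqref{eqn: homology of G} to produce edge chains $c_e=-c_{\bar e}\in C_1(G_e)$ so that each $c_v\defeq\sum_{i:v_i=v}c_i+\sum_{t(e)=v}t_e(c_e)$ is null-homologous in $G_v$. Since $t_{\bar e}(c_{\bar e})=o_e(-c_e)$ and $-t_e(c_e)$ agree in $C_1^H(G)$, one has $c+\sum c_i=c+\sum_v c_v$ in $C_1^H(G)$, whence $\scl_G(c+\sum c_i)=\scl_G(c+\sum_v c_v)=\scl_G(c)$ by the pseudo-norm property of $\scl$ together with the hypothesis $\scl_G(c_v)=0$; taking the infimum over all such $c+\sum c_i$ finishes. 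Your geometric translation of this can be made to work, but two of your justifications need repair. First, the number of correction pieces needed to redistribute homology through edge spaces is \emph{not} bounded independently of $n$: the required edge chains $c_e$ scale with the extra boundary of $S$, which grows with $n$, so the claim ``bounded in terms of $\Gamma$ and $c$'' is false. The argument survives only because the redistribution can be carried out entirely with annuli through $X_e\times[-1,1]$, each realizing a relation $o_e(w)\sim t_e(w)$; these have $\chi^-=0$, so their number is immaterial, and pairs of pants are never needed and should be dropped from the construction. Second, the final gluing of $S$ with the cheap caps $T_\lambda$ requires passing to finite covers to match boundary components, a step you elide; this complication disappears entirely if you instead apply subadditivity $\scl_G(c)\le\scl_G(c+\sum c'_\lambda)+\scl_G(\sum c'_\lambda)$ and note $\scl_G(\sum c'_\lambda)=0$ — at which point your geometric argument has collapsed into the paper's algebraic one.
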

\begin{proof}
	A simple calculation shows
	\begin{eqnarray}
		H_1(G;\R)&\cong& H_1(\Gamma;\R)\nonumber\\
		&\oplus&
		\left(\bigoplus_v H_1(G_v;\R)\right)\left/\left<o_{e*}(c_e)-t_{e*}(c_e):c_e\in H_1(G_e;\R)\right>\right.  .\label{eqn: homology of G}
	\end{eqnarray}
	Consider any null-homologous chain $c+\sum c_i$ where $g_ic_ig_i^{-1}\in C_1(G_{v_i})$ for some $g_i\in G$ and vertex $v_i$. It follows that $\sum c_i$ is null-homologous since $c$ is. Thus by the homology calculation (\ref{eqn: homology of G}), there exist chains $c_e=-c_{\bar{e}}\in C_1(G_e)$ such that, for each vertex $v$, the chain
	$$c_v\defeq\sum_{i: v_i=v} c_i+\sum_{e: t(e)=v} t_{e}(c_e)\quad \in C_1(G_v)$$
	is null-homologous in $H_1(G_v;\R)$. Clearly, for each $e$ the chains $t_{\bar{e}}(c_{\bar{e}})=o_{e}(-c_e)$ and $-t_{e}(c_e)$ are equivalent to each other as they are geometrically homotopic. Hence $c+\sum c_i=c+\sum_{v\in V} c_v\in C_1^H(G)$, and
	$$\scl_G\left(c+\sum c_i\right)=\scl_G\left(c+\sum c_v\right)=\scl_G(c),$$
	where the last equality holds since $\scl_G(c_v)=0$ for all $v$ by assumption. Then the result follows from the definition of relative scl.
\end{proof}

In the sequel, all relative admissible surfaces will be understood to be relative to vertex groups unless stated otherwise.

\section{Surfaces in graphs of groups}\label{sec: surf in gog}
Throughout this section, let $G=\mathcal{G}(\Gamma, \{G_v\}, \{G_e\})$ be a graph of groups and let $X$ be its standard realization as in Subsection \ref{subsec: gog}. Let $\ug=\{g_1,\ldots,g_m\}$ be a set of infinite order elements in $G$ represented by tight loops $\uga=\{\gamma_1,\ldots,\gamma_m\}$ in $X$. 

Recall that edge spaces cut hyperbolic tight loops into arcs, while each elliptic tight loop is already supported in some vertex space. For each vertex $v$, denote by $A_v$ the collection of arcs supported in $N(X_v)$ obtained by cutting hyperbolic loops in $\uga$. Arcs from different loops or different parts of the same loop are considered as distinct elements. Let $L_v\subset\uga$ be the set of elliptic loops supported in $X_v$.

For each $\alpha\in A_v\cup L_v$, let $i(\alpha)$ be the index such that $\alpha$ sits on $\gamma_{i(\alpha)}$. Denote by $[w(\alpha)]$ the homology class of the winding number $w(\alpha)$ in $H_1(G_v;\R)$.

\begin{lemma}\label{lemma: homological computation}
	Let $c=\sum_i r_ig_i$ be a null-homologous chain. Then there exists $\beta_e\in H_1(G_e;\R)$ for each edge $e$ such that $\beta_e=-\beta_{\bar{e}}$ and
	$$\sum_{\alpha\in A_v\cup L_v} r_{i(\alpha)}[w(\alpha)]=\sum_{t(e)=v} t_{e*}\beta_e$$
	holds for each vertex $v$.
\end{lemma}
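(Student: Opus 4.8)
The idea is to unwind the homology computation from Proposition~\ref{prop: rel vertex} --- in particular the splitting \eqref{eqn: homology of G} of $H_1(G;\R)$ --- and read off what it says at the level of chains on each vertex group. The chain $c = \sum_i r_i g_i$ is null-homologous, so in particular its $\Gamma$-component and its vertex-group components all vanish. The plan is to represent $[c]$ in the Mayer--Vietoris / graph-of-spaces decomposition of $X$ and track how the arcs contribute.

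First I would compute the class of $c$ in $H_1(X;\R) \cong H_1(G;\R)$ using the decomposition of $X$ into thickened vertex spaces $N(X_v)$ glued along edge spaces $X_e$. For a hyperbolic loop $\gamma_i$, cutting along the edge spaces expresses $\gamma_i$ as a cyclic concatenation of arcs $\alpha \in A_v$, each of which (by the description in Subsection~\ref{subsec: gog}) is homologous, rel its two endpoints on edge spaces, to the based loop in $X_v$ realizing $w(\alpha)$ together with the two connecting arcs in $X_e \times [0,1]$ and $X_{e'}\times[-1,0]$. For an elliptic loop $\gamma_i \in L_v$, the loop is already supported in $X_v$ and contributes $[w(\gamma_i)]$ directly. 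Summing with the coefficients $r_i$, the class $[c] \in H_1(X;\R)$ is, in the direct-sum decomposition \eqref{eqn: homology of G}, the class whose $H_1(\Gamma;\R)$-part records the net flow of the loops across edges and whose $\bigl(\bigoplus_v H_1(G_v;\R)\bigr)/\langle o_{e*}(c_e) - t_{e*}(c_e)\rangle$-part is the image of $\sum_v \bigl(\sum_{\alpha \in A_v \cup L_v} r_{i(\alpha)}[w(\alpha)]\bigr)$.

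Now impose $[c] = 0$. Vanishing of the $H_1(\Gamma;\R)$-component is automatic from the structure (the arcs of each loop already concatenate into a cycle) and is not needed here. Vanishing of the vertex-group component says precisely that the tuple $\bigl(\sum_{\alpha \in A_v \cup L_v} r_{i(\alpha)}[w(\alpha)]\bigr)_{v \in V} \in \bigoplus_v H_1(G_v;\R)$ lies in the subspace spanned by $\{o_{e*}(c_e) - t_{e*}(c_e) : c_e \in H_1(G_e;\R),\ e \in E\}$. Unpacking this: there exist classes $\beta_e \in H_1(G_e;\R)$, one for each \emph{oriented} edge, such that subtracting off $\sum_e (o_{e*}\beta_e - t_{e*}\beta_e)$ kills each vertex component. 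Since $o_{\bar e} = t_e$, a single oriented edge and its reverse can be combined, so after relabelling we may take the family to satisfy $\beta_e = -\beta_{\bar e}$, and then $o_{e*}\beta_e = t_{\bar e *}\beta_{\bar e} = -t_{\bar e *}\beta_e$; collecting the contribution of both orientations of each edge at the vertex $v = t(e)$ gives exactly the required identity $\sum_{\alpha \in A_v \cup L_v} r_{i(\alpha)}[w(\alpha)] = \sum_{t(e) = v} t_{e*}\beta_e$ for each $v$.

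The one point that needs genuine care --- and which I expect to be the main (minor) obstacle --- is the bookkeeping between oriented edges: the decomposition \eqref{eqn: homology of G} quotients by generators indexed in a way that treats $e$ and $\bar e$ symmetrically, and one must verify that the freedom in choosing the $\beta_e$ is exactly enough to arrange the antisymmetry $\beta_e = -\beta_{\bar e}$ without losing any solutions. This is the same linear-algebra maneuver already used implicitly in the proof of Proposition~\ref{prop: rel vertex} (where the chains $c_e = -c_{\bar e}$ are produced), so I would model the argument on that proof, replacing chains by their homology classes. Everything else is a direct translation of "the arc decomposition computes the class of $c$" through the known form of $H_1(G;\R)$.
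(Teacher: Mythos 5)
Your proof is correct and follows exactly the approach the paper takes; the paper's own proof is the one-line ``This directly follows from equation (\ref{eqn: homology of G})'', and your write-up is simply a careful unpacking of that reduction, including the bookkeeping that arranges $\beta_e = -\beta_{\bar e}$.
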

\begin{proof}
	This directly follows from equation (\ref{eqn: homology of G}).
\end{proof}

\subsection{Normal form}\label{subsec: normal form}
With the above setup, fix a rational homologically trivial chain $c=\sum_i r_ig_i$ with $r_i\in\Q$. Let $f:S\to X$ be an arbitrary admissible surface for $c$ without sphere components. Put $S$ in general position so that it is transverse to all edge spaces. Then $F=f^{-1}(\cup_e X_e)$ is a proper submanifold of codimension $1$, that is, a union of embedded loops and proper arcs. Eliminate all trivial loops in $F$ (innermost first) by homotopy and then compress $S$ along a loop in $F$ whose image is trivial in $X$ if any. Since this process decreases $-\chi(S)$, all loops in $F$ are non-trivial in $X$ after finitely many repetitions. All proper arcs in $F$ are essential since loops in $\uga$ are tight.

Now cut $S$ along $F$ into surfaces with corners, each component mapped into $N(X_v)$ for some vertex $v$. Let $S_v$ be the union of components mapped into $N(X_v)$. The boundary components of $S_v$ fall into two types (See Figure \ref{fig: S_v}).
\begin{enumerate}
	\item Polygonal boundary: these are the boundary components divided by corners of $S_v$ into segments alternating between proper arcs in $F$ and arcs in $A_v$.
	\item Loop boundary: these are the components disjoint from corners of $S_v$, and thus each is a loop in either $F$ or $L_v$.
\end{enumerate}

\begin{figure}
	\labellist
	\small \hair 2pt
	\pinlabel $\beta_1$ at 120 250
	\pinlabel $\beta_2$ at 180 120
	\pinlabel $\beta_3$ at 165 25
	\endlabellist
	\centering
	\includegraphics[scale=0.5]{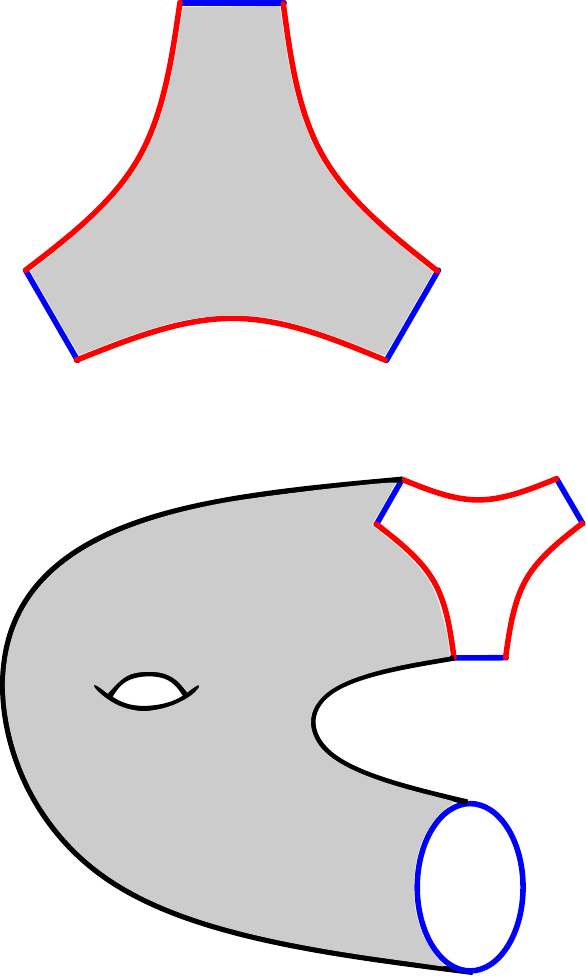}
	\caption{An example of $S_v$ consisting of two components and three boundary components. On the boundary, the blue parts are supported in edge spaces and the red in the interior of $N(X_v)$. The boundaries $\beta_1$ and $\beta_2$ are polygonal, with arcs curly and turns straight. The boundary $\beta_3$ is a loop boundary.}\label{fig: S_v}
\end{figure}

Note that any disk component in $S_v$ must bound a polygonal boundary since loop boundaries are non-trivial in $X$ by the simplification process above and the assumption that $\ug$ consists of infinite order elements.

Let $\alpha$ be any proper arc in $F$ that appears on a polygonal boundary of $S_v$. If $\alpha$ with the orientation induced from $S_v$ starts from an arc $a_v\in A_v$ and ends in $a'_v\in A_v$, we say $\alpha$ is a \emph{turn} from $a_v$ to $a'_v$. With the induced orientation, $\alpha$ represents an element $w\in G_e$, called the \emph{winding number} of this turn, where $e=e_{out}(a_v)=\overline{e_{in}(a'_v)}$. The triple $(a_v,w,a'_v)$ determines the \emph{type} of the turn $\alpha$. There are possibly many turns in $S_v$ of the same type.

Suppose $u=t(e)$ is the other end of the edge $e$ above. Then $\alpha$, viewed from $S_u$, gives rise to a turn in $S_u$ from $a_u$ to $a'_u$, where $a_u,a'_u\in A_u$ are arcs such that $a_v$ is followed by $a'_u$ and $a_u$ is followed by $a'_v$ on $\uga$. See Figure \ref{fig: pair}. Since the two sides induce opposite orientations on $\alpha$, the winding numbers of the two turns are inverses. We refer to such two types of turns $(a_v,w,a'_v)$ and $(a_u,w^{-1},a'_u)$ as \emph{paired turns}. 

\begin{figure}
	\labellist
	\small \hair 2pt
	\pinlabel $v$ at 0 85
	\pinlabel $u$ at 64 85
	\pinlabel $e$ at 32 85
	
	\pinlabel $a_v$ at 148 40
	\pinlabel $a'_v$ at 148 110
	\pinlabel $a_u$ at 203 110
	\pinlabel $a'_u$ at 203 40
	\pinlabel $w$ at 158 75
	\pinlabel $w^{-1}$ at 203 75
	\pinlabel $S_v$ at 118 75
	\pinlabel $S_u$ at 238 75
	
	\pinlabel $a_v$ at 318 20
	\pinlabel $a'_v$ at 318 130
	\pinlabel $a_u$ at 413 130
	\pinlabel $a'_u$ at 413 20
	\pinlabel \textcolor{red}{$\gamma_j$} at 408 95
	\pinlabel \textcolor{red}{$\gamma_i$} at 408 55
	
	\pinlabel $X_e$ at 368 10
	
	\endlabellist
	\centering
	\includegraphics[scale=0.7]{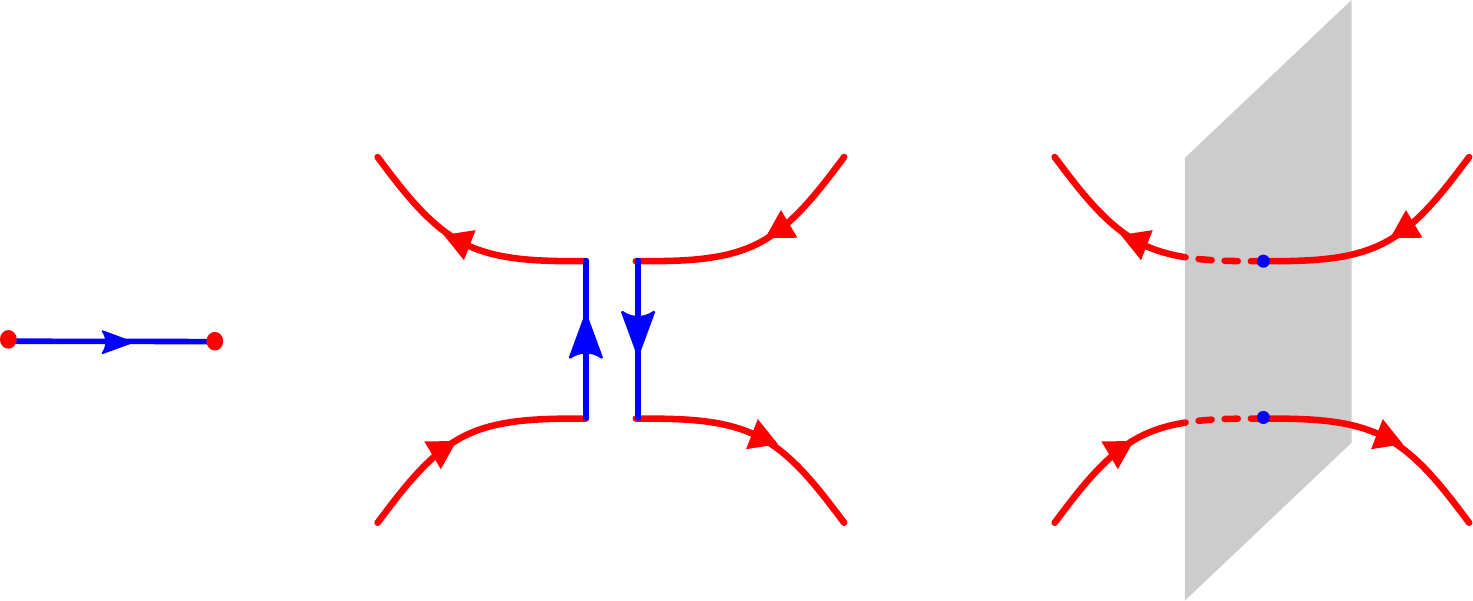}
	\caption{Two paired turns on an edge space $X_e$, with arcs $a_v,a'_u\subset\gamma_i$ and $a'_v,a_u\subset\gamma_j$, $e=e_{out}(a_v)$}\label{fig: pair}
\end{figure}

Similar analysis can be done for loops in $F$ except that they represent conjugacy classes in $G_e$ instead of elements.

Then the collection of boundaries $\{\bdry S_v\}$ satisfies the following
\begin{equation}\label{eqn: gluing condition}
	\emph{gluing condition}: \text{Turns of paired types have equal numbers of instances.}
\end{equation}
Moreover, the loop boundaries together represent a trivial chain in $B_1^H(G)$.


\begin{definition}
	Such a decomposition into subsurfaces $S_v$, one for each vertex $v$, is called the \emph{normal form} of $S$.
\end{definition}

Recall the orbifold Euler characteristic of a surface $\Sigma$ with corners is
$$\chi_o(\Sigma)\defeq\chi(\Sigma)-\frac{1}{4}\#\text{corners}.$$
Then we obtain a surface $S'$ admissible for a chain equivalent to $c$ in $B_1^H(G)$ of the same degree as $S$ by gluing up turns of paired types arbitrarily on polygonal boundaries in the normal form of $S$, and
$$-\chi(S')=-\sum \chi_o(S_v)\le-\chi(S).$$

We summarize the discussion above as the following lemma.

\begin{lemma}\label{lemma: normal form}
	Every admissible surface $S$ can be decomposed into the normal form after throwing away sphere components, compressions, homotopy and cutting along edge spaces. By gluing up paired turns arbitrarily on polygonal boundaries in the normal form of $S$, we obtain a surface $S'$ relative admissible for $c$ of the same degree as $S$ and
	$$-\chi(S')=-\sum_v \chi_o(S_v)\le-\chi(S).$$
\end{lemma}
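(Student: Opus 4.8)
The statement consolidates the cut-and-paste construction carried out in the preceding discussion, so the plan is to organize that discussion into three verifiable assertions and check each one. First I would argue that the preprocessing operations listed — discarding sphere components, compressing along null-homotopic loops in $F=f^{-1}(\cup_e X_e)$, eliminating trivial loops in $F$ by homotopy, and tightening — can all be performed and terminate. The key point is that each compression or loop-elimination strictly decreases $-\chi(S)$ (or at worst keeps it fixed while decreasing the number of components of $F$), so after finitely many steps $F$ consists only of essential arcs and loops nontrivial in $X$; cutting $S$ along the resulting $F$ then produces the subsurfaces $S_v$ mapping into the thickened vertex spaces $N(X_v)$, with boundary of the two declared types (polygonal and loop). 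This is exactly the content already spelled out above, so here I would mostly be pointing to it.

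Second, I would verify that gluing up the paired turns yields a legitimate relative admissible surface $S'$ for a chain equivalent to $c$. The gluing condition \eqref{eqn: gluing condition} guarantees that on each edge space the turns of a given type match in number with the turns of the paired type, so the polygonal boundary segments lying in edge spaces can be identified in pairs; doing this for all edges reassembles the pieces into a closed-up surface $S'$ whose only remaining boundary consists of the loop boundaries together with the original boundary of $S$ covering $\sqcup\gamma_i$. The loop boundaries represent a trivial chain in $B_1^H(G)$ and each is supported in a vertex space, so $S'$ is relative admissible (relative to the vertex groups) for a chain equivalent to $c$; its degree equals that of $S$ because the gluing does not touch the boundary components covering $\sqcup\gamma_i$. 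One should note here that the gluing is carried out by matching up the collars $\alpha\times[-1,1]$ of the arcs/loops $\alpha$ in $F$, so that the map $f$ on $S'$ is well defined and agrees with $f$ on each $S_v$.

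Third, I would check the Euler characteristic bookkeeping. When $S$ is cut along $F$ into the $S_v$, the arcs of $F$ become boundary arcs and each endpoint of such an arc becomes a corner of some $S_v$; each interior arc of $F$ contributes two corners (one to each side) and so on. Since $\chi_o(\Sigma)=\chi(\Sigma)-\tfrac14\#\text{corners}$, the additivity of $\chi$ under cutting together with the count of corners created gives $\sum_v\chi_o(S_v)=\chi(S')$, while reassembling by gluing paired turns recovers exactly a surface with $\chi=\chi(S')=\sum_v\chi_o(S_v)$; the inequality $-\chi(S')\le-\chi(S)$ then follows because each gluing of a pair of turns identifies two boundary arcs, and the orbifold Euler characteristic is designed so that this operation does not increase $-\chi_o$. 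I expect the main technical obstacle to be the careful corner count: one must make sure that $\chi_o$ is genuinely additive under the cut into the $S_v$ (so that quarter-integer corner contributions from the two sides of an arc of $F$ add up to a full unit per gluing) and that no corners are lost or double-counted where several arcs of $F$ meet a common boundary of $\uga$. Everything else is a matter of invoking the preceding paragraphs, since the definitions of normal form, turns, paired turns, and the gluing condition have all been set up there.
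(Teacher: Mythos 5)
Your overall plan follows the paper's approach (the lemma is a summary of the cut-and-paste discussion immediately preceding it, so a re-derivation of that discussion is exactly what is needed), and the first two paragraphs are fine in substance, if loose in places. The problem is in your Euler characteristic bookkeeping. You write that the inequality $-\chi(S')\le-\chi(S)$ follows ``because each gluing of a pair of turns identifies two boundary arcs, and the orbifold Euler characteristic is designed so that this operation does not increase $-\chi_o$.'' This misattributes the source of the inequality. Gluing two paired turns is an \emph{exact} additivity statement: it removes four corners and one arc, so $\chi_o$ is strictly additive, and cutting the (preprocessed) surface along $F$ and regluing in any pattern consistent with the gluing condition returns precisely $-\chi(S')=-\sum_v\chi_o(S_v)=-\chi(\text{preprocessed }S)$. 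The inequality against the original $S$ comes entirely from the preprocessing steps: compressing along an essential loop of $F$ whose image is trivial in $X$ strictly increases $\chi$ (and, since the loop is essential, cannot produce a sphere component), while the homotopies that kill trivial loops of $F$ leave $\chi$ unchanged. You do say in your first paragraph that compressions decrease $-\chi(S)$, so you have the ingredient in hand --- but the third paragraph fails to invoke it and instead credits the gluing with the inequality, which is wrong. Tightening this up is essential, since the $\le$ is the whole content of the estimate. As a smaller point, the corner count is stated ambiguously: each proper arc of $F$ has two endpoints and cutting doubles each of them, giving four corners per arc; pairing this with the fact that each arc becomes two turns is what makes $\chi_o$ close up to $\chi$, and that balance is worth stating explicitly rather than gesturing at ``and so on.''
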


The following theorem pieces isometric embeddings of vertex groups into a global one.
\begin{thm}[first isometric embedding]\label{thm: isomemb1}
	Let $\mathcal{G}(\Gamma, \{G_v\}, \{G_e\})$ and $\mathcal{G}(\Gamma', \{G'_v\}, \{G'_e\})$ be graphs of groups. Suppose there is a graph isomorphism $h:\Gamma\to\Gamma'$, homomorphisms $h_v:G_v\to G'_{h(v)}$ and isomorphisms $h_e:G_e\to G'_{h(e)}$ such that 
	\begin{enumerate}
		\item each $h_v$ is an isometric embedding;\label{item: isom}
		\item for each $v$, the map induced by $h_{v}$ on homology is injective on $H_v$, where $H_v$ is the sum of $\mathrm{Im}t_{e*}$ over all edges $e$ with $t(e)=v$; and\label{item: homology}
		\item the following diagram commutes for all edge pairs $(e,e')$ with $e'=h(e)$.\label{item: diagram}
		\[
		\begin{CD}
		G_{o(e)} @<{o_e}<< G_e @>{t_e}>> G_{t(e)}\\
		@V{h_{o(e)}}VV @V{h_e}V{\cong}V @V{h_{t(e)}}VV\\
		G'_{o(e')} @<{o_{e'}}<< G'_{e'} @>{t_{e'}}>> G'_{t(e')}\\
		\end{CD}
		\]
	\end{enumerate}
	Then the induced homomorphism $h:\mathcal{G}(\Gamma, \{G_v\}, \{G_e\})\to\mathcal{G}(\Gamma', \{G'_v\}, \{G'_e\})$ is an isometric embedding.
\end{thm}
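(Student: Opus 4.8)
The strategy is to show that $h$ induces isometric embeddings at the level of admissible surfaces, working through the normal form machinery of Subsection~\ref{subsec: normal form}. First I would verify injectivity of $h:G\to G'$: this follows from a standard argument using the action on Bass--Serre trees, since $h$ is injective on each vertex group (being an isometric embedding, hence injective) and an isomorphism on each edge group, and $h$ is compatible with the edge maps by condition~(\ref{item: diagram}); thus $h$ maps the Bass--Serre tree of $G$ equivariantly and injectively into that of $G'$ and cannot kill a hyperbolic element or collapse an elliptic subgroup. The inequality $\scl_G(c)\ge \scl_{G'}(h(c))$ is automatic from monotonicity of scl, so the content is the reverse inequality $\scl_G(c)\le \scl_{G'}(h(c))$ for every $c\in B_1^H(G)$.

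For the reverse inequality, I would take a relative admissible surface $S'$ for $h(c)$ in the standard realization $X'$ of $G'$, realizing $\scl_{G'}(h(c))$ up to $\epsilon$ (using Proposition~\ref{prop: rel vertex} to pass to relative admissible surfaces, noting $\scl_{G'_v}\equiv 0$ is \emph{not} assumed here—so instead I would work with honest admissible surfaces, or more carefully reduce to pieces). Put $S'$ in normal form as in Lemma~\ref{lemma: normal form}, obtaining pieces $S'_v$ in the thickened vertex spaces $N(X'_{h(v)})$, one for each vertex $v$ of $\Gamma\cong\Gamma'$, with boundary data recording arcs in $A'_{h(v)}$, turns, and winding numbers in the $G'_e$ and $G'_v$. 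Because each $h_e$ is an isomorphism, the combinatorial gluing data (arcs, turns, paired-turn matching) pulls back verbatim to combinatorial gluing data over $\Gamma$. The winding numbers of turns live in $G'_e$, so they pull back uniquely via $h_e^{-1}$ to elements of $G_e$; the only genuine issue is the winding numbers $w(\alpha)\in G'_{h(v)}$ of the arcs $\alpha\in A'_{h(v)}$ themselves, which need to be realized inside $G_v$.

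Here is where conditions~(\ref{item: isom}) and~(\ref{item: homology}) enter, and this is the main obstacle. The surface $S'_v$ is (after closing up paired turns within the vertex, or by Lemma~\ref{lemma: pull back surf}) an admissible surface in $X'_{h(v)}$ for a chain $c_v$ built from the $w(\alpha)$ and the edge-winding contributions. Lemma~\ref{lemma: homological computation} guarantees this chain is null-homologous in $H_1(G'_{h(v)};\R)$ \emph{relative to} the edge subgroups; condition~(\ref{item: homology}) (injectivity of $(h_v)_*$ on $H_v$) is exactly what lets me lift the homological correction to a null-homologous chain $\tilde c_v$ in $C_1^H(G_v)$ whose image under $h_v$ agrees with $c_v$ in $B_1^H(G'_{h(v)})$. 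Then, since $h_v$ is an isometric embedding, Lemma~\ref{lemma: pull back surf} applied to each vertex produces a surface $S_v\to X_v$ (after possibly passing to a common degree $n$ over all vertices by taking covers, which is harmless for scl) with $-\chi(S_v)/n\le -\chi^-(S'_v)+\epsilon$ and with boundary a covering of the combinatorial boundary data of $S'_v$. Finally I reassemble: since the gluing conditions (\ref{eqn: gluing condition}) hold for the $\{S'_v\}$ and are preserved under the isomorphisms $h_e$ and under passing to the common covering degree, the pulled-back pieces $\{S_v\}$ glue along their polygonal boundaries to a relative admissible surface $S$ for $c$ in $X$ with $-\chi(S)\le \sum_v(-\chi^-(S'_v))n + \epsilon' \le (-\chi^-(S') + \epsilon')n$, giving $\scl_G(c)\le \scl_{G'}(h(c))+\epsilon'$ and hence equality. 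The delicate points to get right are: (a) matching degrees of the covers produced at different vertices so the boundaries actually glue, which forces a uniform choice of $n$ and a compatible refinement of the boundary coverings; and (b) keeping careful track of the homological bookkeeping in (\ref{item: homology}) so that the chains $\tilde c_v$ glue to give $c$ rather than merely something scl-equivalent to it up to elliptic noise.
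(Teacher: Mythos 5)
Your proposal follows essentially the same route as the paper's proof: pass to normal form, pull back arcs and turns vertex-by-vertex via $h_v$ and $h_e^{-1}$ (using condition~(\ref{item: diagram})), show the pulled-back boundary of each $S'_{v'}$ is null-homologous by combining Lemma~\ref{lemma: homological computation} with condition~(\ref{item: homology}), invoke Lemma~\ref{lemma: pull back surf} via condition~(\ref{item: isom}), and then take a common covering degree (the paper uses $N=\prod_v n_v$ copies of $S_v$) to reassemble. The only cosmetic difference is your justification of injectivity via Bass--Serre trees, where the paper simply notes it follows from conditions~(\ref{item: isom}) and~(\ref{item: diagram}); and you correctly observe that one must work with honest rather than relative admissible surfaces since $\scl_{G'_v}\equiv 0$ is not assumed.
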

\begin{proof}
	It easily follows from assumptions (\ref{item: isom}) and (\ref{item: diagram}) that $h$ is well defined and injective. Now we show $h$ preserves scl.
	
	Consider an arbitrary rational homologically trivial chain $c=\sum_i r_ig_i$ with $r_i\in\Q$ and $g_i\in G$ of infinite order. Represent each $g_i$ by a tight loop $\gamma_i$ in the standard realization $X$ of $G$. Define $\uga,A_v,L_v$ as in the discussion above.
	
	Let $X'$ be the standard realization of $G'$ and let $\eta: X\to X'$ be the map representing $h$. Then the arcs obtained by cutting $\sqcup_i \eta(\gamma_i)$ along the edge spaces of $X'$ are exactly $\sqcup_v \eta(A_v)$.
	
	Let $S'$ be an admissible surface for $c'=h(c)$ of degree $n'$. It suffices to show, for any $\epsilon>0$, there is an admissible surface $S$ for $c$ of degree $n$ such that
	$$\frac{-\chi^{-}(S)}{n}\le \frac{-\chi^{-}(S')}{n'}+C(S')\epsilon$$
	for some constant $C(S')$.
	
	By Lemma \ref{lemma: normal form}, we may assume $S'$ to be in its normal form and obtained by gluing surfaces with corner $S'_{v'}$, where $S'_{v'}$ is supported in the thickened vertex space $N(X'_{v'})$. Let $v=h^{-1}(v')$. Now we pull back $\bdry S'_{v'}$ in a natural way and show that the pull back is homologically trivial in $G_v$. First consider each polygonal boundary of $S'_{v'}$. Each arc is in $\eta(A_v)$ and pulls back via $h_v$ to a unique arc in $A_v$. Each turn is in $\mathrm{Im}t_{e'}$ for some $e'$ with $t(e')=v'$, and thus, by assumption (\ref{item: diagram}), pulls back via $h_v$ to a unique turn in $\mathrm{Im}t_{e}$, where $e=h^{-1}(e')$. Similarly consider each loop boundary of $S'_{v'}$. If such a loop lies in $\eta(L_v)$, then it uniquely pulls back to a loop in $L_v$. Otherwise, it is obtained by cutting along the preimage of edge spaces, thus uniquely pulls back to a loop represented by an element in $\mathrm{Im}t_{e}$. By construction, the homology of the pull back of $\bdry S'_{v'}$ lies in 
	$$n'\sum_{\alpha\in A_v\cup L_v} r_{i(\alpha)}[w(\alpha)]+H_v,$$
	which is exactly $H_v$ by Lemma \ref{lemma: homological computation} since $c$ is homologically trivial. Thus assumption (\ref{item: homology}) implies the pull back of $\bdry S'_{v'}$ is homologically trivial in $G_v$. By assumption (\ref{item: isom}) and Lemma \ref{lemma: pull back surf}, there is a surface $S_v$ (with corners induced by those on $S'_{v'}$) mapped into $X_v$ such that $h_v(\bdry S_v)=n_v \bdry S'_{v'}$ for some integer $n_v>0$ and $$\frac{-\chi(S_v)}{n_v}\le -\chi(S_v')+\epsilon.$$
	The analogous inequality holds with $\chi$ replaced by $\chi_o$.
	
	Now let $N=\prod_v n_v$ and take $N/n_v$ copies of $S_v$ for every vertex $v$. These pieces satisfy the gluing condition (\ref{eqn: gluing condition}) and can be glued to form an admissible surface for $c$ of degree $n=N\cdot n'$ since $h_v(\bdry S_v)=n_v \bdry S'_{v'}$ and $\{S'_{v'}\}_{v'}$ glues up to $S'$, which is admissible of degree $n'$. Note that $S_v$ has no disk components with loop boundary, and thus $S$ has no sphere components. Each $g_i$ is of infinite order, so $S$ has no disk components either. Hence $\chi^{-}(S)=\chi(S)$ and
	\begin{eqnarray*}
		\frac{-\chi(S)}{n}&=&\frac{\sum_v -\chi_o(S_v)\cdot\frac{N}{n_v}}{N\cdot n'}\\
		&\le & \frac{\sum_{v'} [-N\chi_o(S'_{v'})+N\epsilon]}{N\cdot n'}\\
		&\le& \frac{-\chi^{-}(S')}{n'}+\frac{\#\{v'\}}{n'}\cdot \epsilon,
	\end{eqnarray*}
	
	where the summations are taken over vertices $v'$ where $S'$ has nonempty intersection with $X_{v'}$ and $\#\{v'\}$ is the number of such vertices, which is finite by compactness of $S'$.
\end{proof}

In the special case of free products, this is \cite[Theorem B]{Chen:sclfp}, whose applications can be found in \cite[Section 5]{Chen:sclfp}.

\begin{cor}
	Let $w$ be an element in a group $G$ representing a non-trivial rational homology class. Let $G(w,M,L)$ be the HNN extension $G*_\Z$ given by the inclusions $o_e,t_e:\Z\to G$ sending the generator $1$ to $w^M$ and $w^L$ respectively, where $M,L\neq 0$. Then the homomorphism $h:\BS(M,L)\to G(w,M,L)$ is an isometric embedding.
\end{cor}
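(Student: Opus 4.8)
The plan is to realize $\BS(M,L)$ and $G(w,M,L)$ as fundamental groups of graphs of groups over the same one-vertex, one-loop graph $\Gamma$ (an HNN extension), and then apply Theorem \ref{thm: isomemb1} (first isometric embedding). For $\BS(M,L) = \langle a, t\mid a^M = ta^Lt^{-1}\rangle$ we take the vertex group to be $G_v = \Z = \langle a\rangle$ and the edge group $G_e = \Z$, with $o_e, t_e: \Z\to\Z$ sending $1\mapsto L$ and $1\mapsto M$ respectively (so that conjugation by $t$ identifies $a^L$ with $a^M$). For $G(w,M,L)$ we take $G'_v = G$, edge group $G'_e = \Z$, and $o_{e'}, t_{e'}: \Z\to G$ sending $1\mapsto w^L$ and $1\mapsto w^M$. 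The homomorphism $h$ of the statement is then induced by $h_v:\Z\to G$, $a\mapsto w$, together with the identity $h_e = \mathrm{id}:\Z\to\Z$ on the edge group, and the identity on the underlying graph.

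Next I would verify the three hypotheses of Theorem \ref{thm: isomemb1}. Hypothesis (\ref{item: diagram}), commutativity of the edge square, is immediate: both sides of each triangle send the generator $1$ of the edge group to $w^M$ (resp. $w^L$), since $h_v(a^M) = w^M$ and $h_v(a^L) = w^L$. Hypothesis (\ref{item: isom}), that $h_v$ is an isometric embedding, holds because $G_v = \Z$ is abelian, so $B_1^H(\Z) = 0$ and the condition is vacuous by Example \ref{ex: isom emb}(2). (One should also note $h_v$ is injective since $w$ has infinite order, being of nontrivial rational homology class.) The main point to check is hypothesis (\ref{item: homology}): the map induced by $h_v$ on $H_1$ must be injective on $H_v$, the sum of the images $\mathrm{Im}\, t_{e*}$ over edges $e$ with $t(e) = v$. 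Here there is only the one loop edge (counted with both orientations), so $H_v \le H_1(\Z;\R) \cong \R$ is the span of $t_{e*}(1) = M$ together with $o_{e*}(1) = t_{\bar e *}(1) = L$; since $M, L\neq 0$ this is all of $\R$. Then $h_{v*}:H_1(\Z;\R)\to H_1(G;\R)$ sends the generator to the class $[w]$, which is nonzero by hypothesis that $w$ represents a nontrivial rational homology class. Hence $h_{v*}$ is injective on $H_v = H_1(\Z;\R)$, as required.

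With all three hypotheses checked, Theorem \ref{thm: isomemb1} yields that the induced map $h:\BS(M,L)\to G(w,M,L)$ is an isometric embedding. I expect the only step requiring genuine care is hypothesis (\ref{item: homology}): it is exactly the place where the assumption "$w$ represents a nontrivial rational homology class" gets used, and it is worth spelling out that without it the map $h_v$ could kill $H_v$ and the argument would break (indeed scl behaves quite differently when $w$ is, say, a commutator). Everything else is bookkeeping about identifying the two HNN presentations with graph-of-groups data over the same graph.
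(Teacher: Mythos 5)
Your proposal is correct and follows essentially the same route as the paper's proof: realize both groups as one-vertex, one-loop graphs of groups, invoke Theorem \ref{thm: isomemb1}, use abelianness of $\Z$ for hypothesis (\ref{item: isom}) and non-triviality of $[w]\in H_1(G;\Q)$ for injectivity of $h_v$ and for hypothesis (\ref{item: homology}). (Your assignment $o_e(1)=L$, $t_e(1)=M$ reverses the convention stated in the corollary, but this is just swapping $e\leftrightarrow\bar e$ and changes nothing.)
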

\begin{proof}
	Consider both groups as graphs of groups where the underlying graph has one vertex and one edge. The inclusion of the vertex group $\Z=\langle a\rangle\to G$ with $a\mapsto w$ is injective since $[w]\neq 0\in H_1(G;\Q)$, and is an isometric embedding since $\Z$ is abelian (See Example \ref{ex: isom emb}). The non-triviality of $[w]$ also implies that condition (\ref{item: homology}) of Theorem \ref{thm: isomemb1} holds. Then it is easy to see that Theorem \ref{thm: isomemb1} applies.
\end{proof}

\subsection{Simple normal form}
From now on, we assume $\scl_{G}(c)=0$ for all $c\in B_1^H(G_v)$ and all vertices $v$, which holds for example when $\scl_{G_v}\equiv0$ by monotonicity of scl. In view of Proposition \ref{prop: rel vertex} we are interested in admissible surfaces relative to vertex groups. Each such a surface is admissible (in the absolute sense) for some rational chain, and thus has a normal form possibly after simplifications. We now further simplify the normal form using the triviality of scl in the vertex groups.

\begin{definition}
	A normal form of an admissible surface $S$ relative to vertex groups is called the \emph{simple normal form} if each component in $S_v$ has exactly one polygonal boundary and is either a disk or an annulus with the former case happening if and only if the polygonal boundary is null-homotopic in $S_v$. For simplicity, we refer to such a surface as a \emph{simple relative admissible surface}.
\end{definition}

Assume each $g_i\in \ug$ to be hyperbolic. Let $c=\sum r_ig_i$ be a rational chain whose homology class $[c]$ is in the kernel of the projection $H_1(G;\R)\to H_1(\Gamma;\R)$.

\begin{lemma}\label{lemma: simple normal form}
	For any relative admissible surface $S$ for $c$, there is another $S'$ of the same degree in simple normal form, such that
	$$-\chi(S')\le -\chi(S).$$
\end{lemma}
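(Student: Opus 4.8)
The plan is to start from an arbitrary relative admissible surface $S$ for $c$ (which we may assume is already in normal form by Lemma \ref{lemma: normal form}, with the polygonal pieces glued so that $-\chi(S)=-\sum_v\chi_o(S_v)$) and modify each vertex piece $S_v$ separately, keeping track of the polygonal boundaries so that the gluing condition (\ref{eqn: gluing condition}) is preserved and the surface can be reassembled. The target is a surface whose vertex pieces each have exactly one polygonal boundary and are disks or annuli. There are two distinct jobs: (a) reduce the number of polygonal boundaries on each component of $S_v$ to one, and (b) once a component has a single polygonal boundary, reduce it to a disk or annulus without increasing $-\chi_o$.

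For step (a), the key observation is that the loop boundaries and the ``interior topology'' of $S_v$ contribute only elliptic data. Concretely, if a component $\Sigma$ of $S_v$ has $k\geq 2$ polygonal boundaries $\delta_1,\dots,\delta_k$, each $\delta_j$ reads off a word $w_j\in G_v$ (its winding number as a based loop, well-defined up to conjugacy), and the product $\prod_j w_j$ times the contribution of the loop boundaries and handles is trivial in $G_v$; more precisely $\Sigma$ exhibits a bound $\uscl_{G_v}(\text{the chain of }\delta_j\text{'s and loop boundaries})\leq -\chi^-(\Sigma)$-type inequality. Here I would use the hypothesis $\scl_{G_v}\equiv 0$ on $B_1^H(G_v)$: after discarding the loop boundaries (which together are null-homologous and hence bound, at zero cost in $\chi_o$, a surface in $X_v$ by Lemma \ref{lemma: pull back surf} applied to the isometric embedding $G_v\hookrightarrow G$, or simply by $\scl_{(G,\{G_v\})}$ vanishing there), we are left with a vertex piece whose boundary is a collection of polygonal boundaries forming a null-homologous chain in $G_v$ — the homology condition on $[c]$ and Lemma \ref{lemma: homological computation} guarantee this globally, and we can arrange it componentwise. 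Since $\scl_{G_v}$ vanishes on that chain, Proposition \ref{prop: relative admissible surf} / Lemma \ref{lemma: pull back surf} lets us replace $\Sigma$ by a new vertex piece realizing that chain with $-\chi_o$ arbitrarily close to (hence, since everything is a half-integer multiple, equal to or less than) the scl value times $2n$, which is $0$. Taking the covering degrees into account and passing to a common multiple $N$ as in the proof of Theorem \ref{thm: isomemb1}, each polygonal boundary can be made to bound its own component; the new pieces then have a single polygonal boundary each.

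For step (b), once a component $\Sigma$ has exactly one polygonal boundary $\delta$ (and possibly loop boundaries and positive genus), again $\uscl_{G_v}$ or $\scl_{G_v}$ vanishing on the relevant chain lets us replace $\Sigma$: if $\delta$ is null-homotopic in $X_v$ we can fill it with a disk (this is the disk case), and otherwise $\delta$ represents a conjugacy class whose multiple bounds a surface of $\chi_o$ no worse than an annulus — indeed, an annulus (one polygonal boundary plus one loop boundary carrying the inverse winding number) is available and $-\chi_o(\text{annulus}) = -\tfrac14\#\text{corners}$ matches the optimum, while extra genus or extra loop boundaries only increase $-\chi_o$. So we may assume $\Sigma$ is a disk exactly when $\delta$ bounds in $S_v$ and an annulus otherwise, which is the definition of simple normal form. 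Finally, since each $g_i$ is hyperbolic the resulting $S'$ has no disk or sphere components coming from loop boundaries, so $-\chi(S') = -\sum_v\chi_o(S'_v) \leq -\sum_v\chi_o(S_v) = -\chi(S)$, and the degree is unchanged (after clearing the common multiple $N$ uniformly, which does not affect the ratio and can be absorbed since we only need an inequality on $-\chi$ at equal degree — or one re-runs the argument tracking degrees exactly as in Theorem \ref{thm: isomemb1}).

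The main obstacle I expect is bookkeeping rather than conceptual: making the replacements of the vertex pieces compatible with the gluing condition (\ref{eqn: gluing condition}), since modifying $\Sigma$ changes which turns appear on its polygonal boundaries unless we are careful to keep the \emph{types} of turns fixed. The clean way around this is to not alter the polygonal boundaries at all — fix all arcs and turns — and only alter the ``elliptic interior'': fill or compress the loop boundaries and the genus using triviality of scl in $G_v$, which is exactly what Lemma \ref{lemma: pull back surf} provides with $\epsilon$ arbitrarily small; then a single pass to a common multiple $N$ of the resulting degrees reconciles everything, precisely as in the proof of Theorem \ref{thm: isomemb1}. One should also double check that the null-homology of each vertex piece's polygonal boundary (needed to invoke $\scl_{G_v}=0$) really does follow componentwise from the hypothesis on $[c]$; this is where Lemma \ref{lemma: homological computation} and the assumption $[c]\in\ker(H_1(G;\R)\to H_1(\Gamma;\R))$ are used.
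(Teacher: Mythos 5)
Your proposal and the paper's proof diverge substantially, and your version has a real gap.

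The paper's proof is entirely topological and does not invoke $\scl_{G_v}\equiv 0$ at all. After reducing to normal form, for each non-disk component $\Sigma$ of $S_v$ one simply takes a small collar neighborhood of each polygonal boundary and discards the rest. Each collar is an annulus with one polygonal boundary and one loop boundary; what is thrown away is connected, has no corners, has at least one boundary, and has $\chi\le 0$ (because $\chi(\Sigma)\le 0$ and removing collars preserves $\chi$), so discarding it does not increase $-\chi_o$. Since the polygonal boundaries are exactly those of $S_v$, the gluing condition holds verbatim, and reassembling the paired turns gives a relative admissible surface of \emph{exactly} the original degree. Finally, any annulus whose loop boundary is null-homotopic in $X_v$ gets capped with a disk, which only decreases $-\chi$. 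No scl of vertex groups, no Lemma \ref{lemma: pull back surf}, no covers, no $\epsilon$.

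Your route via $\scl_{G_v}=0$ has two problems. First, invoking Lemma \ref{lemma: pull back surf} (or $\scl_{G_v}=0$ generally) to replace a vertex piece only gives a new piece after passing to a finite cover with $-\chi/n$ close to the target up to $\epsilon$; you then need a common multiple of degrees as in Theorem \ref{thm: isomemb1}. This changes the degree, whereas the lemma explicitly asserts ``of the same degree,'' and you do not repair this — ``clearing the common multiple $N$ uniformly'' is not an operation on surfaces, and the $\epsilon$-inequality at a higher degree does not produce the required inequality at the original degree. Second, even granting $\scl_{G_v}=0$, knowing a null-homologous chain in $G_v$ bounds cheaply does not by itself produce a surface in which \emph{each polygonal boundary bounds its own component}; the extremizing surface given by the vanishing of scl has no such structural guarantee, so an additional argument would be needed in your step (a).

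That said, your last paragraph converges on the right idea (``not alter the polygonal boundaries at all ... and only alter the elliptic interior''), and your step (b) observation that an annulus is always achievable and optimal is exactly right. If you follow that idea through without scl — take collars, discard the rest, cap null-homotopic loops — you get the paper's short proof, with the degree preserved on the nose.
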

\begin{proof}
	By Lemma \ref{lemma: normal form}, we may assume $S$ to be in its normal form. For each subsurface $S_v$, in any of its components other than disks, take out a small collar neighborhood of each polygonal boundary. Let $S'_v$ be the disjoint union of these collar neighborhoods and disk components of $S_v$. See Figure \ref{fig: obtain simple}. Then each component of the subsurfaces in $S_v$ that we throw away to obtain $S'_v$ has at least one boundary component, has no corners, and cannot be a disk. Thus
	$$-\chi_o(S'_v)\le-\chi_o(S_v).$$
	Since $S'_v$ has all its polygonal boundaries taken from $S_v$, it satisfies the gluing condition (\ref{eqn: gluing condition}). Recall that disk components of $S_v$ must have polygonal boundaries. It follows that each component of $S'_v$ has exactly one polygonal boundary. Moreover, each loop boundary in $S'_v$ is homotopic to a polygonal boundary by construction, and thus to a loop in $X_v$. If the loop boundary is null-homotopic, replace the interior by a disk realizing the null-homotopy. Since each $g_i\in \ug$ is hyperbolic, gluing up paired turns in $S'_v$ produces a relative admissible surface $S'$ for $c$ of the same degree as $S$ with the desired properties.
\end{proof}

\begin{figure} 
	\centering
	\subfloat[]{
		\labellist
		\small \hair 2pt
		\pinlabel $S_v$ at 0 150
		\pinlabel $\beta_1$ at 120 250
		\pinlabel $\beta_2$ at 180 120
		\pinlabel $\beta_3$ at 165 25
		\endlabellist
		\centering
		\includegraphics[scale=0.5]{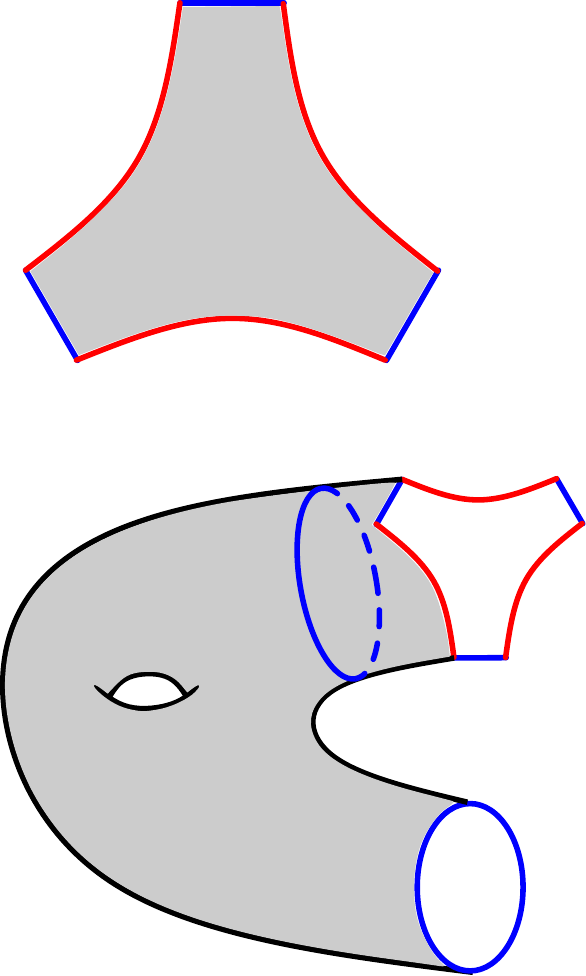}
		\label{fig: cut}
	}
	\hspace{60 pt}
	\subfloat[]{
		\labellist
		\small \hair 2pt
		\pinlabel $S'_v$ at 0 150
		\pinlabel $\beta_1$ at 120 250
		\pinlabel $\beta_2$ at 120 30
		\endlabellist
		\includegraphics[scale=0.5]{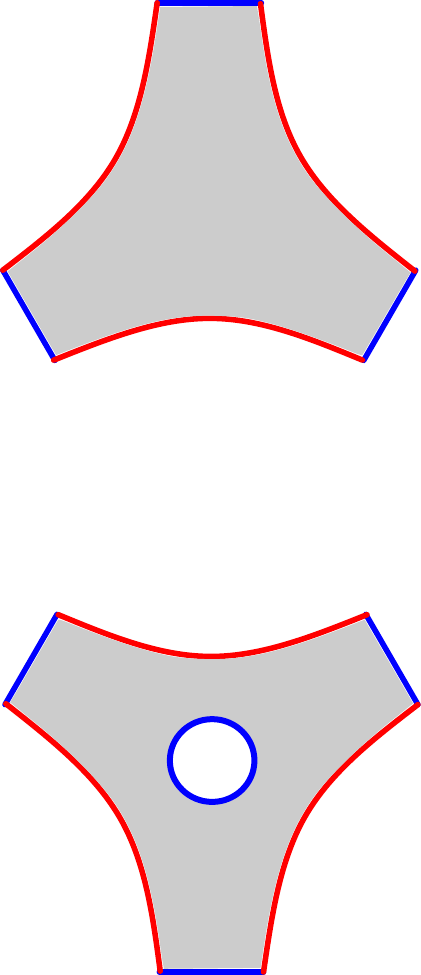}
		\label{fig: simple}
	}
	
	\caption{With the $S_v$ in Figure \ref{fig: S_v}, we cut out a neighborhood of $\beta_2$, shown on the left, and throw away the component without polygonal boundary to obtain $S'_v$ on the right.}
	\label{fig: obtain simple}
\end{figure}

\begin{thm}[second isometric embedding]\label{thm: isomemb2}
	Let $\mathcal{G}(\Gamma, \{G_v\}, \{G_e\})$ and \linebreak $\mathcal{G}(\Gamma', \{G'_v\}, \{G'_e\})$ be graphs of groups where scl vanishes on vertex groups. Suppose there is a graph homomorphism $h:\Gamma\to\Gamma'$, injective homomorphisms $h_v:G_v\to G'_{h(v)}$ and isomorphisms $h_e:G_e\to G'_{h(e)}$ such that 
	\begin{enumerate}
		\item $h$ is injective on the set of edges;
		\item the map induced by $h_{v}$ on homology is injective on $H_v$, where $H_v$ is the sum of $\mathrm{Im}t_{e*}$ over all edges $e$ with $t(e)=v$; and
		\item the following diagram commutes for all edge pairs $(e,e')$ with $e'=h(e)$.
		\[
		\begin{CD}
		G_{o(e)} @<{o_e}<< G_e @>{t_e}>> G_{t(e)}\\
		@V{h_{o(e)}}VV @V{h_e}V{\cong}V @V{h_{t(e)}}VV\\
		G'_{o(e')} @<{o_{e'}}<< G'_{e'} @>{t_{e'}}>> G'_{t(e')}\\
		\end{CD}
		\]
	\end{enumerate}
	Then the induced homomorphism $h:\mathcal{G}(\Gamma, \{G_v\}, \{G_e\})\to\mathcal{G}(\Gamma', \{G'_v\}, \{G'_e\})$ is an isometric embedding.
\end{thm}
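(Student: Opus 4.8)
The plan is to mimic the proof of Theorem \ref{thm: isomemb1}, but to work throughout with the \emph{simple} normal form---available precisely because scl vanishes on vertex groups---so that the only vertex pieces one must pull back are disks and annuli. For such pieces the pull-back can be carried out \emph{exactly}, with no $\epsilon$ and no passage to a finite cover; this is what lets us weaken the hypothesis on the $h_v$ from ``isometric embedding'' (as in Theorem \ref{thm: isomemb1}) to merely ``injective''.

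We begin with reductions. Injectivity of the induced map $h:\mathcal{G}(\Gamma,\{G_v\},\{G_e\})\to\mathcal{G}(\Gamma',\{G'_v\},\{G'_e\})$ follows from conditions (1)--(3) together with injectivity of the $h_v$, by the standard Bass--Serre argument that a reduced word maps to a reduced word; the same argument shows $h$ carries hyperbolic elements to hyperbolic elements. By the basic properties of relative scl (Subsection \ref{subsec: relscl}), elliptic and finite-order entries of a chain contribute $0$ to relative scl, so by sub-additivity of the pseudo-norm it suffices to treat a chain $c=\sum_i r_ig_i$ all of whose entries are hyperbolic of infinite order. Since scl vanishes on the $G_v$ and the $G'_v$, Proposition \ref{prop: rel vertex} gives $\scl_G(c)=\scl_{(G,\{G_v\})}(c)$ and $\scl_{G'}(h(c))=\scl_{(G',\{G'_v\})}(h(c))$, while monotonicity gives $\scl_G(c)\ge\scl_{G'}(h(c))$. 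So it remains to turn a relative admissible surface $S'$ for $h(c)$ into a relative admissible surface $S$ for $c$ with $-\chi(S)/n(S)\le-\chi(S')/n(S')$.

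By Lemma \ref{lemma: simple normal form} we may take $S'$ in simple normal form, $S'=\bigcup_{v'}S'_{v'}$, with each component of $S'_{v'}$ a disk or an annulus with a single polygonal boundary. Only vertices and edges in the image of $h$ appear along $\partial S'_{v'}$, because all arcs lie on loops $h(\gamma_i)$, and these remain tight after applying $h$ (here one uses that $h$ is injective on edges, that the $h_e$ are isomorphisms, and that the $h_v$ are injective). Consequently each arc pulls back via the bijection $A_v\leftrightarrow\eta(A_v)$ and, using the commuting diagram (3) and the $h_e$ being isomorphisms, each turn pulls back to a unique turn of the corresponding paired type; the accompanying homological bookkeeping is exactly as in the proof of Theorem \ref{thm: isomemb1}, via Lemma \ref{lemma: homological computation} and condition (2). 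A disk component of $S'_{v'}$ has polygonal boundary representing $1\in G'_{v'}$, hence pulls back to a loop representing $1\in G_v$ by injectivity of $h_v$, which thus bounds a disk in $X_v$. For an annulus component, pull its polygonal boundary back to a loop $\ell$ in $N(X_v)$ representing some $g\in G_v$, and pair $\ell$ with a smooth loop in $X_v$ representing the conjugacy class of $g^{-1}$; these cobound an annulus in $X_v$. In both cases the new vertex piece has the same number of corners, hence the same orbifold Euler characteristic $\chi_o$, as the old one.

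Because turn types, their multiplicities, and their pairings are transported bijectively by the $h_e$, the pulled-back pieces again satisfy the gluing condition (\ref{eqn: gluing condition}) and assemble to a surface $S$; since its arcs and their incidences are in bijection with those of $S'$, we get $n(S)=n(S')$ with $\partial_0S$ representing $n(S)c$, and every other boundary component of $S$ maps into a vertex space, so $S$ is relative admissible for $c$. As the entries of $c$ are hyperbolic, $S$ and $S'$ have no disk or sphere components, so $-\chi^-=-\chi$ for both; and, as in Lemma \ref{lemma: normal form}, $-\chi(S)=-\sum_v\chi_o(S_v)=-\sum_{v'}\chi_o(S'_{v'})=-\chi(S')$. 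Hence $-\chi^-(S)/n(S)=-\chi^-(S')/n(S')$, and taking the infimum over $S'$ gives $\scl_{(G,\{G_v\})}(c)\le\scl_{(G',\{G'_v\})}(h(c))$; combined with the reductions above this proves the theorem. The one genuinely new ingredient, and the only real obstacle, is the exactness of the pull-back for disk and annulus pieces: this is precisely what the simple normal form provides, which is why vanishing of scl on vertex groups is built into the hypotheses.
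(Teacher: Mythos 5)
Your proposal follows the same general route as the paper's proof: pass to relative scl via Proposition \ref{prop: rel vertex}, put $S'$ in simple normal form via Lemma \ref{lemma: simple normal form}, pull back pieces, and reassemble. However, you misidentify where the real difficulty lies, and as a result there is a genuine gap in your argument.

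You present the exact pull-back of disk and annulus pieces (avoiding the $\epsilon$ and finite cover of Lemma \ref{lemma: pull back surf}) as ``the one genuinely new ingredient.'' This observation is correct, but it is not actually what distinguishes Theorem \ref{thm: isomemb2} from Theorem \ref{thm: isomemb1}. Since $\scl_{G_v}\equiv 0$ and $h_v$ is injective, monotonicity already forces $h_v$ to be an isometric embedding in the sense of the paper, so Lemma \ref{lemma: pull back surf} is available and the isomemb1-style argument would have worked verbatim on each piece. Your exact pull-back is a cleaner alternative but does not buy new generality.

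The actual new difficulty is that $h$ is no longer required to be a graph isomorphism; it may identify distinct vertices $v_1\ne v_2$ with $h(v_1)=h(v_2)=v'$. Your proof tacitly assumes that every polygonal boundary of a component of $S'_{v'}$ pulls back into a single $N(X_v)$ --- for instance when you write ``pull its polygonal boundary back to a loop $\ell$ in $N(X_v)$ representing some $g\in G_v$'' and ``pulls back to a loop representing $1\in G_v$.'' But a priori the arcs on one polygonal boundary could come from $\eta(A_{v_1})$ for several different $v_1\in h^{-1}(v')$, in which case there is no single $G_v$ to receive the pull-back, no element $g$ to speak of, and the whole construction of the piece $S_v$ fails. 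One must show that all arcs and turns on a single polygonal boundary of $S'_{v'}$ pull back coherently to a unique $v\in h^{-1}(v')$. The argument is short --- each turn lies in an edge space $X'_{e'}$ with $e'$ in the image of $h$, injectivity of $h$ on edges gives a unique preimage $e$, and the two arcs the turn joins must then both pull back to $A_{t(e)}$, so consistency propagates around the polygon --- but it is precisely the observation the paper spotlights as the content of the generalization, and your proof never makes it. You cite injectivity on edges to prove tightness of $h(\gamma_i)$, but not to establish this coherence, which is where the hypothesis is doing its essential work.

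Once this point is supplied, the rest of your argument (the exact pull-back, the gluing, the equality of Euler characteristics, and the homological bookkeeping via Lemma \ref{lemma: homological computation} and condition (2)) is sound. One small infelicity: the reduction to hyperbolic chains is not really ``sub-additivity''; it is simply that $\scl_{(G,\{G_v\})}$ is unchanged by adding or removing summands that conjugate into vertex groups, directly from the definition of relative scl.
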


\begin{proof}
	The proof is similar to that of Theorem \ref{thm: isomemb1} by considering an arbitrary relative admissible surface $S'$ for $c'=h(c)$ in simple normal form instead of normal form, where $c=\sum r_ig_i$ is a rational chain in $G$ with all $g_i$ hyperbolic. Elliptic elements are ignored since it suffices to show that $h$ preserves scl relative to vertex groups according to Proposition \ref{prop: rel vertex}. The difference is that, the graph homomorphism $h$ is now allowed to collapse vertices, so there might be several vertices of $\Gamma$ mapped to the same vertex $v'$ in $\Gamma'$. However, each component of $S'_{v'}$ has a polygonal boundary on which each turn is supported in some $X'_{e'}$ connecting two arcs in $A'_{v'}$, where $t(e')=v'$. Then the two arcs must be the image of two arcs in $A_v$ under $h_v$ for the vertex $v=t(e)\in h^{-1}(v)$, where $e=h^{-1}(e')$. It follows that all the arcs on each polygonal boundary come from the same vertex $v$ and thus each polygonal boundary of $S'_{v'}$ can be pulled back to a polygonal boundary in a unique $X_v$ with $h(v)=v'$. Then the rest of the proof is the same as that of Theorem \ref{thm: isomemb1}.
\end{proof}

An immediate corollary is the following proposition, which is useful to simplify the computation of scl.
\begin{prop}[restriction of domain]\label{prop: restriction of domain}
	Let $\mathcal{G}(\Gamma, \{G_v\}, \{G_e\})$ be a graph of groups where scl vanishes on vertex groups. Let $\Gamma'$ be a subgraph of $\Gamma$, then the inclusion $i: \mathcal{G}(\Gamma', \{G_v\}, \{G_e\})\to\mathcal{G}(\Gamma, \{G_v\}, \{G_e\})$ is an isometric embedding.
\end{prop}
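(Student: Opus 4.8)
The plan is to read this off directly from the second isometric embedding theorem (Theorem~\ref{thm: isomemb2}) by feeding it the most degenerate data possible. Note first that the first isometric embedding theorem does not apply here, since a subgraph $\Gamma'\subset\Gamma$ need not be isomorphic to $\Gamma$; the graph homomorphism $h$ in Theorem~\ref{thm: isomemb2} is however allowed to be non-surjective, which is exactly what we need. So I would apply Theorem~\ref{thm: isomemb2} with source graph of groups $\mathcal{G}(\Gamma',\{G_v\},\{G_e\})$ and target $\mathcal{G}(\Gamma,\{G_v\},\{G_e\})$, taking $h\colon\Gamma'\hookrightarrow\Gamma$ to be the subgraph inclusion (we may assume $\Gamma'$ connected, since a graph of groups has connected underlying graph), and taking every vertex homomorphism $G_v\to G_v$ and every edge homomorphism $G_e\to G_e$ to be the identity.

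Then I would check the three hypotheses of Theorem~\ref{thm: isomemb2}, all of which are immediate in this situation: $h$ is injective on edges because it is a subgraph inclusion; the map induced on $H_1(G_v;\R)$ by the identity $G_v\to G_v$ is the identity, hence injective on the subspace $H_v$ (now computed using only the edges of $\Gamma'$ incident to $v$); and the commuting square in hypothesis~(3) commutes trivially, since its vertical arrows are all identities while the edge injections $o_e,t_e$ of $\Gamma'$ are by construction the ones inherited from $\Gamma$. The conclusion of Theorem~\ref{thm: isomemb2} is precisely that the induced map $i\colon\mathcal{G}(\Gamma',\{G_v\},\{G_e\})\to\mathcal{G}(\Gamma,\{G_v\},\{G_e\})$ is a well-defined injection and an isometric embedding, which is exactly the claimed statement; in particular the injectivity needed even to speak of an isometric embedding comes for free as part of that conclusion.

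The only point requiring a word of care is the standing hypothesis of Theorem~\ref{thm: isomemb2} that scl vanishes on the vertex groups. This is a condition on the vertex groups themselves, and the vertex groups occurring in $\mathcal{G}(\Gamma',\ldots)$ form a subcollection of those occurring in $\mathcal{G}(\Gamma,\ldots)$, so the hypothesis of the Proposition carries over to $\Gamma'$ verbatim. Beyond this bit of bookkeeping there is no real obstacle: the content lies entirely in Theorem~\ref{thm: isomemb2}, and the present statement is simply the special case in which nothing happens at the level of groups and only the underlying graph is shrunk. If anything, the one thing to keep straight is the direction in Theorem~\ref{thm: isomemb2}—there the map runs from the smaller graph of groups into the larger one—so that it matches the direction of the inclusion $i$.
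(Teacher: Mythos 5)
Your argument is exactly the intended one: the paper labels Proposition \ref{prop: restriction of domain} an immediate corollary of Theorem \ref{thm: isomemb2} without spelling out the verification, and your proposal supplies precisely that verification (subgraph inclusion for $h$, identities for $h_v$ and $h_e$, all three hypotheses trivially satisfied). The one remark you flag—that the scl-vanishing hypothesis passes to the subcollection of vertex groups in $\Gamma'$—is the only thing that might need a word, and you handle it correctly.
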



We can use Theorem \ref{thm: isomemb2} to compute the scl of $t$-alternating words in HNN extensions of abelian groups.

\begin{definition}\label{def: t-alt}
	Given injective homomorphisms $i,j:E\to V$, let $G=V*_E=\left<G,t\ |\ i(e)=t\ j(e)t^{-1}\right>$ be the HNN extension. An element $g$ is \emph{$t$-alternating} if it is conjugate to a cyclically reduced word of the form $a_1 t b_1 t^{-1}\ldots a_n t b_n t^{-1}$ for some $n\ge 0$.
\end{definition}

Let $H=V_1*_E V_2$ be the amalgam with $V_1=V_2=V$ and injections $i,j$ above, which has a natural inclusion into $G$, whose image is the set of $t$-alternating words.

\begin{prop}[$t$-alternating words]\label{prop: t-alt}
	Let $V$ and $E$ be abelian groups with inclusions $i,j:E\to V$, then we have an isometric embedding $h:V_1*_E V_2\to V*_E$ where $V_1=V_2=V$. In particular, with $T=t^{-1}$, we have
	\begin{eqnarray*}
		\scl_{(V*_E,V)}(a_1tb_1Ta_2tb_2T\cdots a_ntb_nT)&=&\scl_{(V_1*_E V_2,\{V_1,V_2\})}(a_1b_1\cdots a_nb_n)\\
		&=&\scl_{(V_1/i(E)*V_2/j(E),\{V_1/i(E), V_2/j(E)\})}(\bar{a}_1\bar{b}_1\cdots\bar{a}_n\bar{b}_n).
	\end{eqnarray*}
\end{prop}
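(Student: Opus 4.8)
The plan is to deduce Proposition~\ref{prop: t-alt} from the second isometric embedding Theorem~\ref{thm: isomemb2} by realizing both claimed equalities as instances of that theorem. For the first equality, I would set up $H=V_1*_E V_2$ and $G=V*_E$ as graphs of groups with the same underlying data at the level of abelian groups: $H$ corresponds to a graph $\Gamma$ with two vertices $v_1,v_2$ carrying $G_{v_1}=G_{v_2}=V$ and a single (unoriented) edge carrying $G_e=E$ with edge maps $i$ and $j$; while $G$ corresponds to a graph $\Gamma'$ with one vertex $v'$ carrying $G'_{v'}=V$ and one loop edge carrying $G'_{e'}=E$ with the same edge maps $i,j$. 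The graph homomorphism $h:\Gamma\to\Gamma'$ collapses both vertices to $v'$ and sends the edge to the loop; on groups take $h_{v_1}=h_{v_2}=\mathrm{id}_V$ and $h_e=\mathrm{id}_E$. Since $V$ and $E$ are abelian, scl vanishes on all vertex groups, $h$ is injective on edges, the homology condition on each $H_v$ holds automatically because $B_1^H$ of an abelian group is trivial (so in fact the whole situation is harmless), and the square in condition~(3) commutes by construction. Theorem~\ref{thm: isomemb2} then gives that $h$ is an isometric embedding. One should check that the image of $h$ is exactly the set of $t$-alternating elements and that under this identification the element $a_1b_1\cdots a_nb_n\in H$ maps to $a_1tb_1Ta_2tb_2T\cdots a_ntb_nT\in G$ (with $T=t^{-1}$); this is a direct normal-form computation in the HNN extension, writing the alternating word as an alternating product of elements of the two copies of $V$ conjugated across the stable letter.

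For the second equality, I would apply Theorem~\ref{thm: isomemb2} again (or rather the already-known free-product case, e.g. the isometric embedding behind Corollary~\ref{cor: t-alt}/Proposition in \cite{Chen:sclfp}) to the quotient map $V_1*_E V_2\to V_1/i(E)*V_2/j(E)$. Here the target is a genuine free product, viewed as a graph of groups with two vertices $V_1/i(E)$, $V_2/j(E)$ and a trivial edge group; the map is induced by the quotients $V_j\to V_j/\cdot$ on vertices and the trivial map on the edge group $E$. Again scl vanishes on the (abelian) vertex groups, the edge condition is vacuous, and the homology condition holds trivially since these abelian groups have $B_1^H=0$; the commuting square is immediate because $i(E)$ and $j(E)$ are killed. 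So this quotient is an isometric embedding relative to vertex groups, which is exactly the content of the middle-to-right equality once one identifies the relative scl statements via Proposition~\ref{prop: rel vertex}.

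The main obstacle, such as it is, is bookkeeping rather than mathematics: one must be careful that Theorem~\ref{thm: isomemb2} is stated for graphs of groups with \emph{injective} edge maps and that the underlying graph homomorphism is allowed to collapse vertices (which it is — that is precisely the generality over Theorem~\ref{thm: isomemb1}), and one must translate between the ``$t$-alternating word'' description in $V*_E$ and the normal form in the amalgam $V_1*_E V_2$ carefully, so that the stated word $a_1tb_1Ta_2tb_2T\cdots a_ntb_nT$ really is $h$ of $a_1b_1\cdots a_nb_n$ up to conjugacy (cyclic reducedness is what makes the alternation genuine and the correspondence clean). I would also note explicitly that all the scl's here are relative to vertex groups and invoke Proposition~\ref{prop: rel vertex} to pass between relative and absolute statements where needed; since the vertex groups are abelian, $\scl_G$ vanishes on $B_1^H(G_v)=0$ automatically, so that hypothesis of Proposition~\ref{prop: rel vertex} is satisfied. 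Composing the two isometric embeddings then yields the chain of equalities as stated.
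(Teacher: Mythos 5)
Your handling of the first equality is essentially the paper's argument: cast $V_1*_E V_2$ and $V*_E$ as graphs of groups over a segment and a loop, collapse the segment to the loop, take $h_v,h_e$ to be identities, and invoke Theorem~\ref{thm: isomemb2}; then use Proposition~\ref{prop: rel vertex} to pass between relative and absolute scl. (Your justification of the homology condition via ``$B_1^H$ of an abelian group is trivial'' is a bit off as a reason, but the condition is obviously satisfied since $h_v=\mathrm{id}$ induces the identity on $H_1$, so this is only a phrasing issue.)

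The second equality, however, has a genuine gap. You propose applying Theorem~\ref{thm: isomemb2} (or the free-product isometric embedding) to the quotient map $\pi:V_1*_E V_2\to V_1/i(E)*V_2/j(E)$ with the ``trivial map on the edge group.'' That cannot work: Theorem~\ref{thm: isomemb2} requires each $h_v:G_v\to G'_{h(v)}$ to be \emph{injective} and each $h_e:G_e\to G'_{h(e)}$ to be an \emph{isomorphism}. Here $V_j\to V_j/\cdot$ fails injectivity and $E\to\{1\}$ fails to be an isomorphism (unless $E$ is trivial), so the hypotheses are simply not met, and there is no graph-of-groups morphism in the required sense. Your parenthetical fallback to ``the isometric embedding behind Corollary~\ref{cor: t-alt}'' is circular, since that corollary is a consequence of the present proposition. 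The correct mechanism (and the one the paper uses) is different in nature: $\pi$ sits in the central extension $1\to E\to V_1*_E V_2\to V_1/i(E)*V_2/j(E)\to 1$, and because the kernel $E$ is abelian (hence amenable) and central, the projection $\pi$ preserves scl; the paper cites \cite[Proposition 4.3]{Susse} for exactly this. You need that central-extension/amenable-kernel input — an isometric-embedding theorem going from the bigger group to the smaller one cannot be applied here, because $\pi$ is a quotient, not an embedding, and the nontrivial inequality $\scl_{V_1*_E V_2}(c)\le \scl_{V_1/i(E)*V_2/j(E)}(\pi c)$ is precisely the direction that monotonicity does not give you.
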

\begin{proof}
	Consider $V_1*_E V_2$ and $V*_E$ as graphs of groups where the underlying graphs are a segment and a loop respectively. Let $h$ be the graph homomorphism taking the two end points of the segment to the vertex on the loop. Let $h_v$ and $h_e$ be identities. It is easy to check that the assumptions of Theorem \ref{thm: isomemb2} are satisfied since $V$ and $E$ are abelian. Thus we obtain an induced isometric embedding $h:V_1*_E V_2\to V*_E$ with $h(a_1b_1\cdots a_nb_n)=a_1tb_1Ta_2tb_2T\cdots a_ntb_nT$.
	
	Let $a_0\in V_1$ and $b_0\in V_2$ be elements such that the chain $c=a_0+b_0+a_1b_1\cdots a_nb_n$ is null-homologous. Then by Proposition \ref{prop: rel vertex}, the isometric embedding $h$ provides
	\begin{eqnarray*}
	\scl_{(V_1*_E V_2, \{V_1,V_2\})}(a_1b_1\cdots a_nb_n)&=&\scl_{V_1*_E V_2}(a_0+b_0+a_1b_1\cdots a_nb_n) \\
	&=& \scl_{V*_E}(a_0+b_0+a_1tb_1Ta_2tb_2T\cdots a_ntb_nT)\\
	&=& \scl_{(V*_E,V)}(a_1tb_1Ta_2tb_2T\cdots a_ntb_nT)
	\end{eqnarray*}
	The other equality can be proved similarly using the scl-preserving projection $\pi$ \cite[Proposition 4.3]{Susse} in the central extension
	$$1\to E\to V_1*_E V_2 \to V_1/i(E)*V_2/j(E)\overset{\pi}{\to} 1.$$
\end{proof}

In particular, scl of $t$-alternating words in Baumslag--Solitar groups can be computed as scl in free products of cyclic groups, which is better understood and easier to compute.

\begin{cor}\label{cor: t-alt}
	Let $\Z/M\Z*\Z/L\Z = \left<x,y\ |\ x^M=y^L=1\right>$. For any $t$-alternating word in $\BS(M,L)$, we have
	$$\scl_{\BS(M,L)}(a^{u_1}ta^{v_1}t^{-1}a^{u_2}ta^{v_2}t^{-1}\cdots a^{u_n}ta^{v_n}t^{-1})=\scl_{\Z/M\Z*\Z/L\Z}(x^{u_1}y^{v_1}\cdots x^{u_n}y^{v_n}).$$
\end{cor}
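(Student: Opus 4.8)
The plan is to apply Proposition \ref{prop: t-alt} directly to the special case $V=\Z$, $E=\Z$, with the two inclusions $i,j:\Z\to\Z$ sending the generator to $M$ and $L$ respectively. First I would observe that with this data the HNN extension $V*_E=\left<a,t\ |\ a^M=ta^Lt^{-1}\right>$ is exactly $\BS(M,L)$, under the identification $a\leftrightarrow$ the generator of $V=\Z$ and $t\leftrightarrow$ the stable letter. Next I would identify the $t$-alternating word: under Proposition \ref{prop: t-alt}, the element $a^{u_1}tb^{v_1}Ta^{u_2}tb^{v_2}T\cdots$ corresponds (with $b=a$) to $a^{u_1}ta^{v_1}t^{-1}\cdots a^{u_n}ta^{v_n}t^{-1}$, which is precisely the word in the statement.

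Then the second equality in Proposition \ref{prop: t-alt} takes over. With $V_1=V_2=\Z$ and $i(E)=M\Z$, $j(E)=L\Z$, the quotients are $V_1/i(E)=\Z/M\Z$ and $V_2/j(E)=\Z/L\Z$, and the relative scl equals $\scl_{(\Z/M\Z*\Z/L\Z,\{\Z/M\Z,\Z/L\Z\})}(\bar a^{u_1}\bar a^{v_1}\cdots)$, i.e. $\scl_{(\Z/M\Z*\Z/L\Z,\{\Z/M\Z,\Z/L\Z\})}(x^{u_1}y^{v_1}\cdots x^{u_n}y^{v_n})$ after renaming the generator of $\Z/M\Z$ as $x$ and that of $\Z/L\Z$ as $y$. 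The final step is to remove the ``relative'' decoration: since $\Z/M\Z$ and $\Z/L\Z$ are finite, hence amenable, $\scl$ vanishes on $B_1^H$ of each factor, so Proposition \ref{prop: rel vertex} (applied to $\Z/M\Z*\Z/L\Z$ viewed as a graph of groups with a single edge joining two vertices) gives $\scl_{(\Z/M\Z*\Z/L\Z,\{\Z/M\Z,\Z/L\Z\})}(c)=\scl_{\Z/M\Z*\Z/L\Z}(c)$ for the null-homologous chain $c$. Alternatively, and more directly, one notes that finite-order elements may be deleted from a chain without changing $\scl$ (as recorded in Subsection \ref{subsec: scl}), and the word $x^{u_1}y^{v_1}\cdots x^{u_n}y^{v_n}$ is already a single element of $\Z/M\Z*\Z/L\Z$, so no homological correction terms are needed and the relative and absolute scl agree on it.

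I do not expect any serious obstacle here: the corollary is a direct specialization of Proposition \ref{prop: t-alt} combined with the already-established fact that $\scl$ is insensitive to the relative-versus-absolute distinction for free products of amenable groups. The only point requiring a line of care is the bookkeeping of which generator maps to which: one must be sure that the ``$a$'' appearing in both the $t$ and $t^{-1}$ slots of the $t$-alternating word corresponds to taking $b=a$ in Proposition \ref{prop: t-alt}, and that the resulting element $x^{u_1}y^{v_1}\cdots x^{u_n}y^{v_n}$ is genuinely null-homologous in $\Z/M\Z*\Z/L\Z$ (which is automatic, since $H_1=\Z/M\Z\oplus\Z/L\Z$ is torsion and the image of any element of the relevant form in $H_1$ need not vanish — but $\scl$ is still finite and computed by the relative machinery, and equals the absolute $\scl$ by the remark above on deleting torsion). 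So the proof is essentially one paragraph invoking Proposition \ref{prop: t-alt} and then reading off the special case.

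\begin{proof}
	Apply Proposition \ref{prop: t-alt} with $V=E=\Z$ and $i,j:\Z\to\Z$ sending the generator to $M$ and $L$ respectively. Then $V*_E=\left<a,t\ |\ a^M=ta^Lt^{-1}\right>=\BS(M,L)$, and taking $b_k=a^{v_k}$, $a_k=a^{u_k}$ (so that the common vertex group generator $a$ is used in both the $t$ and $t^{-1}$ slots) we get
	$$\scl_{(\BS(M,L),\langle a\rangle)}(a^{u_1}ta^{v_1}t^{-1}\cdots a^{u_n}ta^{v_n}t^{-1})=\scl_{(\Z/M\Z*\Z/L\Z,\{\Z/M\Z,\Z/L\Z\})}(x^{u_1}y^{v_1}\cdots x^{u_n}y^{v_n}),$$
	where $x$ and $y$ are the generators of $\Z/M\Z$ and $\Z/L\Z$ with $x^M=y^L=1$. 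Since $\BS(M,L)$ is generated by $a$ together with $t$ and the chain on the left is null-homologous, Proposition \ref{prop: rel vertex} (with the single vertex group $\langle a\rangle\cong\Z$, on which scl vanishes) gives that the left-hand side equals $\scl_{\BS(M,L)}(a^{u_1}ta^{v_1}t^{-1}\cdots a^{u_n}ta^{v_n}t^{-1})$. Likewise, since $\Z/M\Z$ and $\Z/L\Z$ are finite, hence amenable, scl vanishes on each factor, so Proposition \ref{prop: rel vertex} applied to $\Z/M\Z*\Z/L\Z$ shows the right-hand side equals $\scl_{\Z/M\Z*\Z/L\Z}(x^{u_1}y^{v_1}\cdots x^{u_n}y^{v_n})$. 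Combining these equalities yields the claim.
\end{proof}
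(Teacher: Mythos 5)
Your proof is correct and takes essentially the same route as the paper: the paper's own proof is simply the one-line remark that the corollary follows from Proposition \ref{prop: rel vertex} together with Proposition \ref{prop: t-alt}, and your write-up just makes this specialization (with $V=E=\Z$, $i(1)=M$, $j(1)=L$) explicit. One small clean-up worth noting: your exposition briefly worries about whether $x^{u_1}y^{v_1}\cdots x^{u_n}y^{v_n}$ is null-homologous in $\Z/M\Z*\Z/L\Z$, but since $H_1(\Z/M\Z*\Z/L\Z;\R)=0$ (the abelianization is torsion) this is automatic, matching what your final proof actually uses.
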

\begin{proof}
	It directly follows from Proposition \ref{prop: rel vertex} and Proposition \ref{prop: t-alt}.
\end{proof}

The scl spectrum of a group $G$ is the image of $\scl_G$ (as a function on $G$). Corollary \ref{cor: t-alt} implies that the spectrum of $\Z/M\Z*\Z/L\Z$ is a subset of the spectrum of $\BS(M,L)$. Due to our limited understanding of the scl spectrum, it is not clear if this is a proper subset and how big the difference is. We do know that the smallest positive elements in the two spectra exist and agree if $M$ and $L$ are odd; See \cite[Corollary 5.10]{CH} and \cite[Remark 3.6]{Chen:sclgap}.

Here are a few examples, where we write $T=t^{-1}$ and $A=a^{-1}$ for simplicity.
\begin{example}
	With the notation in Corollary \ref{cor: t-alt}, the product formula \cite[Theorem 2.93]{Cal:sclbook} implies
	$$\scl_{\BS(M,L)}(atAT)=\scl_{\Z/M*\Z/L}(xy^{-1})=\frac{1}{2}\left(1-\frac{1}{M}-\frac{1}{L}\right).$$
	This explains why \cite[Proposition 5.5]{CFL} resembles the product formula.
\end{example}
	
\begin{example}
	Similarly, using \cite[Proposition 5.6]{Chen:sclfp} instead of the product formula, we have
	$$\scl_{\BS(M,L)}(ataTAtAT)=\scl_{\Z/M*\Z/L}([x,y])=\frac{1}{2}-\frac{1}{\min(M,L)}.$$
\end{example}
	
\begin{example}
	Finally, for explicit $M$ and $L$, Corollary \ref{cor: t-alt} and the computer program {\tt scallop} \cite{CW:scallop} quickly computes scl of rather long $t$-alternating words. For example, 
	\begin{eqnarray*}
		&&\scl_{\BS(7,5)}(ataTataTatATatATataTatATatATatATataT)=\frac{123}{70}\\
		&&\scl_{\BS(5,11)}(ataTataTatATatATataTatATatATatATataTataTatAT)=\frac{102}{55}.
	\end{eqnarray*}
\end{example}


Proposition \ref{prop: t-alt} also provides a new perspective and a shorter proof for \cite[Proposition 4.4]{CW:IsomEnd} as follows.

\begin{prop}[Calegari--Walker \cite{CW:IsomEnd}]
	The homomorphism 
	$$h: F_2=\left<x,y\right>\to F_2=\left<a,t\right>$$ 
	given by $h(x)=a$ and $h(y)=tat^{-1}$ is an isometric embedding.
\end{prop}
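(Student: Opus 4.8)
The plan is to recognize this homomorphism as a direct specialization of Proposition \ref{prop: t-alt}. First I would observe that $F_2=\langle a,t\rangle$ is exactly the HNN extension $V*_E$ in which $V=\langle a\rangle\cong\Z$ and $E=\{1\}$ is the trivial group, with $i,j\colon E\to V$ the (necessarily injective) trivial homomorphisms: adjoining to $\Z$ a stable letter $t$ subject to the empty relation $i(e)=t\,j(e)\,t^{-1}$ produces precisely $\Z*\Z=F_2$, with $a$ the generator of $V$ and $t$ the stable letter. Correspondingly, the amalgam $V_1*_E V_2$ with $V_1=V_2=V$ degenerates to the free product $\Z*\Z$, which I identify with $F_2=\langle x,y\rangle$ by letting $x$ generate $V_1$ and $y$ generate $V_2$.

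Next I would invoke Proposition \ref{prop: t-alt}. Since $V=\Z$ and $E=\{1\}$ are abelian and the structure maps $i,j$ are injective, the proposition supplies an isometric embedding $h\colon V_1*_E V_2\to V*_E$ carrying $a_1b_1\cdots a_nb_n$ (with $a_i\in V_1$, $b_i\in V_2$) to $a_1 t b_1 t^{-1}\cdots a_n t b_n t^{-1}$. Reading off the effect on generators gives $h(x)=a$ and $h(y)=t a t^{-1}$, which is exactly the map in the statement. This finishes the argument.

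I do not expect any genuine obstacle: the only points to check are the bookkeeping identifications of the two copies of $F_2$ with the relevant graph-of-groups constructions, and the hypotheses of Proposition \ref{prop: t-alt}, which hold trivially because every group in sight is abelian and a homomorphism out of the trivial group is automatically injective. If one prefers to avoid the amalgam intermediary, the same conclusion follows directly from Theorem \ref{thm: isomemb2}: present $\langle a,t\rangle$ as a graph of groups over a single loop with vertex group $\Z$ and trivial edge group, present $\langle x,y\rangle$ as a graph of groups over a segment with both vertex groups $\Z$ and trivial edge group, and take $h$ to be the graph homomorphism collapsing the segment onto the loop together with the identity vertex and edge maps; conditions (1)--(3) of Theorem \ref{thm: isomemb2} are immediate since the edge groups vanish and the vertex maps are isomorphisms, so $h$ is an isometric embedding.
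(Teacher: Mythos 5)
Your proposal is correct and follows essentially the same route as the paper: view $\langle x,y\rangle$ as the free product (amalgam over the trivial edge group) and $\langle a,t\rangle$ as the free HNN extension of $\Z$, then invoke Proposition \ref{prop: t-alt}. The paper's one-line proof additionally cites Proposition \ref{prop: rel vertex}, but the isometric-embedding conclusion is already packaged in Proposition \ref{prop: t-alt}, so your direct invocation (and the optional detour through Theorem \ref{thm: isomemb2}, which is precisely what \ref{prop: t-alt} uses internally) is fine.
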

\begin{proof}
	Consider the domain $F_2$ as a free product and the co-domain $F_2$ as a free HNN extension of $\Z$. Then the conclusion immediately follows from Proposition \ref{prop: t-alt} and Proposition \ref{prop: rel vertex}.
\end{proof}

\section{Asymptotic promotion}\label{sec: asym prom}
In the sequel, we consider graphs of groups $\mathcal{G}(\Gamma, \{G_v\}, \{G_e\})$ where
\begin{enumerate}
	\item $\scl_{G_v}\equiv0$ for each vertex $v$, and
	\item the images of edge groups in each vertex group are \emph{central} and \emph{mutually commensurable}.
\end{enumerate}

The first assumption can be weakened to $\scl_G(c)=0$ for any $c\in B_1^H(G_v)$ and any $v$, but it is usually hard to check without having $\scl_{G_v}\equiv0$.

\begin{example}\label{ex: groups}
	The following families of groups satisfy both assumptions above.
	\begin{enumerate}
		\item Any graphs of groups with vertex and edge groups isomorphic to $\Z$, since all non-trivial subgroups of $\Z$ are commensurable. These groups are also known as generalized Baumslag--Solitar groups.
		\item Amalgams of abelian groups, since the commensurability assumption is vacuous for degree-one vertices. This includes the groups studied in \cite{Susse}.
		\item Graphs of groups where vertex groups are isomorphic to the Heisenberg group $\mathcal{H}_3(\Z)$ or fundamental groups of irreducible $3$-manifolds with $\mathrm{Nil}$ geometry, and edge groups are isomorphic to $\Z$ and maps into the central $\Z$ subgroup of the vertex groups generated by a regular Seifert fiber (see \cite{Scott}). Note that these vertex groups are amenable (actually virtually solvable \cite[Theorem 4.7.8]{Thurston book}), and thus have trivial scl.
		\item Free products of groups with trivial scl. These are the groups studied in \cite{Chen:sclfp}.
	\end{enumerate}
\end{example}

As in the previous section, let $\ug$ be a finite collection of hyperbolic elements and $\uga$ be their tight loop representatives. We would like to compute scl of a rational chain $c=\sum r_i g_i$ relative to vertex groups. It comes down to understanding components that might appear in simple relative admissible surfaces. The essential difficulty is that there are too many possible components since the winding number of each turn could have infinitely many choices when some edge group is infinite. One can simply ignore the winding numbers of turns to estimate the Euler characteristic to get lower bounds of scl.  Clay--Forester--Louwsma \cite{CFL} and Susse \cite{Susse} show such a lower bound turns out to be equality in certain cases. In general, we cannot completely ignore the winding number.

Surprisingly, it turns out that recording winding numbers of turns mod certain \emph{finite index} subgroup of edge groups is sufficient (Lemma \ref{lemma: asym prom}) to asymptotically recover the surface by adjusting the winding numbers. This is the goal of this section and is the heart of the rationality theorem (Theorem \ref{thm: rational}). To this end, we investigate adjustment of winding numbers in terms of \emph{transition maps} and \emph{adjustment maps} which we will define.

For the moment, we will also assume $\Gamma$ to be \emph{locally finite} for convenience. We can always reduce the situation to this case using restriction of domain (Proposition \ref{prop: restriction of domain}).

For each vertex $v$, let $W_v$ be the subgroup generated by the images of adjacent edge groups. By our assumption and local finiteness, $W_v$ is \emph{central} in $G_v$ and each adjacent edge group is \emph{finite index} in $W_v$.

\begin{definition}
	A \emph{virtual isomorphism} $\phi:H\to H'$ is an isomorphism $\phi: H_0\to H'_0$ of finite index subgroups $H_0\le H$ and $H'_0\le H'$. The domain $\Dom\phi=H_0$ and the image $\Im\phi=H'_0$ are part of the data of $\phi$. Typically $\phi$ is not defined for elements outside $\Dom\phi$. In the case $H=H'$, we say $\phi$ is an \emph{virtual automorphism}.
	
	Two virtual isomorphisms $\phi:H_1\to H_2$ and $\psi: H_2\to H_3$ form a composition $\psi\phi$ with $\Dom \psi\phi=\phi^{-1}(\Im\phi\cap \Dom\psi)$ and $\Im \psi\phi=\psi(\Im\phi\cap \Dom\psi)$, which is again a virtual isomorphism. Each virtual isomorphism has an inverse in the obvious way. 
\end{definition}

A similar notion appears in \cite{Thomas}, which agrees with ours in the context of finitely generated abelian groups.

It follows from the commensurability assumption and local finiteness of $\Gamma$ that the inclusion $t_e=o_{\bar{e}}: G_e\to W_v$ is a virtual isomorphism for all edges $e$ with $t(e)=v$. For each edge $e$ connecting vertices $u=o(e)$ and $v=t(e)$ (possibly $u=v$), we form a \emph{transition map} $\tau_{e}:W_u\to W_v$ via $\tau_e\defeq -t_e\circ o_e^{-1}$, which is a virtual isomorphism. The negative sign makes sense since $W_u$ and $W_v$ are abelian, and the reason we add it will be clear when we define adjustment maps in Subsection \ref{subsec: disk-like}.

Moreover, for any oriented path $P=(v_0,e_1,v_1,\ldots,e_n,v_n)$ in $\Gamma$ with $o(e_i)=v_{i-1}$ and $t(e_i)=v_i$, we have a virtually isomorphic \emph{transition map} $\tau_P: W_{v_0}\to W_{v_n}$ defined as
$$\tau_P\defeq\tau_{e_n}\circ \cdots\circ \tau_{e_1}.$$

For any arc $a_v\in A_v$ on $\gamma_i\in\uga$, the loop $\gamma_i$ projects to an oriented cycle $P(a_v)=(v,e_1,v_1,\ldots,e_n,v)$ in $\Gamma$, and thus gives rise to a transition map $\tau_{P(a_v)}: W_v\to W_v$ as above. 

There is a stability result of virtual automorphisms that is important for our argument. 

\subsection{Stability of virtual automorphisms}\label{subsec: stability}
Let $\phi: H\to H$ be a virtual automorphism. Typically both $\Dom\phi^p$ and $\Im\phi^p$ will keep getting smaller as $p\to\infty$. However, when $H$ is an abelian group, the subgroup $\Dom\phi^p+\Im\phi^q$ generated by $\Dom\phi^p$ and $\Im\phi^q$ will stabilize to a finite index subgroup.

\begin{lemma}[Stability]\label{lemma: stability}
	Let $H$ be an abelian group with a virtual automorphism $\phi$. There is a \emph{finite index} subgroup $H_0$ of $H$ such that $H_0\subset\Dom\phi^p+\Im\phi^q$ for any $p,q\ge 0$.
\end{lemma}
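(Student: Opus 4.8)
The plan is to reduce to the case of finitely generated $H$, and then exploit the structure of virtual automorphisms of such groups together with the well-known ascending/descending chain behaviour. First I would observe that a virtual automorphism $\phi$ of $H$ is supported on a finitely generated subgroup in the following sense: $\Dom\phi$ and $\Im\phi$ are finite index in $H$, so $H/\Dom\phi$ and $H/\Im\phi$ are finite; but more importantly, for the statement we only need to find \emph{one} finite index subgroup $H_0$ sitting inside $\Dom\phi^p+\Im\phi^q$ for all $p,q\ge 0$. Since $\Dom\phi^p+\Im\phi^q \supseteq \Dom\phi^p$ and $\Dom\phi^p$ is a decreasing sequence of finite index subgroups (each $\Dom\phi^{p+1}\le\Dom\phi^p$ with finite index), the real content is that this sequence cannot decrease indefinitely once we add $\Im\phi^q$ back in.

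The key step is to analyze the quotient $Q_p := H/(\Dom\phi^p+\Im\phi^q)$ and show its order is bounded independently of $p$ (and then handle $q$ symmetrically, or note the roles are interchangeable). Concretely, I would pass to $\bar H = H/\mathrm{Tor}(H_{\text{relevant part}})$ or, more cleanly, work inside a finitely generated subgroup: choose a finitely generated subgroup $K\le H$ containing generators of $H/\Dom\phi$ and $H/\Im\phi$ — but since $H$ need not be finitely generated, instead I would argue directly that $\phi$ restricts to an honest automorphism of $\bigcap_{n\ge 0}\Dom\phi^n \cap \bigcap_{n\ge 0}\Im\phi^n$ is not quite right either. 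The cleanest route: note $\Dom\phi\cap\Im\phi =: H'$ is finite index in $H$, and $\phi$ maps $\Dom\phi^{p}\cap H'$ into $\Im\phi^{p}$; tensoring with $\mathbb{Q}$, $\phi$ induces a genuine automorphism $\phi_{\mathbb Q}$ of the $\mathbb Q$-vector space $V = (\Dom\phi)\otimes\mathbb Q = H\otimes\mathbb Q$ (a virtual isomorphism of abelian groups becomes an isomorphism after $\otimes\mathbb Q$ since finite index subgroups have the same rationalization), and then $\Dom\phi^p\otimes\mathbb Q = \Im\phi^q\otimes\mathbb Q = V$ for all $p,q$. This shows $(\Dom\phi^p+\Im\phi^q)\otimes\mathbb Q = V$, i.e. $H/(\Dom\phi^p+\Im\phi^q)$ is torsion; to get a \emph{uniform} bound on it I would then invoke a rank/determinant argument: within the (finite rank) rationalization, the index $[H : \Dom\phi^p]$ can grow, but the image $\Im\phi^q$ ``compensates'': pick a basis realizing $\phi_{\mathbb Q}$ by a rational matrix $A$; $\Dom\phi^p$ contains $A^{-p}$ of a fixed lattice and $\Im\phi^q$ contains $A^{q}$ of a fixed lattice, and $A^{-p}\Lambda + A^q\Lambda$ contains a lattice whose covolume divides something independent of $p,q$ because $A$ has a spectral decomposition into an expanding and a contracting part (eigenvalue $|\lambda|\ge 1$ vs $<1$), and the two terms together cover all the ``directions.''

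The main obstacle I expect is precisely making this spectral/lattice compensation argument uniform and clean in the presence of torsion in $H$ and the fact that $H$ is not assumed finitely generated. I would handle non-finite-generation by noting that everything takes place inside $\Dom\phi + \Im\phi$, and that $\phi$ being a virtual automorphism forces the ``moving part'' of $H$ to be finitely generated up to a subgroup fixed pointwise — more precisely, I would replace $H$ by $H / C$ where $C = \bigcap_{n}(\Dom\phi^n \cap \Im\phi^n)$ won't vanish but $\phi$ descends to it; alternatively, and more simply, reduce to $H$ finitely generated by the observation that $\Dom\phi^p + \Im\phi^q \supseteq \Dom\phi^p \supseteq $ the finite-index subgroup on which $\phi$ is ``eventually periodic'' only after we know the rank is finite — so I would first prove $H\otimes\mathbb Q$ is finite-dimensional (this follows because $H$ has a finite index subgroup isomorphic to its own finite index subgroup via $\phi$, forcing finite rank when combined with the target of our chains being built from finitely generated $W_v$'s in the application; but for a standalone lemma one should simply \emph{assume} or note the relevant $H = W_v$ is finitely generated in all cases of interest). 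Granting finite rank, the torsion subgroup is finite, splits off, and the matrix argument above closes the proof; I would set $H_0$ to be the intersection of the (finitely many) stable values, which is finite index.
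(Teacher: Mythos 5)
Your proposal takes a genuinely different route from the paper, but it has two real gaps.

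First, the claim that a virtual automorphism forces $H\otimes\Q$ to be finite-dimensional is false: take $H=\bigoplus_{i\in\Z}\Z$ with $\phi$ the shift; this is an honest automorphism (hence a virtual automorphism with $\Dom\phi=\Im\phi=H$) of an infinite rank group. You half-acknowledge this and propose to ``simply assume'' finite generation, but the lemma as stated makes no such hypothesis, and the paper's proof needs none.

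Second, even granting finite rank, the spectral argument you sketch is not aimed at the right mechanism and you never close it. You invoke an ``expanding/contracting'' splitting of $A$ by eigenvalue \emph{magnitude}, but the index $|H:\Dom\phi^p+\Im\phi^q|$ is an arithmetic quantity governed by integrality/valuations, not by archimedean size of eigenvalues. (Look at the paper's $H=\Z$ example: both $\Dom\phi^p=dx^p\Z$ and $\Im\phi^q=dy^q\Z$ shrink, and the sum stabilizes at $d\Z$ because $x$ and $y$ are \emph{coprime}, not because one expands and the other contracts.) In higher rank you would need to control $p$-adic behaviour of $A$ simultaneously at all primes, and there is also the issue of eigenvalues of absolute value $1$ that are not roots of unity, which your expanding/contracting dichotomy doesn't place. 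None of this is worked out, and it is not clear it would go through cleanly.

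The paper instead proves the lemma by a short, purely combinatorial index count, valid for arbitrary abelian $H$: writing $I_p=|H:\Im\phi^p|$, one shows the identity
\[
|H:\Dom\phi^p+\Im\phi^q|=\frac{I_pI_q}{I_{p+q}},
\]
then observes (setting $p=1$) that $|H:\Dom\phi+\Im\phi^q|$ is an increasing sequence of positive integers bounded above by $|H:\Dom\phi|$, hence eventually constant, so $I_{q+1}/I_q$ eventually equals a constant $r$. Plugging this back in shows $|H:\Dom\phi^p+\Im\phi^q|$ is constant once $p,q\ge N_\phi$, and since these subgroups are nested, constancy of the index pins down a single finite index $H_0$. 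This sidesteps all the finiteness, torsion, and spectral issues you were wrestling with; if you want to salvage your approach you would need to replace the eigenvalue heuristic with a genuinely arithmetic (Smith normal form / elementary divisor) argument and still restrict to the finitely generated case, which the lemma does not permit.
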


We first look at the simplest example with $H=\Z$, where everything is quite explicit.

\begin{example}\label{ex: index compute}
	Let $X,Y$ be non-zero integers. Let $\phi:\Z\to\Z$ be the virtual automorphism given by $\phi(X)=Y$ with $\Dom\phi=X\Z$ and $\Im\phi=Y\Z$. Let $d=\gcd(|X|,|Y|)$, $x=|X|/d$ and $y=|Y|/d$. Then $\Dom\phi^2=\phi^{-1}(\Im\phi\cap\Dom\phi)=\phi^{-1}(dxy\Z)=dx^2\Z$ and $\Im\phi^2=\phi(\Im\phi\cap\Dom\phi)=\phi(dxy\Z)=dy^2\Z$.
	
	More generally, we have $\Dom\phi^p=dx^p\Z$ and $\Im\phi^q=dy^q\Z$, both keep getting smaller as $p,q\to\infty$. However, $\Dom\phi^p+\Im\phi^q=\gcd(dx^q,dy^q)\Z=d\Z$ for all $p,q\ge1$.
\end{example}

We prove Lemma \ref{lemma: stability} by computing the index. Denote the index of $B\le A$ by $|A:B|$. Recall the following basic identities, which we will use.

\begin{lemma}\label{lemma: basic index}
	Let $A$ be an abelian group with finite index subgroups $B$ and $C$.
	\begin{enumerate}
		\item If $C$ is a subgroup of $B$, then $|B:C|=|A:C|/|A:B|$.
		\item $$|A:B+C|=\frac{|A:B|}{|B+C:B|}=\frac{|A:B|}{|C:B\cap C|}.$$
		\item If $\phi$ is an injective homomorphism defined on $A$, then $|A:B|=|\phi(A):\phi(B)|$.
	\end{enumerate}
\end{lemma}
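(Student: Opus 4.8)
The plan is to recognise Lemma \ref{lemma: basic index} as a repackaging of two completely standard facts --- multiplicativity of the index in a tower of subgroups and the second isomorphism theorem --- together with the routine observations that in an abelian group $A$ the set $B+C$ is a subgroup, and that $B+C$ and $B\cap C$ inherit finite index from $B$ and $C$ (indeed $|A:B\cap C|\le|A:B|\cdot|A:C|$ and $|A:B+C|\le|A:B|$), so that all the ratios written down are genuine positive integers. So I would simply record these and apply them.

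For part (1): the left cosets of $C$ in $A$ refine the left cosets of $B$, and each coset of $B$ is a disjoint union of exactly $|B:C|$ cosets of $C$; hence $|A:C|=|A:B|\cdot|B:C|$, and dividing gives the claim (this also re-proves that $|B:C|$ is finite).

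For part (2): apply the tower law just proved to the chain $B\subseteq B+C\subseteq A$ to get $|A:B|=|A:B+C|\cdot|B+C:B|$, which rearranges to the first equality; for the second equality, the second isomorphism theorem provides an isomorphism $C/(B\cap C)\cong(B+C)/B$ induced by $c\mapsto c+B$, whence $|B+C:B|=|C:B\cap C|$. For part (3): since $\phi$ is injective, its restriction to $A$ is a group isomorphism onto $\phi(A)$ carrying $B$ bijectively onto $\phi(B)$, so it induces a bijection between the coset spaces $A/B$ and $\phi(A)/\phi(B)$; hence $|A:B|=|\phi(A):\phi(B)|$.

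There is no genuine obstacle here --- it is all elementary group theory. The only points deserving a word of care are that $B+C$ is a subgroup precisely because $A$ is abelian; that one should note $B+C$, $B\cap C$ (and $\phi(B)$) have finite index before writing the displayed fractions; and that in (3) the hypothesis ``$\phi$ defined on $A$'' is used only through injectivity of $\phi|_A$ --- $\phi$ need not be defined on any ambient group, which is exactly what makes the identity applicable to the (partially defined) virtual isomorphisms of the previous subsection and, in particular, to the index computation proving Lemma \ref{lemma: stability}.
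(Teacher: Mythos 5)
Your proof is correct. The paper itself gives no proof of this lemma --- it is introduced with ``Recall the following basic identities'' and used directly --- so there is no argument in the paper to compare against, but the tower law, the second isomorphism theorem, and the coset bijection under an injection are exactly the standard tools one would invoke, and your application of them is complete and accurate.
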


\begin{lemma}\label{lemma: index compute}
	Let $H$ be an abelian group with a virtual automorphism $\phi$. Let $I_p\defeq|H:\Im\phi^p|$.
	\begin{enumerate}
		\item We have
		$$|H:\Dom\phi^p+\Im\phi^q|=\frac{I_p I_q}{I_{p+q}}.$$
		\item There exist integers $r\ge1$ and $N_\phi\ge1$ such that $I_{q+1}/I_{q}=r$ for all $q\ge N_\phi$.
	\end{enumerate}
\end{lemma}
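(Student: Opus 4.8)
The plan is to establish (1) by a direct index computation that exploits the composition rules for virtual isomorphisms, and then to deduce (2) from (1) together with a short monotonicity observation. Throughout, every subgroup appearing below has finite index in $H$, since powers and compositions of virtual automorphisms are again virtual automorphisms; so all the indices in question are finite and the identities of Lemma~\ref{lemma: basic index} apply.

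For (1), I would begin by unwinding $\Im\phi^{p+q}$ through the composition $\phi^{p+q}=\phi^p\circ\phi^q$. By the formula for the image of a composite virtual isomorphism,
$$\Im\phi^{p+q}=\phi^p\!\left(\Dom\phi^p\cap\Im\phi^q\right).$$
Since $\phi^p$ restricts to an isomorphism $\Dom\phi^p\to\Im\phi^p$ and $\Dom\phi^p\cap\Im\phi^q\le\Dom\phi^p$, part (3) of Lemma~\ref{lemma: basic index} gives $|\Dom\phi^p:\Dom\phi^p\cap\Im\phi^q|=|\Im\phi^p:\phi^p(\Dom\phi^p\cap\Im\phi^q)|$. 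As $\phi^p(\Dom\phi^p\cap\Im\phi^q)\le\Im\phi^p\le H$, multiplicativity of the index then yields
$$I_{p+q}=|H:\Im\phi^p|\cdot|\Im\phi^p:\phi^p(\Dom\phi^p\cap\Im\phi^q)|=I_p\cdot\bigl|\Dom\phi^p:\Dom\phi^p\cap\Im\phi^q\bigr|.$$
On the other hand, part (2) of Lemma~\ref{lemma: basic index} applied to $H$, $\Im\phi^q$, and $\Dom\phi^p$ gives
$$\bigl|H:\Dom\phi^p+\Im\phi^q\bigr|=\frac{|H:\Im\phi^q|}{|\Dom\phi^p:\Dom\phi^p\cap\Im\phi^q|}=\frac{I_q}{I_{p+q}/I_p}=\frac{I_pI_q}{I_{p+q}},$$
which is exactly (1).

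For (2), I would first note that $\Im\phi^{q+1}=\phi^q(\Dom\phi^q\cap\Im\phi)\le\Im\phi^q$, so the chain $H\ge\Im\phi\ge\Im\phi^2\ge\cdots$ is descending and $r_q:=I_{q+1}/I_q=|\Im\phi^q:\Im\phi^{q+1}|$ is a \emph{positive integer} for every $q\ge0$. Specializing (1) to $p=1$ gives $|H:\Dom\phi+\Im\phi^q|=I_1I_q/I_{q+1}=I_1/r_q$, hence
$$r_q=\frac{I_1}{\bigl|H:\Dom\phi+\Im\phi^q\bigr|}.$$
Because the subgroups $\Im\phi^q$ shrink as $q$ grows, so do the subgroups $\Dom\phi+\Im\phi^q$, and therefore their indices $|H:\Dom\phi+\Im\phi^q|$ are non-decreasing in $q$. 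Consequently $(r_q)$ is a non-increasing sequence of positive integers, so it is eventually constant; taking $r\ge1$ to be its eventual value and $N_\phi\ge1$ a stage past which it is attained proves (2).

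I expect the only delicate point to be the bookkeeping in the first step: keeping straight the domains and images of $\phi^p$, $\phi^q$, and $\phi^{p+q}$ and checking that the finite-index hypotheses of Lemma~\ref{lemma: basic index} genuinely hold at each application. Once the identity in (1) is secured, (2) is essentially immediate, the entire content being that a ``sum index'' $|H:\Dom\phi+\Im\phi^q|$ is monotone in $q$, which forces the integer ratios $I_{q+1}/I_q$ to stabilize.
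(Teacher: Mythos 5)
Your proof is correct and takes essentially the same approach as the paper: the key identity $\Im\phi^{p+q}=\phi^p(\Dom\phi^p\cap\Im\phi^q)$ together with the isomorphism $\Dom\phi^p\to\Im\phi^p$ and the index formulas gives (1), and specializing to $p=1$ gives (2) by a monotonicity argument. The only cosmetic difference is that in (2) you conclude stabilization from $(r_q)$ being a non-increasing sequence of positive integers, while the paper concludes it from $|H:\Dom\phi+\Im\phi^q|$ being a non-decreasing sequence of integers bounded above by $|H:\Dom\phi|$; these are equivalent.
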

\begin{proof}
	Note that $\phi^p$ isomorphically maps $\Dom\phi^p\cap \Im\phi^q$ to $\Im\phi^{p+q}$ and $\Dom\phi^p$ to $\Im\phi^p$. By the formulas in Lemma \ref{lemma: basic index}, we have
	$$|H:\Dom\phi^p+\Im\phi^q|=\frac{|H:\Im\phi^q|}{|\Dom\phi^p:\Dom\phi^p\cap \Im\phi^q|}=\frac{I_p I_q}{I_{p+q}}.$$
	
	To prove the second assertion, let $p=1$ in the equation above. We have
	$$\frac{I_{q+1}}{I_{q}}=\frac{I_1}{|H:\Dom\phi+\Im\phi^q|}.$$
	Since the sequence of \emph{integers} $|H:\Dom\phi+\Im\phi^q|$ is increasing in $q$ with upper bound $|H:\Dom\phi|$, it must stabilize when $q\ge N_\phi$ for some $N_\phi\ge1$ and thus $I_{q+1}/I_{q}$ stabilizes to some $r$ when $q\ge N_\phi$.
\end{proof}

\begin{proof}[Proof of Lemma \ref{lemma: stability}]
	With notation as in Lemma \ref{lemma: index compute}, we have $I_{q}=r^{q-N_\phi}I_{N_\phi}$ for all $q\ge N_\phi$. Thus
	$$|H:\Dom\phi^p+\Im\phi^q|=\frac{I_p I_q}{I_{p+q}}=\frac{r^{p+q-2N_\phi}I^2_{N_\phi}}{r^{p+q-N_\phi}I_{N_\phi}}=\frac{I_{N_\phi}}{r^{N_\phi}},$$
	for any $p,q\ge N_\phi$. 
	
	Since $\Dom\phi^p+\Im\phi^q\subset\Dom\phi^{p'}+\Im\phi^{q'}$ whenever $p\ge p'$ and $q'\ge q$, the index computation above shows that there is an index $I_{N_\phi}/r^{N_\phi}$ subgroup $H_0$ of $H$ such that $\Dom\phi^p+\Im\phi^q=H_0$ when $p,q\ge N_\phi$. Therefore, for arbitrary $p,q\ge0$, 
	$$\Dom\phi^p+\Im\phi^q \supseteq\Dom\phi^{\max\{p,N_\phi\}}+\Im\phi^{\max\{q,N_\phi\}}=H_0.$$
\end{proof}

\subsection{Disk-like pieces}\label{subsec: disk-like}
Recall that any arc $a_v\in A_v$ on $\gamma_i\in\uga$, determines an oriented cycle $P(a_v)=(v,e_1,v_1,\ldots,e_n,v)$ in $\Gamma$, and consequently a transition map $\tau_{P(a_v)}$ which is a virtual automorphism of $W_v$.

Consider a simple relative admissible surface $S$ for $c$. Refer to a component of a subsurface $S_v$ as \emph{a piece}. Recall that $S$ is obtained by gluing pieces together along paired turns on the polygonal boundaries in a certain way that can be encoded by a graph $\Gamma_S$, where each vertex corresponds to a piece and each edge corresponds to two paired turns glued together in $S$. There is a graph homomorphism $\pi: \Gamma_S\to\Gamma$ taking a vertex $\hat{v}$ to $v$ if $\hat{v}$ corresponds to a piece $C$ in $S_v$. In this way, $S$ admits an induced structure of a graph of spaces with underlying graph $\Gamma_S$.

Recall that each piece $C$ in $S_v$ has a unique polygonal boundary, which as a loop in $X_v$ represents a conjugacy class $w(C)$ in $G_v$ called \emph{the winding number} of $C$. Recall that $W_v$ is the subgroup generated by $t_e(G_e)$ with $t(e)=v$, which is central, so it makes sense to say whether $w(C)$ lies in $W_v$.

\begin{definition}
	We say a piece $C$ in $S_v$ is a \emph{potential disk} if its winding number $w(C)$ lies in $W_v$.
\end{definition}

Since $S$ is in simple normal form, a piece $C$ is a disk if and only if the winding number $w(C)$ is trivial. Then potential disks are exactly the pieces that can be made into disks by adjusting winding numbers of turns on their polygonal boundaries. However, this typically cannot be done for all potential disks \emph{simultaneously}. Our remedy is to find a class of potential disks, called \emph{disk-like pieces}, that can be made into disks simultaneously in an asymptotic sense. Moreover, we will make sure that keeping track of a finite amount of information suffices to tell whether a piece is disk-like.

Recall that each oriented edge $\hat{e}$ of $\Gamma_S$ going from $\hat{v}$ to $\hat{u}$ represents a turn shared by the pieces represented by $\hat{v}$ and $\hat{u}$. Changing the winding number of the turn by $n\in G_e$ would adjust $w(\hat{v})$ and $w(\hat{u})$ by $o_e(n)$ and $-t_e(n)$ respectively, where $e$ is the projection of $\hat{e}$ in $\Gamma$. Here the negative sign is due to the opposite orientations on the turn induced by the two pieces, which is why we add a negative sign in the definition of transition maps.

For an oriented path $\hat{P}=(\hat{v}_0,\hat{e}_1,\hat{v}_1,\ldots,\hat{e}_n,\hat{v}_n)$ in $\Gamma_S$ projecting to a path $P$ in $\Gamma$, associate to $\hat{P}$ the adjustment map $\alpha(\hat{P})\defeq\tau_{P}$. Then for any $x\in \Dom \alpha(\hat{P})$, we can adjust the winding numbers of the turns represented by $\hat{e}_i$ ($1\le i\le n$) such that, for any $1\le i\le n-1$, the changes to the winding number of $\hat{v}_i$ contributed by the adjustment on $\hat{e}_i$ and $\hat{e}_{i+1}$ cancel each other, and the net result of the adjustment is
\begin{enumerate}
	\item the winding numbers of $\hat{v}_0$ and $\hat{v}_n$ increase by $x$ and $\alpha(\hat{P})(x)$ respectively;
	\item the winding number of $\hat{v}_i$ stays unchanged for all $1\le i\le n-1$.
\end{enumerate}
We say such an adjustment is supported on $\hat{v}_0$ and $\hat{v}_n$. 

\begin{definition}
	For each $a_v\in A_v$, fix a finite index subgroup $W(a_v)$ of $W_v$ such that 
	\begin{equation}
		W(a_v)\subset\Dom\tau_{P(a_v)}^p+\Im\tau_{P(a_v)}^q\quad \text{for any }p,q\ge 0.\label{eqn: W(a_v)}
	\end{equation}
	Such a $W(a_v)$ exists by Lemma \ref{lemma: stability}.
\end{definition}

Consider any piece $C$ with a copy of $a_v$ on its polygonal boundary. If $\gamma_i\in \uga$ is the loop contain $a_v$, then the component $B$ of $\bdry S$ which this copy of $a_v$ sits on is a finite cover of $\gamma_i$, say of degree $n$. Then $B$ successively passes through $n$ copies of $a_v$ contained in pieces $C_j$, $j=0,\ldots, n-1$, where $C_0=C$. See Figure \ref{fig: followbdry}. It might happen that $C_j=C_k$ for $j\neq k$, in which case its polygonal boundary contain the $j$-,$k$-th copies of $a_v$ on $B$ as distinct arcs.

\begin{figure}
	\labellist
	\small 
	\pinlabel $B$ at 10 10
	
	\pinlabel $a_v$ at -10 60
	\pinlabel $C_0=C$ at 26 75

	\pinlabel $C_1=C_4$ at 130 80
	\pinlabel $a_v$ at 130 103
	\pinlabel $a_v$ at 130 15
	
	\pinlabel $a_v$ at 280 120
	\pinlabel $C_3$ at 275 90
	
	\pinlabel $a_v$ at 280 -5
	\pinlabel $C_2$ at 287 30
	
	\endlabellist
	\centering
	\includegraphics[scale=1]{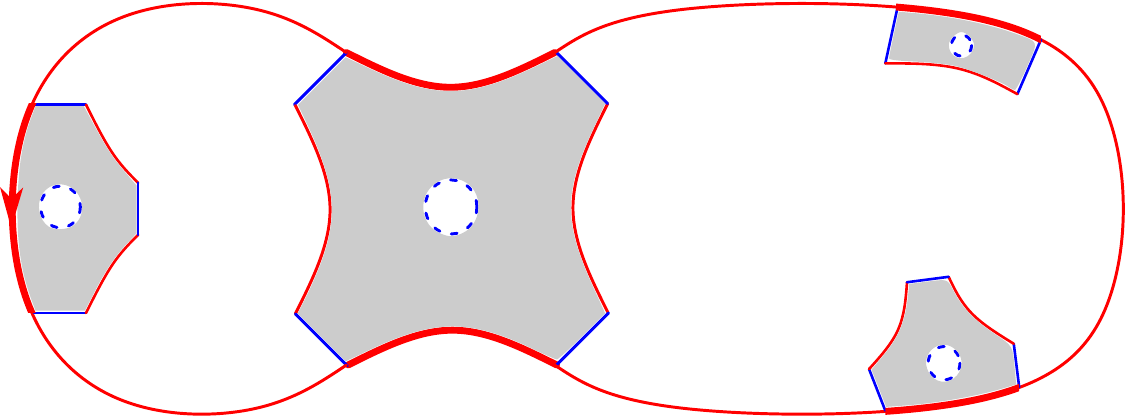}
	\caption{An example of a boundary component $B$ of $S$ with $n=5$ copies of $a_v$, where $C_1$ coincides with $C_4$, and where pieces sitting along $B$ not witnessing $a_v$ are omitted}\label{fig: followbdry}
\end{figure}

The boundary component $B$ visits a sequence of pieces and thus determines an oriented cycle $\omega$ in $\Gamma_S$, on which we have vertices $\hat{v}_j$ corresponding to $C_j$ for $j=0,\ldots, n-1$. Let $\omega_j$ and $\omega'_j$ be the subpaths on $\omega$ going from $\hat{v}_0$ to $\hat{v}_j$ in the positive and negative orientation respectively, such that under the projection $\pi: \Gamma_S\to \Gamma$, we have $\pi(\omega_j)=P(a_v)^j$ and $\pi(\omega'_j)=\overline{P(a_v)}^{n-j}$. Thus the adjustment maps are $\alpha(\omega_j)=\tau_{P(a_v)}^j$ and $\alpha(\omega'_j)=\tau_{P(a_v)}^{j-n}$.

\begin{lemma}\label{lemma: kill winding}
	With the above notation, let $C_j\neq C_0$ be a piece with $w(C_j)\in W(a_v)$. Then there is an adjustment of $S$ supported on $C_0$ and $C_j$ after which $C_j$ has trivial winding number.
\end{lemma}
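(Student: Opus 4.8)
The plan is to realize the required adjustment as a composition of two adjustments, one along the forward path $\omega_j$ and one along the backward path $\omega'_j$, both of which are supported on their common endpoints $\hat{v}_0$ and $\hat{v}_j$. Since $C_j\neq C_0$ we have $\hat{v}_0\neq\hat{v}_j$, so each of these adjustments changes only the winding numbers of $C_0$ and $C_j$; consequently their composition is supported on $\{C_0,C_j\}$, as demanded. Performing the adjustment along $\omega_j$ with parameter $x_1\in\Dom\alpha(\omega_j)=\Dom\tau_{P(a_v)}^{j}$ and the one along $\omega'_j$ with parameter $x_2\in\Dom\alpha(\omega'_j)=\Im\tau_{P(a_v)}^{n-j}$, the net change to $w(C_j)$ is
$$\alpha(\omega_j)(x_1)+\alpha(\omega'_j)(x_2)=\tau_{P(a_v)}^{j}(x_1)+\tau_{P(a_v)}^{j-n}(x_2).$$

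It then suffices to choose the parameters so that this change equals $-w(C_j)$. As $x_1$ and $x_2$ vary over their domains, the displayed quantity ranges over the subgroup $\Im\tau_{P(a_v)}^{j}+\Im\tau_{P(a_v)}^{j-n}=\Im\tau_{P(a_v)}^{j}+\Dom\tau_{P(a_v)}^{n-j}$ of $W_v$, using that the inverse of a virtual isomorphism exchanges domain and image. By the defining property \eqref{eqn: W(a_v)} of $W(a_v)$, taken with $p=n-j$ and $q=j$, this subgroup contains $W(a_v)$, and hence contains $w(C_j)$ --- and so $-w(C_j)$ --- by the hypothesis $w(C_j)\in W(a_v)$. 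Choosing $x_1,x_2$ accordingly and carrying out the two adjustments leaves $C_j$ with trivial winding number, while all pieces other than $C_0$ and $C_j$ are untouched.

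The one point that will need care is the assertion that an adjustment along $\omega_j$ (respectively $\omega'_j$) is genuinely supported only on its two endpoints. Because the boundary cycle $\omega$ need not be embedded, the subpath $\omega_j$ may revisit vertices, possibly including $\hat{v}_0$ or $\hat{v}_j$ themselves, and one must check that these revisits contribute nothing to the net winding-number change away from the endpoints. This follows from the position-by-position cancellation built into the definition of the adjustment map, together with the additivity of winding-number changes when the two adjustments are composed; I expect this bookkeeping, rather than any conceptual difficulty, to be the main thing to spell out.
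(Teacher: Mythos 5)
Your argument is the same as the paper's: decompose $w(C_j)$ (up to sign) as a sum of elements in $\Im\alpha(\omega_j)$ and $\Im\alpha(\omega'_j)=\Dom\tau_{P(a_v)}^{n-j}$, invoke the defining property of $W(a_v)$ with $p=n-j$, $q=j$, and carry out the two adjustments. The caveat you flag about revisited vertices is indeed already handled by the position-by-position cancellation built into the definition of the adjustment along a path, so no further bookkeeping is needed.
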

\begin{proof}
	By definition, we have $w(C_j)\in \Dom\tau_{P(a_v)}^{n-j}+\Im\tau_{P(a_v)}^{j}=\Im \alpha(\omega'_j)+\Im \alpha(\omega_j)$. As a consequence, there exist $x,y\in W_v$ with $w(C_j)=\alpha(\omega'_j)(x)+\alpha(\omega_j)(y)$. This gives rise to an adjustment that eliminates $w(C_j)$ and adjusts $w(C_0)$ to $w(C_0)-x-y$ without changing all other winding numbers.
\end{proof}

Let $\widetilde{\Gamma}_S$ be a finite cover of $\Gamma_S$. This determines a cover $\widetilde{S}$ of $S$ in simple normal form. Let $\widetilde{B}$ be a lift of $B$ of degree $m$, which corresponds to a degree $m$ lift $\widetilde{\omega}$ of $\omega$ in $\widetilde{\Gamma}_S$. Denote the $m$ (distinct) lifts of $C$ by $\widetilde{C}_0,\ldots,\widetilde{C}_{m-1}$ as vertices on $\widetilde{\omega}$.

\begin{lemma}\label{lemma: lift and kill}
	With the above notation, suppose $w(C)\in W(a_v)$, then there is an adjustment of $\widetilde{S}$ supported on $\widetilde{C}_0,\ldots,\widetilde{C}_{m-1}$ after which $\widetilde{C}_k$ has trivial winding number for all $1\le k\le m-1$.
\end{lemma}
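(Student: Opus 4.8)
The plan is to reduce Lemma \ref{lemma: lift and kill} to a repeated application of Lemma \ref{lemma: kill winding} on the covering surface $\widetilde{S}$. The key observation is that the cycle $\widetilde{\omega}$ in $\widetilde{\Gamma}_S$ is a degree $m$ lift of $\omega$, so it passes through $mn$ copies of $a_v$ contained in pieces $\widetilde{C}_{k,j}$ for $0\le k\le m-1$ and $0\le j\le n-1$, with $\widetilde{C}_{k,0}=\widetilde{C}_k$. Crucially, since $\widetilde{S}$ is a cover of $S$, the pieces $\widetilde{C}_k$ all sit over $C$, and hence each inherits winding number $w(C)\in W(a_v)$. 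The transition map along $P(a_v)$ does not change under passing to the cover: the adjustment maps along $\widetilde{\omega}$ are powers of the very same virtual automorphism $\tau_{P(a_v)}$ of $W_v$, and the defining property \eqref{eqn: W(a_v)} of $W(a_v)$ holds for all exponents $p,q\ge 0$, in particular for the exponents that arise along the longer cycle $\widetilde{\omega}$ of length $mn$.

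Concretely, I would proceed as follows. First, observe that $\widetilde{C}_0,\ldots,\widetilde{C}_{m-1}$ all appear as (distinct) vertices along the single oriented cycle $\widetilde{\omega}$, appearing at positions $0, n, 2n, \ldots, (m-1)n$ among the $mn$ copies of $a_v$ witnessed by $\widetilde\omega$. Then I would apply Lemma \ref{lemma: kill winding} inside $\widetilde{S}$ with base piece $\widetilde{C}_0$ and target piece $\widetilde{C}_1$: since $w(\widetilde{C}_1) = w(C) \in W(a_v)$ and $W(a_v)$ satisfies the stability containment \eqref{eqn: W(a_v)} for the relevant exponents (the positive subpath from $\widetilde C_0$ to $\widetilde C_1$ projects to $P(a_v)^n$ and the negative subpath to $\overline{P(a_v)}^{(m-1)n}$), there is an adjustment supported on $\widetilde{C}_0$ and $\widetilde{C}_1$ killing $w(\widetilde{C}_1)$. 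This adjustment changes only $w(\widetilde C_0)$ and $w(\widetilde C_1)$, leaving $w(\widetilde C_k) = w(C) \in W(a_v)$ untouched for $k\ge 2$. Repeating with target $\widetilde{C}_2$, then $\widetilde{C}_3$, and so on up to $\widetilde{C}_{m-1}$, each step again supported on $\{\widetilde{C}_0,\widetilde{C}_k\}$, leaves all of $\widetilde{C}_1,\ldots,\widetilde{C}_{m-1}$ with trivial winding number. The composite of all these adjustments is supported on $\widetilde C_0, \ldots, \widetilde C_{m-1}$ as claimed.

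The main subtlety to check carefully is that Lemma \ref{lemma: kill winding} genuinely applies at each stage, i.e. that the hypothesis $w(\widetilde C_k)\in W(a_v)$ is preserved. This is where one must be careful about the \emph{order} of the adjustments: each adjustment supported on $\{\widetilde C_0, \widetilde C_k\}$ does modify $w(\widetilde C_0)$, but $w(\widetilde C_0)$ is never required to lie in $W(a_v)$ — only the targets $\widetilde C_k$ with $k\ge 1$ are — and those are modified only at the single step when they are the target, after which they become trivial (hence certainly in $W(a_v)$, though that is irrelevant since we never touch them again). So processing $\widetilde C_1, \widetilde C_2, \ldots, \widetilde C_{m-1}$ in sequence works. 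The other point worth a line is that the virtual automorphism governing adjustments along $\widetilde\omega$ is indeed $\tau_{P(a_v)}$ — this follows because $\pi\colon\widetilde\Gamma_S\to\Gamma$ factors through $\Gamma_S\to\Gamma$, so the projection of any subpath of $\widetilde\omega$ to $\Gamma$ is a power of $P(a_v)$, and the adjustment map depends only on this projection.
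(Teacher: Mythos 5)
Your argument is correct and takes essentially the same approach as the paper's proof, which also proceeds by applying Lemma \ref{lemma: kill winding} once for each $k$ with base piece $\widetilde C_0$ and target $\widetilde C_k$ inside the covering surface $\widetilde S$ along the lifted cycle $\widetilde\omega$. The paper condenses this to one sentence; you usefully spell out why the hypotheses of Lemma \ref{lemma: kill winding} persist across the sequence of adjustments (each step only modifies $w(\widetilde C_0)$, which is unconstrained, and the stability condition \eqref{eqn: W(a_v)} is quantified over all exponents so it covers the longer subpaths arising in $\widetilde\omega$).
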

\begin{proof}
	For each $1\le k\le m-1$, apply Lemma \ref{lemma: kill winding} to the loop $\widetilde{\omega}$ to eliminate the winding number $w(\widetilde{C}_k)$ at the cost of changing  $w(\widetilde{C}_0)$.
\end{proof}

Once we normalize by the degree of admissible surfaces, the adjustment in Lemma \ref{lemma: lift and kill} has the effect of making $1-1/m$ portion of an annulus piece $C$ into disk pieces. This implies that $C$ can be asymptotically promoted to a disk as $m\to\infty$ without affecting other pieces if $w(C)\in W(a_v)$. In the exceptional case where $\omega$ is null-homotopic in $\Gamma_S$, we cannot find finite covers with $m\to\infty$ and need a different strategy to promote $C$.

\begin{lemma}\label{lemma: backtrack}
	The cycle $\omega$ representing the boundary component $B$ of $S$ in $\Gamma_S$ backtracks at a vertex $\hat{u}$ if and only if $\hat{u}$ has valence one. In this case, the piece $\hat{u}$ has only one turn on the polygonal boundary and is not a potential disk.
\end{lemma}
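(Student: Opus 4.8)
The plan is to translate the statement into the combinatorics of the polygonal boundary of the piece $C$ that $\hat u$ represents, say with $C\subset S_v$. Recall this polygonal boundary is a cyclic sequence alternating arc-copies (of arcs in $A_v$) with turn-copies, and that each turn-copy is one of the two halves of an edge of $\Gamma_S$. I would first record two elementary facts: (a) the valence of $\hat u$ equals the number of turn-copies on the polygonal boundary of $C$ (a loop edge at $\hat u$ contributing both its halves, a non-loop edge contributing one), which by alternation also equals the number of arc-copies; and (b) each visit of $B$ to $\hat u$ in $\omega$ amounts to traversing exactly one arc-copy $\alpha$ of the polygonal boundary of $C$, with $B$ entering $C$ across the turn-copy $\tau_-$ immediately preceding $\alpha$ and leaving across the turn-copy $\tau_+$ immediately following it; so in $\omega$ the vertex $\hat u$ is entered along the edge carrying $\tau_-$ and left along the edge carrying $\tau_+$.

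For the implication ``valence one $\Rightarrow$ backtrack'', suppose $\hat u$ has valence one. Then the polygonal boundary of $C$ carries a single turn-copy $\tau$ and a single arc-copy $\alpha$, and $\tau$ is half of a non-loop edge $\hat e$ (a loop edge would contribute two turn-copies). By (b), every visit to $\hat u$ — in particular the one witnessing $\hat u\in\omega$ — enters and leaves along $\hat e$, and since $\hat e$ is not a loop this is a genuine backtrack of $\omega$ at $\hat u$. That the piece has only one turn is a restatement of valence one.

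For the converse, suppose $\omega$ backtracks at $\hat u$ in a visit traversing the arc-copy $\alpha$; then $\tau_-$ and $\tau_+$ are halves of a common edge $\hat e$ of $\Gamma_S$. This can happen in only two ways. Either $\tau_-=\tau_+$, which forces the polygonal boundary of $C$ to consist of just $\alpha$ and $\tau$, hence valence one. Or $\tau_-$ and $\tau_+$ are the two distinct halves of a single turn $t$ that does not separate $C$, so that $\hat e$ is a loop edge at $\hat u$; but then regluing the two halves of $t$ to one another identifies the two endpoints of $\alpha$, so $\alpha$ closes up into a loop which is a boundary component of $S$ by itself. This boundary component would then have to be $B$, and $\omega$ would be the length-one closed walk consisting of the single oriented edge $t$, which contains no subword of the form $\hat e\,\overline{\hat e}$ and so does not backtrack — contradicting the hypothesis. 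Hence only the first alternative survives, and $\hat u$ has valence one. I expect this loop-edge alternative to be the delicate point of the argument: one must check it produces a degenerate length-one cycle rather than a genuine backtrack, which rests on the precise way paired turns are reglued.

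Finally, to see that a valence-one piece $C$ is not a potential disk: writing $a_v$ for the arc of which $\alpha$ is a copy, a turn running from $\alpha$ back to $\alpha$ can exist only when $e_{out}(a_v)=\overline{e_{in}(a_v)}$, so that $\tau$ lies in a single edge space joining the two ends of $\alpha$. Pushing the polygonal boundary of $C$ into $G_v$ then expresses its winding number as $w(C)=w(a_v)\cdot z$ with $z\in t_{e_{in}(a_v)}(G_{e_{in}(a_v)})$. Since the tight loop carrying $a_v$ does not trivially backtrack at $a_v$, we have $w(a_v)\notin t_{e_{in}(a_v)}(G_{e_{in}(a_v)})$, so $w(C)$ lies in a nontrivial coset of that edge subgroup; as adjusting the winding number of the unique turn $\tau$ moves $w(C)$ only within the same subgroup, $C$ cannot be turned into a disk and hence is not a potential disk.
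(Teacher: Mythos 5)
Your proof is correct and reaches the same conclusions by essentially the same combinatorial route as the paper, namely translating backtracking of $\omega$ into the cyclic order of turn-copies on the polygonal boundary and invoking tightness to rule out the piece being a potential disk. The one genuine point of divergence is in the converse direction (``backtrack $\Rightarrow$ valence one''): after observing that $\tau_-$ and $\tau_+$ carry the same underlying edge of $\Gamma_S$, you explicitly split off the loop-edge alternative and dispatch it via the nice geometric observation that the arc $\alpha$ closes up into its own boundary component, forcing $\omega$ to be a length-one cycle and hence non-backtracking. The paper instead treats the step more tersely: since the entering half-edge (where $\hat e_{\mathrm{in}}$ arrives) and the departing half-edge (where $\hat e_{\mathrm{out}}$ leaves) must coincide when $\hat e_{\mathrm{out}}=\overline{\hat e_{\mathrm{in}}}$, a backtrack forces $\tau_-=\tau_+$ outright, so the loop-edge case never arises as a separate alternative; the conclusion that the cyclic order has a single element (valence one) then follows at once. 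Both routes are valid; yours is more verbose but covers all cases and contains an observation (about $\alpha$ closing up) worth having. Your treatment of the ``not a potential disk'' conclusion matches the paper's argument verbatim: both establish that $w(C)$ sits in a nontrivial coset of $t_{e_{\mathrm{in}}(a_v)}(G_{e_{\mathrm{in}}(a_v)})$ via the tightness of the loop and the fact that adjusting the unique turn only moves $w(C)$ within that subgroup.
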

\begin{proof}
	Let $C'$ be a piece represented by a vertex $\hat{u}$ in $\Gamma_S$ that $\omega$ passes through. Note that the edges adjacent to $\hat{u}$ correspond to the turns on the polygonal boundary of $C'$ and thus have an induced cyclic order. See Figure \ref{fig: backtrackpiece}. Since $\omega$ represents the boundary component, the two edges that $\omega$ enters and leaves $\hat{u}$ are adjacent in the cyclic order. Thus $\omega$ backtracks at $\hat{u}$ if and only if $\hat{u}$ has valence one, in which case the polygonal boundary of $C'$ consists of one arc $a'_u\in A_u$ and one turn. Let $\hat{e}$ be the edge representing the gluing of this turn and let $e=\pi(\hat{e})$ so that $t(e)=\pi(\hat{u})$. Then $w(a'_u)\notin t_e(G_e)$ since $\uga$ consists of tight loops. It follows that $C'$ cannot be a potential disk.
\end{proof}

\begin{figure}
	\labellist
	\small 
	
	\endlabellist
	\centering
	\includegraphics[scale=1]{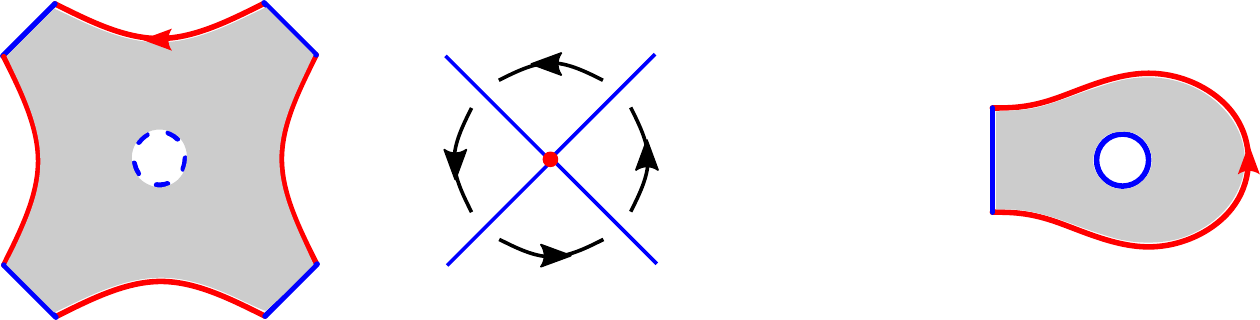}
	\caption{The cyclic order on the edges around a vertex in $\Gamma_S$ (left); A piece representing a vertex where the cycle $\omega$ backtracks (right)}\label{fig: backtrackpiece}
\end{figure}

\begin{lemma}\label{lemma: nullpush}
	With the notation above, let $B$ be the boundary component of $S$ passing through a copy of $a_v$ on a potential disk $C$. If the loop $\omega$ representing $B$ in $\Gamma_S$ is null-homotopic, then $P(a_v)$ is a null-homotopic loop in $\Gamma$ and $\tau_{P(a_v)}$ is the identity on its domain. In particular, we have $W(a_v)\subset\Dom\tau_{P(a_v)}$.
\end{lemma}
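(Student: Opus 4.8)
The plan is to push the hypothesis on $\omega$ down to $\Gamma$ via the projection $\pi$, and then read off the two conclusions from the arithmetic of transition maps. First I would recall from the construction of $\omega$ that $\pi(\omega)=P(a_v)^n$, where $n\ge1$ is the degree of the covering $B\to\gamma_i$. Since $\pi\colon\Gamma_S\to\Gamma$ is a graph homomorphism, if $\omega$ is null-homotopic in $\Gamma_S$ then $\pi(\omega)=P(a_v)^n$ is null-homotopic in $\Gamma$, i.e. $[P(a_v)]^n=1$ in $\pi_1(\Gamma,v)$. As $\Gamma$ is a graph, $\pi_1(\Gamma,v)$ is free, hence torsion-free, so $[P(a_v)]=1$; that is, $P(a_v)$ is a null-homotopic loop in $\Gamma$. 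This gives the first assertion.

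For the second assertion I would use the identity $\tau_{\bar e}=\tau_e^{-1}$, valid for every oriented edge $e$ of $\Gamma$. Writing $e$ from $u=o(e)$ to $v=t(e)$, we have $\tau_e=-t_e\circ o_e^{-1}$, so $\tau_e^{-1}=-(o_e\circ t_e^{-1})$; on the other hand $o_{\bar e}=t_e$ and $t_{\bar e}=o_e$ give $\tau_{\bar e}=-t_{\bar e}\circ o_{\bar e}^{-1}=-(o_e\circ t_e^{-1})$, the same virtual isomorphism (using $(-\psi)^{-1}=-\psi^{-1}$ for a virtual isomorphism $\psi$ of abelian groups). Now, since $P(a_v)$ is null-homotopic it reduces to the constant loop at $v$ by successively cancelling backtracks $e\ast\bar e$; each cancellation replaces a factor $\tau_{\bar e}\circ\tau_e=\tau_e^{-1}\circ\tau_e=\mathrm{id}|_{\Dom\tau_e}$, which only restricts the domain of the composite. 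Hence $\tau_{P(a_v)}$ is a restriction of the transition map of the constant loop at $v$, namely of the identity virtual automorphism of $W_v$; in other words $\tau_{P(a_v)}$ is the identity on its domain.

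Finally, for the inclusion $W(a_v)\subset\Dom\tau_{P(a_v)}$, I would write $H_0\defeq\Dom\tau_{P(a_v)}$, a finite index subgroup of $W_v$ on which $\tau_{P(a_v)}$ acts as the identity; then $\tau_{P(a_v)}^p=\mathrm{id}|_{H_0}$ for every $p\ge1$, so $\Dom\tau_{P(a_v)}^p=\Im\tau_{P(a_v)}^q=H_0$ for all $p,q\ge1$, and taking $p=q=1$ in the defining property (\ref{eqn: W(a_v)}) of $W(a_v)$ gives $W(a_v)\subset\Dom\tau_{P(a_v)}+\Im\tau_{P(a_v)}=H_0=\Dom\tau_{P(a_v)}$, as desired.

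The only place with genuine content is the bookkeeping for virtual isomorphisms in the second step: verifying $\tau_{\bar e}=\tau_e^{-1}$ with the sign convention in the definition of $\tau_e$, and tracking how domains and images shrink under composition so that ``restriction of the identity'' is the right conclusion of the backtrack reduction. The descent along $\pi$ and the appeal to torsion-freeness of $\pi_1(\Gamma)$ are immediate.
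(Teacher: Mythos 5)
Your proof is correct and essentially the paper's argument, which it gives only telegraphically ("the conclusions follow immediately from the fact that $P(a_v)$ is the image of $\omega$ under $\pi$" and "we deduce \dots from equation (\ref{eqn: W(a_v)})"). You correctly observe $\pi(\omega)=P(a_v)^n$ and use torsion-freeness of $\pi_1(\Gamma)$ to conclude $P(a_v)$ is null-homotopic; the verification of $\tau_{\bar e}=\tau_e^{-1}$ with the sign convention and the domain bookkeeping for backtrack cancellation are exactly the details being glossed over; and taking $p=q=1$ in (\ref{eqn: W(a_v)}) together with $\Dom\tau_{P(a_v)}=\Im\tau_{P(a_v)}$ yields the inclusion.
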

\begin{proof}
	The conclusions follow immediately from the fact that $P(a_v)$ is the image of $\omega$ under the graph homomorphism $\pi: \Gamma_S\to\Gamma$. We deduce $W(a_v)\subset\Dom\tau_{P(a_v)}$ from equation (\ref{eqn: W(a_v)}).
\end{proof}

\begin{lemma}\label{lemma: exceptional kill}
	With the notation above, suppose $\omega$ is null-homotopic and $w(C)\in W(a_v)$. Then there is an adjustment of $S$ supported on $C$ and a piece $C'$ that is not a potential disk, such that $w(C)$ becomes $0$ after the adjustment.
\end{lemma}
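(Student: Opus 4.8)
The plan is to exploit the fact that a non-trivial null-homotopic edge loop must backtrack: this produces a valence-one piece (which by Lemma \ref{lemma: backtrack} is automatically not a potential disk), and I will push all of $w(C)$ onto it along an initial arc of $\omega$, using Lemma \ref{lemma: nullpush} to control the domain of the relevant adjustment map.

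First I would check that $\omega$ is non-trivial and locate the backtrack. Since $w(C)\in W(a_v)\subseteq W_v$, the piece $C$ is a potential disk, so by Lemma \ref{lemma: backtrack} it does not have valence one; as its polygonal boundary carries $a_v$ its valence is at least two, so $\omega$ enters and leaves $\hat v_0$ along distinct edges and is non-trivial. A non-trivial null-homotopic edge loop backtracks, so writing $\omega=\hat e_1\cdots\hat e_\ell$ based at $\hat v_0$, let $i$ be minimal with $\hat e_{i+1}=\bar{\hat e}_i$, let $\hat u$ be the common endpoint of $\hat e_i$ and $\hat e_{i+1}$, and let $\hat P=(\hat v_0,\hat e_1,\ldots,\hat e_i,\hat u)$; by minimality of $i$ the path $\hat P$ is reduced. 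By Lemma \ref{lemma: backtrack}, $\hat u$ has valence one, the associated piece $C'$ has a single turn and is not a potential disk; in particular $C'\neq C$, and $\hat u$ occurs in $\hat P$ only at its end.

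Next I would verify that $w(C)\in\Dom\alpha(\hat P)$. Since $\hat P$ is an initial segment of $\omega$, and $\omega$ meets the copies $\hat v_0,\ldots,\hat v_{n-1}$ of $a_v$ in order with each segment between consecutive copies projecting to $P(a_v)$, the projection $\pi(\hat P)$ is an initial segment of $\pi(\omega)=P(a_v)^n$; hence $\pi(\hat P)=P(a_v)^j\cdot Q$ for some $j\ge 0$ and some initial subpath $Q$ of $P(a_v)$, so $\alpha(\hat P)=\tau_Q\circ\tau_{P(a_v)}^{\,j}$. By Lemma \ref{lemma: nullpush}, $\tau_{P(a_v)}$ is the identity on $\Dom\tau_{P(a_v)}$ and $W(a_v)\subseteq\Dom\tau_{P(a_v)}$; a short induction then shows each power $\tau_{P(a_v)}^{\,j}$ is also the identity on $\Dom\tau_{P(a_v)}$. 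Moreover $\Dom\tau_{P(a_v)}\subseteq\Dom\tau_Q$ since $Q$ is an initial subpath of $P(a_v)$ and composition only shrinks the domain. Combining, $w(C)\in W(a_v)\subseteq\Dom\tau_{P(a_v)}$, $\tau_{P(a_v)}^{\,j}(w(C))=w(C)\in\Dom\tau_Q$, hence $w(C)\in\Dom\alpha(\hat P)$.

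Finally I would apply the adjustment along $\hat P$ with $x=-w(C)$. By construction this adjustment is supported on $\hat v_0=C$ and $\hat u=C'$: it changes $w(C)$ by $-w(C)$, so $w(C)$ becomes $0$; it modifies $w(C')$ by the central element $\alpha(\hat P)(-w(C))\in W_{\pi(\hat u)}$, so $C'$ stays outside $W_{\pi(\hat u)}$ and remains not a potential disk; and it leaves all other winding numbers unchanged. This gives the desired adjustment. The step I expect to be the main obstacle is justifying that the adjustment along $\hat P$ is well defined with exactly this net effect even though $\hat P$, being an initial arc of $\omega$, may revisit interior vertices --- this is the same bookkeeping as for the walks $\omega_j,\omega'_j$ in the proof of Lemma \ref{lemma: kill winding}, where one checks that each interior repetition of a vertex contributes zero net change while the two distinct endpoints $C\neq C'$ receive the stated changes --- together with confirming that the reduced initial segment $\hat P$ up to the first backtrack projects to $P(a_v)^j\cdot Q$ as claimed.
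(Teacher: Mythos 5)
Your proof is correct and takes essentially the same route as the paper: both identify a piece $C'$ at a valence-one vertex $\hat u$ where $\omega$ backtracks (which by Lemma \ref{lemma: backtrack} cannot be a potential disk), take the oriented subpath $\omega_0=\hat P$ of $\omega$ from $C$ to $C'$, and use Lemma \ref{lemma: nullpush} to get $W(a_v)\subset\Dom\tau_{P(a_v)}=\Dom\alpha(\omega)\subset\Dom\alpha(\omega_0)$, so the adjustment with $x=-w(C)$ exists. You just unpack more of the bookkeeping---locating the first backtrack explicitly, checking $\hat u$ appears only at the endpoint, and spelling out why $\Dom\alpha(\omega)\subset\Dom\alpha(\hat P)$---which the paper compresses into one line.
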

\begin{proof}
	Let $C'$ be a piece as in Lemma \ref{lemma: backtrack} representing a vertex $\hat{u}$ where $\omega$ backtracks. Let $\omega_0$ be the positively oriented subpath of $\omega$ going from $C$ to $C'$. Considering $\omega$ as a cycle based at $C$, we have $\Dom\alpha(\omega)\subset\Dom\alpha(\omega_0)$ for the adjustment maps. Thus $w(C)\in W(a_v)\subset \Dom\alpha(\omega_0)$ by Lemma \ref{lemma: nullpush} and the claimed adjustment exists.
\end{proof}

\begin{definition}\label{def: disk-like}
	Given the choices of $W(a_v)$ for all arcs $a_v\in A_v$, a piece $C$ of $S_v$ is \emph{a disk-like piece} if there is an arc $a_v\in A_v$ on the polygonal boundary of $C$ such that $w(C)\in W(a_v)$. Let $$\what{\chi}(C)\defeq \left\{\begin{array}{ll}
	1	&\text{if }$C$\text{ is disk-like},\\
	0	&\text{otherwise}
	\end{array}\right.$$
	be the \emph{over-counting Euler characteristic} that counts disk-like pieces as disks. Let $\what{\chi}_o(C)\defeq -\frac{1}{4}\#\text{corners}+\what{\chi}(C)$ be the \emph{over-counting orbifold Euler characteristic}. For a simple relative admissible surface $S$, define its \emph{over-counting Euler characteristic} as $$\what{\chi}(S)\defeq\sum \what{\chi}_o(C),$$ where the sum is taken over all pieces $C$. Equivalently, 
	$$\what{\chi}(S)=\chi(S)+\#\{C: \text{disk-like but not a disk}\}.$$
\end{definition}

Now we show that the over-counting is accurate via asymptotic promotion.
\begin{lemma}[Asymptotic Promotion]\label{lemma: asym prom}
	Let $G$ be a graph of groups $\mathcal{G}(\Gamma, \{G_v\}, \{G_e\})$ where
	\begin{enumerate}
		\item $\Gamma$ is locally finite,
		\item $\scl_{G_v}\equiv0$, and
		\item the images of edge groups in each vertex group are central and mutually commensurable.
	\end{enumerate}
	Let $\ug$ be a finite collection of hyperbolic elements of $G$. With the notion of disk-like pieces and $\what{\chi}$ above (which depends on the choices of $W(a_v)$), we have the following: For any simple relative admissible surface $S$ for a rational chain $c=\sum r_i g_i$ of degree $n$, and for any $\epsilon>0$, there is a simple relative admissible surface $S'$ of a certain degree $n'$ such that
	$$\frac{-\chi(S')}{2n'}\le \frac{-\what{\chi}(S)}{2n}+\epsilon.$$
\end{lemma}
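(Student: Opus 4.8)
The plan is to take a simple relative admissible surface $S$ in simple normal form and, by passing to a suitable finite cover $\widetilde{S}$ and performing the adjustments of winding numbers developed earlier in the section, turn a definite proportion of each disk-like-but-not-a-disk annulus piece into genuine disk pieces, at no cost to Euler characteristic. The key point is that $\what{\chi}(S) = \chi(S) + \#\{C : \text{disk-like but not a disk}\}$, so each such piece must be promoted to roughly $1-\delta$ of a disk (per unit degree) in order to make the over-counting accurate in the limit.

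First I would enumerate the disk-like-but-not-a-disk pieces $C^{(1)},\ldots,C^{(N)}$ of $S$. For each one, choose an arc $a_v$ on its polygonal boundary witnessing $w(C^{(s)})\in W(a_v)$, and let $B^{(s)}$ be the boundary component of $S$ through the relevant copy of $a_v$, with associated cycle $\omega^{(s)}$ in $\Gamma_S$. There are two cases. If $\omega^{(s)}$ is \emph{not} null-homotopic in $\Gamma_S$, then $\Gamma_S$ admits finite covers in which a chosen lift of $\omega^{(s)}$ has arbitrarily large degree $m$; by Lemma \ref{lemma: lift and kill}, after the covering and an adjustment supported on the $m$ lifts of $C^{(s)}$, all but one of these lifts has trivial winding number, hence becomes an honest disk (since $S$ is in simple normal form and we can replace loop boundaries that become null-homotopic by disks). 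If $\omega^{(s)}$ \emph{is} null-homotopic, then by Lemma \ref{lemma: exceptional kill} a single adjustment supported on $C^{(s)}$ and a valence-one piece $C'$ (which is not a potential disk, so was never counted by $\what{\chi}$ as a disk) makes $w(C^{(s)})=0$ outright, so $C^{(s)}$ becomes a disk with no covering needed; here the adjustment only modifies the winding number of $C'$, which remains non-disk-like, so $\what{\chi}$ is unaffected.

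The subtlety is that these adjustments, and especially the finite covers, must be performed \emph{simultaneously} for all $s$, and each adjustment changes some winding numbers (namely $w(C_0)$, the winding number of the surviving lift, in the non-null-homotopic case) — but crucially it never changes winding numbers of pieces \emph{other than} those in its support, and the support of the $s$-th adjustment is contained in the lifts of $C^{(s)}$. So I would take $\widetilde{\Gamma}_S$ to be a common finite cover of $\Gamma_S$ realizing large lift-degree $m$ for each of the finitely many non-null-homotopic cycles $\omega^{(s)}$ at once (residual finiteness of $\pi_1(\Gamma_S)$, a finite graph, makes this routine); pass to the corresponding cover $\widetilde{S}$; then run the adjustments of Lemmas \ref{lemma: lift and kill} and \ref{lemma: exceptional kill} one $s$ at a time. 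Since disk-like-ness of the untouched lifts and of all other pieces is unchanged, and since $-\chi$ is multiplicative under the cover $\widetilde{S}\to S$ of degree $D$ while we have converted, for each $s$ of the first type, $(m-1)$ of the $m$ lift-copies of $C^{(s)}$ into disks and, for each $s$ of the second type, all $D/(\text{period})$ copies into disks, we get
\[
-\chi(\widetilde{S}) \;\le\; D\bigl(-\what{\chi}(S)\bigr) + \frac{D}{m}\,\#\{s:\text{first type}\}.
\]
Dividing by $2n' = 2nD$ and choosing $m$ large enough that $\frac{1}{m}\#\{s:\text{first type}\}/(2n) < \epsilon$ gives the claimed inequality with $S' = \widetilde{S}$ (after replacing the newly null-homotopic loop boundaries by disks, which only decreases $-\chi$).

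The main obstacle I anticipate is the bookkeeping in the \emph{simultaneous} step: verifying that performing the $s$-th adjustment does not disturb the work already done for $s' < s$, i.e.\ that the supports interact correctly and that a piece which was made into a disk is not later re-endowed with a nontrivial winding number by a subsequent adjustment. This is handled by the observation that an adjustment supported on a set of pieces changes \emph{only} the winding numbers of pieces in that set, together with the fact that the surviving lift $\widetilde{C}_0^{(s)}$ — the one piece whose winding number genuinely changes — is itself disk-like (its arc $a_v$ still witnesses membership in $W(a_v)$ after the change, since $W(a_v)$ is a subgroup), so $\what{\chi}(S)$ truly is unaffected throughout. A secondary point requiring care is that the adjustment in the null-homotopic case modifies a valence-one piece $C'$; one should check $C'$ stays non-potential-disk, which follows because its single arc $a'_u$ has $w(a'_u)\notin t_e(G_e)$ by tightness of $\uga$, and adding an element of $W_u$ to a winding number that already fails to lie in $t_e(G_e)$ may or may not land in $t_e(G_e)$ — so here one should instead note that $C'$ has only one turn, hence a loop boundary that is automatically non-trivial, and re-examine whether ``potential disk'' can be created; the correct statement (implicit in Lemma \ref{lemma: backtrack}) is that a valence-one piece is never disk-like because disk-like-ness requires an arc witness and the adjusted winding number still need not lie in any $W(a_u)$ — I would spell this out carefully, as it is the one place the argument is genuinely delicate.
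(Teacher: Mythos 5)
Your proposal follows the same line of argument as the paper's proof: handle the null-homotopic $\omega(C)$ cases first by a single adjustment (Lemma \ref{lemma: exceptional kill}), then use residual finiteness of $\pi_1(\Gamma_S)$ to produce a common finite cover in which each remaining $\omega(C)$ has all lifts of degree at least some large $N$, apply Lemma \ref{lemma: lift and kill} on each such lift to convert all but one preimage of each $C$ into a disk, and combine the two bookkeeping facts ($\widehat{\chi}$ preserved under exceptional adjustment; a $1-1/N$ fraction of each remaining disk-like piece becomes a disk) into the desired inequality. Your observation that the adjustments have disjoint (net) supports --- and that only the winding numbers of pieces in the support can change --- is exactly what makes the simultaneous execution work; the paper does not belabor this but it is the right thing to check.

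The one place your writeup wavers is the verification that the exceptional adjustment does not disturb $\widehat{\chi}(S)$; you correctly flag it as the delicate point, but your discussion drifts between the condition $w(a'_u)\notin t_e(G_e)$ (a statement about the \emph{arc}) and the condition needed, namely that the \emph{piece} $C'$ remains a non-potential disk. The clean resolution is uniform: Lemma \ref{lemma: backtrack} asserts $C'$ is not a potential disk, i.e.\ $w(C')\notin W_u$, and the adjustment of Lemma \ref{lemma: exceptional kill} changes $w(C')$ by the value of an adjustment map, which lands in $W_u$; since $W_u$ is a subgroup, $w(C')+W_u$ is disjoint from $W_u$, so $C'$ remains a non-potential disk, hence non-disk-like, hence was not counted by $\widehat{\chi}$ before or after. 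This is the point you should ``spell out carefully,'' and it is short once phrased at the level of $W_u$ rather than $t_e(G_e)$.
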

\begin{proof}
	For each disk-like piece $C$, say a component of $S_v$, fix an arc $a_v(C)\in A_v$ on the polygonal boundary of $C$ such that $w(C)\in W(a_v(C))$. Let $\omega(C)$ be the oriented cycle in $\Gamma_S$ determined by the boundary component of $S$ containing the copy of $a_v(C)$ on $\bdry C$.
	
	For those pieces $C$ with null-homotopic $\omega(C)$, by applying the adjustment as in Lemma \ref{lemma: exceptional kill}, we assume $w(C)=0$ and $C$ is a genuine disk.
	
	For any given $\epsilon>0$, choose a large integer $N>\frac{1}{2n\epsilon}\#\{C: \text{disk-like and } w(C)\neq0\}$. For the finite collection of non-trivial loops $\Omega\defeq\{\omega(C): C\text{ disk-like and } w(C)\neq0\}$, since free groups are residually finite, there is a finite cover $\wtilde{\Gamma}_S$ of $\Gamma_S$ such that any lift of each $\omega\in \Omega$ has degree at least $N$. Let $M$ be the covering degree.
	
	Let $\wtilde{S}$ be the finite cover of $S$ corresponding to $\widetilde{\Gamma}_S$. Applying the adjustment in Lemma \ref{lemma: lift and kill} to each lift of each $\omega\in \Omega$, we observe that, for every disk-like piece $C$ with $w(C)\neq0$, at least $M(1-1/N)$ of its $M$ preimages in $\wtilde{S}$ after adjustment bounds a disk.
	
	Denote by $S'$ the surface obtained by the adjustment above from $\wtilde{S}$. Then $S'$ is simple relative admissible of degree $n'=Mn$. We have
	\begin{eqnarray*}
	\frac{-\chi(S')}{2n'}&\le&\frac{-\chi(\wtilde{S})-M(1-1/N)\#\{C: \text{disk-like and } w(C)\neq0\}}{2nM}\\
	&=&\frac{-\chi(S)-(1-1/N)\#\{C: \text{disk-like and } w(C)\neq0\}}{2n}\\
	&=&\frac{-\what{\chi}(S)}{2n}+\frac{\#\{C: \text{disk-like and } w(C)\neq0\}}{2nN}\\
	&<&\frac{-\what{\chi}(S)}{2n}+\epsilon.
	\end{eqnarray*}
\end{proof}



With Lemma \ref{lemma: asym prom} above, we can work with disk-like pieces and $\what{\chi}$ instead of genuine disks and $\chi$. The advantage is that deciding whether a piece is disk-like or not is equivalent to checking whether an element in a finite abelian group vanishes or not, which only requires keeping track of a finite amount of information.

\section{Determining scl by linear programming}\label{sec: scl by linprog}
With the help of Lemma \ref{lemma: asym prom}, there are several known methods of encoding to compute scl via linear programming \cite{Cal:freegroup PQL, BCF, CFL, Cal:sss, Wal:scylla}. In this section, we use an encoding similar to those in \cite{Cal:sss,Chen:sclfp} to optimize our rationality result. We will use the notation from Section \ref{sec: asym prom} and the setup in Lemma \ref{lemma: asym prom}. We further assume the graph $\Gamma$ to be \emph{finite} for our discuss until Theorem \ref{thm: rational}, where we use restriction of domain (Proposition \ref{prop: restriction of domain}).

Given the choices of the finite index subgroups $W(a_v)$ of $W_v$ satisfying (\ref{eqn: W(a_v)}) for all arcs $a_v\in A_v$ for any vertex $v$, let $D_v\defeq\cap_{a_v\in A_v}W(a_v)$ be the intersection, which is also finite index in $W_v$. For each edge $e$ of $\Gamma$, let $D_e\defeq o_{e}^{-1}D_{o(e)} \cap t_{e}^{-1}D_{t(e)}$ and $W_e\defeq G_e/D_e$. Then $W_e$ is a finite abelian group with induced homomorphisms $\overline{o_{e}}: W_e\to W_{o(e)}/D_{o(e)}$ and $\overline{t_{e}}: W_e\to W_{t(e)}/D_{t(e)}$.

We are going to consider all abstract pieces that potentially appear as a component of $S_v$ for some simple relative admissible surface $S$ for some rational chain supported on finitely many hyperbolic elements $\ug=\{g_i: i\in I\}$ represented by tight loops $\uga=\{\gamma_i: i\in I\}$. Thus we extend some previous definitions as follows.

\begin{definition}
	For each vertex $v$, \emph{a piece $C$ at $v$} is a surface with corners in the thickened vertex space $N(X_v)$ satisfying the following properties:
	\begin{enumerate}
		\item $C$ has a unique polygonal boundary where edges alternate between arcs in $A_v$ and turns connecting them, where a turn (with the induced orientation) going from $a_v$ to $a'_v$ is an arc supported on $X_e$ connecting the terminus point of $a_v$ and initial point of $a'_v$.
		\item $C$ is either a disk or an annulus depending on its winding number $w(C)$ as follows, where the winding number is the conjugacy class in $G_v$ represented by the polygonal boundary of $C$.
		\begin{enumerate}
			\item $C$ is a disk if $w(C)$ is trivial in $G_v$. The map in the interior of $C$ is a null-homotopy of its polygonal boundary in $N(X_v)$.
			\item $C$ is an annulus if its winding number $w(C)$ is non-trivial, where the other boundary has no corners and represents a loop in $X_v$ homotopic to the polygonal boundary in $N(X_v)$, and the homotopy gives the interior of $C$.
		\end{enumerate}
	\end{enumerate}
	As before, we define a piece $C$ at $v$ to be \emph{disk-like} if there is an arc $a_v\in A_v$ on the polygonal boundary of $C$ such that $w(C)\in W(a_v)$.
\end{definition}

For each vertex $v$ of $\Gamma$, let $T_v$ be the set of triples $(a_v,\bar{w},a'_v)$ where $a_v,a'_v\in A_v$ are arcs with terminus point of $a_v$ and initial point of $a'_v$ on a common edge space $X_e$, and $\bar{w}\in W_e$. Note that $T_v$ is a \emph{finite} set since there are finitely many pairs of $(a_v,a'_v)$ as above and $\bar{w}$ lies in a finite group given each such a pair. We use such a triple to record a turn from $a_v$ to $a'_v$ with winding number in the coset $\bar{w}$.

Consider a piece $C$ at vertex $v$ with $m$ turns. For each turn, say supported on $X_e$ and going from $a_v$ to $a'_v$ with winding number $w\in G_e$, the triple $(a_v,\bar{w},a'_v)$ is in $T_v$, where $\bar{w}$ is the image of $w$ in the quotient $W_e$. Regard this triple as a basis vector in $\R^{T_v}$ and let $x(C)\in \R^{T_v}$ be the sum of such triples over the $m$ turns of $C$. 

Let $\bdry: \R^{T_v}\to \R^{A_v}$ be the rational linear map given by $\bdry(a_v,\bar{w},a'_v)\defeq a'_v-a_v$ for all $(a_v,\bar{w},a'_v)\in T_v$. Then $\bdry(x(C))=0$ as the polygonal boundary of $C$ closes up.

There is a pairing on the set $\cup_v T_v$ similar to the pairing of turns (Figure \ref{fig: pair}). For each triple $(a_v,\bar{w},a'_v)\in T_v$ with $\bar{w}\in W_e$, there is a unique triple $(a_u,-\bar{w},a'_u)\in T_u$ such that $a_v$ and $a_u$ are followed by $a'_u$ and $a'_v$ in $\uga$ respectively, where $u$ is necessarily the vertex adjacent to $v$ via $e$. We say such two triples are \emph{paired}.

Denote the convex rational polyhedral cone $\R_{\ge0}^{T_v}\cap\ker\bdry$ by $\mathcal{C}_v$. For any simple relative admissible surface $S$ for any rational chain supported on $\uga$, let $x(S_v)=\sum x(C)$, where the sum is taken over all pieces $C$ of $S_v$. Then $x(S_v)$ is an integer point in $\mathcal{C}_v$ and paired turns are encoded by paired triples. Let $x(S)\in\prod_v \mathcal{C}_v$ be the element with $v$-coordinate $x(S_v)$.

Let $\mathcal{C}(\uga)$ be the subspace of $\prod_v \mathcal{C}_v$ consisting of points satisfying the \emph{gluing condition} which we now describe. This is the analog of (\ref{eqn: gluing condition}) on $\prod_v \mathcal{C}_v$. For each triple $(a_v,\bar{w},a'_v)\in T_v$, let $\#_{(a_v,\bar{w},a'_v)}$ be the linear function taking the $(a_v,\bar{w},a'_v)$ coordinate. We say $x\in\mathcal{C}(\uga)$ satisfies the gluing condition if $\#_{(a_v,\bar{w},a'_v)}(x)=\#_{(a_u,-\bar{w},a'_u)}(x)$ for all paired triples $(a_v,\bar{w},a'_v)$ and $(a_u,-\bar{w},a'_u)$. It follows that $\mathcal{C}(\uga)$ is a rational polyhedral cone.

For each $\gamma_i\in\uga$, fix an arc $a_i$ in some $A_v$ supported on $\gamma_i$ and let
$\#_{\gamma_i}=\sum \#_{(a_i,w,a'_v)}$, where the sum is taken over all triples in $T_v$ starting with $a_i$. For any $x\in\mathcal{C}(\uga)$, the gluing condition implies $\#_{\gamma_i}(x)$ is independent of the choice of arc $a_i$ on $\gamma_i$. Roughly speaking, the rational linear function $\#_{\gamma_i}(x)$ counts how many times $x$ winds around $\gamma_i$.

For $\bm{r}=(r_i)\in \Q^I_{\ge0}$ ($I$ is the index set of $\uga$), let $c(\bm{r})$ be the rational chain $\sum r_i g_i$, and let $\mathcal{C}(\bm{r})$ be the set of $x\in\mathcal{C}(\uga)$ satisfying the \emph{normalizing condition} $\#_{\gamma_i}(x)=r_i$ for all $i\in I$. Let $h(\bm{r})$ be the homology class of $c(\bm{r})$ under the projection $H_1(G;\R)\to H_1(\Gamma;\R)$. Then $h$ is a rational linear map.

\begin{lemma}\label{lemma: space nonempty}
	The set $\mathcal{C}(\bm{r})$ is nonempty if and only if $\bm{r}\in \ker h$, in which case, it is a compact rational polyhedron depending piecewise linearly on $\bm{r}$.
\end{lemma}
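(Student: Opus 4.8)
The plan is to analyze the defining linear conditions on $\mathcal{C}(\bm{r})$ as a subset of the finite-dimensional space $\prod_v \R^{T_v}$ and check the three assertions (nonemptiness iff $\bm{r}\in\ker h$, compactness, piecewise linear dependence) in turn. Recall that $\mathcal{C}(\bm{r})$ is cut out of the cone $\prod_v \mathcal{C}_v = \prod_v(\R_{\ge0}^{T_v}\cap\ker\bdry)$ by two families of rational linear equations: the gluing condition $\#_{(a_v,\bar w,a_v')}(x)=\#_{(a_u,-\bar w,a_u')}(x)$ over all paired triples, and the normalizing condition $\#_{\gamma_i}(x)=r_i$ for $i\in I$. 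Since all of these are rational linear (in)equalities, $\mathcal{C}(\bm{r})$ is automatically a rational polyhedron whenever it is nonempty; the content is the nonemptiness criterion and compactness.

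For the ``only if'' direction of nonemptiness: if $x\in\mathcal{C}(\bm{r})$, then $x$ assembles (at least virtually, up to scaling to an integral point, as in the normal-form discussion of Section \ref{sec: surf in gog}) into a surface configuration whose boundary represents $c(\bm r)$; since a relative admissible surface relative to vertex groups forces the $\Gamma$-homology class of the chain to vanish (this is exactly the condition $[c]\in\ker(H_1(G;\R)\to H_1(\Gamma;\R))$ under which Lemma \ref{lemma: simple normal form} operates), we get $h(\bm r)=0$. More directly: the gluing condition makes the collection of turn-counts ``balance'' across every edge space of $\Gamma$, so summing the $\gamma_i$-weights around $\Gamma$ gives a $1$-cycle in $\Gamma$ that must be a boundary — equivalently $h(\bm r)=0$. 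I would phrase this cleanly by observing that the composite map $\mathcal{C}(\uga)\to\Q^I$, $x\mapsto(\#_{\gamma_i}(x))$, lands in $\ker h$: each triple contributes its projected edge of $\Gamma$ to an explicit $\Z$-valued $1$-chain on $\Gamma$, and the gluing relations say exactly that this $1$-chain is a cycle, while its homology class equals $h$ applied to the weight vector.

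For the ``if'' direction I would exhibit an explicit point of $\mathcal{C}(\bm r)$ when $\bm r\in\ker h$. Given $\bm r\in\ker h$, pick an actual (real, then rescaled) relative admissible surface for $c(\bm r)$ — these exist because $c(\bm r)\in B_1^H(G,\{G_v\})$ by the homology calculation — put it in simple normal form via Lemma \ref{lemma: simple normal form}, and take $x(S)$; this lies in $\mathcal{C}(\bm r)$ by construction. Alternatively, and more self-containedly, one can build a point directly: choose a $2$-chain in $\Gamma$ filling the cycle determined by $\bm r$, which amounts to choosing edge-flows $\beta_e$ as in Lemma \ref{lemma: homological computation}, and then distribute the required turn counts consistently; the central/commensurability hypotheses guarantee the winding-number cosets in the finite groups $W_e$ can be chosen to match up. I expect the cleanest write-up routes through the already-proved existence of relative admissible surfaces rather than redoing the homological bookkeeping.

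Compactness is the step I expect to require the most care. The cone $\mathcal{C}(\uga)$ is generally unbounded, so compactness of $\mathcal{C}(\bm r)$ must come from the normalizing equations $\#_{\gamma_i}(x)=r_i$. The key point is that every nonnegative coordinate of $x$ is bounded by $\sum_i r_i$: each triple $(a_v,\bar w,a_v')$ lies on the polygonal boundary of some piece and its count is at most the total number of arcs along all boundary components, which is controlled by $\sum_i\#_{\gamma_i}(x)=\sum_i r_i$. Concretely, for a fixed $v$ and fixed first arc $a_v$, the sum of coordinates $\#_{(a_v,\bar w,a_v')}(x)$ over all $\bar w,a_v'$ equals $\#_{\gamma_{i}}(x)=r_{i}$ where $\gamma_i$ carries $a_v$ (using the gluing condition to identify this with the $\#_{\gamma_i}$ functional); since all coordinates are $\ge 0$, each is $\le r_i\le\sum_j r_j$. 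Hence $\mathcal{C}(\bm r)$ is a bounded rational polyhedron. Finally, piecewise linear dependence on $\bm r$ is a standard fact about parametrized polyhedra whose defining inequalities are fixed and whose equality right-hand sides depend linearly on $\bm r$: over the rational polyhedral cone $\ker h\cap\Q^I_{\ge 0}$ of admissible parameters, $\mathcal{C}(\bm r)$ varies as the fiber of a projection of a fixed rational polyhedron, so its vertices (and the whole set) depend piecewise linearly on $\bm r$; I would cite this as the elementary linear-programming fact that it is, the main obstacle having been the uniform coordinate bound establishing compactness.
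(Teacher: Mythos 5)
Your proposal is correct and follows essentially the same route as the paper: nonemptiness comes from the homology computation (\ref{eqn: homology of G}) via (relative admissible) surfaces encoded in simple normal form, compactness comes from the observation that the normalizing equations $\#_{\gamma_i}(x)=r_i$ together with nonnegativity bound every coordinate, and piecewise linear dependence is the standard parametrized linear programming fact. You spell out the coordinate bound and the parameter dependence in slightly more detail than the paper, and you sketch an alternative purely combinatorial way to see the ``only if'' direction (the turn counts assemble into a $1$-cycle on $\Gamma$ whose class is $h(\bm r)$), but these are refinements of the same argument rather than a different approach.
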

\begin{proof}
	The chain $c(\bm{r})$ bounds an admissible surface relative to vertex groups if and only if $h(\bm{r})=0$, according to the computation (\ref{eqn: homology of G}). Whenever a relative admissible surface exists, it can be simplified to one in simple normal form, which is encoded as a vector in $\mathcal{C}(\bm{r})$. Then in this case, each $\mathcal{C}(\bm{r})$ is the intersection of a rational polyhedral cone $\mathcal{C}(\uga)$ with a rational linear subspace $\cap_i\{x: \#_{\gamma_i}(x)=r_i\}$ that depends linearly on $\bm{r}$. Thus $\mathcal{C}(\bm{r})$ is a closed rational polyhedron depending piecewise linearly on $\bm{r}$. It is compact since the equations $\#_{\gamma_i}(x)=r_i$ impose upper bounds on all coordinates.
\end{proof}

\begin{definition}
	For each vertex $v$, an integer point $d\in\mathcal{C}_v$ is a \emph{disk-like vector} if $d=x(C)$ for some disk-like piece $C$ at $v$. Let $\mathcal{D}(v)$ be the set of disk-like vectors.
	
	For any $x\in \mathcal{C}_v$, we say $x=x'+\sum t_j d_j$ is an \emph{admissible expression} if $x'\in \mathcal{C}_v$, $d_j\in \mathcal{D}(v)$ and $t_j\ge0$. Define
	$$\kappa_v(x)\defeq \sup\left\{\sum t_j\ |\ x=x'+\sum t_j d_j \text{ is an admissible expression}\right\}.$$
\end{definition}

The key point of our encoding and the definitions of $D_v,D_e,W_e$ is to have enough information on the winding numbers of turns to tell whether a piece is disk-like.
\begin{lemma}\label{lemma: enough info}
	For each $\bar{w}\in W_e$, fix an arbitrary lift $w\in G_e$. Then any disk-like vector $d\in \mathcal{C}_v$ can be realized as a disk-like piece $C$ at $v$ such that every turn from $a_v$ to $a'_v$ on the polygonal boundary of $C$ representing a triple $(a_v,\bar{w}, a'_v)$ has winding number $w$.
\end{lemma}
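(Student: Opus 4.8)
The plan is to take a disk-like piece $C_0$ at $v$ with $x(C_0)=d$ — one exists by the very definition of a disk-like vector — and to rebuild from it a piece $C$ with the \emph{same polygonal-boundary pattern}, i.e.\ the same cyclic sequence of arcs of $A_v$ and of turns on the same edge spaces, but with the winding number of each turn replaced by the prescribed lift $w\in G_e$ of its coset $\bar w\in W_e$. The first thing to check is that this rebuilding produces a legitimate piece: replacing a turn winding number by another element of the same edge group does not change which arcs it joins, so the cyclic word of arcs and turns closes up to a loop in $N(X_v)$ exactly as for $C_0$; fill that loop inside $N(X_v)$ by a disk if it is null-homotopic there, and otherwise by an annulus whose free boundary is a loop in $X_v$ homotopic to it in $N(X_v)$, as in the definition of a piece at $v$. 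By construction $x(C)=x(C_0)=d$ and every turn of $C$ carries the prescribed winding number, so the only remaining point is that $C$ is again disk-like.

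The heart of the argument is a comparison of the winding numbers $w(C)$ and $w(C_0)$. Reading off the polygonal boundary as a based loop in $X_v$ and discarding the basepoint, each of these is the conjugacy class of a product, in the common cyclic order, of the arc winding numbers $w(a_v^{(j)})\in G_v$ interleaved with the turn contributions $\iota_{e_j}(w_j)$, where $w_j\in G_{e_j}$ is the winding number of the $j$-th turn and $\iota_{e_j}\colon G_{e_j}\to G_v$ is the relevant edge inclusion ($o_{e_j}$ or $t_{e_j}$, according to the orientation of $e_j$ at $v$). Since each image $\iota_{e_j}(G_{e_j})$ lies in $W_v$, which is central in $G_v$, the turn contributions commute past everything, so up to conjugacy $w(C_0)=a_0\,z_0$ and $w(C)=a_0\,z$, where $a_0\in G_v$ is the product of the arc winding numbers — identical for $C$ and $C_0$ — and $z_0,z\in W_v$ are the products of the $\iota_{e_j}(w_j)$, respectively of the $\iota_{e_j}(w'_j)$, with $w'_j\in G_{e_j}$ the prescribed lift of $\bar w_j$. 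For each $j$ the elements $w'_j$ and $w_j$ represent the same coset of $W_{e_j}=G_{e_j}/D_{e_j}$, hence differ by an element of $D_{e_j}$; and since $D_{e_j}=o_{e_j}^{-1}D_{o(e_j)}\cap t_{e_j}^{-1}D_{t(e_j)}$, the inclusion $\iota_{e_j}$ carries $D_{e_j}$ into $D_v$. Combining these over all $j$ in the abelian group $W_v$ gives $z=z_0\,\delta$ for some $\delta\in D_v$, so that multiplying any representative of $w(C_0)$ by the central element $\delta\in D_v$ yields a representative of $w(C)$.

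To finish, disk-likeness of $C_0$ furnishes an arc $a_v\in A_v$ on its polygonal boundary — hence on that of $C$ — with $w(C_0)\in W(a_v)$; since $W(a_v)\le W_v$ consists of central elements, $w(C_0)$ is the class of a single element $\rho\in W(a_v)$, and by the previous paragraph $w(C)$ is the class of $\rho\,\delta$ with $\delta\in D_v=\cap_{a'_v\in A_v}W(a'_v)\subseteq W(a_v)$, so $\rho\,\delta\in W(a_v)$ and $w(C)\in W(a_v)$. Thus $C$ is disk-like, as required. I expect the only genuine obstacle to be the bookkeeping of the central turn contributions — this is exactly where centrality of $W_v$ is used, to split each winding number into an ``arc part'' times a central ``turn part'' — together with the containment $\iota_e(D_e)\subseteq D_v$, which the definition of $D_e$ is precisely engineered to supply; the rest is a direct unwinding of the definitions of a piece, of $x(C)$, and of $W_e$.
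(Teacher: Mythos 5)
Your proof is correct and takes essentially the same approach as the paper's: both start from a disk-like piece $C_0$ realizing $d$, swap the turn winding numbers for the fixed lifts, and observe that the resulting change to $w(C_0)$ lies in $D_v\subseteq W(a_v)$ because $o_e(D_e),t_e(D_e)\subseteq D_v$ by the very definition of $D_e$. The only stylistic difference is that you perform all substitutions at once and track the cumulative change $\delta\in D_v$, whereas the paper performs them one at a time, noting each individual change preserves disk-likeness.
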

\begin{proof}
	By definition, there is some disk-like piece $C_0$ at $v$ realizing the given disk-like vector $d$. Thus, for some $a_{v,0}\in A_v$, the winding number $w(C_0)$ lies in $W(a_{v,0})\subset W_v$, which is central in $G_v$. Suppose $C_0$ contains a turn from $a_v$ to $a'_v$ supported on the edge space $X_e$ representing a triple $(a_v,\bar{w}, a'_v)$ with actual winding number $w_0\in G_e$ and $e=e_{out}(a_v)$. Then $w-w_0\in D_e$ and changing the winding number of the turn from $w_0$ to $w$ would change the winding number $w(C_0)$ of the piece $C_0$ by $o_e(w-w_0)\in D_v$. Since $D_v\subset W(a_{v,0})$, such a change preserves the property of being disk-like. After finitely many such changes, we modify $C_0$ to a disk-like piece $C$ with the desired winding numbers of turns.
\end{proof}

Any admissible surface $S$ naturally provides an admissible expression for $x(S_v)$ by sorting out disk-like pieces among components of $S_v$. Hence $\kappa_v(x(S_v))$ is no less than the number of disk-like pieces in $S_v$.

For each vector $x\in \mathcal{C}_v\subset \R_{\ge0}^{T_v}$, denote its $\ell^1$-norm by $|x|$. Then $|\cdot|$ on $\mathcal{C}_v$ coincides with the linear function taking value $1$ on each basis vector. Thus $\sum_v |x(S_v)|$ is the total number of turns in $S$, which is twice the number of corners.

Recall from Definition \ref{def: disk-like} that $\what{\chi}(S)$ is the over-counting Euler characteristic that counts disk-like pieces as disks in a simple relative admissible surface $S$.

\begin{lemma}\label{lemma: overcount accurate}
	Fix any $\bm{r}$ as above. 
	\begin{enumerate}
		\item For any $S$ simple relative admissible for $c(\bm{r})$ of degree $n$, we have $x(S)/n \in \mathcal{C}(\bm{r})$ and
		$$\frac{-\what{\chi}(S)}{2n(S)}\ge\sum_v \frac{1}{4}|x(S_v)/n|-\sum_v \frac{1}{2}\kappa_v(x(S_v)/n).$$
		\item For any rational point $x=(x_v)\in \mathcal{C}(\bm{r})$ and any $\epsilon>0$, there is a simple relative admissible surface $S$ for $c(\bm{r})$ of a certain degree $n$ such that $x(S_v)/n=x_v$ and $$\frac{-\what{\chi}(S)}{2n}\le\sum_v \frac{1}{4}|x_v|-\sum _v\frac{1}{2}\kappa_v(x_v)+\epsilon.$$
	\end{enumerate}
\end{lemma}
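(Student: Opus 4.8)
The statement has two halves, and each is a bookkeeping translation between the geometry of simple relative admissible surfaces and the linear-algebra of the cone $\mathcal{C}(\bm{r})$. I would begin with part (1), which is essentially an accounting of Euler characteristic piece by piece. Given $S$ in simple normal form of degree $n$, the fact that $x(S)/n \in \mathcal{C}(\bm{r})$ is immediate from the construction of $x(S_v)$: the gluing condition (\ref{eqn: gluing condition}) on paired turns becomes the linear gluing condition defining $\mathcal{C}(\uga)$ when recorded mod $D_e$, the boundary relation $\bdry x(S_v)=0$ holds because each polygonal boundary closes up, and the normalizing condition $\#_{\gamma_i}(x(S))/n = r_i$ records that $S$ is admissible for $c(\bm{r})$ of degree $n$. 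For the inequality: every piece $C$ of $S_v$ is either a disk or an annulus, so $\chi(C) \le \chi_o(C) = -\tfrac14\#\text{corners}(C) + [C \text{ is a disk}]$, and hence $\what{\chi}_o(C) = -\tfrac14\#\text{corners}(C) + \what{\chi}(C)$ with $\what{\chi}(C)=1$ exactly when $C$ is disk-like. Summing over all pieces, $-\what{\chi}(S) = \sum_v \tfrac14 |x(S_v)| - \#\{\text{disk-like pieces in }S\}$, using that the total number of corners is $\tfrac12\sum_v|x(S_v)|$. Since the decomposition of each $x(S_v)$ into its disk-like components is an admissible expression, the number of disk-like pieces in $S_v$ is $\le \kappa_v(x(S_v))$, and dividing by $2n$ (noting $\kappa_v$ is positively homogeneous so $\kappa_v(x(S_v))/n = \kappa_v(x(S_v)/n)$) gives the claimed lower bound.

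Part (2) is the reverse construction, and this is where the real work — and the main obstacle — lies. Given a rational $x = (x_v) \in \mathcal{C}(\bm{r})$, I would first scale by a large integer $n$ so that $nx_v$ is integral for all $v$, and then choose, for each $v$, a near-optimal admissible expression $nx_v = x_v' + \sum_j t_j d_j$ with $t_j$ integers (possible after a further integer rescaling, using rationality of the polyhedron of admissible expressions) and $\sum_j t_j$ within any prescribed tolerance of $n\kappa_v(x_v)$. Each disk-like vector $d_j$ is realized, by Lemma \ref{lemma: enough info}, by a disk-like piece whose turns have the \emph{fixed chosen lifts} $w$ of each $\bar w \in W_e$ as their genuine winding numbers; and any integer vector in $\mathcal{C}_v$ can be realized by a collection of annulus pieces with turns again carrying these same fixed lifts (here one uses that $\bdry$ of the vector vanishes to chop it into closed polygonal boundaries, then caps each off as an annulus). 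Because all turns across the whole surface now use the same fixed lift $w$ for each coset $\bar w$, paired triples $(a_v,\bar w, a_v')$ and $(a_u,-\bar w, a_u')$ carry genuinely inverse winding numbers $w$ and $-w$ (note $-w$ is the chosen lift of $-\bar w$ provided we arrange the lifts to be compatible with negation, which costs nothing), so the pieces glue along paired turns to form an honest relative admissible surface $S$ with $x(S_v) = nx_v$; the gluing condition $x \in \mathcal{C}(\uga)$ guarantees the turn counts match. Finally $-\what{\chi}(S) = \sum_v \tfrac14|nx_v| - \#\{\text{disk-like pieces}\} \le \sum_v \tfrac14 |nx_v| - n\sum_v \kappa_v(x_v) + n\epsilon'$ by choice of the admissible expressions, and dividing by $2n$ yields the bound.

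The delicate point — the step I expect to fight with — is the \textbf{compatibility of the chosen lifts across pairings}: one must fix a single system of coset representatives $w \in G_e$ for $\bar w \in W_e$ that is \emph{consistent under $\bar w \mapsto -\bar w$} (i.e. the chosen lift of $-\bar w$ is the negative of the chosen lift of $\bar w$) so that paired triples produced by Lemma \ref{lemma: enough info} actually glue, and simultaneously consistent with the orientation-reversal identification $G_e = G_{\bar e}$. This is harmless ($W_e$ is finite abelian, so one can pick representatives symmetric under negation), but it has to be stated carefully, and one should also check that the two-sidedness of the edge space — the fact that a turn in $S_v$ with winding $w$ is the \emph{same} turn viewed from $S_u$ with winding $-w$ because of the opposite induced orientations — is exactly what makes $\overline{o_e}$ versus $\overline{t_e}$ with the sign in the transition maps line up. Secondarily, one must confirm that the surface $S$ built this way is genuinely in simple normal form (each component has exactly one polygonal boundary, and is a disk precisely when its winding number is trivial), which is built into how we realize the pieces, and that it has no sphere or disk \emph{components} — true since all loops in $\ug$ are hyperbolic, so no component has empty polygonal boundary and a piece is never a sphere.
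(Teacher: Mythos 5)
Your proposal follows the paper's argument essentially step for step: part (1) is the same Euler-characteristic bookkeeping and the observation that the decomposition of $x(S_v)$ given by $S$ is itself an admissible expression, bounding the number of disk-like pieces by $\kappa_v$; part (2) chooses a near-optimal rational admissible expression, clears denominators, realizes the disk-like summands via Lemma \ref{lemma: enough info} and the remainder $x'_v$ with the same fixed lifts, and you correctly isolate the one genuinely delicate step, namely choosing a system of lifts $W_e\to G_e$ compatible with $\bar{w}\mapsto-\bar{w}$ so that paired triples carry honestly inverse winding numbers and the pieces glue---this is exactly the point the paper makes explicit (``We can choose the lifts such that $-w$ is the lift of $-\bar w$'').

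One arithmetic slip in part (1): $|x(S_v)|$ counts turns and each turn contributes two corners, so $\#\text{corners}=2\sum_v|x(S_v)|$ (not $\tfrac12\sum_v|x(S_v)|$) and hence $-\what\chi(S)=\tfrac12\sum_v|x(S_v)|-\#\{\text{disk-like pieces}\}$ (the paper's $\tfrac12\#\text{turns}$), not $\tfrac14\sum_v|x(S_v)|-\cdots$ as you wrote. Only after dividing by $2n$ does the coefficient $\tfrac14$ appear, as in the statement. This is a slip rather than a conceptual problem, since with the corrected count the claimed bound follows exactly as you say.
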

\begin{proof}
	\begin{enumerate}
		\item It is easy to see $x(S)/n \in \mathcal{C}(\bm{r})$ from the definition. To obtain the inequality, recall that 
		$$-\what\chi(S)=-\sum_{C} \what{\chi}_o(C)=\frac{1}{4}\#\text{corners}-\sum_{C}\what{\chi}(C)=\frac{1}{2}\#\text{turns}-\#\text{disk-like pieces}.$$
		Since $\kappa_v(x(S_v))$ is no less than the number of disk-like pieces in $S_v$, the inequality follows.
		\item By finiteness of $\Gamma$, there are admissible expressions $x_v=x'_v+\sum_j t_{j,v}d_{j,v}$ with each $t_{j,v}\in\Q_{\ge0}$ such that $\sum_{j,v} t_{j,v}+2\epsilon>\sum_v \kappa_v(x_v)$. Note that each $x'_v$ is rational as each $t_{j,v}$ and $x_v$ are. Choose an integer $n$ so that each $nt_{j,v}$ is an integer and $nx'_v$ is an integer vector in $\mathcal{C}_v$. For each $\bar{w} \in W_e$, fix a lift $w\in G_e$. We can choose the lifts such that $-w$ is the lift of $-\bar{w}$. By Lemma \ref{lemma: enough info}, we can realize $nt_{j,v}d_{j,v}$ as the union of $nt_{j,v}$ pieces that are disk-like, such that each turn from $a_v$ to $a'_v$ representing a triple $(a_v,\bar{w},a'_v)$ has winding number $w$. We can also realize $nx'_v$ as the union of some other pieces at $v$ with turns satisfying the same property. Let $S_v$ be the disjoint union of these pieces at $v$. Then $x\in \mathcal{C}(\bm{r})$ and our choice of the lifts imply that the surface $\sqcup_v S_v$ satisfies the gluing condition (\ref{eqn: gluing condition}) and glues to a simple relative admissible surface $S$ for $c(\bm{r})$ of degree $n$ with $x(S_v)/n=x_v$. Noticing that the number of disk-like pieces in $S$ is no less than $n\Sigma_{j,v}t_{j,v}>n\Sigma_v \kappa_v(x_v)-2n\epsilon$, the estimate of $-\what{\chi}(S)/2n$ easily follows from a computation similar to the one in the first part.
	\end{enumerate}
\end{proof}

Let $\conv(E)$ denote the convex hull of a set $E$ in some vector space. Denote the Minkowski sum of two sets $E$ and $F$ by $E+F\defeq\{e+f\ |\ e\in E\text{ and }f\in F\}$. Note that $\conv(E+F)=\conv(E)+\conv(F)$.

The following lemma is the analog of \cite[Lemma 3.10]{Cal:sss} and has the same proof.
\begin{lemma}[Calegari \cite{Cal:sss}]\label{lemma: cite key}
	The function $\kappa_v$ on $\mathcal{C}_v$ is a non-negative concave homogeneous function which takes value $1$ exactly on the boundary of $\conv(\mathcal{D}(v))+\mathcal{C}_v$ in $\mathcal{C}_v$.
\end{lemma}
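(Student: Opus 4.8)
The statement claims three things about $\kappa_v: \mathcal{C}_v \to \mathbb{R}$: it is non-negative concave homogeneous, and the set where it equals $1$ is exactly the boundary of $\conv(\mathcal{D}(v)) + \mathcal{C}_v$ inside $\mathcal{C}_v$. I would follow the model of \cite[Lemma 3.10]{Cal:sss}. First, \textbf{non-negativity and homogeneity} are nearly immediate from the definition. For any $x \in \mathcal{C}_v$, the trivial admissible expression $x = x + \sum_{j \in \emptyset} t_j d_j$ shows $\kappa_v(x) \ge 0$. Homogeneity of degree $1$ follows because scaling an admissible expression $x = x' + \sum t_j d_j$ by $\lambda > 0$ gives $\lambda x = \lambda x' + \sum (\lambda t_j) d_j$, again admissible (since $\mathcal{C}_v$ is a cone and $t_j \ge 0$), and this sets up a bijection between admissible expressions for $x$ and for $\lambda x$ multiplying $\sum t_j$ by $\lambda$; taking suprema gives $\kappa_v(\lambda x) = \lambda \kappa_v(x)$.

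\textbf{Concavity} is the substantive point. Given $x, y \in \mathcal{C}_v$ and admissible expressions $x = x' + \sum t_j d_j$, $y = y' + \sum s_k e_k$ (with $d_j, e_k \in \mathcal{D}(v)$), I would form $x + y = (x' + y') + \sum t_j d_j + \sum s_k e_k$, which is an admissible expression for $x+y$ since $x' + y' \in \mathcal{C}_v$ and the $t_j, s_k \ge 0$. Hence $\kappa_v(x+y) \ge \sum t_j + \sum s_k$, and taking suprema over the two families of expressions gives superadditivity $\kappa_v(x+y) \ge \kappa_v(x) + \kappa_v(y)$; combined with positive homogeneity this yields concavity on the cone $\mathcal{C}_v$. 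One technical wrinkle: the supremum defining $\kappa_v$ must be shown finite (so the claims about its values make sense), which follows because $|d_j| \ge 1$ for any disk-like vector $d_j$ (a disk-like piece has at least one turn—in fact by Lemma \ref{lemma: backtrack} a valence-one piece is never a potential disk, so disk-like pieces have at least one turn, giving $|d| \ge 1$), so in any admissible expression $\sum t_j \le \sum t_j |d_j| \le |x|$, whence $\kappa_v(x) \le |x| < \infty$.

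\textbf{The level set $\{\kappa_v = 1\}$.} This is where I expect the main work. The cone generated by $\mathcal{D}(v)$ together with $\mathcal{C}_v$, namely $K := \conv(\mathcal{D}(v)) + \mathcal{C}_v$, is a convex subcone of $\mathcal{C}_v$ (it is convex since $\mathcal{C}_v$ and $\conv(\mathcal{D}(v))$ are, and $\mathbb{R}_{\ge 0}$-invariant). I would argue: $\kappa_v(x) \ge 1$ iff $x$ admits an admissible expression with $\sum t_j \ge 1$, which—by rescaling the $d_j$'s inside $\conv(\mathcal{D}(v))$ and absorbing the remainder into $\mathcal{C}_v$—is equivalent to $x \in K$. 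More precisely, $\sum t_j \ge 1$ with $x = x' + \sum t_j d_j$ lets us write $x = x' + (\sum t_j - 1) d_{j_0} + \big( d_{j_0} + \sum_{j} t_j d_j - d_{j_0}\big)$... cleaner: normalize so that $\sum t_j = 1$ exactly (possible by concavity/homogeneity since we can always shrink an expression with $\sum t_j > 1$ by scaling down—pushing the surplus mass into the $\mathcal{C}_v$ term), giving $\sum t_j d_j \in \conv(\mathcal{D}(v))$ and hence $x \in \conv(\mathcal{D}(v)) + \mathcal{C}_v = K$. Conversely any point of $K$ has such an expression. Thus $\{x \in \mathcal{C}_v : \kappa_v(x) \ge 1\} = K$. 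Since $\kappa_v$ is concave, homogeneous of degree $1$, and continuous on $\mathcal{C}_v$ (a concave function on a polyhedral cone is continuous on its relative interior, and one checks continuity up to the boundary here via the polyhedral structure of $\mathcal{D}(v)$, which is finite modulo scaling), its sublevel regions are nested convex cones and $\kappa_v = 1$ precisely on $\bdry K \cap \mathcal{C}_v$: a point with $\kappa_v(x) > 1$ lies in the interior of $K$ relative to $\mathcal{C}_v$ (shrink slightly and stay in $K$), and a point with $\kappa_v(x) = 1$ cannot be interior since moving outward along the ray decreases $\kappa_v$ below $1$ by homogeneity. The main obstacle is making the passage between ``admissible expression with $\sum t_j \ge 1$'' and ``membership in $\conv(\mathcal{D}(v)) + \mathcal{C}_v$'' fully rigorous, in particular justifying that the supremum in the definition of $\kappa_v$ is attained (or approached) by expressions one can renormalize cleanly; this is handled exactly as in \cite[Lemma 3.10]{Cal:sss}, using that $\mathcal{D}(v)$ is a discrete set with $|d| \ge 1$ so that $\conv(\mathcal{D}(v))$ is a closed polyhedron (modulo its recession directions, which lie in $\mathcal{C}_v$).
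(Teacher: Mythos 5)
Your proposal is correct and follows the same strategy as Calegari's Lemma 3.10, which is precisely what the paper invokes (the paper gives no proof of its own, just the citation and the remark that the proof is identical). The one place you gloss---that $\conv(\mathcal{D}(v))+\mathcal{C}_v$ is a closed polyhedron, needed to go from $\kappa_v(x)\ge 1$ to $x\in\conv(\mathcal{D}(v))+\mathcal{C}_v$ when the sup in the definition of $\kappa_v$ is not attained---does not follow from discreteness of $\mathcal{D}(v)$ alone as you suggest, but it is exactly the first assertion of Lemma \ref{lemma: D'} (which cites Chen and is independent of the present lemma, so no circularity); modulo that, your argument matches the intended one.
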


It is an important observation in \cite{Chen:sclfp} that both $\conv(\mathcal{D}(v))+\mathcal{C}_v$ and $\kappa_v$ are nice, no matter how complicated $\mathcal{D}(v)$ is.

\begin{lemma}[Chen \cite{Chen:sclfp}]\label{lemma: D'}
	There is a \emph{finite} subset $D'$ of $\mathcal{D}(v)$ such that $D'+\mathcal{C}_v=\mathcal{D}(v)+\mathcal{C}_v$. Consequently, the function $\kappa_v$ is the minimum of \emph{finitely} many rational linear functions.
\end{lemma}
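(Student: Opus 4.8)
The plan is to establish the two assertions separately: the existence of the finite set $D'$ follows from Dickson's lemma, and the description of $\kappa_v$ then follows by feeding this into Lemma~\ref{lemma: cite key} together with elementary polyhedral geometry.

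For the existence of $D'$, I would argue as follows. Since $\mathcal{D}(v)$ consists of integer points of $\mathcal{C}_v$, it is a subset of $\Z_{\ge0}^{T_v}$; ordering the latter coordinatewise, Dickson's lemma gives that the set $D'$ of $\le$-minimal elements of $\mathcal{D}(v)$ is finite and that every $d\in\mathcal{D}(v)$ dominates some $d'\in D'$. For such $d,d'$ the difference $d-d'$ is coordinatewise non-negative and lies in $\ker\bdry$ (both $d$ and $d'$ do, as $\mathcal{C}_v\subset\ker\bdry$), hence $d-d'\in\R_{\ge0}^{T_v}\cap\ker\bdry=\mathcal{C}_v$ and $d\in D'+\mathcal{C}_v$. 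Combined with $D'\subseteq\mathcal{D}(v)$ and $\mathcal{C}_v+\mathcal{C}_v=\mathcal{C}_v$, this gives $D'+\mathcal{C}_v=\mathcal{D}(v)+\mathcal{C}_v$. (If $\mathcal{D}(v)=\emptyset$ one takes $D'=\emptyset$ and $\kappa_v\equiv0$, so I may assume $\mathcal{D}(v)\ne\emptyset$ henceforth.)

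For the description of $\kappa_v$, I would set $P\defeq\conv(D')+\mathcal{C}_v$, which by the first part and the identity $\conv(E+F)=\conv(E)+\conv(F)$ equals $\conv(\mathcal{D}(v))+\mathcal{C}_v$; since $D'$ is finite and $\mathcal{C}_v$ is a rational polyhedral cone, $P$ is a rational polyhedron with recession cone $\mathcal{C}_v$. By Lemma~\ref{lemma: cite key}, $\kappa_v$ is non-negative, concave, and homogeneous of degree one, and $\{\kappa_v=1\}$ is the relative boundary of $P$ in $\mathcal{C}_v$. Observing that $0\notin P$ (each $x(C)$ is a nonzero vector, since the polygonal boundary of a piece carries at least one turn), a short argument along rays through the origin, using homogeneity, upgrades this to $P=\{x\in\mathcal{C}_v:\kappa_v(x)\ge1\}$. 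Next I would take an inequality description $P=\mathcal{C}_v\cap\bigcap_{i\in J_0}\{x:\ell_i(x)\ge b_i\}$; since the recession cone of $P$ is $\mathcal{C}_v$, each $\ell_i$ is non-negative on $\mathcal{C}_v$, so the constraints with $b_i\le0$ are redundant and can be dropped, while at least one with $b_i>0$ survives because $0\notin P$. After rescaling, $P=\{x\in\mathcal{C}_v:\ell_i(x)\ge1,\ i\in J\}$ for a finite nonempty family of rational linear functionals. Finally, homogeneity gives, for each $s>0$, that $\kappa_v(x)\ge s$ iff $x\in sP$ iff $\ell_i(x)\ge s$ for all $i\in J$; together with $\kappa_v\ge0$ this yields $\kappa_v=\min_{i\in J}\ell_i$ on $\mathcal{C}_v$, the minimum of finitely many rational linear functions.

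I expect the first assertion to be essentially immediate once Dickson's lemma is invoked (the paper indeed notes this is the same proof as in \cite{Chen:sclfp}). The step deserving the most care is the polyhedral bookkeeping in the second assertion: identifying the super-level set $\{\kappa_v\ge1\}$ with the rational polyhedron $P$, and normalizing the facet inequalities of $P$ so that all right-hand sides equal $1$ — this is precisely where homogeneity and non-negativity of $\kappa_v$, together with $0\notin P$, are used.
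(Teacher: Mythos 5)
Your proof is correct and follows the same route as the paper's: the paper defers the first assertion to a cited lemma from \cite{Chen:sclfp} (which is the Dickson's-lemma argument you reconstruct), and for the second assertion it asserts tersely that $\conv(\mathcal{D}(v))+\mathcal{C}_v=\mathcal{C}_v\cap\bigcap_i\{f_i\ge 1\}$ and then invokes Lemma~\ref{lemma: cite key}. You helpfully supply the two details the paper leaves implicit — that $0\notin P$ is what lets the facet inequalities be normalized so all right-hand sides equal $1$, and how homogeneity of $\kappa_v$ turns $P=\{\kappa_v\ge 1\}$ into $\kappa_v=\min_i\ell_i$ — but the approach and key lemmas used are the same.
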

\begin{proof}
	The first assertion follows from \cite[Lemma 4.7]{Chen:sclfp}. Then we have $$\conv(\mathcal{D}(v))+\mathcal{C}_v=\conv(\mathcal{D}(v)+\mathcal{C}_v)=\conv(D'+\mathcal{C}_v)=\conv(D')+\mathcal{C}_v.$$
	Note that $\conv(D')$ is a compact rational polyhedron since $D'$ is a finite set of integer points. Hence $\conv(D')+\mathcal{C}_v$ is a rational polyhedron as it is the sum of two such polyhedra (see the proof of \cite[Theorem 3.5]{Barvinok}). Then $\conv(\mathcal{D}(v))+\mathcal{C}_v=\mathcal{C}_v\cap(\cap_i\{f_i\ge1\})$ for a finite collection of rational linear functions $\{f_i\}$. Combining with Lemma \ref{lemma: cite key}, we have $\kappa_v(x)=\min_i{f_i(x)}$ for all $x\in\mathcal{C}_v$.
\end{proof}

\begin{lemma}\label{lemma: compute by linprog}
	The optimization $\min \sum_v \frac{1}{4}|x_v|-\sum_v \frac{1}{2}\kappa_v(x_v)$ among $x=(x_v)\in \mathcal{C}(\bm{r})$ can be computed via linear programming. The minimum is $\scl_{(G,\{G_v\})}(c(\bm{r}))$, which depends piecewise rationally linearly on $\bm{r}\in \ker h$ and is achieved at some rational point in $\mathcal{C}(\bm{r})$.
\end{lemma}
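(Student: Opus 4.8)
The plan is to separate the statement into a \emph{combinatorial--geometric} half and a \emph{linear-programming} half. The geometric half is the equality of numbers
$$\scl_{(G,\{G_v\})}(c(\bm r))\;=\;\min_{x=(x_v)\in\mathcal{C}(\bm r)}\Big(\sum_v\tfrac14|x_v|-\sum_v\tfrac12\kappa_v(x_v)\Big),$$
and the linear-programming half is to see that the right-hand side is a genuine linear program over a rational polyhedron, so that its optimum is attained at a rational point and its value varies piecewise rational linearly in $\bm r\in\ker h$.

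For the geometric equality I would chain together the reductions already established. By Proposition~\ref{prop: relative admissible surf}, $\scl_{(G,\{G_v\})}(c(\bm r))$ is the infimum of $-\chi^-(S)/2n(S)$ over relative admissible surfaces $S$ for $c(\bm r)$; discarding disk and sphere components (harmless for $\chi^-$, and, since every $g_i$ is hyperbolic hence of infinite order, it does not affect admissibility) we may assume $\chi^-(S)=\chi(S)$. Lemma~\ref{lemma: simple normal form} then replaces $S$ by a simple relative admissible surface $S'$ of the same degree with $-\chi(S')\le-\chi(S)$, and again $\chi(S')=\chi^-(S')$ because the $g_i$ are hyperbolic; conversely every simple relative admissible surface is relative admissible. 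Hence $\scl_{(G,\{G_v\})}(c(\bm r))$ equals the infimum of $-\chi(S)/2n(S)$ over \emph{simple} relative admissible surfaces for $c(\bm r)$. Next I would pass from $\chi$ to the over-counting Euler characteristic $\what\chi$: one always has $-\what\chi(S)\le-\chi(S)$, while Lemma~\ref{lemma: asym prom} (asymptotic promotion) produces, from any simple relative admissible $S$ and any $\epsilon>0$, a simple relative admissible $S'$ with $-\chi(S')/2n(S')\le-\what\chi(S)/2n(S)+\epsilon$; so the two infima of $-\chi(\cdot)/2n$ and $-\what\chi(\cdot)/2n$ over simple relative admissible surfaces coincide. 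Finally, Lemma~\ref{lemma: overcount accurate}(1) gives that for every simple relative admissible $S$ the point $x(S)/n$ lies in $\mathcal{C}(\bm r)$ and $-\what\chi(S)/2n(S)\ge\sum_v\tfrac14|x(S_v)/n|-\sum_v\tfrac12\kappa_v(x(S_v)/n)$, while Lemma~\ref{lemma: overcount accurate}(2), applied to any prescribed rational $x\in\mathcal{C}(\bm r)$, realizes this lower bound up to $\epsilon$ by an actual surface. Since $\mathcal{C}(\bm r)$ is compact (Lemma~\ref{lemma: space nonempty}) and the objective is continuous, these two facts force the infimum of $-\what\chi(\cdot)/2n$ to equal the minimum of the objective over $\mathcal{C}(\bm r)$, completing the chain.

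For the linear-programming half I would use that, by Lemma~\ref{lemma: space nonempty}, $\mathcal{C}(\bm r)$ is a compact rational polyhedron carved out of the fixed rational cone $\mathcal{C}(\uga)$ by the affine equations $\#_{\gamma_i}(x)=r_i$, and that each $|x_v|$ is the restriction to $\mathcal{C}_v$ of a rational linear functional. By Lemma~\ref{lemma: D'} each $\kappa_v$ is the pointwise minimum of finitely many rational linear functionals $f_{i,v}$, so introducing auxiliary scalars $s_v$ with constraints $s_v+f_{i,v}(x_v)\ge0$ turns the problem into: minimize $\sum_v\tfrac14|x_v|+\tfrac12\sum_v s_v$ subject to $x\in\mathcal{C}(\bm r)$ and those inequalities. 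At the optimum $s_v=-\kappa_v(x_v)$ (the $s_v$ have positive objective coefficients), so this linear program has the same optimal value; it is a bona fide LP with rational data over a polyhedron bounded in the relevant coordinates, so its optimum is attained at a rational vertex whose $x$-part is the claimed rational minimizer in $\mathcal{C}(\bm r)$. Since $\bm r$ enters only as the right-hand side of the affine constraints $\#_{\gamma_i}(x)=r_i$, standard parametric linear programming yields that the optimal value is a (convex) piecewise rational linear function of $\bm r\in\ker h$.

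The main obstacle is not a new idea but keeping the bookkeeping straight: one must track three distinct complexity measures --- $\chi^-$ (which defines scl), $\chi$ (used in $\what\chi$ and in the asymptotic-promotion estimates), and $\what\chi$ itself --- across the classes of relative admissible, simple relative admissible, and abstract-piece surfaces, and verify that hyperbolicity of the $g_i$ is exactly what makes $\chi^-=\chi$ on the relevant surfaces so that all inequalities point the right way. The remaining delicate point is rationality: one must ensure every polyhedron and functional in sight ($\mathcal{C}(\uga)$, the $\#_{\gamma_i}$, and the $f_{i,v}$ of Lemma~\ref{lemma: D'}) has rational coefficients, so that the conclusion is piecewise rational linear rather than merely piecewise linear.
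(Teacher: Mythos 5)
Your proposal is correct and takes essentially the same approach as the paper: the LP half (slack variables via Lemma~\ref{lemma: D'}, rationality and piecewise rational linearity from Lemma~\ref{lemma: space nonempty}) and the geometric half (Lemma~\ref{lemma: overcount accurate} together with Lemma~\ref{lemma: asym prom}) are exactly what the paper's terse proof invokes. You have simply unpacked the chain of reductions --- Proposition~\ref{prop: relative admissible surf}, Lemma~\ref{lemma: simple normal form}, the passage from $\chi^-$ to $\chi$ to $\what\chi$ --- that the paper leaves implicit.
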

\begin{proof}
	Recall that $|x_v|$ is a rational linear function for $x_v\in \mathcal{C}_v$. Combining with Lemma \ref{lemma: D'}, for each vertex $v$, there are finitely many rational linear functions $f_{j,v}$ such that $|x_v|/4-\kappa_v(x_v)/2=\max_j f_{j,v}(x_v)$. By introducing slack variables $y=(y_v)$, the optimization is equivalent to minimizing $\sum_v y_v$ subject to $y_v\ge f_{j,v}(x_v)$ for all $j,v$ and $x=(x_v)\in \mathcal{C}(\bm{r})$, which is a rational linear programming problem in variables $(x,y)$. The minimum depends piecewise rationally linearly on $\bm{r}\in \ker h$ by Lemma \ref{lemma: space nonempty} and is achieved at a rational point.
	
	The minimum is $\scl_{(G,\{G_v\})}(c(\bm{r}))$ by Lemma \ref{lemma: overcount accurate} and Lemma \ref{lemma: asym prom}.
\end{proof}
\begin{remark}\label{rmk: const}
	The function $\sum_v |x_v|$ is actually a constant $\sum_i r_iA_i$ for $x=(x_v)\in\mathcal{C}(\bm{r})$, where $A_i$ is the number of arcs that the edge spaces cut $\gamma_i$ into. Thus for a fixed chain, the problem comes down to maximizing the number of disk-like pieces.
\end{remark}

Now we return to full generality without assuming $\Gamma$ to be finite or locally finite.
\begin{thm}[Rationality]\label{thm: rational}
	Let $G$ be a graph of groups $\mathcal{G}(\Gamma, \{G_v\}, \{G_e\})$ where
	\begin{enumerate}
		\item $\scl_{G_v}\equiv0$, and
		\item the images of edge groups in each vertex group are central and mutually commensurable.
	\end{enumerate}
	Then $\scl_G$ is piecewise rational linear, and $\scl_G(c)$ can be computed via linear programming for each rational chain $c\in B_1^H(G)$.
\end{thm}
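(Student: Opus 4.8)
The plan is to reduce Theorem \ref{thm: rational} to the results already established in the locally finite and finite cases, using the fact that scl on a chain only depends on the finitely generated subgroup containing the support of the chain, together with Proposition \ref{prop: restriction of domain} (restriction of domain). First I would observe that it suffices to treat a single rational chain $c = \sum r_i g_i$ with all $g_i$ of infinite order, since scl vanishes on, hence is unaffected by, torsion elements, and elliptic infinite-order elements can be absorbed into the relative framework via Proposition \ref{prop: rel vertex}. For such a $c$, each $g_i$ is represented by a tight loop passing through only finitely many vertex and edge spaces, so the whole support of $c$ is contained in $\mathcal{G}(\Gamma_0, \{G_v\}, \{G_e\})$ for a \emph{finite} connected subgraph $\Gamma_0 \subset \Gamma$. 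By Proposition \ref{prop: restriction of domain}, the inclusion $\mathcal{G}(\Gamma_0, \{G_v\}, \{G_e\}) \to \mathcal{G}(\Gamma, \{G_v\}, \{G_e\})$ is an isometric embedding, so $\scl_G(c) = \scl_{\mathcal{G}(\Gamma_0,\ldots)}(c)$; note the hypotheses (trivial scl on vertex groups, central mutually commensurable edge images) are inherited by $\Gamma_0$, and local finiteness and finiteness of $\Gamma_0$ hold automatically.

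Next I would invoke Proposition \ref{prop: rel vertex} once more to pass from $\scl_{\mathcal{G}(\Gamma_0,\ldots)}(c)$ to the relative quantity $\scl_{(\mathcal{G}(\Gamma_0,\ldots),\{G_v\})}(c)$, since scl vanishes on the vertex groups. Then Lemma \ref{lemma: compute by linprog} applies verbatim to the finite graph $\Gamma_0$: it identifies this relative scl with the minimum of the rational linear program $\min \sum_v \tfrac14|x_v| - \sum_v \tfrac12 \kappa_v(x_v)$ over the rational polyhedron $\mathcal{C}(\bm{r})$, which is achieved at a rational point and depends piecewise rationally linearly on $\bm r$. This gives both the computability statement (run the linear program on $\Gamma_0$) and the rationality of the value on each chain.

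For the piecewise rational linearity of $\scl_G$ as a function on rational finite-dimensional subspaces $V \subset B_1^H(G)$, I would argue as follows: a finite-dimensional rational subspace is spanned by finitely many rational chains, whose combined support again lies in some finite subgraph $\Gamma_0$; on $V$ the function $\scl_G$ agrees with $\scl_{\mathcal{G}(\Gamma_0,\ldots)}$, which is piecewise rational linear on $V$ by the piecewise-linear dependence on $\bm r$ in Lemma \ref{lemma: compute by linprog} (after parametrizing $V$ by the coefficient vector $\bm r$ and noting that $\ker h$ cuts out the relevant rational subspace). Putting these together yields the theorem.

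The main obstacle, which is really the content that has already been done in the preceding sections, is the reduction in Lemma \ref{lemma: compute by linprog} and its dependence on the asymptotic promotion Lemma \ref{lemma: asym prom}; at the level of this final theorem the only genuine work is checking that restriction of domain legitimately reduces the arbitrary graph of groups to a finite one, i.e. that scl of a fixed chain is detected within a finitely generated subgroup of the stated form and that Proposition \ref{prop: restriction of domain} is applicable. Care is needed to make sure the relevant homology condition ($c \in B_1^H(G)$ versus $c \in \ker h$ for $\Gamma_0$) is correctly tracked through the reductions, but this is routine given equation (\ref{eqn: homology of G}) and Lemma \ref{lemma: space nonempty}.
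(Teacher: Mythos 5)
Your proposal is correct and follows essentially the same route as the paper's proof: reduce to hyperbolic elements and relative scl via Proposition \ref{prop: rel vertex}, cut down to a finite subgraph $\Gamma_0$ by restriction of domain (Proposition \ref{prop: restriction of domain}), and then invoke Lemma \ref{lemma: compute by linprog}. You are slightly more explicit than the paper about why the support of a fixed chain lies in a finite subgraph and about tracking the homology condition through the restriction, but the argument is the same.
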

\begin{proof}
	We compute $\scl_G(\sum r_i g_i)$ for an arbitrary finite set of element $\ug=\{g_1,\ldots,g_m\}\subset G$ with $r_i\in\Q_{>0}$ so that the chain $\sum r_i g_i$ is null-homologous in $G$. By Proposition \ref{prop: rel vertex}, we may assume each $g_i$ to be hyperbolic and consider $\scl_{(G,\{G_v\})}(c(\bm{r}))$ with $\bm{r}\in \ker h$ instead. By restriction of domain (Proposition \ref{prop: restriction of domain}), we further assume $\Gamma$ to be finite. Then the result follows from Lemma \ref{lemma: compute by linprog}.
\end{proof}
\begin{remark}
	Theorem \ref{thm: rational} holds with the weaker assumption $\scl_G(c_v)=0$ for all $c_v\in B_1^H(G_v)$ in place of $\scl_{G_v}\equiv0$ and all vertices $v$.
\end{remark}
\begin{remark}
	Let $G$ be a graph of groups where each edge group is $\Z$ and each vertex group $G_v$ is itself a graph of groups as in Theorem \ref{thm: rational} with vanishing $H_2(G_v;\R)$. Using the method in \cite{Cal:sshom}, it follows from Theorem \ref{thm: rational} that the Gromov--Thurston norm on $H_2(G;\R)$ has a rational polyhedral unit ball and can be computed via linear programming.
\end{remark}

\section{Scl in Baumslag--Solitar groups}\label{sec: sclBS}
\subsection{Basic setups}\label{subsec: sclBS setup}
In this section we focus on scl in Baumslag--Solitar groups $G=\BS(M,L)=\left<a,t\ |\ a^M =ta^Lt^{-1}\right>$ with integers $M,L\neq 0$. We are not interested in the case where $|M|=1$ or $|L|=1$ since $\BS(M,L)$ is solvable and $\scl_{\BS(M,L)}\equiv0$ in such cases. Most of the results and tools are applicable for any graphs of groups with abelian vertex groups, but we will not pursue such generalizations. Let $d\defeq\gcd(|M|,|L|)$, $m\defeq M/d$ and $\ell\defeq L/d$. Denote by $h:G\to\Z$ the homomorphism given by $h(a)=0$ and $h(t)=1$. An element $g$ is \emph{$t$-balanced} if $h(g)=0$.

We denote the only vertex and edge by $v$ and $\{e,\bar{e}\}$ respectively, where $e$ is oriented to represent the generator $t$. See Figure \ref{fig: BS}. Notation from Section \ref{sec: scl by linprog} will be used.

\begin{figure}
	\labellist
	\small 
	\pinlabel $M$ at -10 70
	\pinlabel $L$ at 120 70
	\pinlabel $v$ at 195 115
	\pinlabel $e$ at 195 40
	
	\endlabellist
	\centering
	\includegraphics[scale=0.7]{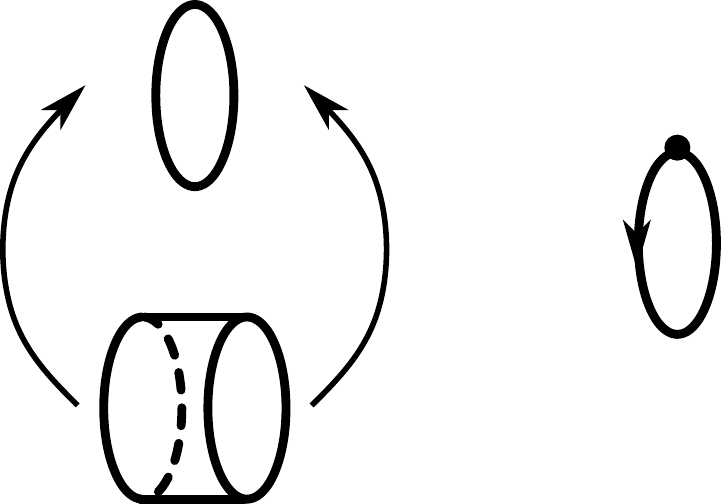}
	\caption{The graph of spaces associated to $\BS(M,L)$ and the underlying graph.}\label{fig: BS}
\end{figure}

Let $g=a^{p_1}t^{\epsilon_1}\ldots a^{p_n}t^{\epsilon_n}$ be a cyclically reduced word, where $\epsilon_i=\pm 1$ for all $i$. Then $g$ is represented by a tight loop $\gamma$ in $X_G$ cut into $n$ arcs $A_v=\{a_i\ |\ 1\le i\le n\}$ where $a_i$ has winding number $w(a_i)=p_i$. Note that we have the transition map $\tau_e: |M|\Z\to|L|\Z$ with $\tau_e(x)=-Lx/M$. For each $1\le i\le n$, let $\mu_i\defeq\max_{0\le k\le n} \sum_{j=1}^{k}\epsilon_{i+j}$ and $\lambda_i\defeq-\min_{0\le k\le n} \sum_{j=1}^{k}\epsilon_{i+j}$, where indices are taken mod $n$ and the summation is $0$ when $k=0$. It is straightforward to see that $\Dom\tau_{P(a_i)}=dm^{\mu_i}\ell^{\lambda_i}\Z$, $\Im\tau_{P(a_i)}=dm^{\mu_i-h(g)}\ell^{\lambda_i+h(g)}\Z$, and $\tau_{P(a_i)}(x)=(-1)^{h(g)} \ell^{h(g)}x/m^{h(g)}$. Example \ref{ex: index compute} (with $X=dm^{\mu_i}\ell^{\lambda_i}$ and $Y=\pm dm^{\mu_i-h(g)}\ell^{\lambda_i+h(g)}$) shows that, letting
\begin{equation}\label{eqn: choose W_0}
W_0(a_i)\defeq\left\{\begin{array}{cc}
dm^{\mu_i-|h(g)|}\ell^{\lambda_i}\Z & \text{if } h(g)\ge0,\\
dm^{\mu_i}\ell^{\lambda_i-|h(g)|}\Z & \text{if } h(g)\le0,
\end{array}\right.
\end{equation}
we have $W_0(a_i)\subset \Dom\tau_{P(a_i)}^p+\Im\tau_{P(a_i)}^q$ for all $p,q\ge0$.

In the sequel, we will use the following two different setups.
\begin{enumerate}
	\item \textbf{Setup 1}: Let $W(a_i)=W_0(a_i)$ for all $i$.
	\item \textbf{Setup 2}: Let $W(a_i)=\cap_j W_0(a_j)$ for all $i$.
\end{enumerate}
In both setups, the group $D_v$ defined as $\cap_i W(a_i)$ in Section \ref{sec: scl by linprog} equals $\cap_i W_0(a_i)$, and thus the choice of setups does not affect our encoding or the space $\mathcal{C}_v$. The only difference is that we have more disk-like pieces or vectors in Setup 1 than Setup 2. We will explicitly state our choice of setups whenever the discussion depends on it.

For an explicit formula of $D_v$, let $\lambda\defeq\max_i \lambda_i$ and $\mu\defeq\max_i \mu_i$. Since $h(g)=\sum \epsilon_i$, it is easy to observe that $\mu-|h(g)|=\lambda\ge0$ when $h(g)\ge0$ and $\lambda-|h(g)|=\mu\ge0$ when $h(g)\le0$. In any case, using the formula (\ref{eqn: choose W_0}) we have 
$$
D_v=dm^{\rho(g)}\ell^{\rho(g)}\Z,
$$
where $\rho(g)\defeq \min(\mu,\lambda)$, which we call the \emph{complexity} of $g$.

When $h(g)=0$, this can be easily seen geometrically. The infinite cyclic cover $\wtilde{X}_G$ of $X_G$ corresponding to $\ker h$ has a $\Z$-action by translation with fundamental domains projecting homeomorphically to the thickened vertex space $N(X_v)$. Since $h(g)=0$, the tight loop $\gamma$ representing $g$ lifts to a loop $\tilde{\gamma}$ on $\wtilde{X}_G$, and $\rho(g)+1$ is the number of fundamental domains that $\tilde{\gamma}$ intersects. In particular, when $h(g)=0$, the element $g$ is $t$-alternating if and only if the complexity $\rho(g)=1$. 

More generally, for a chain $c=\sum r_i g_i$ with each $r_i\neq0$, define its complexity $\rho(c)\defeq\max_i \rho(g_i)$. Then 
\begin{equation}\label{eqn: choose D_v}
D_v=dm^{\rho(c)}\ell^{\rho(c)}\Z,
\end{equation}
and $\rho(c)$ controls the amount of information we need to encode. Denote $|D_v|\defeq d|m|^{\rho(c)}|\ell|^{\rho(c)}$.

Using the notation from Section \ref{sec: scl by linprog}, we have $D_e=m^{\rho(c)}\ell^{\rho(c)}\Z$ and $W_e=\Z/D_e$ in both setups. The fact that $D_e$ does not depend on $d$ is important in Theorem \ref{thm: surgery}.

To better understand integer points in $\mathcal{C}_v$ and disk-like vectors, consider a directed graph $Y$ with vertex set $A_v$, where each oriented edge from $a_i$ to $a_j$ corresponds to a triple $(a_i,\bar{w},a_j)\in T_v$. See Proposition \ref{prop: eg3} and Figure \ref{fig: Y} for an example. Then each vector $x\in\mathcal{C}_v$ assigns non-negative weights to edges in $Y$. 

Define the support $\supp(x)$ to be the subgraph containing edges with positive weights. Then $\supp(x)$ is a union of positively oriented cycles in $Y$. Note that an integer point $x\in\mathcal{C}_v$ can be written as $x(C)$ for some piece $C$ if and only if $\supp(x)$ is connected. If two pieces $C$ and $C'$ are encoded by the same vector $x$, then their winding numbers $w(C)$ and $w(C')$ are congruent mod $D_v$ since the vertex group is abelian. Moreover, fixing a lift $\wtilde{w}\in \Z$ of each $\bar{w}\in W_e$, we can compute this winding number $w(x)$ as a linear function on $\R^{T_v}$ determined by
\begin{equation}\label{eqn: w}
w(a_i,\bar{w},a_j)\defeq w(a_j)+\iota(\wtilde{w})
\end{equation}
where $\iota=t_e$ if $e_{out}(a_i)=\bar{e}$ (ie $a_i$ leaves $v$ by following $\bar{e}$, see Figure \ref{fig: pair}) and $\iota=o_e$ if $e_{out}(a_i)=e$. Then $w(x)$ depends on the choice of lifts but $w(x)$ mod $D_v$ does not. In particular, it makes sense to discuss whether $w(x)\in W(a_i)$ since $D_v\subset W(a_i)$.

Then in both setups, an integer point $x\in\mathcal{C}_v$ is disk-like if and only if $\supp(x)$ is connected and $w(x)\in W(a_i)$ for some $a_i$ in $\supp(x)$. Since $W(a_i)=D_v$ for all $i$ in Setup 2, the criterion is simply that $\supp(x)$ is connected and $w(x)\in D_v$. In contrast, in Setup 1, we have more disk-like pieces, which makes it easier to construct simple relative admissible surfaces using disk-like pieces.

We use $\mathcal{C}(c)$ instead of $\mathcal{C}(\bm{r})$ to denote the polyhedron encoding normalized simple relative admissible surfaces for $c$, as we will not consider families of chains with varying $\bm{r}$.

\subsection{Extremal surfaces}
The goal of this subsection is to obtain a criterion for the existence of extremal surfaces in the case of Baumslag--Solitar groups. We accomplish this by strengthening results in Section \ref{sec: asym prom} to analyze when the asymptotic approximation can terminate at a finite stage. We will use the notion of transition maps and adjustment maps from Section \ref{sec: asym prom}.


Let $c=\sum r_i g_i$ be a rational chain as in the previous subsection where $r_i\in \Q_{>0}$ and each $g_i\in \ug$ is a hyperbolic element represented by a tight loop $\gamma_i\in\uga$. Throughout this subsection, we use Setup 2 where $W(a)=D_v$ for all $a\in A_v$. 

\begin{lemma}\label{lemma: all disk-like}
	Let $S$ be a simple relative admissible surface of degree $n$ with 
	$$-\frac{\what{\chi}(S)}{2n}=\scl_{\BS(M,L)}(c).$$
	Then $S$ consists of disk-like pieces only.
\end{lemma}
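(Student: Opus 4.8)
The plan is to compare $-\what{\chi}(S)/2n$ with the linear programming optimum, then use superadditivity of $\kappa_v$ to locate the non-disk-like pieces inside a ``wasteful'' piece of the decomposition, and finally kill that piece using the fact that $D_v$ has \emph{finite index} in the vertex group $G_v=\Z$. Throughout we work in Setup 2, so a piece $C$ at $v$ is disk-like precisely when $w(C)\in D_v$ (the support being automatically connected for a single piece), and $v$ denotes the unique vertex of $\BS(M,L)$.

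First I would record the lower bound. By Lemma \ref{lemma: overcount accurate}(1), $x(S)/n\in\mathcal{C}(c)$ and
\[
\frac{-\what{\chi}(S)}{2n}\ \ge\ \tfrac14|x(S_v)/n|-\tfrac12\kappa_v(x(S_v)/n)\ \ge\ \scl_{(G,\{G_v\})}(c)=\scl_{\BS(M,L)}(c),
\]
where the middle inequality is Lemma \ref{lemma: compute by linprog} and the last equality is Proposition \ref{prop: rel vertex} (here $B_1^H(G_v)=0$). The first inequality used only the estimate $\#\{\text{disk-like pieces of }S\}\le\kappa_v(x(S_v))$, so the hypothesis $-\what{\chi}(S)/2n=\scl_{\BS(M,L)}(c)$ forces equality there: $\#\{\text{disk-like pieces of }S\}=\kappa_v(x(S_v))$.

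Next I would feed this into superadditivity of $\kappa_v$. Writing $x(S_v)=\sum_C x(C)$ over the pieces $C$ of $S$, non-negativity, concavity and degree-one homogeneity of $\kappa_v$ (Lemma \ref{lemma: cite key}) give $\kappa_v(x(S_v))\ge\sum_C\kappa_v(x(C))$; since $x(C)\in\mathcal{D}(v)$ for each disk-like $C$ we have $\kappa_v(x(C))\ge 1$ there, hence
\[
\kappa_v(x(S_v))\ \ge\ \#\{\text{disk-like pieces of }S\}+\sum_{C\ \text{not disk-like}}\kappa_v(x(C)).
\]
Comparing with the equality just obtained, every non-disk-like piece $C$ must have $\kappa_v(x(C))=0$. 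But this cannot happen: the polygonal boundary of such a $C$ traces a cycle $P$ in the graph $Y$, and since $C$ is not disk-like, $w(C)\notin D_v$; as $D_v=dm^{\rho(c)}\ell^{\rho(c)}\Z$ has finite index in $G_v=\Z$, there is $k\ge 1$ with $k\,w(C)\in D_v$. The $k$-fold traversal of $P$ is realized by a piece $C'$ with $x(C')=k\,x(C)$, whose support is $P$ (connected) and whose winding satisfies $w(k\,x(C))\equiv k\,w(C)\equiv 0\pmod{D_v}$; thus $k\,x(C)$ is a disk-like vector, so $k\,x(C)\in\mathcal{D}(v)$. Then $x(C)=0+\tfrac1k\bigl(k\,x(C)\bigr)$ is an admissible expression for $x(C)$ in $\mathcal{C}_v$, giving $\kappa_v(x(C))\ge 1/k>0$, a contradiction. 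Hence $S$ has no non-disk-like pieces.

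I expect the only delicate points to be bookkeeping: checking that the $k$-fold traversal of $P$ is genuinely a piece realizing $k\,x(C)$ (using that any vector of $\mathcal{C}_v$ with connected support is realized by a piece, and that the winding is linear mod $D_v$, per \eqref{eqn: w}), and noting that degenerate pieces such as valence-one (monogon) pieces are non-disk-like by Lemma \ref{lemma: backtrack} and so are handled uniformly by the same argument. The one genuinely essential input is that $D_v$ sits with finite index in the vertex group, i.e.\ exactly the commensurability hypothesis on $\BS(M,L)$.
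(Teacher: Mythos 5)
Your proof is correct, but it takes a genuinely different route from the paper's. The paper's argument is a one-move surgery: given a non-disk-like piece $C$, it replaces $|D_v|$ copies of $S$ by a new surface $\wtilde{S}$ in which $C$'s $|D_v|$ copies are consolidated into a single piece $\wtilde{C}$ whose boundary is a degree-$|D_v|$ cover of $\partial C$; this $\wtilde{C}$ is disk-like because $|D_v|\,w(C)\in D_v$, so $-\what{\chi}(\wtilde{S})/(2n|D_v|)=-\what{\chi}(S)/(2n)-1/(2n|D_v|)$, which together with Lemma~\ref{lemma: asym prom} contradicts optimality of $S$. Your argument reaches the same conclusion through the linear-programming encoding: you pin down the equality $\#\{\text{disk-like pieces}\}=\kappa_v(x(S_v))$ via Lemmas~\ref{lemma: overcount accurate} and~\ref{lemma: compute by linprog}, invoke superadditivity of $\kappa_v$ (from Lemma~\ref{lemma: cite key}) to conclude every non-disk-like $C$ has $\kappa_v(x(C))=0$, and then rule this out by exhibiting the admissible expression $x(C)=0+\frac{1}{k}\bigl(kx(C)\bigr)$ with $kx(C)\in\mathcal{D}(v)$. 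Both proofs hinge on exactly the same key observation---$D_v$ has finite index in $G_v=\Z$, so an integer multiple of any piece's vector is disk-like---but the paper exploits it geometrically (a concrete improvement of $S$) whereas you exploit it inside the convex-analysis framework (superadditivity plus positivity of $\kappa_v$ on $\mathcal{C}_v\setminus\{0\}$). The paper's proof is shorter and does not need the machinery of Section~\ref{sec: scl by linprog} beyond the definitions; your version makes the relationship to the LP optimum more transparent, at the price of routing through $\kappa_v$.

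One small bookkeeping point worth flagging: your chain of inequalities compares $\#\{\text{disk-like pieces}\}$ with $\kappa_v(x(S_v))$, which uses homogeneity to pass between $\kappa_v(x(S_v)/n)$ and $\kappa_v(x(S_v))/n$. That is fine, but should be said explicitly since the equality $\#\{\text{disk-like}\}=\kappa_v(x(S_v))$ (rather than the normalized version) is what you feed into the superadditivity step. Otherwise the argument is complete and correct.
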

\begin{proof}
	Suppose there is a piece $C$ not disk-like. Take a piece $\wtilde{C}$ where the boundary is a degree $|D_v|$ cover of $\bdry C$. Then $\wtilde{C}$ is disk-like. Moreover, $\wtilde{C}$ and $|D_v|$ copies of every piece in $S$ other than $C$ together satisfy the gluing condition and form a new admissible surface $\wtilde{S}$ of degree $n|D_v|$, such that
	$$-\frac{\what{\chi}(\wtilde{S})}{2n|D_v|}=-\frac{\what{\chi}(S)}{2n}-\frac{1}{2n|D_v|}<\scl_{\BS(M,L)}(c).$$
	This is absurd (in view of Lemma \ref{lemma: asym prom}) since $S$ is optimal. 
\end{proof}

The optimal solution to the linear programming in Lemma \ref{lemma: compute by linprog} to compute $\scl_{\BS(M,L)}(c)$ provides a surface $S$ satisfying the assumption in Lemma \ref{lemma: all disk-like}. To obtain an extremal surface we need to adjust disk-like pieces into genuine disks by trivializing their winding numbers. 

This can be easily understood when $M=L$. The winding number $w(C)$ is divisible by $|D_v|=|M|=|L|$ since $C$ is disk-like, which can be eliminated and added to $w(C')$ by an adjustment where $C'$ is any nearby piece. Eventually we can concentrate all the winding numbers at a single piece (assuming $S$ is connected). The last winding number is simply the sum of all original winding numbers which remains invariant in the adjustment process. Thus we can make $S$ into an extremal surface if and only if this sum vanishes. A similar argument works when $M=-L$ where one needs to consider an ``alternating'' sum instead.

In what follows we assume $M\neq \pm L$, which is the hard case. If we were allowed to treat winding numbers as rational numbers, the same process as above can be done, and the problem comes down to the vanishing of an obstruction number. The main bulk of this subsection is to use suitable finite covers and stability (Lemma \ref{lemma: stability}) to make our situation as good as working over $\Q$.

In the sequel, we always suppose $S$ to be a simple relative admissible surface consisting of disk-like pieces only. Recall that there is a finite graph $\Gamma_S$ encoding how pieces glue up to form $S$, which has a graph homomorphism $\pi$ to the underlying graph $\Gamma$. In the case of Baumslag--Solitar groups, $\Gamma$ is a circle with one edge $e$ (shown in Figure \ref{fig: BS}), inducing an orientation on $\Gamma_S$. Recall from Section \ref{sec: asym prom} that every nontrivial oriented path $P$ in $\Gamma_S$ has an adjustment map $\alpha(P)$, which is of the form $\alpha(P)(x)=(-\ell/m)^{h(P)} x$ for every $x\in \Dom\alpha(P)\subset d\Z$, where 
$$ h(P)=\#(\text{positively oriented edges on P}) - \#(\text{negatively oriented edges on P}).$$

Similarly, for each oriented cycle $\omega$ in $\Gamma_S$, define $h(\omega)$ to be $h(P)$, where we consider $\omega$ as an oriented path $P$ by choosing an arbitrary base point.

\begin{definition}
	An oriented cycle $\omega$ in $\Gamma_S$ is \emph{imbalanced} if the number $h(\omega)$ above is non-zero (so that $(-\ell/m)^{h(\omega)}\neq 1$ since $M\neq \pm L$). Otherwise, we say $\omega$ is \emph{balanced}.
\end{definition}

We investigate whether all disk-like pieces can be made into genuine disks in each component $\Sigma$ of $S$. Denote the corresponding component of $\Gamma_S$ by $\Gamma_\Sigma$. We say $\Sigma$ is \emph{disk-only} if it consists of genuine disks.

The following lemmas show that imbalanced cycles are useful to eliminate winding numbers of pieces.

\begin{lemma}\label{lemma: boom via imba}
	Suppose $\Gamma_\Sigma$ contains an imbalanced oriented cycle $\omega$, on which sits a piece $C$ as the base point. If $w(C)\in \Dom \alpha(\omega)^p+\Im \alpha(\omega)^q$ for all $p,q\ge0$, then for any $N\in \Z_+$, there is an adjustment of $\Sigma$ supported on $C$, after which $w(C)$ is divisible by $dm^N\ell^N$.
\end{lemma}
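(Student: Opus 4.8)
The idea is to iterate the adjustment coming from the cycle $\omega$ and exploit the stability of virtual automorphisms (Lemma \ref{lemma: stability}) applied to the adjustment map $\alpha(\omega)$. First I would set $\phi \defeq \alpha(\omega)$, a virtual automorphism of $d\Z$ of the form $\phi(x) = (-\ell/m)^{h(\omega)} x$, and observe that since $\omega$ is imbalanced we have $h(\omega)\ne 0$, so $\phi$ is genuinely ``scaling'' both directions: $\Dom\phi^p$ and $\Im\phi^q$ shrink as $p,q\to\infty$. Concretely (compare Example \ref{ex: index compute}), after absorbing the sign and the power, $\phi$ is conjugate to the map $\Z\to\Z$ sending $m^{|h(\omega)|}\mapsto \pm\ell^{|h(\omega)|}$ (or the reverse, depending on the sign of $h(\omega)$), up to the common factor $d$ and the $\gcd$ between $m^{|h(\omega)|}$ and $\ell^{|h(\omega)|}$, which is $1$ since $\gcd(m,\ell)=1$. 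Hence $\Im\phi^q = d\,\ell^{q|h(\omega)|}\Z$ (up to swapping $m,\ell$), which is contained in $dm^N\ell^N\Z$ once $q$ is large. This is the mechanism that lets the winding number become highly divisible.

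Next I would translate the hypothesis $w(C)\in \Dom\phi^p + \Im\phi^q$ for all $p,q\ge 0$ into a concrete sequence of adjustments. As in the proof of Lemma \ref{lemma: kill winding}, write $w(C) = \alpha(\omega'_{\bullet})(x) + \alpha(\omega_{\bullet})(y)$ style decompositions; more precisely, using the cycle $\omega$ based at $C$, the subpath traversals in the positive and negative directions give adjustment maps that are powers of $\phi$ and $\phi^{-1}$, so an element of $\Dom\phi^p+\Im\phi^q$ can be realized as a net change to $w(C)$ supported on $C$ alone (all intermediate pieces along $\omega$ have their winding numbers return to their original values, since each interior vertex on the cycle is entered and left and the contributions cancel, exactly as in the discussion preceding Lemma \ref{lemma: kill winding}). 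Choosing $p=0$ and $q$ large enough that $\Im\phi^q\subset dm^N\ell^N\Z$, the hypothesis gives $w(C)\in \Dom\phi^0 + \Im\phi^q = d\Z + \Im\phi^q$; but I actually want to \emph{move} $w(C)$ into $\Im\phi^q$, not merely write it as a sum. So instead I would argue directly: there exist $a\in \Dom\phi^q$ and $b\in\Im\phi^q$ with $w(C) = \phi^q(a') + b$ for suitable $a'$ — wait, I should be careful here. The cleanest route is: by hypothesis $w(C)\in \Dom\phi^q + \Im\phi^q$, so write $w(C) = u + v$ with $u\in\Dom\phi^q$, $v\in\Im\phi^q$. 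The adjustment along $\omega$ (wrapping $q$ times) lets us subtract $u$ from $w(C)$ and add $\phi^q(u)\in\Im\phi^q$ to $w(C)$ in one move supported on $C$, so afterwards $w(C)$ becomes $(w(C)-u)+\phi^q(u) = v + \phi^q(u)\in\Im\phi^q$. Since $\Im\phi^q\subset dm^N\ell^N\Z$ for $q$ large (choosing $q\ge N/|h(\omega)|$ or so, depending on which of $m,\ell$ survives), this completes the argument.

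The step I expect to be the main obstacle is the bookkeeping in the previous paragraph: making sure that an iterated adjustment along $\omega$, wrapped $q$ times, really is ``supported on $C$'' — i.e. that wrapping the cycle multiple times does not leave residual changes on the other pieces of $\omega$ — and that the composite adjustment map one obtains is exactly $\phi^q$ with the correct domain $\Dom\phi^q$. This is where one needs to unwind the definition of adjustment maps for paths versus cycles from Subsection \ref{subsec: disk-like} and check that traversing the same cycle $q$ times corresponds to composing the transition map with itself $q$ times, with domain shrinking to $\Dom\phi^q$ exactly as in the composition rule for virtual isomorphisms. Once that is pinned down, the divisibility conclusion is immediate from the explicit form of $\Im\phi^q$ as computed via Example \ref{ex: index compute} together with $\gcd(m,\ell)=1$ and $M\ne\pm L$.
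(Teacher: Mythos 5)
The plan correctly identifies the mechanism (iterated adjustment along $\omega$ together with Lemma \ref{lemma: stability}) and correctly works out what an adjustment supported on $C$ via a $q$-fold traversal of $\omega$ does. But the final step contains a genuine error: the claim that $\Im\phi^q \subset dm^N\ell^N\Z$ once $q$ is large is false. Taking $h(\omega)>0$ and working out the domains and images as in Example \ref{ex: index compute}, one finds $\Dom\alpha(\omega)^p = dm^{\mu+(p-1)h}\ell^{\lambda}\Z$ and $\Im\alpha(\omega)^q = dm^{\mu-h}\ell^{\lambda+qh}\Z$ for fixed nonnegative $\mu,\lambda$ with $h = h(\omega)$. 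The $m$-exponent of $\Im\alpha(\omega)^q$ is the constant $\mu-h$, independent of $q$; increasing $q$ improves only the $\ell$-divisibility of $\Im\alpha(\omega)^q$, never the $m$-divisibility. Since $\gcd(m,\ell)=1$ and $|m|>1$ in general, $\Im\alpha(\omega)^q \not\subset dm^N\ell^N\Z$ for any $q$. So after your single-direction adjustment the new winding number lies in $\Im\phi^q$ but need not be divisible by $dm^N$.

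The fix, which is the paper's argument, requires traversing $\omega$ in both directions. Write $w(C) = au + bv$ with $u\in\Dom\alpha(\omega)^{2N}$ and $v\in\Im\alpha(\omega)^{2N}$. A forward adjustment of $N$ turns around $\omega$ replaces the first summand by (a sign times) $\alpha(\omega)^N(au)$, and this element lies in $\alpha(\omega)^N\bigl(\Dom\alpha(\omega)^{2N}\bigr) = \Dom\alpha(\omega)^N\cap\Im\alpha(\omega)^N$. A backward adjustment of $N$ turns replaces the second summand by (a sign times) $\alpha(\omega)^{-N}(bv)\in\Dom\alpha(\omega)^N\cap\Im\alpha(\omega)^N$. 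The intersection $\Dom\alpha(\omega)^N\cap\Im\alpha(\omega)^N$ is contained in $dm^N\Z\cap d\ell^N\Z = dm^N\ell^N\Z$ (using $\gcd(m,\ell)=1$), so the resulting $w(C)$ is divisible by $dm^N\ell^N$. The essential point you missed is that both the $\Dom$-half and the $\Im$-half must be pushed into the \emph{intersection} $\Dom\alpha(\omega)^N\cap\Im\alpha(\omega)^N$, which requires one adjustment in each direction of $\omega$; pushing the whole of $w(C)$ into $\Im\alpha(\omega)^q$ alone does not give the $m$-divisibility.
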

\begin{proof}
	Without loss of generality, assume $h(\omega)>0$. Then $\Dom \alpha(\omega)^p\subset dm^p\Z$ and $\Im \alpha(\omega)^q\subset d\ell^q\Z$. By the assumption, there are $a,b\in \Z$ and $u\in\Dom \alpha(\omega)^{2N}$, $v\in\Im \alpha(\omega)^{2N}$, such that $w(C)=au+bv$. Then $\alpha(\omega)^{N}(au)\in \Dom \alpha(\omega)^{N}\cap \Im \alpha(\omega)^{N}$ is divisible by $dm^N\ell^N$, so is $\alpha(\omega)^{-N}(bv)$.
\end{proof}

\begin{lemma}\label{lemma: lift balanced and merge}
	Suppose $\Gamma_\Sigma$ contains a balanced oriented cycle $\omega$, on which sits a piece $C$ as the base point. Let $\wtilde{S}$ be a finite cover of $S$ and $\wtilde{\omega}$ be a degree $k$ lift of $\omega$, where the preimages of $C$ are denoted as $\wtilde{C}_1,\ldots,\wtilde{C}_k$. If $w(C)\in \Dom \alpha(\omega)^p+\Im \alpha(\omega)^q$ for all $p,q\ge0$, then there is an adjustment of $\wtilde{\Sigma}$ supported on $\{\wtilde{C}_1,\ldots,\wtilde{C}_k\}$, after which $w(\wtilde{C}_1)=kw(C)$ and $w(\wtilde{C}_j)=0$ for all $k>1$.
\end{lemma}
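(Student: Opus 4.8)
The plan is to drain the winding numbers of $\widetilde{C}_2,\dots,\widetilde{C}_k$ into $\widetilde{C}_1$, one lift at a time, by running the adjustment procedure of Subsection~\ref{subsec: disk-like} along successive sub-arcs of $\widetilde{\omega}$; the point of $\omega$ being \emph{balanced} is that each such sub-arc carries the identity transition map, so a single adjustment transfers exactly $w(C)$.

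First I would unwind the hypothesis. Since $\omega$ is balanced, $\alpha(\omega)$ acts on its domain $H_0\defeq\Dom\alpha(\omega)$ by multiplication by $(-\ell/m)^{h(\omega)}=1$; hence $\alpha(\omega)$ restricts to $\mathrm{id}_{H_0}$, so $\Im\alpha(\omega)=H_0$ and $\Dom\alpha(\omega)^p=\Im\alpha(\omega)^q=H_0$ for all $p,q\ge1$, while $\alpha(\omega)^0$ is the identity of $W_v$. As $w(C)\in W_v$ (because $C$ is disk-like), the assumption ``$w(C)\in\Dom\alpha(\omega)^p+\Im\alpha(\omega)^q$ for all $p,q\ge0$'' collapses to the single statement $w(C)\in H_0$ --- a baby case of the stability Lemma~\ref{lemma: stability}.

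Next I would lay out the sub-arcs. Since $\Gamma$ is a circle, $\Gamma_S$ is oriented, and $\widetilde{\omega}$, a connected degree $k$ lift of the oriented cycle $\omega$ based at $C$, is cut by the $k$ lifts $\widetilde{C}_1,\dots,\widetilde{C}_k$ of that base point into arcs $\widetilde{\omega}_1,\dots,\widetilde{\omega}_k$, where $\widetilde{\omega}_i$ runs from $\widetilde{C}_i$ to $\widetilde{C}_{i+1}$ (indices mod $k$) and each $\widetilde{\omega}_i$ projects onto the loop $\omega$. For $2\le j\le k$ put $\widetilde{P}_j\defeq\widetilde{\omega}_j\widetilde{\omega}_{j+1}\cdots\widetilde{\omega}_k$, the sub-arc of $\widetilde{\omega}$ from $\widetilde{C}_j$ to $\widetilde{C}_1$: its interior meets $\widetilde{C}_{j+1},\dots,\widetilde{C}_k$ but none of $\widetilde{C}_2,\dots,\widetilde{C}_{j-1}$, and its adjustment map $\alpha(\widetilde{P}_j)=\alpha(\omega)^{\,k-j+1}$ restricts to the identity of $H_0$; in particular $\Dom\alpha(\widetilde{P}_j)=H_0\ni w(C)$.

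Finally, since $\widetilde{S}\to S$ restricts to homeomorphisms on pieces, each $\widetilde{C}_i$ has winding number $w(C)$. Processing $j=2,3,\dots,k$ in turn, at step $j$ I would run the adjustment along $\widetilde{P}_j$: because $\alpha(\widetilde{P}_j)$ is trivial on $H_0$ and $w(C)\in H_0$, it can be set up so that $w(\widetilde{C}_j)$ drops from $w(C)$ to $0$ while $w(\widetilde{C}_1)$ gains $w(C)$, and so that every interior piece of $\widetilde{P}_j$ (hence all of $\widetilde{C}_{j+1},\dots,\widetilde{C}_k$) and every piece off $\widetilde{P}_j$ (hence all of $\widetilde{C}_2,\dots,\widetilde{C}_{j-1}$) is left unchanged. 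Composing these $k-1$ adjustments yields an adjustment of $\widetilde{\Sigma}$ supported on $\{\widetilde{C}_1,\dots,\widetilde{C}_k\}$ after which $w(\widetilde{C}_j)=0$ for $2\le j\le k$ and $w(\widetilde{C}_1)=w(C)+(k-1)w(C)=kw(C)$, as desired. I expect the only genuinely delicate point to be the domain bookkeeping --- keeping the transfer parameter inside $\Dom\alpha(\widetilde{P}_j)$ at every stage --- which the first step's reduction of the hypothesis to $w(C)\in H_0$ (via balancedness of $\omega$) grants for free; choosing the sub-arcs in cyclic order along $\widetilde{\omega}$ then makes the ``already-processed lifts are not disturbed'' combinatorics automatic.
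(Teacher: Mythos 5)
Your proof is correct and amounts to a streamlined version of the paper's own argument. The paper's proof delegates to Lemma~\ref{lemma: lift and kill}, hence to Lemma~\ref{lemma: kill winding}, where $w(\widetilde{C}_j)$ is written as $\alpha(\omega'_j)(x)+\alpha(\omega_j)(y)$ and the two summands are drained to the base piece along the two complementary arcs of the lifted loop; this two-arc split is needed in the general (imbalanced) case because $\Dom\tau^{p}$ and $\Im\tau^{q}$ are typically distinct finite-index subgroups. Your opening observation---that balancedness makes $\alpha(\omega)$ restrict to the identity of $H_0=\Dom\alpha(\omega)$, so $\Dom\alpha(\omega)^{p}=\Im\alpha(\omega)^{q}=H_0$ for all $p,q\ge1$ and the stability hypothesis collapses to $w(C)\in H_0$---is exactly what degenerates the two-arc split into the single-arc push along $\widetilde{P}_j$, which is a genuine simplification available precisely in the balanced case. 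The resulting tally $w(\widetilde{C}_1)=kw(C)$ also agrees with the invariance of $\sum_j w(\widetilde{C}_j)$ under adjustments supported on the lifts of $C$ (all at the same $\phi$-level), which is the argument the paper implicitly appeals to when it says ``balancedness implies $w(\widetilde{C}_1)=kw(C)$.'' One point worth stating explicitly: the ``supported-on-endpoints'' property of a path adjustment survives even if $\widetilde{P}_j$ is not embedded in $\Gamma_{\widetilde{S}}$, since it only uses local cancellation at each interior visit; this matters because $\widetilde{\omega}$ need not be embedded.
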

\begin{proof}
	We eliminate $w(\wtilde{C}_j)$ for $j>1$ as in the proof of Lemma \ref{lemma: lift and kill}. The fact that $\omega$ is balanced implies $w(\wtilde{C}_1)=kw(C)$ after the adjustment.
\end{proof}

\begin{lemma}\label{lemma: kill last one}
	Suppose a component $\Sigma$ contains two distinct embedded cycles $\omega$ and $\omega_{im}$ where $\omega_{im}$ is imbalanced. If all pieces of $\Sigma$ are genuine disks except for one piece $C$, then there is a finite cover of $\Sigma$ where all pieces are genuine disks after a suitable adjustment, assuming $w(C)$ is divisible by $dm^N\ell^N$ for some $N\in\Z_+$ depending on $\Sigma$.
\end{lemma}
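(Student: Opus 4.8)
The plan is to reduce to a situation where a single winding number, already divisible by a large power of $dm\ell$, can be absorbed by pushing it around an imbalanced cycle until it vanishes entirely. First I would fix the one non-disk piece $C$ and the two embedded cycles $\omega$ and $\omega_{im}$. Since $\Sigma$ is connected, there is an oriented path $P$ in $\Gamma_\Sigma$ from $C$ to some piece $C'$ lying on $\omega_{im}$; moving the winding number of $C$ along $P$ via an adjustment transports $w(C)$ to $C'$ (changing only the winding numbers at $C$ and $C'$, with all intermediate ones cancelling as in the construction of adjustment maps in Subsection \ref{subsec: disk-like}). The adjustment map $\alpha(P)$ has the form $\alpha(P)(x)=(-\ell/m)^{h(P)}x$, so I need $w(C)\in\Dom\alpha(P)$; this is where the hypothesis ``$w(C)$ divisible by $dm^N\ell^N$ for suitable $N=N(\Sigma)$'' is used — taking $N$ at least $|h(P)|$ (maximised over the finitely many relevant paths, hence depending only on $\Sigma$) guarantees $w(C)$ lies in $\Dom\alpha(P)$ and that $\alpha(P)(w(C))$ is still divisible by a large power of $dm\ell$, in particular by $d|D_v|$.

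Next I would work entirely on $\omega_{im}$ with base point $C'$. The point is that for an imbalanced cycle $\omega_{im}$, the subgroup $\Dom\alpha(\omega_{im})^p+\Im\alpha(\omega_{im})^q$ behaves well: by the stability Lemma \ref{lemma: stability} it stabilises to a fixed finite-index subgroup $H_0\le d\Z$, and any sufficiently divisible integer — in particular $\alpha(P)(w(C))$, provided $N$ was chosen large enough relative to the index of $H_0$ in $d\Z$, again a quantity depending only on $\Sigma$ — lies in $H_0$. Then Lemma \ref{lemma: boom via imba} applies: after a further adjustment supported at $C'$ (along powers of $\omega_{im}$), the winding number at $C'$ becomes divisible by $dm^{N'}\ell^{N'}$ for arbitrarily large $N'$ of my choosing. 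I would choose $N'$ large enough that this winding number is divisible by $dm^{k}\ell^{k}$ for $k$ exceeding the ``depth'' of $\omega_{im}$, meaning: pushing it once more around $\omega_{im}$ lands it in $\Dom\alpha(\omega_{im})\cap\Im\alpha(\omega_{im})$, and the element so obtained can be split as a sum $\alpha(\omega_{im})(x)+\alpha(\omega_{im})^{-1}(y)$ of two contributions each supported at $C'$, cancelling the winding number outright without introducing any new one.

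If that last cancellation cannot be arranged directly on $\Sigma$ itself (for instance if $\omega_{im}$ is null-homotopic in $\Gamma_\Sigma$, forbidding passage to covers of unbounded degree along it), I would instead pass to a finite cover $\wtilde\Sigma\to\Sigma$ in which $\omega_{im}$ lifts to a long cycle $\wtilde\omega_{im}$ of degree $\ge 2$: the single bad piece $C$ has several preimages, and Lemma \ref{lemma: lift balanced and merge} (or the argument of Lemma \ref{lemma: lift and kill}) concentrates all the winding number on one preimage while killing it on the others, after which the previous paragraph's argument applies to that one preimage. All of $\Sigma$'s other pieces are genuine disks, so their preimages are too, and the cover is disk-only after the adjustment. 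The main obstacle I anticipate is bookkeeping the quantifiers: I must choose the cover degree and the auxiliary integers $N',k$ \emph{after} seeing $\Sigma$ but in a way that only depends on $\Sigma$ (through $|h(P)|$, $[d\Z:H_0]$, and the length/depth of $\omega_{im}$), and I must verify that the hypothesised divisibility $dm^N\ell^N\mid w(C)$ with $N=N(\Sigma)$ really is enough to feed into all three of Lemma \ref{lemma: boom via imba}, the path transport, and the stabilisation of Lemma \ref{lemma: stability} simultaneously; making these finitely many thresholds explicit and taking their maximum is the crux.
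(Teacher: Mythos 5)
Your plan takes a genuinely different route from the paper's, and it has a gap at the decisive step. The paper transports $w(C)$ to the cycle $\omega$ (not to $\omega_{im}$), then passes to a finite cover in which $\omega$ unwraps to a long cycle $\wtilde\omega$ through preimages $\wtilde C_1,\dots,\wtilde C_k$ while $\omega_{im}$ lifts to a short cycle $\wtilde\omega_{im,j}$ attached at each $\wtilde C_j$; the point is to spread the winding number across the $\wtilde C_j$ so that each becomes divisible by the \emph{specific} integer $w_0 = \alpha(P\omega_{im}\overline P)(k_0)-k_0 = \big[(-\ell/m)^{h(\omega_{im})}-1\big]k_0$, which is exactly what one turn around the attached $\wtilde\omega_{im,j}$ can cancel. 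You instead transport $w(C)$ to a piece $C'$ on $\omega_{im}$ and try to annihilate it using adjustments around $\omega_{im}$ alone.

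The gap is in your ``splitting'' claim. You invoke the mechanism of Lemma~\ref{lemma: kill winding}, writing $w(C') = \alpha(\omega_{im})(x)+\alpha(\omega_{im})^{-1}(y)$ and asserting this cancels the winding number at $C'$. But Lemma~\ref{lemma: kill winding} kills $w(C_j)$ precisely by adjusting along two distinct arcs from a \emph{different} piece $C_0$ to $C_j$, at the cost of changing $w(C_0)$ by $-x-y$. In your situation both ends coincide: an adjustment around the cycle $\omega_{im}$ based at $C'$ by amount $x$ changes $w(C')$ by $\alpha(\omega_{im})(x)-x=(\mu-1)x$, where $\mu=(-\ell/m)^{h(\omega_{im})}$, and similarly the reverse cycle gives $(\mu^{-1}-1)y$; there is no way to isolate $\alpha(\omega_{im})(x)$ by itself. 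Consequently any amount of winding you can cancel at $C'$ is a multiple of $\mu-1=\big((-1)^k\ell^k-m^k\big)/m^k$ (with $k=h(\omega_{im})$), whose numerator $A=(-1)^k\ell^k-m^k$ is coprime to both $m$ and $\ell$. Divisibility of $w(C')$ by $dm^{N'}\ell^{N'}$ for arbitrarily large $N'$ --- which is all that ``boom via imba'' (Lemma~\ref{lemma: boom via imba}) delivers --- does not give divisibility by $A$, so the cancellation cannot be carried out. Passing to powers $\omega_{im}^p$ does not help since $\mu^p-1$ is always a multiple of $\mu-1$. This obstruction is exactly why the paper's proof cannot avoid the covering of $\Sigma$ unwrapping $\omega$, which is what manufactures divisibility by $w_0$ (not just a power of $dm\ell$), and why a further case split is needed --- with a nontrivial binomial-expansion argument when $\omega$ is itself imbalanced --- a case your plan never engages.
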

\begin{proof}
	It suffices to prove the case where $C$ lies on the cycle $\omega$. For the general case, we can increase $N$ by the diameter of $\Gamma_\Sigma$, and move $w(C)$ to a piece $C'$ on $\omega$ by an adjustment supported on the two pieces, after which $w(C')$ is divisible by $dm^N\ell^N$ and $w(C)=0$. When $C$ lies on $\omega$, fix an embedded oriented path $P$ from $C$ to $\omega_{im}$. Since $\omega$ and $\omega_{im}$ are embedded and distinct, for any $k\in \Z_+$, there is a degree $k$ normal cover $\wtilde{\Sigma}$ of $\Sigma$ containing cycles $\wtilde{\omega}_{im,j}$ and $\wtilde{\omega}$, $1\le j\le k$, where $\wtilde{\omega}$ covers $\omega$ with degree $k$ and each $\wtilde{\omega}_{im,j}$ projects homeomorphically to $\omega_{im}$. See Figure \ref{fig: liftkill}. Denote the lifts of $C$ according to the cyclic order on $\wtilde{\omega}$ by $\wtilde{C}_j$, $1\le j\le k$. Let $\wtilde{P}_j$ be the lift of $P$ connecting $\wtilde{C}_j$ and $\wtilde{\omega}_{im,j}$.
	
	\begin{figure}
		\labellist
		\small 
		\pinlabel $P$ at -5 130
		\pinlabel $C$ at 7 100
		\pinlabel $\omega_{im}$ at 10 180
		\pinlabel $\omega$ at 110 100
		
		\pinlabel $\wtilde{C}_1$ at 185 105
		\pinlabel $\wtilde{P}_1$ at 170 130
		\pinlabel $\wtilde{\omega}_{im,1}$ at 188 190
		
		\pinlabel $\wtilde{C}_2$ at 290 5
		\pinlabel $\wtilde{P}_2$ at 270 -5
		\pinlabel $\wtilde{\omega}_{im,2}$ at 195 10
		
		\pinlabel $\wtilde{C}_3$ at 385 105
		\pinlabel $\wtilde{P}_3$ at 400 75
		\pinlabel $\wtilde{\omega}_{im,3}$ at 405 40
		
		\pinlabel $\wtilde{C}_4$ at 280 210
		\pinlabel $\wtilde{P}_4$ at 300 220
		\pinlabel $\wtilde{\omega}_{im,4}$ at 350 220
		
		\pinlabel $\wtilde{\omega}$ at 195 70
		
		\endlabellist
		\centering
		\includegraphics[scale=0.8]{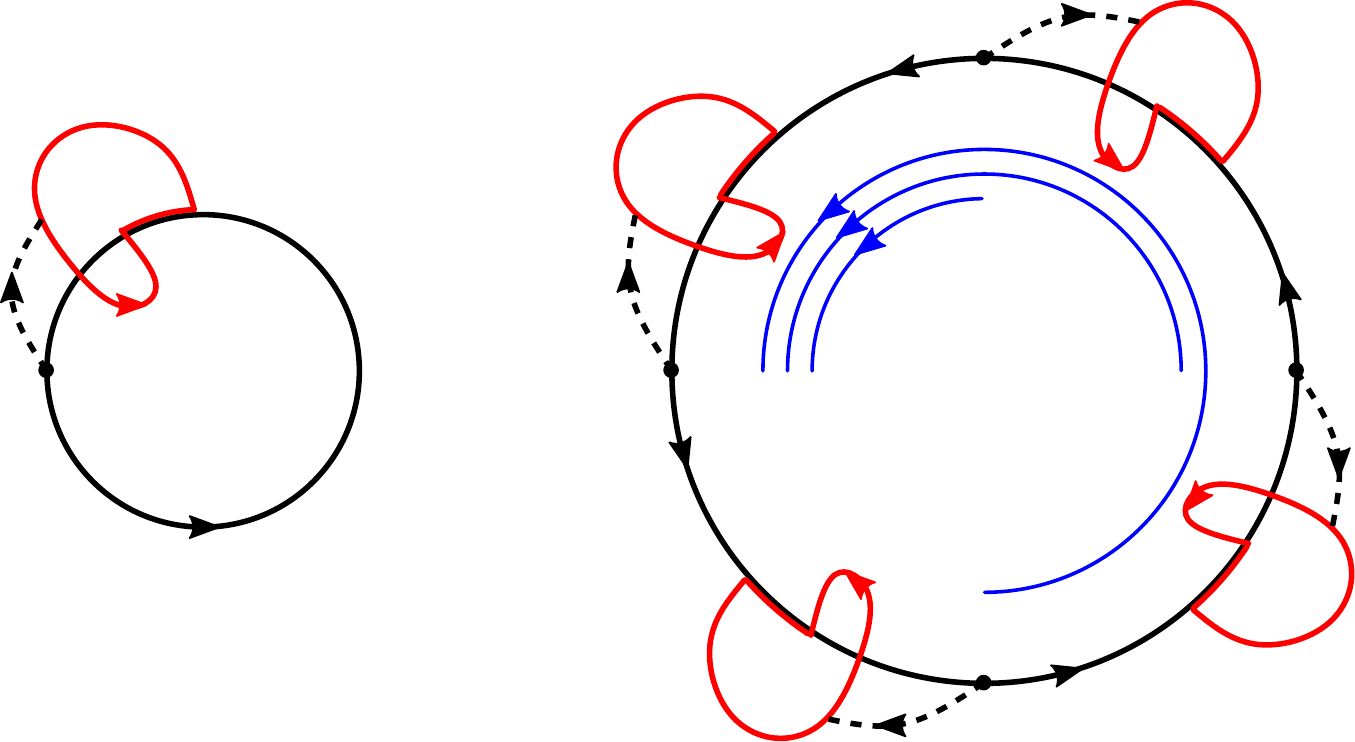}
		\vspace{5pt}
		\caption{The cover unwrapping $\omega$ when $k=4$ with the blue concentric arcs indicating our adjustment moving winding numbers of $\wtilde{C}_j$ together in the case where $\omega$ is balanced}\label{fig: liftkill}
	\end{figure}
	
	Fix an integer $k_0\in\Dom\alpha(P\omega_{im}\overline{P})$ of the form $dm^{N_0}\ell^{N_0}$ with $N_0\in \Z_+$, where $\overline{P}$ is $P$ reversed and $P\omega_{im}\overline{P}$ is the concatenation. Note that any $\wtilde{C}_j$ with $w(\wtilde{C}_j)$ divisible by $w_0\defeq [(-\ell/m)^{h(\omega_{im})}-1] k_0\neq0$ can be made into a genuine disk without affecting any other pieces on $\wtilde{\Sigma}$ since
	$$w_0=[(-\ell/m)^{h(\omega_{im})}-1]k_0=\alpha(P\omega_{im}\overline{P})k_0-k_0=\alpha(\wtilde{P}_j\wtilde{\omega}_{im,j}\overline{\wtilde{P}}_j)k_0-k_0.$$
	
	Our strategy is to choose a suitable covering degree $k$ and make adjustment so that each $w(\wtilde{C}_j)$ is divisible by $w_0$.
	
	If $\omega$ is balanced, let $k=|w_0|$. By stability (Lemma \ref{lemma: stability}), we can apply the adjustment in Lemma \ref{lemma: lift balanced and merge} by choosing $N$ large. After the adjustment, the only non-trivial winding number in $\wtilde{\Sigma}$ is $w(\wtilde{C}_1)=kw(C)$, which is divisible by $w_0$ and thus can be eliminated.
	
	If $\omega$ is imbalanced, let $k=|h(\omega_{im})w_0|$ and $s=\mathrm{sign}(h(\omega))$. We divide $\{\wtilde{C}_1,\ldots,\wtilde{C}_k\}$ into $|h(\omega_{im})|$ groups to carry out the adjustment. For each $1\le i\le |h(\omega_{im})|$, there is an adjustment supported on $\{\wtilde{C}_{i+jsh(\omega_{im})}\ |\ 1\le j\le |w_0|\}$ (subscripts are taken mod $k$) along subpaths of $\wtilde{\omega}$, such that $w(\wtilde{C}_{i+jsh(\omega_{im})})=0$ for all $j$ except
	\begin{equation}\label{eqn: compute wind}
	w(\wtilde{C}_i)=\sum_{j=0}^{|w_0|-1} \alpha(\omega)^{sjh(\omega_{im})}w(C)=|w_0|w(C)+\sum_{j=0}^{|w_0|-1}[\alpha(\omega)^{sjh(\omega_{im})}w(C)-w(C)].
	\end{equation}
	This can be done provided $w(C)\in \Dom\alpha(\omega)^{sjh(\omega_{im})}$ for all $1\le j\le |w_0|$ by choosing $N$ large. Make $N$ further larger so that $w(C)\in m^{N_0}\ell^{N_0}\Dom\alpha(\omega)^{sjh(\omega_{im})}$ for all $1\le j\le |w_0|$. Then $(x-1)^\lambda w(C)$ is divisible by $w_0=(x-1)dm^{N_0}\ell^{N_0}$ for all $1\le \lambda\le |w_0 h(\omega)|$, where $x=(-\ell/m)^{h(\omega_{im})}$. Therefore, for any $j\ge0$,
	$$\alpha(\omega)^{sjh(\omega_{im})}w(C)-w(C)=[x^{j|h(\omega)|}-1]w(C)=\sum_{\lambda=1}^{j|h(w)|}
	\begin{pmatrix}
	j|h(\omega)|\\
	\lambda
	\end{pmatrix}
	(x-1)^{\lambda}w(C)$$
	is an integer multiple of $w_0$ since each term in the summation is. Thus for $N$ large, each term in the last summation of equation (\ref{eqn: compute wind}) is divisible by $w_0$. Hence $w(\wtilde{C}_i)$ is also divisible by $w_0$ and can be eliminated.
\end{proof}

\begin{definition}\label{def: reduced chains}
	Two hyperbolic elements $g$ and $h$ are \emph{pseudo-inverses} if $\scl_{\BS(M,L)}(g+h)=0$. A rational chain $c=\sum r_i g_i$ with $r_i\in \Q_{>0}$ and each $g_i\in\ug$ hyperbolic is \emph{reduced} if no $g_i^p$ and $g_j^q$ are pseudo-inverses for any $p,q\in \Z_+$ and any $g_i,g_j\in \ug$ (possibly $i=j$).
\end{definition}

\begin{lemma}\label{lemma: reduced chains}
	Let $g,h\in \BS(M,L)$ be hyperbolic elements with positive $\scl$. Let $\rho=\rho(g+h)$ be the complexity and $|D_v|=d|m|^{\rho}|\ell|^{\rho}$ as in (\ref{eqn: choose D_v}). Then the following are equivalent:
	\begin{enumerate}
		\item $g$ and $h$ are pseudo-inverses;\label{item: definition}
		\item For some $k\in \Z_+$ and $N\in \Z$, there is a thrice punctured sphere $S$ in $\BS(M,L)$ in simple normal form with $\Gamma_S$ being a cycle such that $S$ bounds conjugacy classes $g^k$, $h^k$ and $a^{N|D_v|}$ (see Figure \ref{fig: pseudoinv});\label{item: geometric}
		\item For some $k\in \Z_+$ and $N\in \Z$, some cyclic permutations of $g^k$ and $h^k$ can be written as cyclically reduced words
		$$a^{n_1}t^{\epsilon_1}\ldots a^{n_s}t^{\epsilon_s}, \quad \text{and}\quad t^{-\epsilon_s}a^{-n_s}\ldots a^{-n_2}t^{\epsilon_1}a^{-n_1+N|D_v|},$$
		respectively, where all $\epsilon_{i}=\pm 1$.\label{item: combinatorial}
	\end{enumerate} 
\end{lemma}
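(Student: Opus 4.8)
The plan is to prove the equivalence by showing (\ref{item: definition})$\Rightarrow$(\ref{item: geometric})$\Rightarrow$(\ref{item: combinatorial})$\Rightarrow$(\ref{item: definition}), with the middle step being the purely combinatorial bookkeeping of reading off the two boundary words along a cycle-shaped $\Gamma_S$.

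For (\ref{item: definition})$\Rightarrow$(\ref{item: geometric}): suppose $\scl_{\BS(M,L)}(g+h)=0$. By Proposition \ref{prop: rel vertex} and Lemma \ref{lemma: compute by linprog}, the linear program computing $\scl_{(G,\{G_v\})}(g+h)$ achieves the value $0$, and by Lemma \ref{lemma: all disk-like} an optimal simple relative admissible surface $S$ (of some degree $k$, after passing to a cover) consists entirely of disk-like pieces with $\what\chi(S)=0$. Since $\what\chi(S)=\chi(S)+\#\{\text{disk-like non-disks}\}$ and $\chi(S)\le 0$ with equality forcing $S$ to be a union of annuli, I would argue that an optimal $S$ can be taken to be a single annulus (plus, a priori, extra components closing up homologically trivial elliptic chains, but those can be absorbed): concretely, $0=\what\chi(S)$ forces each component to be an annulus-like or disk-like piece, and connecting the boundary over $g^k$ to the boundary over $h^k$ gives a component whose $\Gamma_S$ is a cycle. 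The winding number of the unique non-disk piece (after concentrating winding numbers at one piece, exactly as in the $M=\pm L$ discussion in the text, using that the chain has complexity $\rho$ so $D_v=dm^\rho\ell^\rho\Z$) is some multiple of $|D_v|$ forced to lie in $W_v$ — i.e.\ divisible by $|D_v|$; call it $N|D_v|$. Thus $S$ is a thrice-punctured sphere bounding $g^k$, $h^k$, and $a^{N|D_v|}$. I would use Figure \ref{fig: pseudoinv}'s picture to organize this; the content is that a relative admissible surface for $g+h$ with $\what\chi=0$ and $\Gamma_S$ a cycle is exactly such a pair-of-pants.

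For (\ref{item: geometric})$\Rightarrow$(\ref{item: combinatorial}): given the thrice-punctured sphere with $\Gamma_S$ a cycle, I would read off the edge-space crossings. The pieces along the cycle $\Gamma_S$ are disks (annuli) glued along turns; traversing the boundary over $g^k$ produces a cyclically reduced word $a^{n_1}t^{\epsilon_1}\cdots a^{n_s}t^{\epsilon_s}$ where each $a^{n_i}$ records a winding number of an arc and each $t^{\epsilon_i}$ records crossing the single edge $e$. Because the pieces are glued along \emph{paired} turns (Figure \ref{fig: pair}), the boundary over $h^k$ runs along the \emph{other} side of the same sequence of arcs in reverse, so it reads $t^{-\epsilon_s}a^{-n_s}\cdots a^{-n_2}t^{\epsilon_1}a^{-n_1+N|D_v|}$, where the single correction term $N|D_v|$ appears at the unique piece carrying the leftover winding number $a^{N|D_v|}$ of the third boundary; everywhere else the two words' exponents cancel exactly because paired turns have winding numbers that are negatives of each other. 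This is the routine but slightly fiddly normal-form computation — I would present it by tracking one arc at a time around $\Gamma_S$.

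For (\ref{item: combinatorial})$\Rightarrow$(\ref{item: definition}): given the two words of the stated form, I would directly build the thrice-punctured sphere as a simple relative admissible surface — each $t^{\epsilon_i}$ crossing is realized by a turn of winding number $0$ (or the single nonzero one $N|D_v|$, which lies in $W_v$ hence is disk-like), and the disk-like pieces glue up along paired turns to give $S$ bounding $g^k+h^k+a^{N|D_v|}$. Since $a^{N|D_v|}$ is elliptic, $\scl_{\BS(M,L)}(a^{N|D_v|})=0$, and monotonicity/subadditivity of scl together with $-\chi^-(S)/2k = 0$ give $\scl_{\BS(M,L)}(g+h)\le \scl_{\BS(M,L)}(g+h+a^{N|D_v|}) + 0 = 0$, hence $=0$. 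I expect the main obstacle to be the first implication — specifically, justifying that an optimal vanishing-$\what\chi$ configuration may be taken connected with $\Gamma_S$ a single cycle (ruling out, or harmlessly discarding, extra components and higher-genus/many-cycle configurations), and pinning down that the concentrated leftover winding number is precisely an integer multiple of $|D_v|$ rather than merely of some smaller-index subgroup; the stability lemma (Lemma \ref{lemma: stability}) and the explicit computation of $D_v$ in \eqref{eqn: choose D_v} are what make this work, but the argument needs care to match the ``reading off words'' normalization used in (\ref{item: combinatorial}).
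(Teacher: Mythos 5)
Your overall structure is sound, and the (\ref{item: geometric})$\Leftrightarrow$(\ref{item: combinatorial}) bookkeeping is the right kind of argument (the paper dismisses it as ``obvious''). But there are two genuine problems.

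First, in your (\ref{item: combinatorial})$\Rightarrow$(\ref{item: definition}) step you write $-\chi^-(S)/2k=0$, which is false: $S$ is a thrice-punctured sphere, so $\chi^-(S)=-1$ and $-\chi^-(S)/2k=1/(2k)>0$. A single admissible surface never directly certifies $\scl=0$. What \emph{is} zero is the over-counting characteristic $\what\chi(S)$, because the one non-disk piece is disk-like (its winding number $N|D_v|\in D_v\subset W(a_v)$). From $\what\chi(S)=0$ one concludes $\scl(g^k+h^k)=0$ by Lemma~\ref{lemma: asym prom}, not by estimating $\chi^-$. This is exactly the role of Lemma~\ref{lemma: asym prom} and you cannot route around it.

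Second, in (\ref{item: definition})$\Rightarrow$(\ref{item: geometric}) you correctly flag ``justifying that $\Gamma_S$ may be taken to be a single cycle'' as the main obstacle, but you do not resolve it, and the intended resolution is not quite the annulus-counting heuristic you sketch. The paper's argument is: from the optimal linear program one gets $S$ in simple normal form with $\what\chi(S)=0$; by Lemma~\ref{lemma: all disk-like} every piece is disk-like, and when all pieces are disk-like one has the identity $\chi(\Gamma_S)=\what\chi(S)=0$; then Lemma~\ref{lemma: backtrack} says every vertex of $\Gamma_S$ has valence $\ge 2$ (disk-like pieces are never monogons), so every component of $\Gamma_S$ has nonpositive Euler characteristic, forcing each to be a cycle. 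Since $\scl(g),\scl(h)>0$ while $\scl(g+h)=0$, each component must contribute both $g$- and $h$-boundary. For concentrating the winding numbers onto one piece you should cite Lemma~\ref{lemma: boom via imba} and Lemma~\ref{lemma: lift balanced and merge} (covering imbalanced and balanced cycles respectively), not the ``$M=\pm L$ discussion'' — that discussion is the degenerate case and does not apply when $M\neq\pm L$. With those two changes the argument matches the paper's.

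Finally, a cosmetic note: the paper proves (\ref{item: geometric})$\Leftrightarrow$(\ref{item: combinatorial}) first (immediate), then (\ref{item: geometric})$\Rightarrow$(\ref{item: definition}) and (\ref{item: definition})$\Rightarrow$(\ref{item: geometric}); your circular ordering is equivalent.
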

\begin{figure}
	\labellist
	\small 
	\pinlabel $h^k$ at 0 130
	\pinlabel $g^k$ at 45 110
	\pinlabel $a^{N|D_v|}$ at 115 15
	\endlabellist
	\centering
	\includegraphics[scale=0.7]{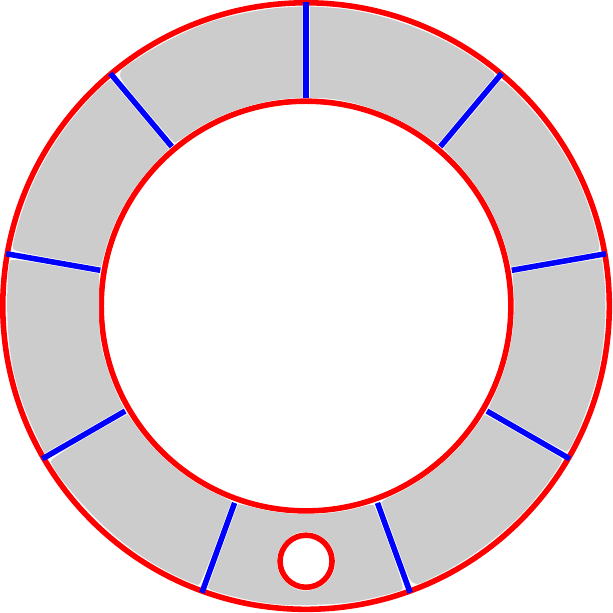}
	\vspace{5pt}
	\caption{Pseudo-inverses $g$ and $h$ have powers almost cobounding an annulus.}\label{fig: pseudoinv}
\end{figure}
\begin{proof}
	Obviously (\ref{item: geometric}) and (\ref{item: combinatorial}) are equivalent. Now suppose (\ref{item: geometric}) holds. Notice that the only non-disk piece in the thrice punctured sphere $S$   is disk-like. Thus $\what{\chi}(S)=0$ and hence $\scl(g+h)=\scl(g^k+h^k)/k=0$ by Lemma \ref{lemma: asym prom}. Therefore (\ref{item: definition}) holds.
	
	Conversely, suppose $g$ and $h$ are pseudo-inverses. By Theorem \ref{thm: rational}, the linear programming produces a simple relative admissible surface $S$ with $\what{\chi}(S)=0$. By Lemma \ref{lemma: all disk-like}, all pieces of $S$ are disk-like, thus $\chi(\Gamma_S)=\what{\chi}(S)=0$. Since each vertex of $\Gamma_S$ has valence at least $2$ (Lemma \ref{lemma: backtrack}), it follows that each component of $\Gamma_S$ is a cycle. Since both $g$ and $h$ have positive $\scl$ and $\scl(g+h)=0$, each component must bound $g^k+h^k$ for some $k\in\Z$ relative to the vertex group. By Lemma \ref{lemma: boom via imba} and Lemma \ref{lemma: lift balanced and merge}, after taking a suitable finite cover of a component and an adjustment, all but at most one disk-like piece become genuine disks, as depicted in Figure \ref{fig: pseudoinv}. Thus (\ref{item: geometric}) follows.
\end{proof}

The last part of the proof without much change implies the following characterization of chains with trivial scl in $\BS(M,L)$.
\begin{prop}\label{prop: reduced scl>0}
	Let $c=\sum r_i g_i$ be a rational chain with $r_i\in \Q_{>0}$ and each $g_i\in\ug$ hyperbolic. Then $\scl_{\BS(M,L)}(c)=0$ if and only if $c$ can be rewritten as $\sum r'_j (h_j+h'_j)$ where $r'_j\in \Q_{>0}$, $h_j$ and  $h'_j$ are pseudo-inverses and each of the form $g_i^k$ for some $g_i\in\ug$ and $k\in\Z_+$. In particular, reduced chains have positive $\scl$.
\end{prop}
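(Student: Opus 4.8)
I would prove the two implications separately. The converse is immediate: if $c=\sum_j r'_j(h_j+h'_j)$ with each $h_j,h'_j$ a pair of pseudo-inverses, then subadditivity and homogeneity of the pseudo-norm $\scl_{\BS(M,L)}$ give $\scl_{\BS(M,L)}(c)\le\sum_j r'_j\,\scl_{\BS(M,L)}(h_j+h'_j)=0$, and since $\scl\ge 0$ we conclude $\scl_{\BS(M,L)}(c)=0$. Granting the forward implication, the final claim follows by contraposition: were a reduced chain $c$ to satisfy $\scl_{\BS(M,L)}(c)=0$, the forward implication would exhibit a nonempty expression $c=\sum_j r'_j(h_j+h'_j)$ with each $h_j,h'_j$ of the form $g_i^k$ and pseudo-inverse, contradicting Definition~\ref{def: reduced chains}; hence reduced chains have positive $\scl$.

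For the forward implication, assume $\scl_{\BS(M,L)}(c)=0$. By Theorem~\ref{thm: rational}, the linear program of Lemma~\ref{lemma: compute by linprog} produces a simple relative admissible surface $S$ for $c$ of some degree $n$ with $\what{\chi}(S)=0$. By Lemma~\ref{lemma: all disk-like} every piece of $S$ is disk-like, hence a potential disk, so every vertex of $\Gamma_S$ has valence at least $2$ by Lemma~\ref{lemma: backtrack}; since all pieces are disk-like we also have $\chi(\Gamma_S)=\what{\chi}(S)=0$. A graph with all valences $\ge 2$ and Euler characteristic $0$ is a disjoint union of cycles, so each connected component $\Sigma$ of $S$ has $\Gamma_\Sigma$ a single cycle, and each of its pieces is a disk or an annulus carrying exactly two turns and two arcs.

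Next I would show, exactly as in the last paragraph of the proof of Lemma~\ref{lemma: reduced chains} (compare Figure~\ref{fig: pseudoinv}), that the pieces of such a $\Sigma$ glue along their turns --- compatibly with the orientation of $S$ --- into a cyclic strip, whose two long sides are the two components of $\bdry_0\Sigma$. These two components cover loops $\gamma_a,\gamma_b\in\uga$ (possibly $a=b$) with degrees $\delta_1,\delta_2\in\Z_+$, so $\bdry_0\Sigma$ represents $\delta_1 g_a+\delta_2 g_b$, which equals $g_a^{\delta_1}+g_b^{\delta_2}$ in $B_1^H(\BS(M,L))$. Since $\what{\chi}(\Sigma)=\chi(\Gamma_\Sigma)=0$, Lemma~\ref{lemma: asym prom} applied to $\Sigma$ gives $\scl_{\BS(M,L)}(g_a^{\delta_1}+g_b^{\delta_2})=0$, so $g_a^{\delta_1}$ and $g_b^{\delta_2}$ are pseudo-inverses. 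Summing $\bdry_0\Sigma$ over the components $\Sigma$ of $S$ and dividing by $n$ then gives $c=\frac1n\sum_\Sigma\big(g_{a_\Sigma}^{\delta_1^\Sigma}+g_{b_\Sigma}^{\delta_2^\Sigma}\big)$, a decomposition of the desired form.

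The step I expect to be the main obstacle is the structural assertion in the previous paragraph: that each cycle-component $\Sigma$ has exactly two boundary curves in $\bdry_0 S$, with the stated covering degrees --- equivalently, that its valence-$2$ pieces assemble into a cyclic strip rather than some surface with more boundary circles. This relies on each such piece having exactly two turns (forced by the valence being $2$) and on the paired-turn identifications being orientation-compatible because $S$ is oriented; it is the same combinatorial picture already used in the proof of Lemma~\ref{lemma: reduced chains}. Once it is in place, everything else is routine bookkeeping with the relation $g^k\sim kg$ in $B_1^H$ and homogeneity of $\scl$, plus the already-established Lemmas~\ref{lemma: all disk-like} and~\ref{lemma: asym prom}.
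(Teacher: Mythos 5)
Your proof is correct and follows essentially the same route as the paper, which itself proves Proposition~\ref{prop: reduced scl>0} by pointing to the last paragraph of the proof of Lemma~\ref{lemma: reduced chains}: pass to an optimal $S$ with $\what\chi(S)=0$, invoke Lemma~\ref{lemma: all disk-like} and Lemma~\ref{lemma: backtrack} to conclude each component of $\Gamma_S$ is a cycle, read off a pair of boundary circles per component, and sum. You have correctly identified the one combinatorial point the paper leaves implicit (that a cyclic $\Gamma_\Sigma$ yields a cyclic strip with exactly two $\bdry_0$--circles, a consequence of orientability and of each valence-two piece carrying exactly two turns and two arcs), and you correctly allow $a=b$ and different covering degrees $\delta_1\neq\delta_2$, which is all the proposition requires.
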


For simplicity, we only consider reduced chains in the rest of this subsection.

We first characterize the existence of imbalanced cycles. In the sequel, let $\wtilde{\Gamma}$ be the universal cover of $\Gamma$, considered as a bi-infinite line with vertex set $\Z$ and oriented edges $k\to k+1$ for each $k\in \Z$.
\begin{lemma}\label{lemma: char imba cycles}
	For a component $\Sigma$ of $S$, the following are equivalent:
	\begin{enumerate}
		\item $\Gamma_\Sigma$ contains an imbalanced cycle;
		\item $\Gamma_\Sigma$ contains an embedded imbalanced cycle;
		\item There is no graph homomorphism $\phi:\Gamma_\Sigma\to \wtilde{\Gamma}$ preserving edge orientations.
	\end{enumerate}
\end{lemma}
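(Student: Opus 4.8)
The statement is purely combinatorial: it concerns only the finite connected directed graph $\Gamma_\Sigma$ (oriented, as explained just before the lemma, so that $h$ of an oriented cycle is the number of forward edges minus the number of backward edges), together with the elementary observation that an orientation-preserving graph homomorphism $\phi:\Gamma_\Sigma\to\widetilde{\Gamma}$ is precisely an assignment of an integer $\phi(\hat v)$ to each vertex $\hat v$ with $\phi(t(\hat e))-\phi(o(\hat e))=1$ for every forward-oriented edge $\hat e$. The plan is to prove the three conditions equivalent via the cyclic chain of implications $(2)\Rightarrow(1)\Rightarrow(3)\Rightarrow(2)$.

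The implication $(2)\Rightarrow(1)$ is immediate, since an embedded imbalanced cycle is in particular an imbalanced cycle. For $(1)\Rightarrow(3)$ I would argue the contrapositive: if a homomorphism $\phi$ as above exists, then along any oriented cycle $\omega=(\hat v_0,\hat e_1,\hat v_1,\dots,\hat e_n,\hat v_0)$ the value of $\phi$ changes by $+1$ across each forward edge and by $-1$ across each backward edge, so the net change $\phi(\hat v_0)-\phi(\hat v_0)=0$ equals $h(\omega)$. Hence every oriented cycle is balanced, so $\Gamma_\Sigma$ contains no imbalanced cycle.

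The content of the lemma is in $(3)\Rightarrow(2)$, which I would also prove contrapositively: assuming $\Gamma_\Sigma$ has no embedded imbalanced cycle, I construct a homomorphism $\phi$. First note $\Gamma_\Sigma$ has no loop edge, since a loop edge by itself is an embedded cycle with $h=\pm1$. Now fix a spanning tree $T$ of $\Gamma_\Sigma$ (forgetting orientations), choose a root $\hat v_0$, and set $\phi(\hat u)\defeq h(P_{\hat u})$, where $P_{\hat u}$ is the unique $T$-path from $\hat v_0$ to $\hat u$. Because any two $T$-paths out of $\hat v_0$ agree up to the meet of their endpoints, $h$ of the $T$-path between any two vertices $\hat u,\hat w$ equals $\phi(\hat w)-\phi(\hat u)$; in particular $\phi$ satisfies the defining relation across every tree edge. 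For a non-tree edge $\hat e$ from $\hat v$ to $\hat u$ (with $\hat v\neq\hat u$ since there are no loops), the $T$-path $Q$ from $\hat u$ to $\hat v$ together with $\hat e$ forms an embedded cycle, which is balanced by hypothesis, so $0=h(Q)+1=\phi(\hat v)-\phi(\hat u)+1$, that is, $\phi(t(\hat e))-\phi(o(\hat e))=1$. Thus $\phi$ respects every edge and defines an orientation-preserving graph homomorphism $\Gamma_\Sigma\to\widetilde{\Gamma}$, so $(3)$ fails.

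The only points requiring a little care, which I would spell out inline, are that the fundamental cycles of a spanning tree are genuinely embedded once loop edges have been excluded, and the additivity bookkeeping of $h$ along $T$-paths, which is immediate from the tree structure. I do not expect a real obstacle here; the spanning-tree construction in $(3)\Rightarrow(2)$ is the crux of the argument.
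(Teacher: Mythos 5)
Your proof is correct and follows essentially the same route as the paper: the easy implications via $h$-additivity along cycles, and the construction of $\phi$ when no embedded imbalanced cycle exists. Your spanning-tree argument is a concrete instantiation of what the paper states more briefly ("we can construct $\phi$ inductively by first mapping a chosen vertex ... and then extending the map to adjacent vertices preserving edge orientations. Such a construction ends up with a well-defined $\phi$ if all embedded cycles ... are balanced"), and you correctly isolate the loop-edge case, which the paper leaves implicit.
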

\begin{proof}
	The existence of such a map $\phi$ implies that each cycle is balanced. Breaking an imbalanced cycle into embedded ones, we obtain at least one embedded imbalanced cycle. Finally, we can construct $\phi$ inductively by first mapping a chosen vertex to an arbitrary vertex of $\wtilde{\Gamma}$ and then extending the map to adjacent vertices preserving edge orientations. Such a construction ends up with a well-defined $\phi$ if all embedded cycles in $\Gamma_\Sigma$ are balanced.
\end{proof}

We show a component $\Sigma$ of $S$ can be made disk-only after taking a finite cover if it contains an imbalanced cycle $\omega$. This is achieved by taking suitable covers and adjustment so that we land in the situation of Lemma \ref{lemma: kill last one}. We need the following lemma on finite covers of graphs.

\begin{lemma}\label{lemma: lift with controlled degree}
	Let $\omega_1,\ldots, \omega_n$ be non-trivial cycles in a connected graph $\Lambda$. Then for any $N\in \Z_+$, there is a connected finite normal cover $\wtilde{\Lambda}\to \Lambda$ such that each lift $\wtilde{\omega}_i\to \omega_i$ has degree divisible by $N$, for any $1\le i\le n$.
\end{lemma}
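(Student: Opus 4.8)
The plan is to translate the statement into group theory and invoke subgroup separability of free groups. Fix a basepoint $b\in\Lambda$ and set $F=\pi_1(\Lambda,b)$; since $\Lambda$ is a graph, $F$ is free. After homotoping each $\omega_i$ to a tight cyclic loop and choosing an auxiliary path from $b$ to $\omega_i$, the cycle $\omega_i$ determines a conjugacy class of a \emph{nontrivial} element $g_i\in F$, necessarily of infinite order. For a finite-index normal subgroup $K\trianglelefteq F$ with quotient $Q=F/K$, the associated cover $\widetilde\Lambda\to\Lambda$ is connected, finite and normal; moreover the connected lift of $\omega_i$ is a circle covering $\omega_i$ with degree exactly $\min\{d>0: g_i^d\in K\}=\mathrm{ord}_{Q}(g_iK)$, and this is the same for every lift because $K$ is normal (different lifts differ by a deck transformation, and conjugate subgroups of $Q$ agree with $Q$ here). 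So it suffices to produce a finite-index normal $K\trianglelefteq F$ with $N\mid\mathrm{ord}_{F/K}(g_i)$ for every $i$.

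First I would handle one element at a time. Fix $g=g_i$. Since $g$ has infinite order, $g^j\notin\langle g^N\rangle$ for $1\le j\le N-1$. By a classical theorem of M.~Hall, free groups are LERF, so the finitely generated subgroup $\langle g^N\rangle$ is separable; hence for each such $j$ there is a finite-index subgroup $H_j\le F$ with $\langle g^N\rangle\subseteq H_j$ and $g^j\notin H_j$. Setting $H=\bigcap_{j=1}^{N-1}H_j$, we get a finite-index subgroup with $H\cap\langle g\rangle=\langle g^N\rangle$: if $g^k\in H$, write $k=qN+r$ with $0\le r<N$, so $g^r\in H$, forcing $r=0$. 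Let $K_i\trianglelefteq F$ be the normal core of $H$. If $g^d\in K_i$ then $g^d\in H\cap\langle g\rangle=\langle g^N\rangle$, so $N\mid d$; hence $N\mid\mathrm{ord}_{F/K_i}(g)$.

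Finally I would set $K=\bigcap_{i=1}^n K_i$, a finite-index normal subgroup of $F$. For each $i$, $g_i^d\in K$ implies $g_i^d\in K_i$, hence $N\mid d$, so $N\mid\mathrm{ord}_{F/K}(g_i)$; the cover $\widetilde\Lambda$ corresponding to $K$ then has the required property by the first paragraph. (One may equivalently avoid separability and argue that $\langle g_i\rangle$ is a virtual retract of $F$ — its core circle immerses in a rose and embeds in some finite cover, realizing $\langle g_i\rangle$ as a free factor of a finite-index subgroup — onto which one pushes the surjection $\Z\to\Z/N\Z$.)

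The one point that genuinely needs care, and the only mild ``obstacle,'' is the \emph{direction} of the divisibility: we need the lift degree to be a multiple of $N$, not a divisor. Plain residual finiteness of $F$ only separates $1$ from the finitely many $g_i^j$ and so yields $\mathrm{ord}_{F/K}(g_i)\ge N$, which is not enough; and completing the immersed cycles $\omega_i^{N}$ to a finite cover à la Stallings would make $\omega_i^{N}$ lift to a loop, giving degree \emph{dividing} $N$ — the wrong inequality. It is precisely the separability of $\langle g_i^N\rangle$ (equivalently the virtual-retract picture) that pins the order down as an exact multiple of $N$. Beyond this observation the argument is routine.
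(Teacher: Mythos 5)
Your argument is correct and establishes the lemma, but by a genuinely different route than the paper. You invoke M.~Hall's theorem that free groups are LERF to manufacture, for each $g_i$, a finite-index subgroup $H$ with $H\cap\langle g_i\rangle = \langle g_i^N\rangle$ (by separating $\langle g_i^N\rangle$ from the finitely many cosets $g_i^j$, $1\le j\le N-1$), then pass to normal cores and intersect over $i$. The paper instead proceeds by a direct, self-contained construction: it takes a cyclically reduced word $w_1=x_{i_k}^{e_k}\cdots x_{i_1}^{e_1}$ representing the conjugacy class of $\omega_1$ and builds an explicit permutation action of $\pi_1(\Lambda)$ on $\{1,\ldots,Nk\}$ in which $w_1$ has an orbit of length exactly $N$; the kernel of this action is the desired normal subgroup for $\omega_1$, and one then intersects the kernels over $i$. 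Both proofs reduce to the same group-theoretic task---producing a finite quotient of the free group in which every $g_i$ has order divisible by $N$---but your approach buys conceptual brevity at the cost of quoting a substantial external theorem, whereas the paper's construction is elementary and keeps the argument self-contained. Your closing observation about the direction of divisibility is exactly the right thing to flag: residual finiteness alone only gives a lower bound on $\mathrm{ord}_{F/K}(g_i)$, and completing $\omega_i^N$ to a cover \`a la Stallings would yield degree \emph{dividing} $N$; it is the separability of $\langle g_i^N\rangle$ (or, in the paper, the explicit orbit of length $N$) that forces the order to be a genuine multiple of $N$.
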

\begin{proof}
	Let $w_1=x_{i_k}^{e_k}\ldots x_{i_1}^{e_1}$ be a cyclically reduced word representing the conjugacy class of $\omega_1$ in the free group $\pi_1(\Lambda)$ with generators $\{x_1,\ldots x_r\}$, where each $e_j=\pm 1$. Construct permutations $\varphi(x_1),\ldots,\varphi(x_r)$ on the set $\mathcal{E}=\{1,\ldots, Nk\}$ such that
	$$\varphi(x_{i_j})(qk+j)=\left\{\begin{array}{rl}
	qk+j+1& \text{if }e_j=1\\
	qk+j-1& \text{if }e_j=-1\\
	\end{array}\right.,$$
	for $q=0,\ldots,N-1$ and $j=1,\ldots,k$, where elements in $\mathcal{E}$ are represented mod $Nk$. For each generator $x_i$, the conditions imposed on each permutation $\varphi(x_i)$ above are compatible (well-defined and injective). Thus such permutations exist and induce an action $\varphi$ of $\pi_1(\Lambda)$ on $\mathcal{E}$ by permutations. By construction, we have $\varphi(w_1)(qk+1)=(q+1)k+1$ for all $q=0,\ldots, N-1$, which gives an orbit of length $N$. Let $N_1$ be the kernel of $\varphi$, and then $w_1$ has order divisible by $N$ in the quotient $\pi_1(\Lambda)/N_1$. Similarly construct normal subgroups $N_i$ for each cycle $\omega_i$. Then the normal cover corresponding to $\cap_i N_i$ has the desired property.
\end{proof}

Recall that each boundary component $B$ of $S$ provides an oriented non-backtracking (by Lemma \ref{lemma: backtrack}) cycle $\omega_B \subset \Gamma_S$ with $h(\omega_B)=kh(g_i)$ if $B$ wraps $k$ times around $\gamma_i$. In particular, $\omega_B$ is balanced if and only if $g_i$ is $t$-balanced. Since Setup 2 is used, for any disk-like piece $C$ on $\omega_B$, we have $w(C)\in \Dom \alpha(\omega_B)^p+\Im \alpha(\omega_B)^q$ for all $p,q\ge0$.

\begin{lemma}\label{lemma: lift and concentrate, imba case}
	If a component $\Sigma$ of $S$ contains an imbalanced cycle $\omega$, then for any $N\in \Z_+$, there is an adjustment on a connected finite cover $\wtilde{\Sigma}$ of $\Sigma$, after which all pieces of $\wtilde{\Sigma}$ have winding number $0$, except for one piece $\wtilde{C}$ which has $w(\wtilde{C})$ divisible by $dm^N\ell^N$ and sits on a lift of $\omega$.
\end{lemma}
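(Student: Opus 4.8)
The plan is to carry out the concentration in two stages on a suitable finite cover: a ``lift‑and‑kill'' stage reducing to finitely many pieces with nonzero winding number, and a ``boom‑and‑transport'' stage pushing all of those onto a lift of $\omega$ while making the accumulated winding number divisible by $dm^N\ell^N$. First, using Lemma \ref{lemma: char imba cycles} I may assume $\omega$ is an embedded imbalanced cycle in $\Gamma_\Sigma$, and I fix a piece $C_0$ sitting on it. Recall that in Setup 2 every disk‑like piece has winding number in $D_v$, and that by the choice (\ref{eqn: choose W_0}) and the stability Lemma \ref{lemma: stability} we have $D_v\subset W_0(a_i)\subset\Dom\alpha(\omega_B)^p+\Im\alpha(\omega_B)^q$ for all $p,q\ge0$ and every boundary cycle $\omega_B$ of $\Sigma$; this is the fact that makes the adjustment lemmas applicable below.

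For the first stage I would apply Lemma \ref{lemma: lift with controlled degree} to the finite collection of boundary cycles $\{\omega_B\}$ of $\Sigma$ (together with $\omega$) to produce a connected finite normal cover $\widetilde\Sigma$ in which every lift of every $\omega_B$ and of $\omega$ has degree divisible by a large integer $K$, to be fixed later. Since $S$ (hence $\Sigma$) consists of disk‑like pieces, each piece is a potential disk and so has valence at least $2$ by Lemma \ref{lemma: backtrack}, hence lies on at least one boundary cycle. For each piece $C$ of $\Sigma$ I pick a boundary component of $\widetilde\Sigma$ through a lift of $C$ and apply Lemma \ref{lemma: lift and kill} (or Lemma \ref{lemma: lift balanced and merge} when the relevant boundary cycle is balanced) to trivialize the winding number of all but one of the lifts of $C$; these adjustments are supported on lifts of $C$ alone, so they can be carried out for every piece $C$ independently. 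The result is that at most one lift $\widetilde C^{(j)}$ of each piece has nonzero winding number, and each such winding number still lies in $D_v$.

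For the second stage, choose $N_0$ large enough that $dm^{N_0}\ell^{N_0}\Z\subset\Dom\alpha(\hat P)$ for every embedded path $\hat P$ in $\Gamma_{\widetilde\Sigma}$ (possible since embedded paths have uniformly bounded $h$‑excursion), and put $N'=N+N_0$. For each surviving $\widetilde C^{(j)}$ lying on an imbalanced boundary cycle, apply Lemma \ref{lemma: boom via imba} to that cycle to make $w(\widetilde C^{(j)})$ divisible by $dm^{N'}\ell^{N'}$; for those lying only on balanced boundary cycles, pass to a further connected finite cover (Lemma \ref{lemma: lift with controlled degree} again, with lift degree divisible by $m^{N'}\ell^{N'}$) and apply Lemma \ref{lemma: lift balanced and merge}, so that the merged winding number becomes a multiple of $dm^{N'}\ell^{N'}$. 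Now every nonzero‑winding piece has winding divisible by $dm^{N'}\ell^{N'}$, so each can be transported along an embedded path in $\Gamma_{\widetilde\Sigma}$ to a fixed lift $\widetilde C$ of $C_0$ on a lift $\widetilde\omega$ of $\omega$; each such transport multiplies the winding number by $(-\ell/m)^{h(\hat P)}$ with $|h(\hat P)|\le N_0$, so the transported value, and hence the sum of them accumulated at $\widetilde C$, remains divisible by $dm^{N'-N_0}\ell^{N'-N_0}=dm^N\ell^N$. After all transports, $\widetilde C$ is the unique piece of $\widetilde\Sigma$ with nonzero winding number, it sits on $\widetilde\omega$, and $w(\widetilde C)$ is divisible by $dm^N\ell^N$; $K$ is chosen at the outset large enough to validate all the stability estimates used.

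The main obstacle is bookkeeping the domain conditions: every adjustment (a transport along a path, a ``boom'' along an imbalanced cycle, or a merge along a balanced cycle) is legitimate only when the winding number being moved lies in the relevant, possibly proper, subgroup $\Dom\alpha(\cdot)^p+\Im\alpha(\cdot)^q$ of $W_v$. Making all of these hold at once is exactly what forces the particular $D_v$ of Setup 2 through Lemma \ref{lemma: stability}, the large covering degrees from Lemma \ref{lemma: lift with controlled degree}, and the preliminary ``boom'' that renders winding numbers divisible enough to be freely transported. The balanced boundary cycle case is the fiddliest, since there Lemma \ref{lemma: boom via imba} is unavailable and one must instead gain the needed divisibility by accumulating $k$ copies of the winding number via Lemma \ref{lemma: lift balanced and merge} on an extra‑high cover.
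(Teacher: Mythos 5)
Your proof has the right global structure (exploit the stability lemma, take covers via Lemma \ref{lemma: lift with controlled degree}, merge balanced cycles via Lemma \ref{lemma: lift balanced and merge}, boom imbalanced cycles via Lemma \ref{lemma: boom via imba}, then transport everything to a lift of a fixed piece on $\omega$). However, the ordering of the operations in your stage 1 creates a genuine gap that the paper's proof avoids by doing things in a different order.

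The problematic step is the unjustified claim that after applying Lemma \ref{lemma: lift and kill} to each piece in stage 1, ``each such winding number still lies in $D_v$.'' This is false in general. Lemma \ref{lemma: lift and kill} kills the winding numbers of all but one lift $\wtilde{C}_0$ of a piece $C$, but the surviving lift has winding number $w(C)-\sum_j(x_j+y_j)$ where the $x_j,y_j$ produced by Lemma \ref{lemma: kill winding} live in $\Dom\tau^{j-n}$ and $\Dom\tau^{j}$ respectively, which are subgroups of $W_v$ but need not be subgroups of $D_v$. (In the Baumslag--Solitar case one can check that the sum lands in the stable subgroup $W_0(a_v)$ of the particular cycle you used, but $W_0(a_v)\supsetneq D_v$ in general, and you would then need to track which $W_0$ you have and use the same cycle throughout.) Because your stage 2 applications of Lemma \ref{lemma: boom via imba} and Lemma \ref{lemma: lift balanced and merge} hinge on the precondition $w(\wtilde C^{(j)})\in\Dom\alpha^p+\Im\alpha^q$ for all $p,q$, which you invoke via $w\in D_v$, the claim $w\in D_v$ cannot simply be dropped or weakened without rechecking every adjustment lemma you subsequently call. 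The paper sidesteps this entirely: it applies Lemma \ref{lemma: boom via imba} to pieces on \emph{imbalanced} boundary cycles first, directly on $\Sigma$, while their winding numbers are still in $D_v$; only afterward does it take a cover (sized by Lemma \ref{lemma: lift with controlled degree} to control lift degrees of the \emph{balanced} boundary cycles only) and apply Lemma \ref{lemma: lift balanced and merge}, whose output $kw(C)$ visibly stays in $D_v$; Lemma \ref{lemma: lift and kill} is never used in this argument. This ordering ensures every winding number encountered is a controlled multiple in $D_v$ at the moment it is acted on, which is exactly the bookkeeping your proposal does not close.
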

\begin{proof}
	Fix a piece $C$ on $\omega$. For any other piece $C'$, choose a path $P(C')$ on $\Gamma_\Sigma$ from $C'$ to $C$. Then there is some $N(C')\in\Z_+$ such that $n(C')=d|m|^{N(C')}|\ell|^{N(C')}$ is in the domain of $\alpha(P(C'))$ and its image lies in $D_v$. Choose also a boundary component $B(C')$ passing through $C'$.
	
	Now for any $C'$ with $\omega_{B(C')}$ imbalanced, by stability and Lemma \ref{lemma: boom via imba}, we may assume $w(C')$ to be divisible by $n(C')$. Consider the finite collection of all (non-backtracking) cycles $\omega_{B(C')}$ that are \emph{balanced} with corresponding integers $n(C')$. Applying Lemma \ref{lemma: lift with controlled degree}, we obtain a cover $\wtilde{\Sigma}$ of $\Sigma$ corresponding to a finite normal cover $\Gamma_{\wtilde{\Sigma}}\to\Gamma_\Sigma$. Then for any $C'$ with $\omega_{B(C')}$ balanced, by Lemma \ref{lemma: lift balanced and merge}, up to an adjustment, the winding numbers of the preimages of $C'$ on each lift $\wtilde{\omega}_{B(C')}$ of $\omega_{B(C')}$ concentrate on one piece $\wtilde{C}'$ with $w(\wtilde{C}')$ divisible by $n(C')$. Hence for any piece $C'$ other than $C$, every preimage $\wtilde{C'}$ on $\wtilde{\Sigma}$ has $w(\wtilde{C'})$ divisible by $n(C')$. Now a lift $\wtilde{P(C')}$ of $P(C)$ connects $\wtilde{C}'$ and some preimage $\wtilde{C}$ of $C$, by our choice of $n(C')$, the winding number at $\wtilde{C}'$ can be eliminated by an adjustment at the cost of changing $w(\wtilde{C})$ by a multiple of $|D_v|$.
	
	In this way, all pieces of $\wtilde{\Sigma}$ are either genuine disks or a preimage of $C$ that is disk-like. Since each lift of the cycle $\omega$ must be imbalanced, further applying Lemma \ref{lemma: boom via imba}, we may assume preimages of $C$ to have large winding numbers that can be merged into a single preimage $\wtilde{C}$ with $w(\wtilde{C})$ divisible by $dm^N\ell^N$ after an adjustment.
\end{proof}

\begin{lemma}\label{lemma: ext surf imba case}
	For $c$ reduced, if a component $\Sigma$ of $S$ contains an imbalanced cycle $\omega$, then a connected finite cover of $\Sigma$ is disk-only after a suitable adjustment.
\end{lemma}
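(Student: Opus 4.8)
The plan is to reduce the problem to an application of Lemma \ref{lemma: kill last one}. By Lemma \ref{lemma: lift and concentrate, imba case}, after passing to a connected finite cover $\wtilde{\Sigma}$ of $\Sigma$ and performing a suitable adjustment, we may assume that every piece of $\wtilde{\Sigma}$ is a genuine disk except for a single piece $\wtilde{C}$, which sits on a lift $\wtilde{\omega}$ of the imbalanced cycle $\omega$ and has $w(\wtilde{C})$ divisible by $dm^N\ell^N$ for any prescribed $N\in\Z_+$. Note that any lift of an imbalanced cycle is again imbalanced, since $h$ is additive under concatenation and covering a cycle of degree $k$ multiplies $h$ by $k$; so $\wtilde{\omega}$ is an imbalanced cycle in $\Gamma_{\wtilde{\Sigma}}$. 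Since $c$ is reduced, the component $\Sigma$ (and hence $\wtilde{\Sigma}$) cannot be a surface whose $\Gamma$-graph is a single embedded cycle bounding only two boundary curves after trivializing winding numbers — more precisely, because $\scl_{\BS(M,L)}(c)>0$ by Proposition \ref{prop: reduced scl>0}, the graph $\Gamma_{\wtilde{\Sigma}}$ cannot itself be a single cycle (otherwise $\what\chi(\wtilde S)$ would force $\scl$ of the boundary chain to vanish, contradicting that $c$ is reduced and the boundary of $\Sigma$ is a nonempty subchain of powers of the $g_i$). Hence $\chi(\Gamma_{\wtilde{\Sigma}})<0$, and since every vertex has valence at least $2$ by Lemma \ref{lemma: backtrack}, $\Gamma_{\wtilde{\Sigma}}$ contains at least two distinct embedded cycles, at least one of which can be taken imbalanced (we may take $\wtilde{\omega}$ broken into embedded pieces to get an embedded imbalanced cycle, by Lemma \ref{lemma: char imba cycles}).

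With these two distinct embedded cycles $\wtilde\omega$ (imbalanced, carrying the one non-disk piece $\wtilde C$) and a second embedded cycle $\omega'$ in place, the hypotheses of Lemma \ref{lemma: kill last one} are met: $\wtilde\Sigma$ contains the two distinct embedded cycles $\omega'$ and $\wtilde\omega = \omega_{im}$ with $\omega_{im}$ imbalanced, all pieces are genuine disks except $\wtilde C$, and $w(\wtilde C)$ is divisible by $dm^N\ell^N$ for $N$ as large as the proof of Lemma \ref{lemma: kill last one} demands (which is legitimate since Lemma \ref{lemma: lift and concentrate, imba case} lets us choose $N$ freely). Applying Lemma \ref{lemma: kill last one} produces a further finite cover on which a suitable adjustment makes all pieces genuine disks, i.e.\ the cover is disk-only. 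The composite of the two covers is again a connected finite cover of $\Sigma$, and composing the two adjustments gives the desired adjustment, completing the proof.

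The main obstacle is the bookkeeping to guarantee that $\Gamma_{\wtilde\Sigma}$ really does contain a second embedded cycle distinct from the one through $\wtilde C$ — i.e.\ ensuring we are not in the degenerate situation already handled (where $\Gamma_\Sigma$ is itself a cycle). This is exactly where reducedness of $c$ enters: if $\Gamma_{\wtilde\Sigma}$ were a single cycle then $\wtilde S$ would be a union of a disk-like piece with disks, exhibiting $\what\chi(\wtilde S)=0$ and hence (via Lemma \ref{lemma: asym prom}) $\scl_{\BS(M,L)}$ of the boundary subchain equal to $0$; but that boundary subchain is built from positive powers of the $g_i$, contradicting that $c$ is reduced (cf.\ the argument in the proof of Lemma \ref{lemma: reduced chains}). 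One should also double-check that the finite cover supplied by Lemma \ref{lemma: lift and concentrate, imba case} does not accidentally destroy the presence of a second embedded cycle, but since covers of graphs with $\chi<0$ again have $\chi<0$, this is automatic.
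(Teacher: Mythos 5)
Your proof is correct and follows essentially the same route as the paper: pass to the cover of Lemma \ref{lemma: lift and concentrate, imba case}, extract an embedded imbalanced cycle via Lemma \ref{lemma: char imba cycles}, use reducedness (through Proposition \ref{prop: reduced scl>0}) to rule out $\Gamma_{\wtilde\Sigma}$ being a single cycle so that a second embedded cycle exists, and conclude with Lemma \ref{lemma: kill last one}. Your argument that $\chi(\Gamma_{\wtilde\Sigma})<0$ is in fact a slightly more careful rendering of the paper's terse remark that $\chi(\Gamma_S)=\what{\chi}(S)<0$, localizing the claim to the relevant component.
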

\begin{proof}
	Let $\wtilde{\Sigma}$ be the connected cover as in Lemma \ref{lemma: lift and concentrate, imba case}, which almost consists of genuine disks except for one piece $\wtilde{C}$ with $w(\wtilde{C})$ divisible by $dm^N\ell^N$, where $N$ can be chosen arbitrarily large without changing the surface $\wtilde{\Sigma}$ by Lemma \ref{lemma: boom via imba}, since $\wtilde{C}$ sits on a lift of $\omega$ which is imbalanced. By Lemma \ref{lemma: char imba cycles}, $\wtilde{\Sigma}$ contains an embedded imbalanced cycle $\omega_{im}$. Since $c$ is reduced, we have $\chi(\Gamma_S)=\what{\chi}(S)<0$ by Proposition \ref{prop: reduced scl>0}, so there must be an embedded cycle distinct from $\omega_{im}$, and thus the conclusion follows from Lemma \ref{lemma: kill last one}.
\end{proof}

\begin{cor}
	If no $g_i\in \ug$ is $t$-balanced, then the reduced chain $c$ has an extremal surface.
\end{cor}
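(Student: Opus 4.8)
The plan is to assemble the componentwise results already at hand. First I would invoke Lemma~\ref{lemma: compute by linprog} together with Lemma~\ref{lemma: all disk-like} to produce a simple relative admissible surface $S$ for $c$ of some degree $n$, consisting of disk-like pieces only, with $-\what{\chi}(S)/(2n)=\scl_{\BS(M,L)}(c)$; recall that $\scl_{\BS(M,L)}(c)>0$ since $c$ is reduced (Proposition~\ref{prop: reduced scl>0}). Every vertex of $\Gamma_S$ has valence at least $2$ by Lemma~\ref{lemma: backtrack}, so $\what{\chi}$ of any component of $S$ equals the Euler characteristic of the corresponding component of $\Gamma_S$, which is $\le 0$. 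Consequently a component of $S$ with no boundary in $\bdry_0$ must have $\what{\chi}=0$: otherwise deleting it would leave a simple relative admissible surface for $c$ of the same degree $n$ with strictly smaller $-\what{\chi}/(2n)$, contradicting Lemma~\ref{lemma: asym prom}. Deleting all such components changes neither $n$ nor $-\what{\chi}(S)/(2n)$, so I may assume every component $\Sigma$ of $S$ carries a boundary component $B\subset\bdry_0$ wrapping some $\gamma_i$, say $k\ge 1$ times.

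This is where the hypothesis is used: since $g_i$ is not $t$-balanced we have $h(g_i)\neq 0$, hence the oriented cycle $\omega_B\subset\Gamma_\Sigma$ determined by $B$ satisfies $h(\omega_B)=kh(g_i)\neq 0$ and is therefore imbalanced. So, by Lemma~\ref{lemma: ext surf imba case} (which is where $c$ being reduced is needed), each component $\Sigma$ of $S$ admits a connected finite cover $\widetilde{\Sigma}\to\Sigma$ of some degree $d_\Sigma$ that becomes disk-only after a suitable adjustment.

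Writing $S=\bigsqcup_j \Sigma_j$ and $D=\operatorname{lcm}_j d_{\Sigma_j}$, I would form $S^*$ by taking $D/d_{\Sigma_j}$ copies of the adjusted disk-only cover of each $\Sigma_j$. Before adjusting, this surface is a degree-$D$ cover of $S$, hence a simple relative admissible surface for $c$ of degree $Dn$; the adjustments only change winding numbers of turns, so they preserve both properties and make $S^*$ disk-only. Being disk-only, every piece of $S^*$ is still disk-like, so $\what{\chi}(S^*)=\chi(\Gamma_{S^*})$ and $\chi(S^*)=\what{\chi}(S^*)$; and each component of $S^*$ has nonempty boundary and graph of minimum valence $\ge 2$, hence is neither a sphere nor a disk, so $\chi^-(S^*)=\chi(S^*)$. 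Finally $\Gamma_{S^*}$ is a degree-$D$ cover of $\Gamma_S$, so $\chi(\Gamma_{S^*})=D\chi(\Gamma_S)=D\what{\chi}(S)$, and therefore
\[
\frac{-\chi^-(S^*)}{2Dn}=\frac{-D\what{\chi}(S)}{2Dn}=\frac{-\what{\chi}(S)}{2n}=\scl_{\BS(M,L)}(c),
\]
so $S^*$ is an extremal surface for $c$.

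The substance of the argument is carried by Lemma~\ref{lemma: ext surf imba case}; what remains is bookkeeping, and the two points deserving care are (a) trimming $S$ first so that every surviving component genuinely contains a boundary cycle $\omega_B$ to which the ``not $t$-balanced'' hypothesis applies, and (b) reconciling the per-component covers, which have different degrees, into a single surface by passing to the common multiple $D$ and then verifying through $\chi(\Gamma_{S^*})=D\chi(\Gamma_S)$ that the outcome is \emph{exactly} extremal rather than merely near-optimal.
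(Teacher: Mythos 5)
Your proof is correct and follows essentially the same route as the paper: extract the optimal surface from the linear program (all pieces disk-like by Lemma~\ref{lemma: all disk-like}), note that the non-$t$-balanced hypothesis makes every boundary cycle $\omega_B$ imbalanced so each component has an imbalanced cycle, apply Lemma~\ref{lemma: ext surf imba case} componentwise, and pass to a common covering degree. The only stylistic difference is that you spell out the final bookkeeping (the LCM of covering degrees and the identity $\chi(\Gamma_{S^*})=D\chi(\Gamma_S)=D\what{\chi}(S)$) that the paper compresses into one sentence, and you spend a paragraph trimming components with no boundary in $\bdry_0$ — a case that actually cannot occur, since every piece in simple normal form has a polygonal boundary alternating arcs and turns, and the arcs are subarcs of $\sqcup\gamma_i$, so every component of $S$ automatically meets $\bdry_0$; that step is harmless but unnecessary.
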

\begin{proof}
	Let $S$ be a simple relative admissible surface obtained from the optimal solution to the linear programming problem as in Theorem \ref{thm: rational}. It consists of disk-like pieces by Lemma \ref{lemma: all disk-like}. Every component $\Sigma$ of $S$ contains imbalanced loop since no $g_i\in \ug$ is $t$-balanced, hence a connected finite cover of $\Sigma$ can be made disk-only by Lemma \ref{lemma: ext surf imba case}. Taking the union of suitable copies of these finite covers of components produces an extremal surface for $c$.
\end{proof}

Now we consider a component $\Sigma$ without any imbalanced cycle. By Lemma \ref{lemma: char imba cycles}, there is an orientation preserving homomorphism $\phi:\Gamma_\Sigma\to \wtilde{\Gamma}$. Any two such homomorphisms differ by a translation on $\wtilde{\Gamma}$. 

Given $\phi$, for each $k\in \Z$, let $s_k(\Sigma)$ be the sum of winding numbers of arcs on the boundary of all pieces $C$ in $\Sigma$ with $\phi(C)=k$. The quantity $s_k(\Sigma)$ is $0$ for all but finitely many $k\in \Z$ since the graph $\Gamma_\Sigma$ is finite. Let $s(\Sigma)$ be the sum of $s_k(\Sigma)(m/\ell)^k$ over all $k\in \Z$.

\begin{lemma}\label{lemma: properties of s}
	The number $s(\Sigma)$ defined above satisfies the following properties:
	\begin{enumerate}
		\item It is invariant under adjustment on $\Sigma$; \label{item: first prop of s}
		\item $s(\Sigma)=0$ if $\Sigma$ can be made disk-only by an adjustment; and \label{item: second prop of s}
		\item Different choices of $\phi$ affect $s(\Sigma)$ by non-zero multiplicative scalars. \label{item: third prop of s}
	\end{enumerate}
\end{lemma}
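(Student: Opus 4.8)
The plan is to establish, for any orientation‑preserving graph homomorphism $\phi:\Gamma_\Sigma\to\widetilde{\Gamma}$, the identity
$$s(\Sigma)=\sum_{k\in\Z}\left(\frac{m}{\ell}\right)^{k}\sum_{\substack{C\text{ a piece of }\Sigma\\ \phi(C)=k}}w(C),$$
from which all three assertions follow quickly. To prove it, fix a piece $C$ with $\phi(C)=k$. Reading its polygonal boundary as a loop in the abelian group $G_v$, it is an alternating concatenation of arcs of $A_v$ with turns, so $\sum_{a\text{ on }\partial C}w(a)=w(C)-\sum_{\tau\text{ on }\partial C}\theta(\tau,C)$, where $\theta(\tau,C)\in G_v$ is the contribution of the turn $\tau$ to $w(C)$. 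Summing over the pieces at a given level and then over levels with weights $(m/\ell)^k$, it suffices to show that $\sum_k(m/\ell)^k\sum_{\phi(C)=k}\sum_\tau\theta(\tau,C)=0$. Each turn $\tau$ is an edge of $\Gamma_\Sigma$ incident to exactly two pieces $C_0,C_1$, and since $\phi$ sends this edge to an edge of $\widetilde{\Gamma}$ we may label them so that $\phi(C_0)=k$ and $\phi(C_1)=k+1$. The two pieces traverse $\tau$ with opposite orientations, so if $w_\tau\in G_e$ is its winding number seen from $C_0$, then $\theta(\tau,C_0)=o_e(w_\tau)$ and $\theta(\tau,C_1)=-t_e(w_\tau)$, which in $\BS(M,L)$ are $Mw_\tau$ and $-Lw_\tau$. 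Hence $\tau$ contributes $w_\tau\,(m/\ell)^{k}\bigl(M-L\cdot(m/\ell)\bigr)=0$ to the weighted sum, since $m/\ell=M/L$. This proves the identity.

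Part (\ref{item: second prop of s}) is now immediate: if $\Sigma$ is disk‑only then every winding number $w(C)$ is trivial, so the identity gives $s(\Sigma)=0$. Moreover, pulling $\phi$ back to a degree‑$d$ cover $\widetilde{\Sigma}\to\Sigma$ gives $s(\widetilde{\Sigma})=d\,s(\Sigma)$, as each piece of $\Sigma$ has $d$ preimages at the same level carrying the same arcs; thus $s(\Sigma)=0$ whenever some finite cover of $\Sigma$ can be made disk‑only, which is the version needed in the sequel. Part (\ref{item: first prop of s}): by definition $s_k(\Sigma)$ depends only on the graph $\Gamma_\Sigma$, on $\phi$, and on the winding numbers of the arcs in $A_v$ (fixed by the chain $c$), and an adjustment changes none of these — only the winding numbers of turns, hence of pieces — so $s(\Sigma)$ is unchanged. (Equivalently, via the identity: a single‑edge adjustment changing one turn's winding number by $n$ changes $w(C_0)$ by $Mn$ and $w(C_1)$ by $-Ln$, which by the computation above leaves $\sum_k(m/\ell)^k\sum_{\phi(C)=k}w(C)$ invariant, and a general path‑adjustment is a composition of such moves.) Part (\ref{item: third prop of s}): two orientation‑preserving homomorphisms $\phi,\phi':\Gamma_\Sigma\to\widetilde{\Gamma}$ differ by a translation of $\widetilde{\Gamma}$ (Lemma \ref{lemma: char imba cycles}), say $\phi'=\phi+c$ with $c\in\Z$; then the pieces with $\phi'(C)=k$ are exactly those with $\phi(C)=k-c$, so $s_k'(\Sigma)=s_{k-c}(\Sigma)$ and $s'(\Sigma)=(m/\ell)^{c}\,s(\Sigma)$, with $(m/\ell)^c\neq0$.

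There is no serious obstacle here; the computation is formal once set up. The one point deserving care is the orientation bookkeeping around a turn — that the two incident pieces see it with opposite orientations, so that their contributions to the respective winding numbers enter through $o_e$ and $-t_e$ — since it is precisely this sign pattern that makes the telescoping collapse at the weight $m/\ell=M/L$ and at no other ratio, which also explains why exactly this ratio appears in the definition of $s(\Sigma)$.
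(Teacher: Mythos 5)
Your proof is correct and uses the same key calculation as the paper: the turn contributions telescope across each edge of $\Gamma_\Sigma$ precisely because the weight $m/\ell$ equals $M/L=o_e(1)/t_e(1)$. Your explicit identity $s(\Sigma)=\sum_k(m/\ell)^k\sum_{\phi(C)=k}w(C)$ is a genuine expositional improvement over the paper's terse argument rather than a different route: the paper's one-line proof of item (\ref{item: first prop of s}) tracks how the $w(C)$'s change under an adjustment and so really establishes invariance of the right-hand side of your identity, whereas the left-hand side (the definition of $s(\Sigma)$ via arcs) is trivially invariant since arc winding numbers are fixed by the chain, and the step ``item (\ref{item: second prop of s}) immediately follows from item (\ref{item: first prop of s})'' is exactly what needs the two sides to agree. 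Making that bridge explicit, as you do, tightens the argument; your remaining points (translating $\phi$ for item (\ref{item: third prop of s}), and the covering observation $s(\widetilde{\Sigma})=d\,s(\Sigma)$ used downstream in Lemma \ref{lemma: ext surf no imba case}) match the paper.
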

\begin{proof}
	Suppose an edge $e$ on $\Gamma_\Sigma$ represents the gluing of two pieces $C=o(e)$ and $C'=t(e)$ on $\Sigma$ along paired turns. Then we have $\phi(C')=\phi(C)+1$. If we increase the winding number of the turn by $1$ viewed from $C$, then $w(C)$ would increase by $o_e(1)=M$ and $w(C')$ would decrease (due to opposite orientation) by $t_e(1)=L$. Thus $s(\Sigma)$ increases by $M(m/\ell)^{\phi(C)}-L(m/\ell)^{\phi(C)+1}=0$. This proves (\ref{item: first prop of s}), from which (\ref{item: second prop of s}) immediately follows. (\ref{item: third prop of s}) is obvious since a different choice of $\phi$ differs by a constant $k'$, which changes $s(\Sigma)$ by a scalar $(m/\ell)^{k'}$.
\end{proof}

\begin{lemma}\label{lemma: ext surf no imba case}
	Suppose there is an orientation preserving homomorphism $\phi:\Gamma_\Sigma\to \wtilde{\Gamma}$. With the notation above, a connected finite cover of $\Sigma$ can be made disk-only after a suitable adjustment if $s(\Sigma)=0$.
\end{lemma}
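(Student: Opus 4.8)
The plan is to realize over $\mathbb{Z}$, after passing to a finite cover, the adjustment that over $\mathbb{Q}$ would make $\Sigma$ disk-only; the only genuine obstruction is the weighted winding number $s(\Sigma)$. Fix an orientation preserving homomorphism $\phi\colon\Gamma_\Sigma\to\widetilde{\Gamma}$ (which exists by Lemma \ref{lemma: char imba cycles}) and, after a translation, assume it takes values in $\{0,1,\dots,n\}$ for some $n\ge 0$. Since $\Gamma_\Sigma$ has no imbalanced cycle, every boundary component $B$ of $\Sigma$ comes from a $t$-balanced element, so every cycle $\omega_B\subset\Gamma_\Sigma$ is balanced, and by Setup 2 each disk-like piece $C$ lying on $\omega_B$ satisfies $w(C)\in\Dom\alpha(\omega_B)^p+\Im\alpha(\omega_B)^q$ for all $p,q\ge0$. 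The heart of the matter is that pushing winding number across an edge of $\Gamma_\Sigma$ multiplies it by $\pm m/\ell$ or $\pm\ell/m$, so the adjustments that would concentrate all winding numbers onto a single piece are integral only under divisibility constraints which cannot be met in general; the idea is to buy enough divisibility by going to a finite cover, while noting that the level range is unaffected.

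Concretely, for each piece $C$ of $\Sigma$ pick an arc on its polygonal boundary, hence a boundary component $B_C$ of $\Sigma$ through $C$. As every piece of $S$ is disk-like and therefore a potential disk, no piece has valence one, so by Lemma \ref{lemma: backtrack} the cycle $\omega_{B_C}$ does not backtrack and is a nontrivial balanced cycle in $\Gamma_\Sigma$. Apply Lemma \ref{lemma: lift with controlled degree} to the finite family $\{\omega_{B_C}\}$ with $N$ a large multiple of $d|m|^{n}|\ell|^{n}$ (and of $|D_v|$), obtaining a connected finite normal cover $\Gamma_{\widetilde{\Sigma}}\to\Gamma_\Sigma$ in which every lift of every $\omega_{B_C}$ has degree divisible by $N$; let $\widetilde{\Sigma}$ be the corresponding cover of $\Sigma$, whose induced grading still takes values in $\{0,\dots,n\}$. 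For each $C$ in turn, apply Lemma \ref{lemma: lift balanced and merge} along each lift of $\omega_{B_C}$ to concentrate the winding numbers of the preimages of $C$. Each such adjustment is supported on preimages of $C$ alone, so the adjustments for distinct pieces do not interfere and the hypothesis of Lemma \ref{lemma: lift balanced and merge} (which refers to the original $w(C)$) remains valid throughout; afterwards, every piece of $\widetilde{\Sigma}$ has winding number either $0$ or divisible by $N$.

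Finally, fix a spanning tree of $\Gamma_{\widetilde{\Sigma}}$ rooted at a piece $\widetilde{C}_0$. For each other piece $\widetilde{C}$ the adjustment along the tree path from $\widetilde{C}$ to $\widetilde{C}_0$ is supported on these two pieces and is integral: the adjustment map of that path has domain $d|m|^{a}|\ell|^{b}\mathbb{Z}$ with $a,b\le n$, since the path never leaves the levels $0,\dots,n$, while $w(\widetilde{C})$ is divisible by $N$. Performing these adjustments in any order zeroes out the winding number of every piece except $\widetilde{C}_0$, so all pieces but $\widetilde{C}_0$ become genuine disks. By Lemma \ref{lemma: properties of s}(\ref{item: first prop of s}) the quantity $s$ is unchanged by all of these adjustments, and since $s(\widetilde{\Sigma})$ equals the covering degree times $s(\Sigma)$ it is $0$; on the other hand, after the concentration the winding contributions to $s(\widetilde{\Sigma})$ come only from $\widetilde{C}_0$, and the defining formula for $s$ forces $s(\widetilde{\Sigma})=(m/\ell)^{\phi(\widetilde{C}_0)}\,w(\widetilde{C}_0)$ up to a fixed additive term that vanishes because $\widetilde{\Sigma}$ with $w(\widetilde{C}_0)$ set to $0$ is disk-only (Lemma \ref{lemma: properties of s}(\ref{item: first prop of s}) again, comparing configurations). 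Hence $w(\widetilde{C}_0)=0$ and $\widetilde{\Sigma}$ is disk-only.

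The step I expect to be the main obstacle is the divisibility bookkeeping just outlined. It is tempting to fear circularity — a larger cover produces a larger graph $\Gamma_{\widetilde{\Sigma}}$, hence longer tree paths, hence stronger divisibility requirements — but the key point is that the covering map neither enlarges the level range $\{0,\dots,n\}$ nor the bound $n$ on the exponents $a,b$ occurring in the relevant domains. Thus a single divisibility gain $N$, depending only on $\Sigma$ through $n$, $d$, $m$, $\ell$ and $|D_v|$, suffices simultaneously for \emph{all} of the concentration adjustments, which is exactly what makes the argument close.
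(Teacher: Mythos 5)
Your proof is correct and follows essentially the same strategy as the paper's: fix $\phi$ to get a divisibility bound from its finite level range, use Lemma~\ref{lemma: lift with controlled degree} to produce a cover on which Lemma~\ref{lemma: lift balanced and merge} makes every winding number divisible by that bound, concentrate all winding numbers onto a single piece via adjustments along paths, and invoke $s(\widetilde\Sigma)=(\text{degree})\cdot s(\Sigma)=0$ to conclude that the last winding number vanishes. Your explicit spanning-tree bookkeeping for the concentration step and your remark that the merging adjustments for distinct pieces do not interfere (being supported on disjoint preimage sets) are slightly more detailed than the paper, which compresses these points into a single sentence, but the substance is the same; the final identification $s(\widetilde\Sigma)=(m/\ell)^{\phi(\widetilde C_0)}w(\widetilde C_0)$ is indeed what the paper's closing line implicitly relies on, and your "fixed additive term" is exactly zero because the per-turn contributions to $\sum_C w(C)(m/\ell)^{\phi(C)}$ cancel across each gluing.
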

\begin{proof}	
	Since $\Gamma_\Sigma$ is finite and connected, the range of $\phi$ is $[N_{\min},N_{\max}]\cap\Z$ for some integers $N_{\min}$ and $N_{\max}$. Let $N\defeq N_{\max}-N_{\min}$. Then it is easy to see that, for any piece $C$ with $w(C)$ divisible by $n\defeq dm^N \ell^N$ and any path $P$ from $C$ to some $C'$, we have $w(C)\in\Dom\alpha(P)$ and $\alpha(P)w(C)=w(C)(-\ell/m)^{\phi(C')-\phi(C)}$. 
	
	For any $C$, choose a boundary component $B(C)$ passing through $C$. By Lemma \ref{lemma: lift with controlled degree}, there is a finite cover $\wtilde{\Sigma}$ of $\Sigma$ corresponding to a connected finite normal cover of $\Gamma_\Sigma$ where each lift $\wtilde{\omega}_{B(C)}$ of $\omega_{B(C)}$ has degree divisible by $n$, for any piece $C$ in $\Sigma$. Hence by Lemma \ref{lemma: lift balanced and merge}, after an adjustment every piece $\wtilde{C}$ of $\wtilde{\Sigma}$ has winding number $w(\wtilde{C})$ divisible by $n$. Composing the covering projection with $\phi$, we have an orientation preserving homomorphism $\tilde{\phi}:\Gamma_{\wtilde{\Sigma}} \to \wtilde{\Gamma}$ with the same range as $\phi$. Thus the observation made in the first paragraph also applies to $\Gamma_{\wtilde{\Sigma}}$, and all winding numbers can be merged into a single piece $\wtilde{C}_*$ of $\wtilde{\Sigma}$ up to an adjustment. Note that $s(\wtilde{\Sigma})$ is simply $s(\Sigma)$ multiplied by the covering degree, so it must also vanish. It follows that $\wtilde{C}_*$ is a genuine disk since all other pieces are.
\end{proof}

Note that $g\in \BS(M,L)=\langle a,t\in\Z\ |\ a^M=ta^L t^{-1}\rangle$ is $t$-balanced if and only if it lies in $\ker h$, which has presentation $\langle a_k, k\in\Z\ |\ a_k^M=a_{k+1}^L\rangle$ with inclusion into $\BS(M,L)$ given by $a_k\mapsto t^k a t^{-k}$. Hence each $t$-balanced $g$ can be written as $\prod_j a_{k_j}^{u_j}$, where $k_j,u_j\in \Z$. Let $s(g)\defeq\sum_j u_j(m/\ell)^{k_j}$. Note that whether $s(g)=0$ only depends on the conjugacy class of $g$ since $s(wgw^{-1})=s(g)(m/\ell)^{h(w)}$ for any $w\in \BS(M,L)$.

\begin{cor}\label{cor: extremal surfaces}
	Let $M\neq \pm L$. For the reduced chain $c$ above, if every $t$-balanced $g_i\in \ug$ satisfies $s(g_i)=0$, then an extremal surface for $c$ exists.
\end{cor}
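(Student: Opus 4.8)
The plan is to run the linear programming machinery of Theorem~\ref{thm: rational} to produce an optimal surface and then to promote it to an honest disk-only surface one component at a time; the hypothesis $s(g_i)=0$ will be exactly what kills the only obstruction in the balanced case. Work in Setup~2. By Theorem~\ref{thm: rational} and Lemma~\ref{lemma: compute by linprog}, an optimal solution of the linear program yields a simple relative admissible surface $S$ for $c$ of some degree $n$ with $-\widehat\chi(S)/2n=\scl_{\BS(M,L)}(c)$, and every piece of $S$ is disk-like by Lemma~\ref{lemma: all disk-like}. I would then treat each component $\Sigma$ of $S$ separately. If $\Gamma_\Sigma$ contains an imbalanced cycle then, since $c$ is reduced, Lemma~\ref{lemma: ext surf imba case} already produces a connected finite cover of $\Sigma$ that becomes disk-only after an adjustment. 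The remaining and essential case is when $\Gamma_\Sigma$ has no imbalanced cycle: by Lemma~\ref{lemma: char imba cycles} there is then an orientation preserving homomorphism $\phi\colon\Gamma_\Sigma\to\widetilde\Gamma$, and the point is to check $s(\Sigma)=0$, after which Lemma~\ref{lemma: ext surf no imba case} again yields a connected finite cover of $\Sigma$ that becomes disk-only after an adjustment.

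To establish $s(\Sigma)=0$ I would argue as follows. In this case every cycle of $\Gamma_\Sigma$ is balanced, so for every boundary component $B$ of $S$ lying in $\Sigma$ and covering a loop $\gamma_i$ with degree $k_B$ the cycle $\omega_B$ satisfies $0=h(\omega_B)=k_B\,h(g_i)$; hence every such $g_i$ is $t$-balanced and $s(g_i)$ is defined. Each arc of $\uga$ occurring on $\Sigma$ lies on the polygonal boundary of a unique piece $C$ and on a unique such boundary component $B$, so
\[
s(\Sigma)=\sum_{k\in\Z} s_k(\Sigma)\,(m/\ell)^{k}=\sum_{B}\ \sum_{a\subset B} w(a)\,(m/\ell)^{\phi(C(a))},
\]
where $C(a)$ denotes the piece carrying $a$. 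As $B$ traverses $\gamma_i$, the level $\phi(C(a))$ changes by $\pm1$ at each edge of $\Gamma_\Sigma$ according to the sign of the letter $t^{\pm1}$ of $g_i$ following that arc; since $g_i$ is $t$-balanced the net change over one pass around $\gamma_i$ is $h(g_i)=0$, so all $k_B$ passes begin at a common level $c_B\in\Z$ and the level of the piece carrying a given arc of $\gamma_i$ equals $c_B$ plus the level of that arc in the lift of $\gamma_i$ to the infinite cyclic cover $\widetilde{X}_G$. Writing $g_i=\prod_j a_{k_j}^{u_j}$ in $\ker h$, the arc carrying $a^{u_j}$ sits at level $k_j$, so $s(g_i)=\sum_j u_j(m/\ell)^{k_j}$ is precisely the sum of $w(a)\,(m/\ell)^{\mathrm{level}(a)}$ over the arcs $a$ of $\gamma_i$. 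Therefore each inner sum equals $k_B\,(m/\ell)^{c_B}s(g_i)$, which vanishes by hypothesis, and summing over $B$ gives $s(\Sigma)=0$.

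It then remains to glue the pieces into one extremal surface. Each component $\Sigma$ of $S$ now admits a connected finite cover $\widetilde\Sigma$, of some degree $m_\Sigma$, that after an adjustment is disk-only; being built of disk pieces only it has no loop boundary, hence is a genuine admissible surface, and since every piece of $\Sigma$ is disk-like while the number of corners scales by $m_\Sigma$ one gets $-\chi(\widetilde\Sigma^{\mathrm{adj}})=-m_\Sigma\widehat\chi(\Sigma)$. Let $M_0$ be a common multiple of the $m_\Sigma$ and let $T$ be the disjoint union, over the components $\Sigma$ of $S$, of $M_0/m_\Sigma$ copies of $\widetilde\Sigma^{\mathrm{adj}}$. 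Since $m_\Sigma\cdot(M_0/m_\Sigma)=M_0$ does not depend on $\Sigma$, the boundary of $T$ covers each $\gamma_i$ in the same proportions as $\bdry S$, so $T$ is admissible for $c$ of degree $M_0 n$ and
\[
\frac{-\chi(T)}{2M_0 n}=\frac{\sum_\Sigma M_0(-\widehat\chi(\Sigma))}{2M_0 n}=\frac{-\widehat\chi(S)}{2n}=\scl_{\BS(M,L)}(c),
\]
so $T$ is extremal for $c$.

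The main obstacle I expect is making the level-tracking in the second paragraph precise: one must carefully match the orientation data on $\Gamma_\Sigma$ (and hence the values of $\phi$) with the levels of the arcs of $\gamma_i$ in $\widetilde{X}_G$, verifying that a traversal of a boundary component shifts $\phi$ in lockstep with those levels and that $t$-balancedness forces all passes around $\gamma_i$ to begin at one common level. The accompanying sign bookkeeping---the interplay of $-\ell/m$, $m/\ell$, and the chosen orientation of the edge $e$---is fiddly but shallow, and everything else is quoting the lemmas of this subsection.
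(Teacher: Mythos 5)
Your proof is correct and takes essentially the same route as the paper: optimal LP gives a surface of disk-like pieces (Lemma~\ref{lemma: all disk-like}), imbalanced components are handled by Lemma~\ref{lemma: ext surf imba case}, and for balanced components you verify $s(\Sigma)=0$ by regrouping the defining sum over boundary components and using $s(g_i)=0$, then invoke Lemma~\ref{lemma: ext surf no imba case}. You simply fill in more details than the paper's terse version—the level-tracking argument showing each boundary component contributes $k_B(m/\ell)^{c_B}s(g_i)$, and the final paragraph assembling covers of the components into a single extremal surface by passing to a common multiple of the covering degrees—both of which the paper leaves implicit.
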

\begin{proof}
	Let $S$ be a relative admissible surface built out of the optimal solution to the linear programming in Theorem \ref{thm: rational}. Components containing imbalanced cycles can be made extremal after taking a finite cover by Lemma \ref{lemma: ext surf imba case}. If $\Sigma$ is a component without imbalanced loop, fix some $\phi$ provided by Lemma \ref{lemma: char imba cycles}, note that $s(\Sigma)$ can be computed by a sum over boundary components $B$, where arcs on $B$ contribute $ks(g_i)(m/\ell)^n$ for some $n\in\Z$ depending on $\phi$ if $B$ represents $g_i^k$, where $g_i$ is necessarily $t$-balanced and $s(g_i)=0$ by our assumption. Thus $s(\Sigma)=0$. By Lemma \ref{lemma: ext surf no imba case}, a finite cover of $\Sigma$ can be made extremal.
\end{proof}
\begin{remark}
	Following the comment at the beginning of this subsection, it is easy to prove similar results for the exceptional cases. 
	\begin{enumerate}
		\item If $M=L$, the number $s(g)\defeq\sum u_j$ is well defined for any $g=\prod_j a^{u_j} t^{\epsilon_j}$ ($\epsilon_j=\pm 1$). The reduced chain $c$ bounds an extremal surface if $s(g_i)=0$ for all $g_i\in \ug$. 
		\item If $M=-L$, the number $s(g)\defeq\sum (-1)^j u_j$ is well defined for $g=\prod_j a^{u_j} t^{\epsilon_j}$ ($\epsilon_j=\pm 1$) if $h(g)$ is even. The reduced chain $c$ bounds an extremal surface if either $h(g_i)$ is odd or $s(g_i)=0$ for all $i\in \ug$.
	\end{enumerate}
\end{remark}

\begin{remark}
	Based on this result, the method in \cite{Cal:sshom} produces (closed) surface subgroups in certain graphs of groups.
\end{remark}

\begin{cor}
	Let $G$ be a graph of groups with each vertex group isomorphic to $\BS(M,L)$ for $M\neq \pm L$ and edge groups isomorphic to $\Z$. Assume for each vertex group the inclusions of adjacent edge groups do not form pseudo-inverses except the identity, where each $t$-balanced word $g$ has $s(g)=0$. Then any non-trivial class in $H_2(G;\Q)$ has an integer multiple represented by a $\pi_1$-injective surface of genus at least $1$.
\end{cor}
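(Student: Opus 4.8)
The plan is to run the homological surface-production argument of Calegari \cite{Cal:sshom}, feeding in the extremal surfaces supplied by Corollary \ref{cor: extremal surfaces} as the vertex pieces; this is exactly the construction anticipated in the remark following Corollary \ref{cor: extremal surfaces}. So first I would cut a representative into pieces. Let $X$ be the graph of spaces realizing $G$, with edge spaces $X_e$ and thickened vertex spaces $N(X_v)$. Scale a given non-trivial $\alpha\in H_2(G;\Q)$ to an integral class and represent it by a map from a closed oriented surface; following Lemma \ref{lemma: normal form}, put it transverse to the edge spaces, remove inessential circles of the preimage of $\cup_e X_e$ by compressions and homotopies, and cut along the remaining circles to write the surface as a union of pieces $S_v\to N(X_v)$ glued along circles mapping into edge spaces. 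Each boundary circle of $S_v$ is a power of the image $\theta_e\in G_v\cong\BS(M,L)$ of a generator of an adjacent edge group; let $c_v\in C_1(G_v)$ be the chain carried by $\bdry S_v$. Since $\bdry S_v$ bounds $S_v$ inside $N(X_v)\simeq K(G_v,1)$, the chain $c_v$ is null-homologous in $G_v$.

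Next I would arrange each piece to be incompressible using Corollary \ref{cor: extremal surfaces}. Every $\theta_e$ is non-trivial, since edge groups inject. If $\theta_e$ is $t$-balanced, then $s(\theta_e)=0$ by hypothesis, and since every conjugate of a non-trivial power of $a$ has non-zero $s$, the element $\theta_e$ is not conjugate into $\langle a\rangle$; if $\theta_e$ is not $t$-balanced then $h(\theta_e)\neq0$. Either way $\theta_e$ is hyperbolic. After cancelling any boundary circles of $S_v$ that pairwise cobound annuli in the edge spaces — this only decreases $-\chi$, and such annuli glued back in can at worst produce torus components — the standing assumption that adjacent edge-group inclusions form no pseudo-inverses beyond the trivial ones makes $c_v$ a reduced chain of hyperbolic elements, so $\scl_{G_v}(c_v)>0$ by Proposition \ref{prop: reduced scl>0}. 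Corollary \ref{cor: extremal surfaces}, whose hypothesis on $t$-balanced elements is precisely our standing assumption, then yields an extremal admissible surface $S_v'$ for $c_v$ of some degree $n_v$; an extremal surface is $\pi_1$-injective, and its boundary covers $c_v$ and so maps into the edge spaces. The cancelled annuli are themselves $\pi_1$-injective and are kept as further pieces.

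Then I would reassemble. Taking $N=\prod_v n_v$ and $N/n_v$ copies of each $S_v'$, the boundary circles over each edge occur with matching total degree on its two sides; after passing to further covers of the $S_v'$ so that the two partitions into covering circles agree, the pieces glue along the edge spaces into a closed oriented surface $S'\to X$. Because $M\neq\pm L$ forces $M\neq L$ and hence $H_2(G_v;\Q)=0$, each piece's relative class is determined by its boundary, so the Mayer--Vietoris bookkeeping of \cite{Cal:sshom} identifies $[S']$ with a positive multiple of $\alpha$. The surface $S'$ is $\pi_1$-injective: the edge circles are essential (their images generate infinite cyclic subgroups of $G$), each piece is incompressible, and an innermost-circle argument on a hypothetical compressing disk, run against the action of $G$ on its Bass--Serre tree as in \cite{Cal:sshom}, forces either a compression inside some piece or an inessential edge circle, both impossible. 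Finally, $S'$ carries a non-zero class, hence is not a union of spheres; deleting its null-homologous sphere components leaves a non-empty closed $\pi_1$-injective surface representing a non-zero multiple of $\alpha$, every component of which is a closed oriented surface other than $S^2$ and so has genus at least $1$.

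I expect the main obstacle to be not any computation — the substance is already in Theorem \ref{thm: rational} and Corollary \ref{cor: extremal surfaces} — but the bookkeeping: checking that the boundary chains $c_v$ genuinely fall under Corollary \ref{cor: extremal surfaces} (that ``$t$-balanced $\Rightarrow s=0$'' forces hyperbolicity of the edge elements, and that the no-pseudo-inverse hypothesis yields reducedness of $c_v$ after the obvious annulus cancellations), together with invoking correctly the incompressibility-plus-Mayer--Vietoris package of \cite{Cal:sshom}. If a single connected surface of genus at least $1$ is wanted in place of a disjoint union of such, one must additionally make the dual graph of the pieces connected, which I would arrange inside the gluing step by taking suitable further covers of the $S_v'$.
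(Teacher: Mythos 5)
Your architecture is right and is indeed what the paper's remark is pointing at: decompose $\alpha$ into pieces along edge spaces, produce extremal vertex surfaces via Corollary \ref{cor: extremal surfaces}, reassemble, and establish $\pi_1$-injectivity by a standard innermost-circle argument. Your observations that each $\theta_e$ is forced to be hyperbolic (a non-trivial $t$-balanced elliptic element has $s\neq 0$, and a non-$t$-balanced element has $h\neq 0$) and that $H_2(\BS(M,L);\Q)=0$ when $M\neq L$, so that relative classes of pieces are determined by their boundary, are both correct and are exactly the right inputs.

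There is, however, a genuine gap in the step that prepares $c_v$ for Corollary \ref{cor: extremal surfaces}. Cancelling exact-opposite boundary pairs $\theta_e^{k}$ and $\theta_e^{-k}$ can still leave powers of both signs for the same inclusion---say $\theta_e^{2}$ and $\theta_e^{-1}$---and then $(\theta_e^{2})^{1}$ and $(\theta_e^{-1})^{2}$ are inverses, hence pseudo-inverses, so the remaining chain is not reduced and the corollary does not apply. Reducedness in the sense of Definition \ref{def: reduced chains} actually forces, for each $e$, that every power of $\theta_e$ appearing in $c_v$ have the same sign, which your cancellation does not guarantee.

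A clean repair, and the route \cite{Cal:sshom} actually takes, is not to cut a representing surface at all. Since $H_2(G_v;\Q)=0$ for every vertex, the Mayer--Vietoris map $H_2(G;\Q)\to\bigoplus_e H_1(G_e;\Q)$ is injective; a non-trivial integral multiple of $\alpha$ therefore determines integers $\sigma_e=-\sigma_{\bar{e}}$ with $\sum_{t(e)=v}t_{e*}\sigma_e=0$ in $H_1(G_v;\Q)$ for each $v$. Set $c_v=\sum_{t(e)=v,\,\sigma_e\neq 0}\theta_e^{\sigma_e}$. This chain is null-homologous by exactness, has exactly one power of each $\theta_e$ (so the sign problem cannot arise), and is reduced directly from the standing no-pseudo-inverses hypothesis together with the observation that a single hyperbolic element is never a pseudo-inverse of a positive power of itself (cf.\ Lemma \ref{lemma: reduced chains} and Proposition \ref{prop: reduced scl>0}). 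Corollary \ref{cor: extremal surfaces} then applies, and the rest of your outline---passing to covers of the $S'_v$ to synchronize degrees, Mayer--Vietoris bookkeeping via $H_2(G_v;\Q)=0$ to recognize a positive multiple of $\alpha$, and the incompressibility/innermost-circle argument for $\pi_1$-injectivity---goes through as you describe.
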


Now we upgrade Lemma \ref{lemma: ext surf imba case} and Lemma \ref{lemma: ext surf no imba case} to characterize the existence of extremal surfaces for $c$ in term of data canonically obtained from the result of the linear programming problem computing $\scl_{\BS(M,L)}(c)$. Such data is expressed as branched surfaces which we describe as follows.

Recall from Lemma \ref{lemma: compute by linprog} that $\scl_{\BS(M,L)}(c)$ is computed as the minimum of a convex piecewise rational linear function on a compact rational polyhedron $\mathcal{C}(c)$. Thus the set of points $x\in \mathcal{C}(c)$ achieving the optimal value is a nonempty (finite-sided) convex rational polyhedron $\mathcal{P}_{opt}$. For each rational point $x\in \mathcal{P}_{opt}$, by Lemma \ref{lemma: all disk-like} and Lemma \ref{lemma: D'}, there are \emph{finitely} many disk-like pieces $\{C_j\}$ and weights $wt_j\in \Q_{>0}$ such that
\begin{equation}\label{eqn: weights of pieces}
	\sum_j wt_j x(C_j)=x \quad\text{and}\quad \sum_j wt_j=\kappa_v(x).
\end{equation}
Let $\{C_j\}_{j\in J}$ be the finite set of disk-like pieces that possibly appear in (\ref{eqn: weights of pieces}) for some $x\in\mathcal{P}_{opt}$. Then the set of weights satisfying equation (\ref{eqn: weights of pieces}) for some $x\in\mathcal{P}_{opt}$ form a convex rational polyhedron $\mathcal{Q}_w$ in $\R_{\ge0}^J$.

For each weight $wt\in \mathcal{Q}_w$, we obtain a simple branched surface $S$ from the disk-like pieces $C_j$ with positive weights: this is done by gluing each turn to all its paired turns as different branches. Then each piece $C_j$ embeds into $S$, and branching only occurs at gluing loci. See Figure \ref{fig: branchsurf} for an example.

\begin{figure}
	\labellist
	\small 
	\pinlabel $a_1$ at -7 95
	\pinlabel $a_1$ at 42 95
	\pinlabel $wt=\frac{1}{2}$ at 5 158
	
	\pinlabel $a_2$ at 178 207
	\pinlabel $a_2$ at 142 105
	\pinlabel $a_4$ at 142 220
	\pinlabel $a_4$ at 100 198
	\pinlabel $wt=\frac{1}{4}$ at 139 260
	
	\pinlabel $a_4$ at 142 65
	\pinlabel $a_2$ at 142 15
	\pinlabel $wt=\frac{1}{2}$ at 130 0
	
	\pinlabel $a_3$ at 230 190
	\pinlabel $a_3$ at 277 200
	\pinlabel $a_3$ at 245 245
	\pinlabel $wt=\frac{1}{36}$ [bl] at 305 250
	
	\pinlabel $a_3$ at 230 85
	\pinlabel $a_3$ at 280 120
	\pinlabel $a_3$ at 250 135
	\pinlabel $wt=\frac{4}{36}$ [lu] at 330 50
	
	\pinlabel $a_3$ at 280 155
	\pinlabel $a_3$ at 322 155
	\pinlabel $a_3$ at 250 40
	\pinlabel $wt=\frac{7}{36}$ [l] at 320 140
	\endlabellist
	\centering
	\includegraphics[scale=0.7]{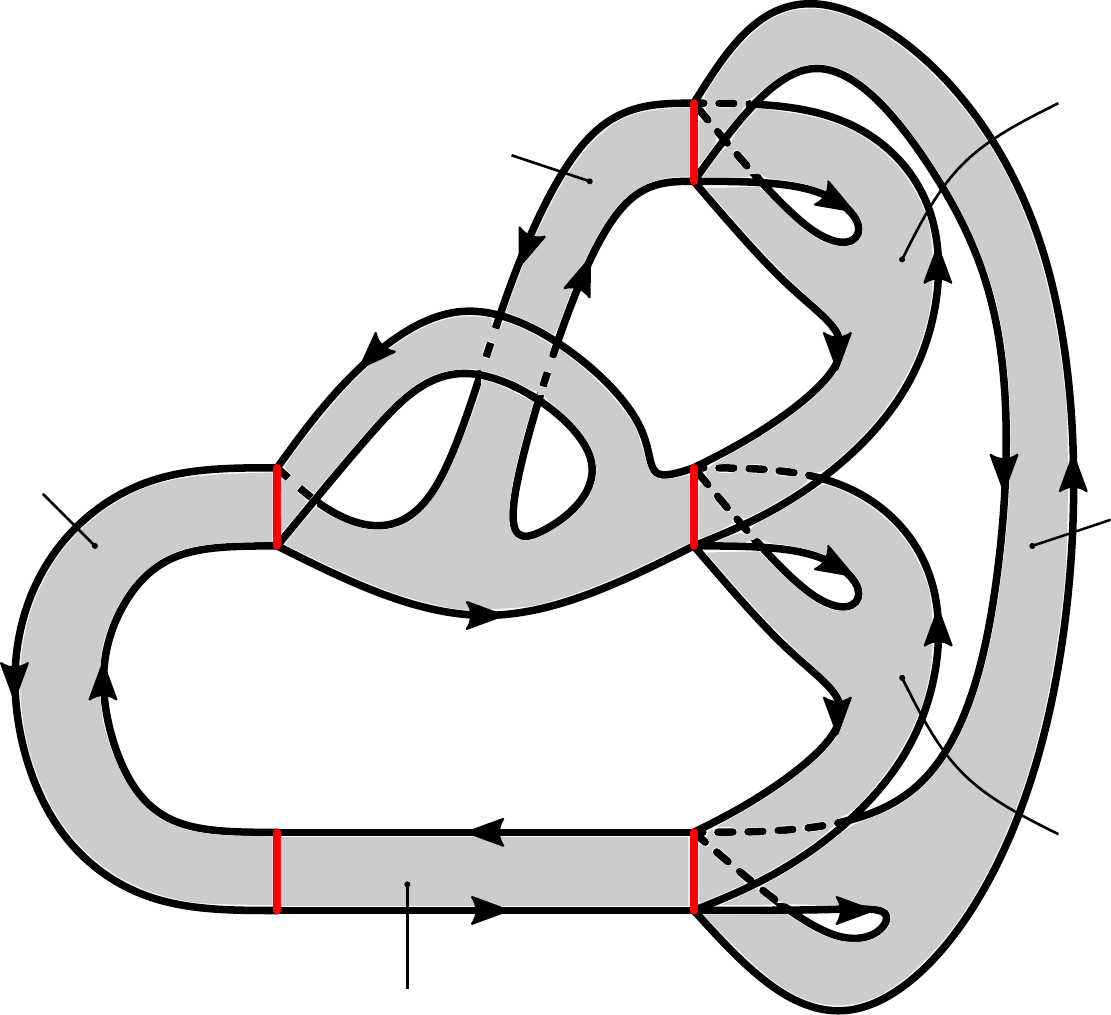}
	\caption{This is a balanced branched surface consisting of $6$ pieces with gluing loci colored red. It carries the surface admissible for $[a,t^2]$ constructed in the proof of Proposition \ref{prop: eg3}.}\label{fig: branchsurf}
\end{figure}

Such a branched surface only depends on the open face $\mathcal{F}$ of $\mathcal{Q}_w$ in which $wt$ lies, so we denote it by $S(\mathcal{F})$.

Each gluing locus has a co-orientation induced by the orientation of $\Gamma$. There is a projection $\pi:S(\mathcal{F}) \to \Gamma$ taking each piece to the unique vertex of $\Gamma$ and each gluing locus to the edge $e$ of $\Gamma$ so that the co-orientation agrees with the orientation of $e$.

$S(\mathcal{F})$ may not be connected. A component $\Sigma(\mathcal{F})$ of $S(\mathcal{F})$ is \emph{balanced} if $\pi$ lifts to $\wtilde{\Gamma}$. Explicitly, $\Sigma(\mathcal{F})$ is balanced if every loop $\omega$ in it intersects an even number of gluing loci, exactly half of which have co-orientations consistent with the orientation of $\omega$. For instance, the branched surface in Figure \ref{fig: branchsurf} is balanced.

For a balanced component $\Sigma(\mathcal{F})$, let $\phi$ be a lift of $\pi$. Given a weight $wt\in\mathcal{F}$, let $s(\Sigma(\mathcal{F}),wt)$ be the sum of $wt_j s_j(m/\ell)^{\phi(C_j)}$ over all $j\in J$, where $s_j$ is the sum of winding numbers of arcs on the boundary of $C_j$. Whether $s(\Sigma(\mathcal{F}),wt)$ vanishes does not depend on the choice of $\phi$.

Finally we are in a position to state our criterion for the existence of extremal surfaces in terms of such branched surfaces.
\begin{thm}[extremal surfaces]\label{thm: extremal surfaces}
	Let $M\neq \pm L$. A reduced rational chain $c$ has an extremal surface if and only if there is some $wt$ in an open face $\mathcal{F}$ of $\mathcal{Q}_w$, such that each balanced component $\Sigma(\mathcal{F})$ of $S(\mathcal{F})$ has $s(\Sigma(\mathcal{F}),wt)=0$.
\end{thm}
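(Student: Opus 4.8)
The plan is to prove both implications by identifying extremal surfaces with certain disk-only relative admissible surfaces and then reading off the branched-surface data from the linear program. The key preliminary observation is that \emph{every} relative admissible surface $S$ for $c$ in simple normal form with $-\chi(S)/2n(S)=\scl_{\BS(M,L)}(c)$ is disk-only. Indeed, applying Lemma~\ref{lemma: asym prom} to $S$ gives $\scl_{\BS(M,L)}(c)\le -\widehat{\chi}(S)/2n(S)$, while $\widehat{\chi}(S)\ge\chi(S)$ and $\chi^-(S)=\chi(S)$ (the $g_i$ being infinite-order forbids disk components, and sphere components may be discarded); combining these forces $\widehat{\chi}(S)=\chi(S)$, so $S$ has no disk-like piece that is not a genuine disk, and Lemma~\ref{lemma: all disk-like} shows that every piece of $S$ is disk-like. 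Hence all pieces of $S$ are disks; in particular $S$ has no loop boundary in the vertex space, so it is in fact an absolute admissible surface. This reduces the theorem to deciding when the optimal solution of the linear program can be realized by a disk-only admissible surface.

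For the forward direction, I would start with an extremal surface $S_{\mathrm{ext}}$, put it in simple normal form by Lemma~\ref{lemma: simple normal form} (preserving extremality), and apply the observation above to see that $S_{\mathrm{ext}}$ is disk-only, of some degree $n$. Its encoding vector $x^*=x(S_{\mathrm{ext}})/n$ then lies in $\mathcal{P}_{opt}$, and setting $wt_j=(\text{multiplicity of }C_j\text{ in }S_{\mathrm{ext}})/n$ yields a point of $\mathcal{Q}_w$: the identity $\sum_j wt_j=\kappa_v(x^*)$ is forced by optimality of $S_{\mathrm{ext}}$ together with the equality case of Lemma~\ref{lemma: overcount accurate}. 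Let $\mathcal{F}$ be the open face of $\mathcal{Q}_w$ containing $wt$; since $S_{\mathrm{ext}}$ uses exactly the pieces with $wt_j>0$, glued along branchings of $S(\mathcal{F})$, it is carried by $S(\mathcal{F})$. Fix a balanced component $\Sigma(\mathcal{F})$, with a lift $\phi$ of $\pi$. Every component $\Sigma$ of $S_{\mathrm{ext}}$ carried in $\Sigma(\mathcal{F})$ inherits an orientation-preserving map to $\widetilde{\Gamma}$ from $\phi$ and is disk-only, so $s(\Sigma)=0$ by Lemma~\ref{lemma: properties of s}(\ref{item: second prop of s}); summing over all such components and using that the total multiplicity of each $C_j$ of $\Sigma(\mathcal{F})$ in $S_{\mathrm{ext}}$ equals $n\,wt_j$ gives $0=\sum_\Sigma s(\Sigma)=n\,s(\Sigma(\mathcal{F}),wt)$. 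Thus $s(\Sigma(\mathcal{F}),wt)=0$ for every balanced component, as required.

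For the converse, suppose $\mathcal{F}$ and $wt$ as in the statement exist; since the conditions $s(\Sigma_i(\mathcal{F}),\cdot)=0$ are rational linear, I may take $wt$ rational, so $x=\sum_j wt_j x(C_j)\in\mathcal{P}_{opt}$, and clearing denominators I glue $n\,wt_j$ copies of each $C_j$ into an honest simple relative admissible surface $S$ for $c$ of degree $n$ realizing the optimum ($-\widehat{\chi}(S)/2n=\scl_{\BS(M,L)}(c)$, all pieces disk-like). The crucial combinatorial step is to choose the matching of paired turns, for $n$ large, so that over each component $\Sigma_k(\mathcal{F})$ of $S(\mathcal{F})$ the surface $S$ is connected, and moreover contains an imbalanced cycle whenever $\Sigma_k(\mathcal{F})$ is not balanced; this is arranged by the usual device of swapping two paired-turn identifications to merge components (and, for imbalanced $\Sigma_k(\mathcal{F})$, of routing one copy of each piece along a prescribed imbalanced cycle of $\Gamma_{\Sigma_k(\mathcal{F})}$), none of which changes the degree or $\widehat{\chi}$. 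Then each component $\Sigma$ of $S$ either contains an imbalanced cycle, so a connected finite cover of it is disk-only after adjustment by Lemma~\ref{lemma: ext surf imba case} (here $c$ being reduced is used), or is balanced with $s(\Sigma)=n\,s(\Sigma_k(\mathcal{F}),wt)=0$, so a connected finite cover of it is disk-only after adjustment by Lemma~\ref{lemma: ext surf no imba case}. Taking $\mathrm{lcm}$-many copies of these covers so that the degrees agree across components and assembling them produces a disk-only, hence absolute, admissible surface $S'$ for $c$ with $-\chi(S')/2n(S')=-\widehat{\chi}(S)/2n=\scl_{\BS(M,L)}(c)$, i.e., an extremal surface.

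The main obstacle I expect is the bookkeeping in the converse: realizing the weight $wt$ by an honest surface whose every component is \emph{simultaneously} connected over its branched component and carries an imbalanced cycle over each imbalanced branched component, so that precisely one of Lemma~\ref{lemma: ext surf imba case} and Lemma~\ref{lemma: ext surf no imba case} applies to it; and checking that $\widehat{\chi}$ is multiplicative under the covers and unchanged by the adjustments (which preserve the counts of pieces and of corners, while the final surface consists of disks). A secondary point that needs care, in both directions, is the interchangeability of pieces having the same encoding vector: whether a surface is carried by $S(\mathcal{F})$, whether a branched component is balanced, and the value of $s(\Sigma(\mathcal{F}),wt)$ all depend only on combinatorial data and on the winding numbers of arcs, not on the winding numbers of the pieces themselves, so replacing the canonical $\{C_j\}$ by pieces with equal vectors is harmless.
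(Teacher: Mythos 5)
Your proposal is correct and takes essentially the same route as the paper: extremal surfaces are shown to be disk-only (combining Lemma~\ref{lemma: all disk-like} with the equality case of Lemma~\ref{lemma: asym prom} and Lemma~\ref{lemma: overcount accurate}), the forward direction reads off $wt\in\mathcal{Q}_w$ from the multiplicities and passes the vanishing of $s(\Sigma)$ down to $s(\Sigma(\mathcal{F}),wt)$, and the converse realizes a rational optimal $wt$, makes the realization connected over each branched component while forcing an imbalanced cycle whenever possible, and then invokes Lemmas~\ref{lemma: ext surf imba case} and \ref{lemma: ext surf no imba case}. The one place your sketch leans on ``the usual device of swapping two paired-turn identifications to merge components'' is precisely the content of the paper's Lemma~\ref{lemma: carry connected surf}, which you could cite directly: the naive swap need not merge components (if both edges being rewired are separating in $\Gamma_\Sigma$ the count can stay the same or even increase), and the paper resolves this by first passing to a strongly connected double cover via Lemma~\ref{lemma: cover with nonsep edges}. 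Your flagged ``main obstacle'' is exactly this point; otherwise the argument is complete.
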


\begin{remark}
	This criterion can be checked by an algorithm. For each open face $\mathcal{F}$, each equation $s(\Sigma(\mathcal{F}),wt)=0$ is rational linear in $wt$. Thus the existence of $wt$ in a given $\mathcal{F}$ satisfying the criterion is a linear programming (feasibility) problem, and there are only finitely many open faces $\mathcal{F}$ of $\mathcal{Q}_w$ to enumerate.
\end{remark}

To prove Theorem \ref{thm: extremal surfaces}, in view of Lemma \ref{lemma: ext surf imba case} and Lemma \ref{lemma: ext surf no imba case}, it suffices to produce surfaces without any branching satisfying analogous conditions.

Given a rational weight $wt\in \mathcal{Q}_w$, a surface $\Sigma$ in simple normal form is \emph{carried by} $\Sigma(\mathcal{F})$ with weight $wt$ if there is some $N\in\Z_+$ such that $\Sigma$ is made of $Nwt_j$ copies of $C_j$ for each $j\in J$. The extremal surfaces $S$ that we look for consist of components $\Sigma$, each carried by some $\Sigma(\mathcal{F})$. Note that if $\Sigma(\mathcal{F})$ is balanced, then there is an orientation preserving homomorphism $\phi:\Gamma_\Sigma\to \wtilde{\Gamma}$ sending each copy of $C_j$ to $\phi(C_j)$ where $\phi$ is a lift of $\pi$. In this case, we have $s(\Sigma)=Ns(\Sigma(\mathcal{F}),wt)$.

We can actually find $\Sigma$  that is connected and has the same balanced properties as $\Sigma(\mathcal{F})$. This is Lemma \ref{lemma: carry connected surf} below. To prove it, recall that an undirected graph $\Lambda$ is strongly connected (or $2$-edge-connected) if every edge is non-separating. A strongly connected component (SCC for short) of $\Lambda$ is a maximal strongly connected subgraph. Any connected graph $\Lambda$ uniquely decomposes as a tree of its SCCs. A SCC is trivial if it consists of a single vertex.

\begin{lemma}\label{lemma: cover with nonsep edges}
	Suppose $\Lambda$ is a connected graph where each vertex has valence at least $2$. Then $\Lambda$ has a strongly connected double cover $\wtilde{\Lambda}$.
\end{lemma}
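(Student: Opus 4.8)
The plan is to construct $\widetilde{\Lambda}$ by one uniform choice: \emph{twist every nontrivial SCC of $\Lambda$}. A double cover of $\Lambda$ is classified by a homomorphism $\phi\colon\pi_1(\Lambda)\to\mathbb{Z}/2$, and since collapsing each SCC of $\Lambda$ to a point yields the (contractible) bridge tree, $\pi_1(\Lambda)$ is the free product of the groups $\pi_1(B)$ over the SCCs $B$; thus $\phi$ is determined by its restrictions $\phi_B\colon\pi_1(B)\to\mathbb{Z}/2$. First I would note that, being connected with all valences $\ge 2$, $\Lambda$ is not a forest, so it contains an embedded cycle and hence at least one nontrivial SCC. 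For each nontrivial SCC $B$ choose a nonzero $\phi_B$ (possible since $B$ contains a cycle, so $H^1(B;\mathbb{Z}/2)\ne 0$), and let $\widetilde{\Lambda}\to\Lambda$ be the double cover classified by the resulting $\phi$. Because $\phi\ne 0$, the cover $\widetilde{\Lambda}$ is connected; it remains to show $\widetilde{\Lambda}$ has no separating edge.

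I would use the standard criterion that, in a connected graph, an edge is non-separating precisely when it lies on a cycle. Let $\tilde e$ be an edge of $\widetilde{\Lambda}$ lying over $e\subset\Lambda$. If $e$ lies on a cycle $\gamma$ of $\Lambda$ — equivalently $e$ is interior to a nontrivial SCC $B$ — then I lift $\gamma$ through $\tilde e$: when $\phi(\gamma)=0$ the lift is already an embedded cycle of $\widetilde{\Lambda}$ through $\tilde e$, and when $\phi(\gamma)=1$ the ``double'' of $\gamma$ (traversing $\gamma$ twice, switching sheets) is an embedded cycle of $\widetilde{\Lambda}$ containing $\tilde e$ together with its partner $\tilde e'$. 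Either way $\tilde e$ lies on a cycle, so it is non-separating.

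The remaining case — $e$ a bridge of $\Lambda$ — is where the hypothesis ``every valence $\ge 2$'' is used, and I expect this to be the main point to get right. Write $\Lambda=\Lambda_1\cup_e\Lambda_2$ for the two sides of $e$, with endpoints $p\in\Lambda_1$, $q\in\Lambda_2$. The key observation is that each $\Lambda_i$ contains a cycle: every vertex of $\Lambda_i$ has valence $\ge 2$ in $\Lambda_i$ except possibly the single endpoint of $e$ it contains, so $\Lambda_i$ has at most one vertex of valence $<2$ and therefore is not a forest. Hence $\phi$ is nonzero on $\pi_1(\Lambda_i)$ (which contains a nontrivial SCC of $\Lambda$), so the preimage $\widetilde{\Lambda_i}\subset\widetilde{\Lambda}$ is connected. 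Let $\tilde e,\tilde e'$ be the two lifts of $e$; they form a matching between the two preimages $\tilde p,\tilde p'$ of $p$ and the two preimages $\tilde q,\tilde q'$ of $q$, say $\tilde e=\tilde p\tilde q$ and $\tilde e'=\tilde p'\tilde q'$. Since $\widetilde{\Lambda_1}$ is connected and contains both $\tilde p,\tilde p'$, there is a path in $\widetilde{\Lambda_1}$ (hence avoiding $\tilde e$) from $\tilde p$ to $\tilde p'$, and likewise a path in $\widetilde{\Lambda_2}$ from $\tilde q$ to $\tilde q'$. Concatenating $\tilde e$, the $\widetilde{\Lambda_2}$-path, $\tilde e'$ (reversed), and the $\widetilde{\Lambda_1}$-path gives a closed walk traversing $\tilde e$ exactly once, which therefore contains an embedded cycle through $\tilde e$. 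Thus every edge of $\widetilde{\Lambda}$ lies on a cycle, so $\widetilde{\Lambda}$ is a strongly connected double cover. The only real obstacle is the bookkeeping in this bridge case — specifically, verifying that each side of a bridge still carries a cycle and hence that its preimage in the twisted cover remains connected.
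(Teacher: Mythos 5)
Your construction agrees with the paper's: both take a connected double cover over each nontrivial strongly connected component of $\Lambda$ and lift the bridging trees trivially (your cohomological classification via $\phi$ nonzero on each $\pi_1(B)$ encodes exactly this). Your edge-by-edge verification that every edge of $\widetilde{\Lambda}$ lies on a cycle---in particular the bridge case, where you observe that each side of a bridge contains a nontrivial SCC of $\Lambda$, hence has connected preimage, so the two lifts of the bridge close up into a cycle---correctly fills in the check that the paper leaves as an assertion.
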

\begin{proof}
	Express $\Lambda$ as a tree of its SCCs. Our assumption implies that each leaf of the tree must be a non-trivial SCC. Let $C_1,\ldots, C_k$ be the non-trivial SCCs of $\Lambda$. Then their complement is a union of embedded trees $T_1,\ldots, T_n$ in $\Lambda$, where each leaf of a tree $T_i$ is glued to a vertex in some $C_j$. Take a connected double cover $\wtilde{C}_i$ of each non-trivial SCC $C_i$ and take two copies $\wtilde{T}_i^{(1)},\wtilde{T}_i^{(2)}$ of each embedded tree $T_i$. They can be assembled into a double cover $\wtilde{\Lambda}$ of $\Lambda$: For each leaf $v$ on $T_i$ that is glued to a vertex $u$ on some $C_j$, match the two leaves $\tilde{v}^{(1)}$, $\tilde{v}^{(2)}$ on $\wtilde{T}_i^{(1)},\wtilde{T}_i^{(2)}$ corresponding to $v$ with the two preimages $\tilde{u}^{(1)}$, $\tilde{u}^{(2)}$ of $u$ on $\wtilde{C}_j$. See Figure \ref{fig: graphscc} for an illustration. Then the graph $\wtilde{\Lambda}$ obtained is strongly connected.
\end{proof}

\begin{figure}
	\labellist
	\small \hair 2pt
	\pinlabel $2:1$ at 265 115
	\pinlabel $\wtilde{\Lambda}$ at 120 -15
	\pinlabel $\Lambda$ at 400 -15
	\endlabellist
	\centering
	\includegraphics[scale=0.7]{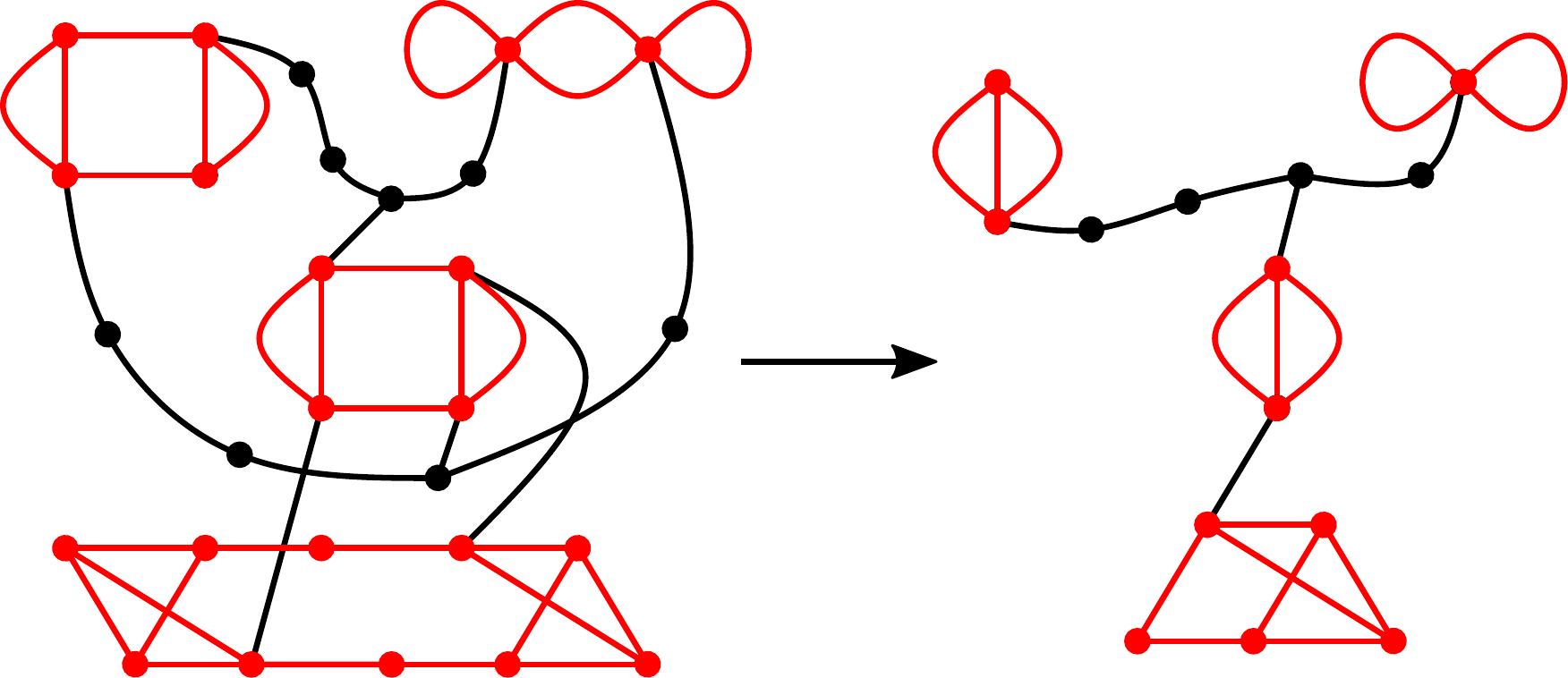}
	\vspace{10pt}
	\caption{The graph $\Lambda$ has its non-trivial SCCs colored red. The graph $\wtilde{\Lambda}$ is a strongly connected double cover of $\Lambda$.}\label{fig: graphscc}
\end{figure}

\begin{lemma}\label{lemma: carry connected surf}
	For each rational weight $wt$ in an open face $\mathcal{F}$, each component $\Sigma(\mathcal{F})$ carries a connected surface $\Sigma$ with weight $wt$. Moreover, we may choose $\Sigma$ to contain an imbalanced loop unless $\Sigma(\mathcal{F})$ is balanced.
\end{lemma}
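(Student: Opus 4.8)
The plan is to build $\Sigma$ in three moves: realize \emph{some} surface carried by $\Sigma(\mathcal{F})$ with weight $wt$, make it connected by re-gluing, and --- when $\Sigma(\mathcal{F})$ is not balanced --- arrange an imbalanced loop inside it.

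First I would fix a rational $wt$ in $\mathcal{F}$ and pick $N\in\Z_+$ (to be enlarged as needed) so that every $Nwt_j\in\Z_{\ge0}$. Since $wt\in\mathcal{Q}_w$ there is $x\in\mathcal{P}_{opt}\subset\mathcal{C}(c)$ with $\sum_j wt_j\,x(C_j)=x$; then $Nx=\sum_j (Nwt_j)\,x(C_j)$ is an integer point of $\mathcal{C}(c)$ and so satisfies the gluing condition, meaning the multiset of $Nwt_j$ copies of each piece $C_j$ of $\Sigma(\mathcal{F})$ has, for every turn type $T$, equally many turns of type $T$ and of the paired type $\bar T$. Matching turns to paired turns then yields a simple-normal-form surface carried by $\Sigma(\mathcal{F})$ with weight $wt$, exactly as in the construction in the proof of Lemma~\ref{lemma: overcount accurate}; call its dual graph $\Gamma_\Sigma$, noting that each vertex has valence $\ge 2$ by Lemma~\ref{lemma: backtrack} since disk-like pieces are potential disks. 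To connect $\Gamma_\Sigma$ I would use \emph{re-gluing swaps}: given two gluings, each joining turns of types $T,\bar T$, re-match them crosswise; this preserves the multiset of pieces (hence $wt$) and the simple normal form but can merge two components. Because $\Sigma(\mathcal{F})$ is connected, any two components of $\Gamma_\Sigma$ are joined by a pair of paired turn types, and abundance of each type guarantees that two such gluings straddle the components, so a merge is always possible; after finitely many swaps $\Gamma_\Sigma$ is connected. By enlarging $N$ I can also arrange, via Lemma~\ref{lemma: cover with nonsep edges}, that $\Gamma_\Sigma$ has a strongly connected double cover at hand for extra robustness below.

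Next, if $\Sigma(\mathcal{F})$ is balanced I would check that $\Sigma$ is too: a $\pi$-lift of the spine of $\Sigma(\mathcal{F})$ gives each piece a level changing by a fixed $\pm1$ across each gluing locus, so assigning every copy of a piece that common level defines an orientation-preserving homomorphism $\Gamma_\Sigma\to\widetilde\Gamma$, and we are done. If $\Sigma(\mathcal{F})$ is not balanced, then by the analogue of Lemma~\ref{lemma: char imba cycles} its spine has an embedded imbalanced cycle through pieces $C_{j_1},\dots,C_{j_r}$, leaving $C_{j_i}$ along a turn $\sigma_i^{\mathrm{out}}$ whose type is paired with that of the entry turn $\sigma_{i+1}^{\mathrm{in}}$ of $C_{j_{i+1}}$, the crossing signs summing to some $c\neq 0$. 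I would enlarge $N$ so that $Nwt_j\ge\#\{i:j_i=j\}$, choose distinct copies $D_i$ of $C_{j_i}$, and perform $r$ re-gluing swaps forcing $\sigma_i^{\mathrm{out}}(D_i)$ to be glued to $\sigma_{i+1}^{\mathrm{in}}(D_{i+1})$; these are mutually compatible since the cycle is embedded and non-backtracking and the $D_i$ are distinct. Then $\Gamma_\Sigma$ contains the cycle $D_1\to\cdots\to D_r\to D_1$ of $h$-value $c\neq 0$, so it admits no homomorphism to $\widetilde\Gamma$ and $\Sigma$ has an imbalanced loop. Finally I would re-run the connectivity swaps, choosing them among the (arbitrarily many, after enlarging $N$) turns not committed to the transplanted cycle so that the imbalanced loop survives; in the edge case where the transplanted cycle is unavoidably its own component of $2$-turn pieces, $\Sigma(\mathcal{F})$ must coincide with that cyclic branched surface and the single long cycle realizing weight $wt$ is already the connected surface wanted.

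The conceptual skeleton is light --- the gluing condition gives a carried surface, connectedness of $\Sigma(\mathcal{F})$ gives connectedness after re-gluing, and an embedded imbalanced cycle of $\Sigma(\mathcal{F})$, present precisely when $\Sigma(\mathcal{F})$ is not balanced, can be transplanted into $\Gamma_\Sigma$. The hard part will be the combinatorial bookkeeping of the last step: restoring connectivity by re-gluings without destroying the transplanted imbalanced cycle, which I expect to handle by enlarging $N$ to create slack (and using Lemma~\ref{lemma: cover with nonsep edges} to keep control of components), together with disposing of the degenerate cycle-of-$2$-turn-pieces configuration separately.
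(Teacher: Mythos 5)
Your proposal shares the paper's central tool --- the \emph{re-gluing swap} (which the paper calls \emph{rewiring}, Figure~\ref{fig: rewire}) --- but organizes the argument in a different and, as it stands, incomplete order. The paper proves a path-lifting claim first: any path of pieces in $\Sigma(\mathcal{F})$ can be lifted into $\Gamma_\Sigma$ one rewiring at a time, and (after passing to a double cover by Lemma~\ref{lemma: cover with nonsep edges} so that the edges involved are non-separating) this never increases the number of components. Connectivity then follows by taking $\Sigma$ with the minimal number of components and deriving a contradiction, and the imbalanced loop is obtained by lifting an embedded imbalanced cycle of $\Sigma(\mathcal{F})$ \emph{into an already-connected $\Sigma$}, so that connectivity is preserved throughout. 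Your step (2) (merging components via swaps that straddle two components) is a clean special case of this and is fine; such a swap always merges.

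The gap is in your last stage: you transplant the imbalanced cycle by $r$ re-gluing swaps \emph{after} connecting, and each of those swaps can split a component of $\Gamma_\Sigma$. You then propose to ``re-run the connectivity swaps, choosing them among the turns not committed to the transplanted cycle.'' Nothing guarantees the component containing the transplanted cycle has any spare (uncommitted) gluings of a type that is also present in another component; you dismiss the obstruction by claiming that in the remaining ``edge case where the transplanted cycle is unavoidably its own component of $2$-turn pieces, $\Sigma(\mathcal{F})$ must coincide with that cyclic branched surface,'' but this dichotomy is false. The branched surface $S(\mathcal{F})$ glues each turn to \emph{all} its paired turns as branches, so an imbalanced embedded cycle in $\Sigma(\mathcal{F})$ can consist entirely of bigons (pieces with exactly two turns) and yet $\Sigma(\mathcal{F})$ can contain further pieces attached at those very branching loci. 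In that situation the transplanted cycle's component has no spare gluings, other components exist, and $\Sigma(\mathcal{F})$ is not a cycle --- your argument has no move left. The paper sidesteps this precisely by never letting the surface disconnect while the cycle is being inserted: because the cycle is lifted into a connected $\Sigma$ using rewirings controlled (via the double cover of Lemma~\ref{lemma: cover with nonsep edges}) not to increase the component count, no separate ``re-connection'' phase is needed.
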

\begin{proof}
	Let $\Sigma$ be any surface carried by $\Sigma(\mathcal{F})$ with the given weight. First we show that for any given path $P$ of pieces in $\Sigma(\mathcal{F})$, we can modify $\Sigma$ to contain a lift of $P$ without increasing the number of components of $\Sigma$. We accomplish this by lifting longer subpaths on $P$. At each step, suppose our lift ends with a piece $\wtilde{C}_1$ corresponding to $C_1$ on $P$ and let $\wtilde{C}_2$ be an arbitrarily chosen lift of the piece $C_2$ next to $C_1$ on $P$ extending the subpath. For $i=1,2$, let $\wtilde{C}'_i$ be the piece on $\Sigma$ glued to $\wtilde{C}_i$ along the turn corresponding to the turn on $\Sigma(\mathcal{F})$ along which $C_1$ and $C_2$ are glued. Now we modify $\Sigma$ by gluing $\wtilde{C}_1$ to $\wtilde{C}_2$ and $\wtilde{C}'_2$ to $\wtilde{C}_1'$ instead. Figure \ref{fig: rewire} illustrates this at the level of graphs. Such a rewiring operation extends our lift. Moreover, if at least one of the two edges representing the original gluing in $\Gamma_{\Sigma}$ is non-separating, then the number of components in $\Sigma$ does not increase, and it decreases if in addition the two edges lie in different components of $\Sigma$. Finally note that we may assume both edges to be non-separating up to taking a double cover of $\Sigma$ by Lemma \ref{lemma: backtrack} and Lemma \ref{lemma: cover with nonsep edges}.
	
	Now let $\Sigma$ be a surface carried by $\Sigma(\mathcal{F})$ with the given weight that has the minimal number of components. Suppose $\Sigma$ is disconnected and let $\wtilde{C}_0$ and $\wtilde{C}_2$ be two pieces in different components of $\Sigma$. Connect the corresponding pieces $C_0$ and $C_2$ in $\Sigma(\mathcal{F})$ by a path $P$, and let $C_1$ be the piece next to $C_2$ on $P$. Modify $\Sigma$ to lift $P$ by the process above until we obtain a lift of the subpath from $C_0$ to $C_1$, and proceed the last step by choosing $\wtilde{C}_2$ to be the lift of $C_2$. This decreases the number of components of $\Sigma$ by at least one since the components containing $\wtilde{C}_0$ and $\wtilde{C}_2$ are merged at a certain stage of this process. This contradicts the choice of $\Sigma$ and thus $\Sigma$ must be connected.
	
	For $\Sigma$ connected, since $\Sigma(\mathcal{F})$ contains an imbalanced cycle unless it is balanced, we can obtain a copy of such a cycle in $\Sigma$ applying the rewiring operations above.
\end{proof}
\begin{figure}
	\labellist
	\small \hair 2pt
	\pinlabel $\wtilde{C}'_1$ at 65 55
	\pinlabel $\wtilde{C}_1$ at 0 55
	\pinlabel $\wtilde{C}_2$ at 65 17
	\pinlabel $\wtilde{C}'_2$ at 0 17
	
	\pinlabel rewire at 111 35
	
	\pinlabel $\wtilde{C}'_1$ at 220 55
	\pinlabel $\wtilde{C}_1$ at 155 55
	\pinlabel $\wtilde{C}_2$ at 220 17
	\pinlabel $\wtilde{C}'_2$ at 155 17
	\endlabellist
	\centering
	\includegraphics[scale=0.9]{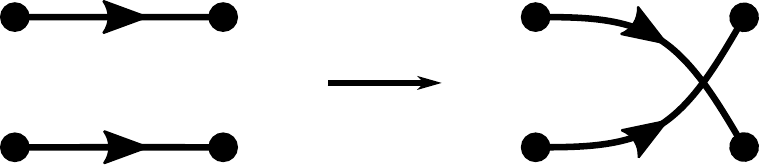}
	\vspace{10pt}
	\caption{The two edges on the left represent pairings of turns of the same type. The rewiring switches the end points of the edges, ie the gluing of pieces.}\label{fig: rewire}
\end{figure}

\begin{proof}[Proof of Theorem \ref{thm: extremal surfaces}]
	Suppose $S$ is an extremal surface. Then it is carried by some $S(\mathcal{F})$ with weight $wt\in\mathcal{Q}_w$, where $\mathcal{F}$ is the open face containing $wt$. Since each piece of $S$ is a genuine disk, each component $\Sigma$ carried by a balanced component $\Sigma(\mathcal{F})$ has $s(\Sigma)=0$. We conclude that $s(\Sigma(\mathcal{F}),wt)=0$ for any balanced component $\Sigma(\mathcal{F})$ since the sum of $s(\Sigma)$ over all components $\Sigma$ of $S$ carried by $\Sigma(\mathcal{F})$ is a positive multiple of $s(\Sigma(\mathcal{F}),wt)$.
	
	Conversely, suppose there is a weight $wt$ in an open face $\mathcal{F}$ satisfying the criterion. Since each equation $s(\Sigma(\mathcal{F}),wt)=0$ is rational linear in $wt$, by rationality of $\mathcal{F}$, we may assume $wt$ to be rational. By Lemma \ref{lemma: carry connected surf}, each component $\Sigma(\mathcal{F})$ carries a connected surface $\Sigma$ with the given weight, such that $\Gamma_\Sigma$ contains no imbalanced loop if and only if $\Sigma(\mathcal{F})$ is balanced. Then each $\Sigma$ corresponding to a balanced $\Sigma(\mathcal{F})$ has $s(\Sigma)=0$ since $s(\Sigma(\mathcal{F}),wt)=0$. By Lemma \ref{lemma: ext surf imba case} and Lemma \ref{lemma: ext surf no imba case}, an extremal surface exists.	
\end{proof}	


\subsection{Lower bounds from duality}\label{subsec: lower bound}
To obtain explicit formulas for scl, especially when we consider chains in $\BS(M,L)$ with parameters or with varying $M$ and $L$, it is often too complicated to work out the linear programming problems. Proving a sharp lower bound is usually the main difficulty. The classical approach using Bavard duality relies on finding extremal quasimorphisms, which is quite hard in practice.

A new method using the idea of linear programming duality is used in \cite{Chen:sclgap} to obtain lower bounds in free products. It actually applies to graphs of groups and proves the so-called spectral gap properties with sharp estimates, which is discussed in detail in a paper with Nicolaus Heuer \cite{CH}. Here we simply describe this method in our setting. Fix any one of the two setups which determines the notion of disk-like vectors as in Subsection \ref{subsec: sclBS setup}.

To each turn $(a_i,\bar{w},a_j)$ we assign a \emph{non-negative cost} $q_{i,\bar{w},j}$. This defines a linear cost function $q$ on $\mathcal{C}_v$. In particular, the cost of a piece is the sum of the costs of the turns on its polygonal boundary.

Recall from Lemma \ref{lemma: compute by linprog} and Remark \ref{rmk: const} that computing scl is equivalent to maximizing the function $\kappa_v$ counting the (normalized) number of disk-like pieces on $\mathcal{C}(c)$, where $c=\sum r_i g_i$. Hence giving upper bounds of $\kappa_v$ produces lower bounds of scl.

\begin{lemma}\label{lemma: dual}
	If $q(d)\ge1 $ for any disk-like vector $d$, then $\kappa_v(x)\le q(x)$ for any $x\in \mathcal{C}_v$.
\end{lemma}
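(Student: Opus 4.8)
The plan is to use the characterization of $\kappa_v$ from Lemma \ref{lemma: cite key} together with the hypothesis $q(d) \geq 1$ for disk-like vectors. First I would recall that $\kappa_v$ is a non-negative concave homogeneous function that takes value $1$ exactly on the boundary of $\conv(\mathcal{D}(v)) + \mathcal{C}_v$ inside $\mathcal{C}_v$. Fix any $x \in \mathcal{C}_v$ with $\kappa_v(x) > 0$ (the case $\kappa_v(x) = 0$ is trivial since $q$ is non-negative). By homogeneity, the rescaled point $x/\kappa_v(x)$ satisfies $\kappa_v(x/\kappa_v(x)) = 1$, so it lies on the boundary of $\conv(\mathcal{D}(v)) + \mathcal{C}_v$; in particular it lies in $\conv(\mathcal{D}(v)) + \mathcal{C}_v$, meaning we can write
$$
\frac{x}{\kappa_v(x)} = \sum_j t_j d_j + y
$$
where $d_j \in \mathcal{D}(v)$, $t_j \geq 0$ with $\sum_j t_j = 1$ (a convex combination), and $y \in \mathcal{C}_v$.

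Now I would apply the linear cost function $q$ to both sides. Since $q$ is linear and non-negative on $\mathcal{C}_v$, and each $q(d_j) \geq 1$ by hypothesis, we get
$$
\frac{q(x)}{\kappa_v(x)} = \sum_j t_j q(d_j) + q(y) \geq \sum_j t_j \cdot 1 + 0 = 1,
$$
which rearranges to $\kappa_v(x) \leq q(x)$, as desired. The key inputs are: (i) the precise description of $\kappa_v$ in terms of $\conv(\mathcal{D}(v)) + \mathcal{C}_v$ from Lemma \ref{lemma: cite key}, which lets us replace the sup over admissible expressions by a single convex-combination-plus-cone expression achieving $\kappa_v$; and (ii) linearity of $q$ together with non-negativity of $q$ on the cone $\mathcal{C}_v$ (which follows since all the coefficients $q_{i,\bar w,j}$ are non-negative and points of $\mathcal{C}_v$ have non-negative coordinates).

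I do not expect any serious obstacle here; the statement is essentially the weak-duality direction of a linear program and the only subtlety is making sure the convex-combination representation is legitimate. One point to be slightly careful about: Lemma \ref{lemma: cite key} tells us $\kappa_v$ takes value $1$ \emph{on the boundary} of $\conv(\mathcal{D}(v)) + \mathcal{C}_v$, but by concavity and homogeneity $\kappa_v(x) \geq 1$ for every $x$ in this set (not just its boundary), and the scaling argument only needs that $x/\kappa_v(x)$ belongs to the set, which holds since $\kappa_v$ of it equals $1$. Alternatively, one can argue directly from the definition of $\kappa_v$: for any admissible expression $x = x' + \sum_j t_j d_j$ with $d_j \in \mathcal{D}(v)$, $x' \in \mathcal{C}_v$, $t_j \geq 0$, applying $q$ gives $q(x) = q(x') + \sum_j t_j q(d_j) \geq \sum_j t_j$ using $q(x') \geq 0$ and $q(d_j) \geq 1$; taking the supremum over all such expressions yields $q(x) \geq \kappa_v(x)$ immediately, which avoids invoking Lemma \ref{lemma: cite key} altogether and is the cleanest route.
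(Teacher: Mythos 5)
Your proposal is correct, and the alternative you give at the end --- applying $q$ directly to an arbitrary admissible expression $x = x' + \sum_j t_j d_j$ and taking the supremum --- is exactly the paper's one-line proof. Your main route through Lemma \ref{lemma: cite key} and the rescaling $x/\kappa_v(x)$ also works (the set $\conv(\mathcal{D}(v)) + \mathcal{C}_v$ is a rational polyhedron by Lemma \ref{lemma: D'}, hence closed, so the boundary point does lie in the set), but you are right that it is an unnecessary detour: the direct argument uses only the definition of $\kappa_v$ as a supremum, linearity and non-negativity of $q$ on the cone, and the hypothesis $q(d) \ge 1$ on disk-like vectors, and it sidesteps any need to know that the supremum defining $\kappa_v$ is attained or that the polyhedron is closed.
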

\begin{proof}
	For any admissible expression $x=x'+\sum t_i d_i$ with $d_i$ disk-like, $t_i\ge 0$, and $x'\in \mathcal{C}_v$, we have $q(x)=q(x')+\sum t_iq(d_i)\ge \sum_i t_i$. Thus $q(x)\ge \kappa_v(x)$.
\end{proof}

For each vector $x\in \mathcal{C}(c)$, we write it as $\sum t_{i,\bar{w},j}(a_i,\bar{w},a_j)$ where $t_{i,\bar{w},j}$ is the coordinate corresponding to the basis element $(a_i,\bar{w},a_j)\in T_v$. We think of $t_{i,\bar{w},j}$ as the normalized number of turns of type $(a_i,\bar{w},a_j)$. Recall from Section \ref{sec: scl by linprog} that the gluing condition requires $t_{i,\bar{w},j}=t_{i',\bar{w}',j'}$ if $(a_i,\bar{w},a_j)$ and $(a_{i'},\bar{w}',a_{j'})$ are paired triples, and the normalizing condition implies $\sum_{\bar{w},j} t_{i,\bar{w},j}=r_k$ if $a_i\subset\gamma_k$ and $\sum_{i,\bar{w}} t_{i,\bar{w},j}=r_k$ if $a_j\subset\gamma_k$.

If we can choose the costs so that
\begin{enumerate}
	\item $q(d)\ge1 $ for any disk-like vector $d$, and
	\item for any $x\in \mathcal{C}(c)$ expressed in the form above, $q(x)=\sum q_{i,\bar{w},j}t_{i,\bar{w},j}$ is equal to or bounded above by a constant $K$ on $\mathcal{C}(c)$ by the gluing and normalizing conditions,
\end{enumerate}
then Lemma \ref{lemma: dual} implies $\kappa_v\le K$ on $\mathcal{C}(c)$. Combining with Lemma \ref{lemma: compute by linprog} and Remark \ref{rmk: const}, this gives a lower bound of scl.

\subsection{Examples with explicit formulas}
In this subsection, we compute three examples of complexities $\rho=0,1,2$ that are not $t$-alternating words. As we will see, the computations get more complicated as the complexity increases.

\begin{prop}\label{prop: eg1}
	For $d=\gcd(|M|,|L|)$, we have 
	$$\scl_{\BS(M,L)}(a^k t^2+2t^{-1})=\frac{1}{2}-\frac{\gcd(|k|,d)}{2d}.$$
\end{prop}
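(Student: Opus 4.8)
The plan is to compute $\scl_{\BS(M,L)}(c)$ for the chain $c = a^kt^2 + 2t^{-1}$ directly via the linear programming machinery of Section~\ref{sec: scl by linprog}, using the explicit description of the cone $\mathcal{C}_v$ and disk-like vectors from Subsection~\ref{subsec: sclBS setup} together with the duality method of Subsection~\ref{subsec: lower bound}. First I would set up the arcs: the word $g_1 = a^kt^2$ is cyclically reduced with two syllables, giving two arcs in $A_v$, each with winding number $k$ appearing once and with $h(g_1)=2$; the word $g_2 = t^{-1}$ has one syllable, giving one arc with winding number $0$ and $h(g_2)=-1$. (Note the homology constraint: the chain must be null-homologous, so the coefficient $2$ on $t^{-1}$ is forced, since $h(c) = 2 - 2 = 0$.) Since $g_1$ is $t$-balanced-free and $\rho(g_1)=0$ (both $t$'s go the same direction, so one of $\mu_i,\lambda_i$ is $0$), the complexity is $\rho(c)=0$; hence $D_v = d\mathbb{Z}$ and $W_e = \mathbb{Z}/d\mathbb{Z}$. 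This is the key simplification: we only need to track winding numbers mod $d$.

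Next I would enumerate the triples $T_v$ and the gluing/normalizing conditions. The edge spaces cut $\sqcup_i \gamma_i$ into three arcs, and a turn records how arcs follow each other along $\uga$ with a winding-number residue in $\mathbb{Z}/d\mathbb{Z}$. Because $g_1$ passes through the edge space $X_e$ in the $t$-direction twice and $g_2$ in the $\bar e$-direction once, the gluing condition forces the turns of $g_1$ (entering/leaving $X_e\times\{1\}$) to match up with turns of $g_2$; bookkeeping shows each turn between an $a^k t^2$-arc and the $t^{-1}$-arc carries a winding-number residue, and the disk-likeness of a piece amounts to the sum of those residues (computed via~(\ref{eqn: w})) lying in $D_v = d\mathbb{Z}$. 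Concretely, following a boundary around a would-be disk piece that witnesses one $a$-arc, the accumulated winding number equals $k$ (from the single $a^k$-syllable it witnesses) plus a correction term lying in the image of the edge group; so the piece can be disk-like precisely when $k$ is, in the appropriate sense, divisible by $d$ mod the available adjustments — i.e.\ the relevant obstruction is $k$ mod $\gcd(|k|,d)$, and the fraction of disk-like pieces one can realize is governed by $\gcd(|k|,d)/d$. Working through $\kappa_v$ on $\mathcal{C}(c)$ via Lemma~\ref{lemma: compute by linprog} and Remark~\ref{rmk: const} (where $\sum_v|x_v|$ is the constant $2r_1 A_1 + r_2 A_2 = 2\cdot 2 + 2\cdot 1 = 6$, twice the corner count normalized) should yield the upper bound $\scl \le \tfrac12 - \tfrac{\gcd(|k|,d)}{2d}$.

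For the matching lower bound I would use Lemma~\ref{lemma: dual}: assign a non-negative cost $q_{i,\bar w,j}$ to each turn type so that (i) every disk-like vector has total cost $\ge 1$, and (ii) the gluing and normalizing conditions pin $q(x)$ to a constant $K$ on $\mathcal{C}(c)$, giving $\kappa_v \le K$ and hence $\scl \ge (\text{const} - 2K)/2$. The costs should be chosen to charge $0$ to turns whose residue class makes a piece disk-like (these come in the "abundant" classes, a $\gcd(|k|,d)/d$ fraction of the residues mod $d$) and a positive cost — tuned so (i) holds with equality — to the rest; the symmetry of the gluing condition (paired triples have equal multiplicities) then makes $q(x)$ evaluate to the required constant. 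The main obstacle I anticipate is the careful combinatorial bookkeeping of exactly which residue classes in $\mathbb{Z}/d\mathbb{Z}$ are realizable as winding numbers of disk-like pieces built from these three arcs: one must verify that the subgroup of $\mathbb{Z}/d\mathbb{Z}$ generated by the available adjustments, together with the class $k \bmod d$, has index exactly $d/\gcd(|k|,d)$, so that a $\gcd(|k|,d)/d$ proportion of pieces can be made disk-like and no more. Once that index computation is nailed down, both the primal construction of an explicit near-optimal surface and the dual cost function fall out, and the two bounds meet at $\tfrac12 - \tfrac{\gcd(|k|,d)}{2d}$.
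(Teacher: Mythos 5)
Your overall framework is the right one (reduce to an LP on $\mathcal{C}(\bar c)$ and maximize $\kappa_v$), and your underlying intuition about the divisibility obstruction — that a piece through $a_1$ must witness a multiple of $n_k:=d/\gcd(|k|,d)$ copies of $a_1$ before it can be disk-like — is exactly what drives the answer. But there are some concrete errors, and you are making the problem substantially harder than it needs to be.

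First, $W_e$ is trivial, not $\Z/d\Z$. Since $\rho(c)=0$, we have $D_e = m^{0}\ell^{0}\Z = \Z$ and hence $W_e = G_e/D_e = \{1\}$. (Unpacking the definition: $D_e = o_e^{-1}(d\Z)\cap t_e^{-1}(d\Z)=\Z$ because $d\mid M$ and $d\mid L$.) So turns carry no residue data at all here; the triples are just pairs $(a_i,a_j)$, and the winding number of a piece mod $d$ is entirely determined by how many copies of $a_1$ (winding number $k$) it witnesses. Your bookkeeping of ``winding-number residues on turns'' and the subgroup of $\Z/d\Z$ they generate is therefore a non-issue; the obstruction is simply $kx\equiv 0 \pmod d$, i.e.\ $x\in n_k\Z$. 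Concretely, $\mathcal C_v$ consists of vectors $\xi(x,y)=x(a_1,a_3)+x(a_3,a_1)+y(a_2,a_3)+y(a_3,a_2)$ with $(x,y)\in\R_{\ge0}^2$, $\mathcal D(v)+\mathcal C_v = \{\xi(n_k,0),\ \xi(0,1)\}+\mathcal C_v$, and so $\kappa_v(\xi(x,y)) = x/n_k + y$.

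Second, and more structurally, you do not need the duality lower bound here. The normalizing conditions $\#_{\gamma_1}=1$, $\#_{\gamma_2}=2$ force $x=y=1$, so $\mathcal C(c)$ is the single point $\xi(1,1)$, and the ``optimization'' is just evaluation: $\kappa_v(\xi(1,1))=1/n_k+1$. (Your ``main anticipated obstacle'' — pinning down which residue classes are realizable — evaporates once you see this.) Both bounds come for free from Lemma~\ref{lemma: compute by linprog} at this one point.

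Third, a small arithmetic slip: $\sum_i r_iA_i = 1\cdot 2 + 2\cdot 1 = 4$, not $6$ (you seem to have used $r_1=2$). Plugging in, $\scl = \tfrac14\cdot 4 - \tfrac12\kappa_v(\xi(1,1)) = 1 - \tfrac12(1/n_k+1) = \tfrac12 - \tfrac{1}{2n_k} = \tfrac12 - \tfrac{\gcd(|k|,d)}{2d}$, as claimed. So your route would eventually land in the right place, but the correct observations about $W_e$ and the singleton $\mathcal C(c)$ collapse the whole argument to a one-line evaluation and make the dual cost construction unnecessary.
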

\begin{proof}
	Let $n_k=\frac{d}{\gcd(|k|,d)}$, which is the order of $[k]$ in $\Z/d\Z$. We use the notation introduced in Subsection \ref{subsec: sclBS setup}.
	
	Let $\gamma_1$ be the tight loop representing $g_1=a^k t^2$ consisting of two arcs $a_1$ and $a_2$ with winding numbers $k$ and $0$ respectively. Let $\gamma_2$ be the tight loop representing $g_2=t^{-1}$ consisting of a single arc $a_3$ with winding number $0$. The three arcs are depicted in Figure \ref{fig: arcs_ex1}. By formula (\ref{eqn: choose W_0}), we have $W_0(a_1)=W_0(a_2)=W_0(a_3)=d\Z$. Then Setup 1 and Setup 2 coincide, which we use. We have $\rho(g_1+2g_2)=\rho(g_1)=\rho(g_2)=0$ and $D_v=d\Z$. As a consequence, we have $W_e=\{1\}$. That is, we ignore the winding numbers of turns and use a pair $(a_i,a_j)$ instead of a triple $(a_i,\bar{w},a_j)$ to represent a turn.
	
	\begin{figure}
		\labellist
		\small \hair 2pt

		\pinlabel $a_2$ at -15 75
		\pinlabel $a_1$ at -15 55
		\pinlabel $a_3$ at -15 95
		
		\pinlabel $a_1$ at 290 75
		\pinlabel $a_2$ at 290 55
		\pinlabel $a_3$ at 290 95
		\endlabellist
		\centering
		\includegraphics[scale=0.5]{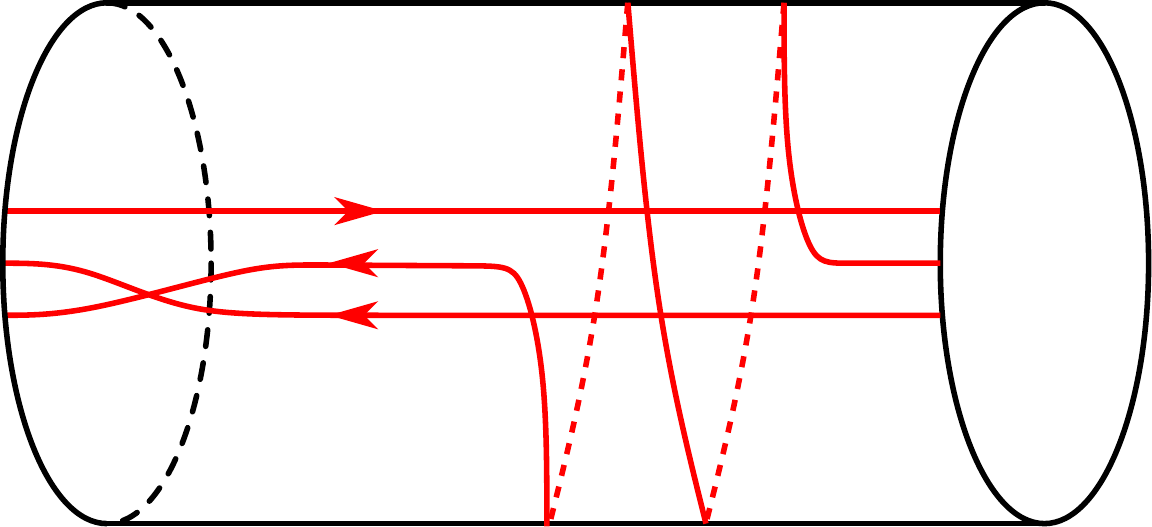}
		\caption{The three arcs of $a^kt^2+2t^{-1}$ in the thickened vertex space $N(X_v)$ with $k=2$.}\label{fig: arcs_ex1}
	\end{figure}
	
	We have a turn $(a_1,a_3)$ paired with $(a_3,a_2)$, and a turn $(a_2,a_3)$ paired with $(a_3,a_1)$. The defining equation $\partial=0$ implies that $\mathcal{C}_v$ consists of vectors of the form $\xi(x,y)=x(a_1,a_3)+x(a_3,a_1)+y(a_2,a_3)+y(a_3,a_2)$ with $(x,y)\in\R_{\ge0}^2$, which has winding number $kx$ mod $d$. Thus such a vector is disk-like if and only if $(x,y)\neq(0,0)\in\Z_{\ge0}^2$ and $kx\in d\Z$. This describes the set $\mathcal{D}(v)$ of disk-like vectors, from which we get $\mathcal{D}(v)+\mathcal{C}_v=\{\xi(n_k,0),\xi(0,1)\}+\mathcal{C}_v$ (See Figure \ref{fig: ex1}). It follows that $\kappa_v(\xi(x,y))=x/n_k+y$. For the chain $c=g_1+2g_2$, the normalizing condition requires $x=y=1$, so $\xi(1,1)$ is the only vector in $\mathcal{C}(c)$. Thus by Lemma \ref{lemma: compute by linprog} and Remark \ref{rmk: const}, we have
	$$\scl(c)=1-\frac{1}{2}\kappa_v(\xi(1,1))=\frac{1}{2}-\frac{1}{2n_k}.$$
\end{proof}

\begin{prop}\label{prop: eg2}
	For all $|M,|L|\ge2$, we have
	$$\scl_{\BS(M,L)}(atta^{-1}t^{-1}+t^{-1})\le\frac{1}{2}-\frac{1}{4|M|}-\frac{1}{4|L|}.$$
	The equality holds if $d=\gcd(|M|,|L|)$ satisfies $d\ge \frac{|M|+|L|}{2\min\{|M|,|L|\}}$.
\end{prop}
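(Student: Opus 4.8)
Write $c=g_1+g_2$ with $g_1=atta^{-1}t^{-1}=at^2a^{-1}t^{-1}$ and $g_2=t^{-1}$. Since $[g_1]=[t]$ and $[g_2]=-[t]$ in $H_1(\BS(M,L);\R)$, the chain $c$ is null‑homologous, and by Proposition \ref{prop: rel vertex} it is enough to compute $\scl_{(\BS(M,L),\langle a\rangle)}(c)$. The plan is to run the linear program of Section \ref{sec: scl by linprog}. First, cut the tight loop $\gamma_1$ into arcs $a_1,a_2,a_3$ with winding numbers $1,0,-1$ and $\gamma_2$ into a single arc $a_4$ with winding number $0$; then $h(g_1)=1$, $h(g_2)=-1$, and \eqref{eqn: choose W_0}--\eqref{eqn: choose D_v} give $\rho(c)=1$, so $D_v=\tfrac{ML}{d}\Z$, $D_e=\tfrac{ML}{d^2}\Z$, $W_e=\Z/D_e$, and $\tau_e(x)=-Lx/M$. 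Reading off $e_{\mathrm{in}},e_{\mathrm{out}}$ for the four arcs produces the finite list of admissible turn types $(a_i,\bar w,a_j)$, the pairing underlying \eqref{eqn: gluing condition}, and the rational polyhedron $\mathcal C(c)$; by Lemma \ref{lemma: compute by linprog} and Remark \ref{rmk: const}, $\scl_{\BS(M,L)}(c)=1-\tfrac12\max_{x\in\mathcal C(c)}\kappa_v(x)$, since $\tfrac14\sum_v|x_v|$ is the constant $1$ here. So both bounds amount to bounding $\max_{\mathcal C(c)}\kappa_v$ between $1+\tfrac1{2|M|}+\tfrac1{2|L|}$ (below, for the upper scl bound) and, under the hypothesis on $d$, equal to it.

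For the upper bound I would work in Setup 1 (where $W_0(a_1)=W_0(a_4)=d\Z$ is as large as possible) and exhibit a rational point $x\in\mathcal C(c)$ together with an admissible expression $x=x'+\sum_j t_j d_j$ witnessing $\kappa_v(x)\ge 1+\tfrac1{2|M|}+\tfrac1{2|L|}$. Concretely this means building a simple relative admissible surface of a suitably divisible degree $n$ out of three kinds of pieces: (i) a fixed family of polygonal pieces that run once around $\gamma_1$ while absorbing a copy of $\gamma_2$, whose winding number lies automatically in $M\Z+L\Z=d\Z$ and which can be made genuine disks by a choice of turn windings; (ii) roughly $n/(2|M|)$ (resp. $n/(2|L|)$) pieces built from many copies of the arc $a_1$ (resp. $a_3$), which become disks after the covering argument of Lemma \ref{lemma: lift and kill}/Lemma \ref{lemma: asym prom} and contribute the fractional savings $\tfrac1{4|M|}$ and $\tfrac1{4|L|}$; and (iii) annular filler. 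One then checks $-\what{\chi}(S)/2n=\tfrac12-\tfrac1{4|M|}-\tfrac1{4|L|}$, and Lemma \ref{lemma: overcount accurate}(2) together with Lemma \ref{lemma: asym prom} gives the claimed inequality. The delicate point is to choose the $W_e$‑coordinates of the turns so that all of the gluing conditions \eqref{eqn: gluing condition} hold simultaneously while every piece declared disk‑like genuinely is (the winding number \eqref{eqn: w} mod $D_v$ must actually land in the relevant $W_0(a_i)$); I would organize this bookkeeping around the flow on the turn graph.

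For the equality clause I would use the linear‑programming duality of Subsection \ref{subsec: lower bound}: it suffices to assign non‑negative costs $q_{i,\bar w,j}$ to turn types so that (a) $q(d)\ge 1$ for every disk‑like vector $d$ and (b) the gluing and normalizing conditions force $q(x)\le 1+\tfrac1{2|M|}+\tfrac1{2|L|}$ for all $x\in\mathcal C(c)$; then $\kappa_v\le q$ on $\mathcal C(c)$ by Lemma \ref{lemma: dual}, and combining with Lemma \ref{lemma: compute by linprog} and Remark \ref{rmk: const} gives $\scl_{\BS(M,L)}(c)\ge\tfrac12-\tfrac1{4|M|}-\tfrac1{4|L|}$. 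The cost must genuinely exploit the pairing — a cost depending only on the source arc produces the normalizing‑constant $q(x)\equiv\sum_i c_i$ but a hopeless disk‑like constraint — so I would look for costs that are pair‑symmetric and constant along each fibre $\{(a_i,\bar w,a_j):\bar w\in W_e\}$, and bound $q(x)$ using \eqref{eqn: gluing condition} pair by pair. Verifying (a) forces a classification of the disk‑like integer vectors of $\mathcal C_v$: for which closed walks in the turn graph can the winding number \eqref{eqn: w} be pushed, mod $D_v$, into some $W_0(a_i)$ — the relevant cheap ones being those assembled from short loops at $a_1$ and at $a_3$.

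The main obstacle is exactly this last point: the dual feasibility region at the budget $1+\tfrac1{2|M|}+\tfrac1{2|L|}$ is governed by how cheap these "spinning" disk‑like vectors at $a_1$ and $a_3$ are relative to the polygonal disk‑like vectors, and working out the arithmetic (using the stability estimate of Lemma \ref{lemma: stability}, which controls precisely how deeply one must iterate the transition map to trivialize these windings) is what produces the threshold $d\ge\frac{|M|+|L|}{2\min\{|M|,|L|\}}$: above it there is an admissible cost meeting the budget, so the lower bound matches Step 2 and equality holds; below it this certificate fails. Once the disk‑like vectors and the optimal cost are pinned down, the remaining steps are routine manipulations on the (finite) turn graph and its pairing.
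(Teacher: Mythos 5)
Your strategy coincides with the paper's: reduce via Proposition \ref{prop: rel vertex} and Remark \ref{rmk: const} to maximizing $\kappa_v$ over $\mathcal{C}(c)$ with the same four arcs and $\rho(c)=1$, exhibit an explicit simple relative admissible surface made of Setup-1 disk-like pieces for the upper bound, and get the lower bound from a cost function and Lemma \ref{lemma: dual}. But what you have is an accurate roadmap, not a proof: the two objects you defer are the entire content. For the upper bound the paper's surface is four kinds of pieces --- $(a_2,0,a_4)+(a_4,0,a_2)$, the square $(a_2,0,a_1)+(a_1,0,a_4)+(a_4,0,a_3)+(a_3,0,a_2)$, $|M|(a_1,0,a_1)$, and $|L|(a_3,0,a_3)$ --- all with turn winding numbers equal to $0$; so the ``delicate bookkeeping'' of turn windings and the covering argument of Lemma \ref{lemma: lift and kill} that you anticipate never arise: since in Setup~1 being disk-like here depends only on the multiset of arcs and not on the turn windings, the gluing condition is checked directly and the surface is built at degree $2|ML|$ with no passage to covers. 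For the lower bound, your guess that the cost should be independent of $\bar w$ is right, but the actual cost matrix $Q$ and the three-way case analysis showing every disk-like piece has cost $\ge 1$ under the hypothesis $d\ge(|M|+|L|)/(2\min\{|M|,|L|\})$ --- which is exactly where that threshold is used, as you predict --- are the substance still missing. Your framing is correct; the verification is not done.
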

\begin{proof}
	We have $3$ arcs $a_1, a_2, a_3$ on the loop $\gamma_1$ representing $atta^{-1}t^{-1}$ with winding numbers $1,0,-1$ respectively, and have another arc $a_4$ on the other loop $\gamma_2$ with winding number $0$. It easily follows from the definitions in Subsection \ref{subsec: sclBS setup} that $\rho(\gamma_1)=1$, $\rho(\gamma_2)=0$, so our chain $c$ has complexity $\rho(c)=1$. 
	
	\begin{figure}
		\labellist
		\small \hair 2pt
		\pinlabel $\mathcal{C}_v$ at -5 0
		\pinlabel $\xi(n_k,0)$ at 50 -10
		\pinlabel $\xi(0,1)$ at -15 30	
		\pinlabel $\mathcal{D}(v)+\mathcal{C}_v$ at 75 80
		\endlabellist
		\centering
		\includegraphics[scale=0.9]{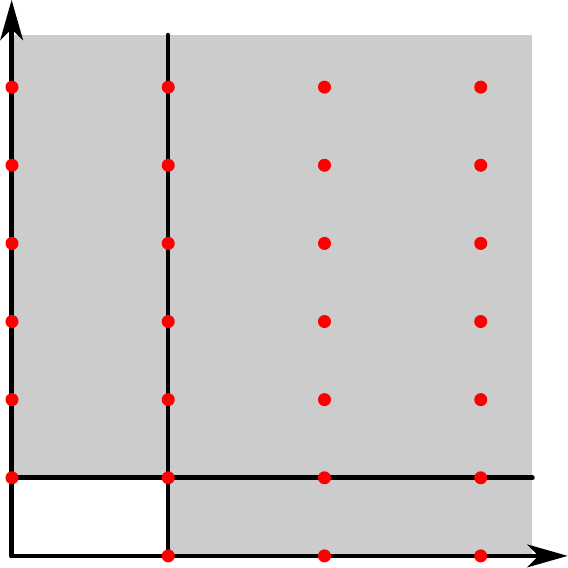}
		\vspace{10pt}
		\caption{The red dots are disk-like vectors in $\mathcal{D}(v)$. The gray region is the set $\mathcal{D}(v)+\mathcal{C}_v$.}\label{fig: ex1}
	\end{figure}

	We use Setup 1 in this proof. By formula (\ref{eqn: choose W_0}), we have $W_0(a_1)=dm\Z$, $W_0(a_2)=W_0(a_4)=d\Z$, $W_0(a_3)=d\ell\Z$. Hence $D_v=dm\ell\Z$ and $W_e=\Z/m\ell \Z$. Turns are paired up as follows:
	\begin{align*}
	(a_1,\bar{w},a_1)\quad \leftrightarrow \quad (a_3,-\bar{w},a_2) \quad\quad\quad\quad & (a_1,\bar{w}, a_4) \quad \leftrightarrow \quad (a_4,-\bar{w}, a_2)\\
	(a_2,\bar{w},a_4)\quad \leftrightarrow \quad (a_4,-\bar{w},a_3) \quad\quad\quad\quad & (a_2,\bar{w}, a_1) \quad \leftrightarrow \quad (a_3,-\bar{w}, a_3)
	\end{align*}
	
	These are the only turns, so each piece $C$ falls into exactly one of the following three types.
	\begin{enumerate}[(i)]
		\item The polygonal boundary contains both $a_2$ and $a_4$. It is disk-like if and only if $w(C)$ is divisible by $d$ since $W(a_2)=d\Z$. Changing the winding numbers of turns will change $w(C)$ by a multiple of $|M|$ or $|L|$, both divisible by $d$.\label{enum: first type}
		\item The polygonal boundary contains $a_1$ only. Then the winding number $w(C)\equiv k\mod |M|$ if there are $k$ copies of $a_1$ on the boundary, which does not depend on the winding numbers of turns. Thus it is disk-like if and only if $w(C)\in |M|\Z=W(a_1)$, ie $k$ is divisible by $|M|$.
		\item The polygonal boundary contains $a_3$ only. Similar to the previous case, it is disk-like if and only if the number of copies of $a_3$ on the boundary is divisible by $|L|$.
	\end{enumerate}
	In summary, whether a piece is disk-like does not depend on the winding numbers of turns on its polygonal boundary. Thus we simply assume all turns to have winding numbers $0$ in what follows.
	
	We prove the upper bound by constructing a simple relative admissible surface $S$ of degree $|2ML|$ consisting of the following disk-like pieces described by the turns on the polygonal boundaries (also see Figure \ref{fig: ex2}).
	\begin{enumerate}
		\item $(a_2,0,a_4)+(a_4,0,a_2)$, take $|ML|$ copies of this piece;
		\item $(a_2,0,a_1)+(a_1,0,a_4)+(a_4,0,a_3)+(a_3,0,a_2)$, take $|ML|$ copies of this piece;
		\item $|M|(a_1,0,a_1)$, take $|L|$ copies of this piece;
		\item $|L|(a_3,0,a_3)$, take $|M|$ copies of this piece.
	\end{enumerate}
	\begin{figure}
		\labellist
		\small \hair 2pt
		\pinlabel $a_1$ at 255 90
		\pinlabel $a_1$ at 312 90
		\pinlabel $a_1$ at 255 37
		\pinlabel $a_1$ at 312 37
		\pinlabel $a_1$ at 163 210
		\pinlabel $a_2$ at -10 132
		\pinlabel $a_2$ at 190 132
		\pinlabel $a_3$ at 163 50
		\pinlabel $a_3$ at 280 235
		\pinlabel $a_3$ at 280 162
		\pinlabel $a_3$ at 340 200
		\pinlabel $a_4$ at 42 132
		\pinlabel $a_4$ at 135 132
		\endlabellist
		\centering
		\includegraphics[scale=0.8]{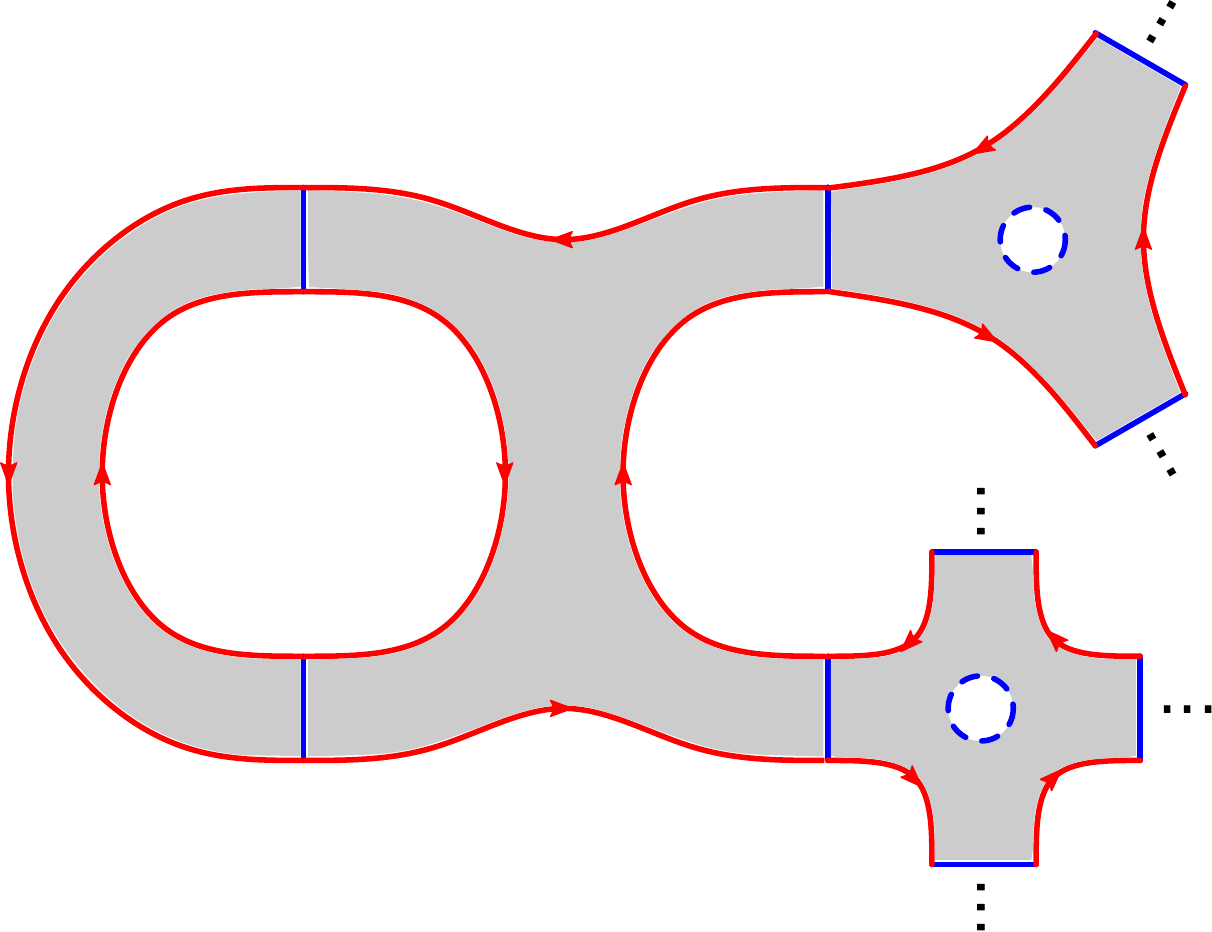}
		\caption{Part of a relative admissible surface involving the four disk-like pieces constructed to give the upper bound, illustrating the case $M=4$ and $L=3$}\label{fig: ex2}
	\end{figure}
	It is easy to check that these are disk-like pieces and that the gluing conditions hold. With notation as in Lemma \ref{lemma: asym prom}, we get $\what\chi(S)=-2|ML|+|M|+|L|$ and obtain the upper bound
	$$\scl_{\BS(M,L)}(atta^{-1}t^{-1}+t^{-1})\le \frac{1}{2}-\frac{1}{4|M|}-\frac{1}{4|L|}.$$
	
	To obtain the lower bound, we use the duality method introduced in Subsection \ref{subsec: lower bound}. Assign a cost $q_{ij}$ independent of the winding number $\bar{w}$ to each turn $(a_i,\bar{w}, a_j)$ as in the following matrix $Q=(q_{ij})$, where $*$ appears if there are no such turns.
	$$Q=\begin{pmatrix}
	\frac{1}{|M|} &* &* &1-\left(\frac{1}{|M|}+\frac{1}{|L|}\right)\\
	\frac{1}{|M|} &* &* &1-\frac{1}{2}\left(\frac{1}{|M|}+\frac{1}{|L|}\right)\\
	* &\frac{1}{|L|} &\frac{1}{|L|} &*\\
	* &\frac{1}{2}\left(\frac{1}{|M|}+\frac{1}{|L|}\right) &0 &*\\
	\end{pmatrix}$$
	We first check that every disk-like piece costs at least $1$ when $d\ge \frac{|M|+|L|}{2\min\{|M|,|L|\}}$. According to the classification of pieces above, only those of type (\ref{enum: first type}) requires some attention. Let $C$ be such a piece, which must contain a turn ending at $a_4$. Assume $|M|\le |L|$.
	\begin{enumerate}
		\item Suppose $C$ contains a turn from $a_1$ to $a_4$. 
		\begin{enumerate}
			\item If this turn is followed by another from $a_4$ to $a_3$, then we must also have a turn from $a_3$ to $a_2$ (to leave $a_3$) and another from $a_2$ to $a_1$ so that the boundary closes up. In this case, the cost is at least $q_{14}+q_{43}+q_{32}+q_{21}=1$.
			\item If this turn is followed by another from $a_4$ to $a_2$ instead, then the cost is at least $q_{14}+q_{42}+\min(q_{21},q_{24})\ge 1$ since $2\le |M|\le |L|$.
		\end{enumerate} 
	
		\item Suppose $C$ does not contain a turn from $a_1$ to $a_4$. Then $C$ does not visit $a_1$ and must contain a turn from $a_2$ to $a_4$. If $C$ also contains a turn from $a_4$ to $a_2$, then the cost will be at least $q_{24}+q_{42}=1$. Otherwise, $C$ is encoded as $n_1(a_2,0,a_4)+n_1(a_4,0,a_3)+n_2(a_3,0,a_3)+n_1(a_3,0,a_2)$ with integers $n_1\ge1$, $n_2\ge0$, and costs $n_1(q_{24}+q_{43}+q_{32})+n_2q_{33}$. Then $C$ has winding number $w(C)\equiv -(n_1+n_2)\mod d$. For $C$ to be disk-like, we have $n_1+n_2\ge d$. Note that $q_{24}\ge 1/2\ge q_{33}$ since $|M|,|L|\ge 2$. Therefore, the cost
		$$n_1(q_{24}+q_{43}+q_{32})+n_2q_{33}\ge q_{24}+q_{43}+q_{32}+(d-1)q_{33}=1+\frac{d-\frac{1}{2}}{|L|}-\frac{1}{2|M|}\ge 1$$
		since $d\ge\frac{|M|+|L|}{2|M|}$.
	\end{enumerate}
	The other case $|M|\ge |L|$ is similar.
	
	Now let $t_{ij}=\sum_{\bar{w}} t_{i,\bar{w},j}$, where $t_{i,\bar{w},j}$ is the normalized number of the turn $(a_i,\bar{w}, a_j)$.
	Then we obtain $t_{14}=t_{42}$ from the gluing conditions, which implies the total cost
	$$\sum_{i,j}q_{ij}t_{ij}=\left[1-\frac{1}{2}\left(\frac{1}{|M|}+\frac{1}{|L|}\right)\right](t_{14}+t_{24})+\frac{1}{|M|}(t_{11}+t_{21})+\frac{1}{|L|}(t_{32}+t_{33}).$$
	The normalizing conditions imply $t_{14}+t_{24}=t_{11}+t_{21}=t_{32}+t_{33}=1$ and thus 
	$$\sum_{i,j} q_{ij}t_{ij}=1+\frac{1}{2}\left(\frac{1}{|M|}+\frac{1}{|L|}\right),$$
	which is an upper bound of $\kappa_v(x)$ for all $x\in \mathcal{C}(c)$ by Lemma \ref{lemma: dual}. Hence
	$$\scl_{\BS(M,L)}(atta^{-1}t^{-1}+t^{-1})\ge \frac{1}{2}-\frac{1}{4|M|}-\frac{1}{4|L|}$$	
	by Lemma \ref{lemma: compute by linprog} and Remark \ref{rmk: const}.
\end{proof}
\begin{remark}\label{rem: eg2 convergence}
	A slightly weaker lower bound 
	$$\scl_{\BS(M,L)}(atta^{-1}t^{-1}+t^{-1})\ge \frac{1}{2}-\frac{1}{2\min\{|M|,|L|\}}.$$
	holds for all $|M|,|L|\ge2$, which can be proved in a similar way with much simpler computations using cost matrix
	$$Q=\begin{pmatrix}
	\frac{1}{\min\{|M|,|L|\}} &* &* &0\\
	0&* &* &0\\
	* &0 &\frac{1}{\min\{|M|,|L|\}} &*\\
	* &1 &1 &*\\
	\end{pmatrix}.$$
	This bound is sharp when $d=1$ by constructing admissible surfaces in simple normal form.
\end{remark}

In contrast to the two examples above, the winding numbers of turns cannot be ignored in the following example, which has higher complexity.
\begin{prop}\label{prop: eg3}
	$$\scl_{\BS(2,3)}([a,t^2])=\frac{5}{24}.$$
\end{prop}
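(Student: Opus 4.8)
The plan is to compute $\scl_{\BS(2,3)}([a,t^2])$ directly from the linear‑programming description of Section~\ref{sec: scl by linprog}, and then to certify the resulting value from both sides. First I would set things up in the normal form of Subsection~\ref{subsec: sclBS setup}. Writing $g=[a,t^2]=a\,t^2a^{-1}t^{-2}$, the tight loop $\gamma$ is cut by the edge space into four arcs $a_1,a_2,a_3,a_4$ with winding numbers $w(a_1)=1$, $w(a_2)=0$, $w(a_3)=-1$, $w(a_4)=0$ and $t$‑exponents $+1,+1,-1,-1$. Since $h(g)=0$ and the lift of $\gamma$ to the infinite cyclic cover meets exactly three consecutive fundamental domains, the complexity is $\rho(g)=2$; with $d=1$, $m=2$, $\ell=3$ this gives $D_v=36\Z$, $D_e=36\Z$ and $W_e=\Z/36\Z$ by~(\ref{eqn: choose D_v}) and the formulas around it, and I would use Setup~1, where~(\ref{eqn: choose W_0}) yields $W(a_1)=6\Z$, $W(a_2)=9\Z$, $W(a_3)=6\Z$, $W(a_4)=4\Z$. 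The turn set $T_v$ (the eight pairs $(a_1,a_1),(a_1,a_4),(a_2,a_1),(a_2,a_4),(a_3,a_2),(a_3,a_3),(a_4,a_2),(a_4,a_3)$, each carrying a class in $\Z/36\Z$), the pairing of triples, and the cone $\mathcal{C}(c)$ are then completely explicit and finite‑dimensional; and since the constant $\sum_i r_iA_i$ of Remark~\ref{rmk: const} equals $4$, Lemma~\ref{lemma: compute by linprog} reduces everything to showing that the maximum of $\kappa_v$ on $\mathcal{C}(c)$ equals $\tfrac{19}{12}$, equivalently $\scl_{\BS(2,3)}([a,t^2])=1-\tfrac12\cdot\tfrac{19}{12}=\tfrac5{24}$.

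\textbf{Upper bound.} To obtain $\scl\le\tfrac5{24}$ I would exhibit a rational point of $\mathcal{C}(c)$ together with an admissible expression realizing $\kappa_v=\tfrac{19}{12}$, built from the six disk‑like pieces of Figure~\ref{fig: branchsurf}: an $(a_1,a_1)$‑bigon with weight $\tfrac12$, a quadrilateral with boundary arcs $a_2,a_4,a_2,a_4$ with weight $\tfrac14$, an $(a_2,a_4)$‑bigon with weight $\tfrac12$, and three triangles with boundary arcs $a_3,a_3,a_3$ and weights $\tfrac1{36},\tfrac4{36},\tfrac7{36}$. The weights sum to $\tfrac{19}{12}$, the turn counts are pinned to $1$ for every arc (so the normalizing condition holds), and what remains is to choose the winding classes of the turns in $W_e=\Z/36\Z$ so that (i) each piece is genuinely disk‑like, i.e.\ its winding number lies in the relevant $W(a_i)$, and (ii) paired triples occur with matching weighted counts, so the gluing condition~(\ref{eqn: gluing condition}) holds; this is routine bookkeeping modulo $36$, the three $a_3$‑triangles being present precisely to match, at each winding class, the weighted number of $(a_2,a_4)$‑turns coming from the quadrilateral and the second bigon. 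Clearing denominators produces a simple relative admissible surface $S$ with $\frac{-\what\chi(S)}{2n}=\tfrac5{24}$, and Lemma~\ref{lemma: asym prom} upgrades this to $\scl_{\BS(2,3)}([a,t^2])\le\tfrac5{24}$.

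\textbf{Lower bound.} For $\scl\ge\tfrac5{24}$, equivalently $\kappa_v\le\tfrac{19}{12}$ on $\mathcal{C}(c)$, I would use the duality method of Subsection~\ref{subsec: lower bound}: assign a non‑negative cost $q_{i,\bar w,j}$ to each turn type so that (a)~every disk‑like vector has cost at least $1$, and (b)~for every $x\in\mathcal{C}(c)$, after substituting the gluing and normalizing relations, the total cost $q(x)$ is bounded above by $\tfrac{19}{12}$. Then Lemma~\ref{lemma: dual} gives $\kappa_v\le q\le\tfrac{19}{12}$ on $\mathcal{C}(c)$, and Lemma~\ref{lemma: compute by linprog} with Remark~\ref{rmk: const} yields the bound. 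Unlike in Propositions~\ref{prop: eg1} and~\ref{prop: eg2}, the winding number of a turn cannot be ignored here, so $q_{i,\bar w,j}$ must genuinely depend on $\bar w$: turns whose winding pushes a piece out of the subgroups $W(a_i)$ must be made expensive, the cheap turns being only those that keep the winding ``congruence‑free''. Verifying (a) is a finite case analysis over the shapes of polygonal boundaries — finite because disk‑likeness of a piece is a single congruence mod $36$ on a weighted sum of arc and turn windings — in the spirit of the case split in the proof of Proposition~\ref{prop: eg2}, but one complexity level higher: one argues that a disk‑like piece using only cheap turns is forced to contain so many copies of one of $a_1,a_3$ (each with fixed winding $\pm1$) that its winding cannot land in $W(a_1)=6\Z$ or $W(a_3)=6\Z$, and similarly at $a_2,a_4$ with their moduli $9$ and $4$.

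The hard part is the lower bound, specifically the design of the winding‑dependent cost function and the proof of~(a): one must find costs for which every disk‑like polygon — across infinitely many combinatorial types, controlled only through the mod‑$36$ congruence — costs at least $1$, while keeping the pinned total cost exactly $\tfrac{19}{12}$, which forces one to understand the interplay of the four distinct moduli $6,9,6,4$ attached to the arcs. Everything else — the setup, and the mod‑$36$ bookkeeping for the upper bound — is mechanical. If a clean closed‑form cost function proves elusive, a fallback is to solve the finite linear program of Lemma~\ref{lemma: compute by linprog} directly and record an optimal dual vector, of which the argument above is just a human‑checkable presentation.
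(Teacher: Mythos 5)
Your plan — reduce via Lemma~\ref{lemma: compute by linprog} to maximizing $\kappa_v$ on $\mathcal{C}(c)$, realize $\kappa_v=\tfrac{19}{12}$ from above with the six pieces of Figure~\ref{fig: branchsurf}, and match it from below with a winding-sensitive cost function via the duality method — is exactly the paper's argument, and the surrounding arithmetic ($\sum r_iA_i=4$, the weights summing to $\tfrac{19}{12}$, $1-\tfrac12\cdot\tfrac{19}{12}=\tfrac5{24}$) is right. But there is a concrete error at the very start that propagates through everything: your subgroups $W(a_i)$ are wrong. You wrote $W(a_1)=6\Z$, $W(a_2)=9\Z$, $W(a_3)=6\Z$, $W(a_4)=4\Z$, which is a cyclic shift by one of the correct assignment $W(a_1)=4\Z$, $W(a_2)=W(a_4)=6\Z$, $W(a_3)=9\Z$. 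The slip comes from taking the displayed formula $\mu_i=\max_k\sum_{j=1}^k\epsilon_{i+j}$ at face value; as typeset it is off by one (the $k$-th partial sum should be $\epsilon_i+\cdots+\epsilon_{i+k-1}$). You can check directly: $\tau_e(x)=-\tfrac32 x$ with $\Dom\tau_e=2\Z$ forces $\Dom\tau_e^2=4\Z$, and the two trailing factors $\tau_{\bar e}$ in $\tau_{P(a_1)}=\tau_{\bar e}^2\tau_e^2$ impose no further divisibility, so $\Dom\tau_{P(a_1)}=4\Z$, not $6\Z$. Equivalently, in the infinite cyclic cover the lift of $\gamma$ places $a_1$ at the bottom of the height range $[0,2]$, so $\mu_1=2$, $\lambda_1=0$.

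This is not a cosmetic relabeling. Condition~(\ref{eqn: W(a_v)}) requires $W(a_1)\subset\Dom\tau_{P(a_1)}^p+\Im\tau_{P(a_1)}^q$ for all $p,q\ge0$; since $\tau_{P(a_1)}$ is the identity on its domain $4\Z$, that intersection stabilizes at $4\Z$, and $6\Z\not\subset 4\Z$. So your notion of disk-like does not satisfy the hypotheses of Lemma~\ref{lemma: asym prom}, and the step ``Lemma~\ref{lemma: asym prom} upgrades $\tfrac{-\what\chi(S)}{2n}=\tfrac5{24}$ to $\scl\le\tfrac5{24}$'' fails: you would be overcounting as disks pieces (for instance an $(a_1,a_1)$-bigon with winding $6$) that cannot in fact be asymptotically promoted to disks along the cycle $\omega(C)$. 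The lower-bound half is not broken in the same way — for any choice $W(a_i)\supset D_v$ the LP value remains a valid lower bound on scl — but there you have only outlined the argument without producing the cost function, which you yourself identify as the hard part. Once the $W(a_i)$ are corrected, your outline becomes a faithful description of the paper's proof; as written it rests on an invalid Setup~1 and leaves both load-bearing verifications (the gluing bookkeeping mod $36$ and the disk-like case analysis for the cost) undone.
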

\begin{proof}
	We have four arcs $a_1,\ldots,a_4$ with winding numbers $1,0,-1,0$ respectively. We use Setup 1 and compute $W(a_1)=4\Z$, $W(a_2)=W(a_4)=6\Z$, and $W(a_3)=9\Z$. Then $D_v=D_e=36\Z$, $W_e=\Z/36\Z$ and the complexity $\rho(c)=2$. The turns are paired up as follows.
	\begin{align*}
	(a_1,\bar{w},a_1)\ \  \leftrightarrow \ \ (a_4,-\bar{w},a_2)\\
	(a_2,\bar{w},a_4) \ \ \leftrightarrow \ \  (a_3,-\bar{w},a_3)\\
	(a_2,\bar{w},a_1)\ \ \leftrightarrow \ \ (a_4,-\bar{w},a_3)\\
	(a_1,\bar{w},a_4)\ \ \leftrightarrow \ \ (a_3,-\bar{w},a_2)
	\end{align*}
	
	To get $5/24$ as an upper bound, we present a simple relative admissible surface $S$ of degree $36$ consisting of the following disk-like pieces described by the turns on the polygonal boundaries. By the computation above and our orientation on $e$, the maps $\overline{o_e},\overline{t_e}: \Z/36\Z\to \Z/36\Z$ are given by $\overline{o_e}(\bar{w})=2\bar{w}$ and $\overline{t_e}(\bar{w})=3\bar{w}$.
	
	\begin{enumerate}
		\item $(a_1,0,a_1)+(a_1,1,a_1)$, which is disk-like since its winding number $2w(a_1)+\overline{o_e}(0+1)\in 4+D_v$ lies in $W(a_1)=4\Z$. This is the leftmost piece in Figure \ref{fig: branchsurf}. Take $18$ copies of this piece;
		\item $(a_2,0,a_4)+(a_4,0,a_2)$, which is disk-like since its winding number $w(a_2)+w(a_4)+\overline{o_e}(0)+\overline{t_e}(0)\equiv 0\mod 36$ lies in $W(a_2)=6\Z$. This is depicted in the bottom-middle of Figure \ref{fig: branchsurf}. Take $18$ copies of this piece.
		\item $(a_2,1,a_4)+(a_2,2,a_4)+2(a_4,-1,a_2)$, which is disk-like since its winding number $2w(a_2)+2w(a_4)+\overline{o_e}(1+2)+2\overline{t_e}(-1)\equiv 0\mod 36$ lies in $W(a_2)=6\Z$. This is depicted in the upper-middle of Figure \ref{fig: branchsurf}. Take $9$ copies of this piece.
		\item $(a_3,0,a_3)+2(a_3,-1,a_3)$, which is disk-like since its winding number $3w(a_3)+\overline{t_e}(2(-1))\equiv -9\mod 36$ lies in $W(a_3)=9\Z$. This is the piece on the right of Figure \ref{fig: branchsurf} with weight $4/36$. Take $4$ copies of this piece.
		\item $(a_3,-2,a_3)+2(a_3,0,a_3)$, which is disk-like since its winding number $3w(a_3)+\overline{t_e}(-2)\equiv -9\mod 36$ lies in $W(a_3)=9\Z$. This is the piece on the right of Figure \ref{fig: branchsurf} with weight $7/36$. Take $7$ copies of this piece.
		\item $(a_3,-1,a_3)+2(a_3,-2,a_3)$, which is disk-like since its winding number $3w(a_3)+\overline{t_e}(-1-2-2)\equiv -18\mod 36$ lies in $W(a_3)=9\Z$. This is the piece on the right of Figure \ref{fig: branchsurf} with weight $1/36$. Take $1$ copy of this piece.
	\end{enumerate}
	It is easy to see that the gluing condition is satisfied and the surface $S$ obtained is carried by the branched surface in Figure \ref{fig: branchsurf}. Note that $S$ is relative admissible of degree $36$ with $\what\chi(S)/36=2-57/36=5/12$, and thus
	$$\scl_{\BS(2,3)}([a,t^2])\le \frac{5}{24}.$$
	
	To establish the lower bound, assign cost $q_{i,\bar{w},j}$ to the turn $(a_i,\bar{w},a_j)$ with 
	$$q_{1,\bar{w},1}=\left\{
	\begin{array}{ll}
	1/4 &\text{if } \bar{w} \text{ is even },\\
	3/4 &\text{otherwise},
	\end{array}\right.
	\quad q_{4,\bar{w},2}=\left\{
	\begin{array}{ll}
	1 &\text{if } \bar{w} \text{ is even},\\
	1/2 &\text{otherwise},
	\end{array}\right.$$
	and for any $\bar{w}$
	\[
	\begin{array}{llllll}
	q_{2,\bar{w},4}=0,		&q_{3,\bar{w},3}=1/3,	&q_{2,\bar{w},1}=1/3,	&q_{4,\bar{w},3}=0, &q_{1,\bar{w},4}=1/4,	&q_{3,\bar{w},2}=1.\\
	\end{array}
	\]
	We check each disk-like piece $C$ costs at least $1$. 
	\begin{figure}
		\labellist
		\small \hair 2pt
		\pinlabel $a_1$ at 70 75
		\pinlabel $a_2$ at 160 135
		\pinlabel $a_3$ at 250 75
		\pinlabel $a_4$ at 160 -7
		\endlabellist
		\vspace{3 pt}
		\centering
		\includegraphics[scale=0.8]{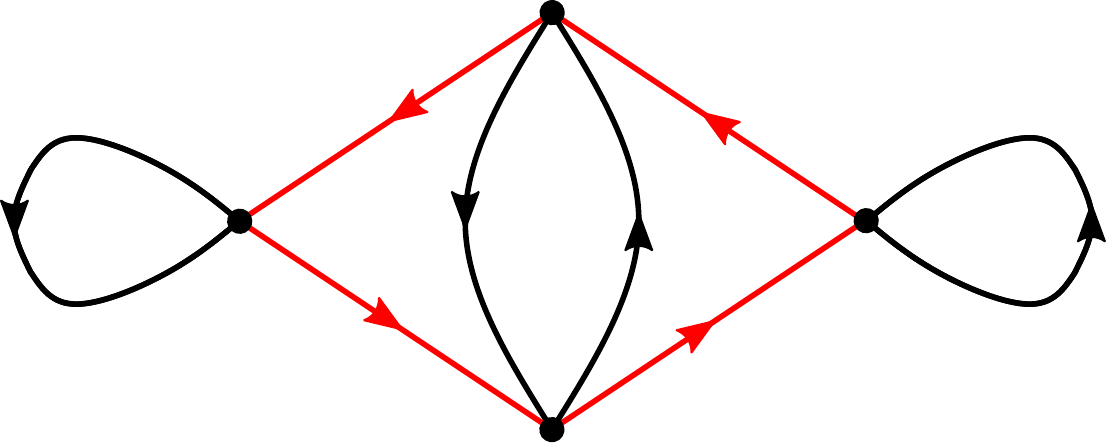}
		\caption{The graph $Y$ indicating turns between arcs, except that every single edge represents $|W_e|=36$ multiedges}\label{fig: Y}
	\end{figure}
	\begin{enumerate}
		\item Suppose $C$ does not contain any turns of forms $(a_2,\bar{w},a_1)$, $(a_4,\bar{w},a_3)$, $(a_1,\bar{w},a_4)$, or $(a_3,\bar{w},a_2)$. That is, the red turns in Figure \ref{fig: Y} are excluded. Then there are three cases:
		\begin{enumerate}
			\item The boundary only contains $a_1$. By formula (\ref{eqn: w}), $w(a_1,\bar{w},a_1)=w(a_1)+\overline{o_e}(\bar{w})=1+2\bar{w}\in \Z/36\Z$. Thus we have $w(C)\equiv n_0+3n_1\mod 4$, where $n_0$ (resp. $n_1$) is the total number of turns $(a_1,\bar{w},a_1)$ on $C$ with $\bar{w}$ even (resp. odd). For $C$ to be disk-like, we have $w(C)\in W(a_1)=4\Z$. Thus the cost is $(n_0+3n_1)/4=k$ for some integer $k\ge1$.
			\item The boundary only contains $a_2$ and $a_4$. Then $w(C)\equiv 2n_0+n_1\mod 2$, where $n_0$ (resp. $n_1$) is the total number of turns $(a_4,\bar{w},a_2)$ on $C$ with $\bar{w}$ even (resp. odd). For $C$ to be disk-like, we need $w(C)\in W(a_2)=6\Z$. Hence $2n_0+n_1$ must be even and the cost $(2n_0+n_1)/2=k$ for some integer $k\ge1$.
			\item The boundary only contains $a_3$. Then $w(C)\equiv n\mod 3$, where $n$ is the total number of turn on $C$. For $C$ to be disk-like, we need $w(C)\in W(a_3)=9\Z$. Hence $n$ must be divisible by $3$ and the cost $n/3=k$ for some integer $k\ge1$.
		\end{enumerate}
		\item Now suppose $C$ is a disk-like piece containing at least one of the turns $(a_2,\bar{w},a_1)$, $(a_4,\bar{w},a_3)$, $(a_1,\bar{w},a_4)$ or $(a_3,\bar{w},a_2)$. 
		\begin{enumerate}
			\item If $C$ contains $a_3$ on the boundary, then it must have a turn $(a_3,\bar{w},a_2)$ which already has cost $q_{3,\bar{w},2}=1$.
			\item If $C$ does not contain $a_3$, then it includes $3$ turns of the forms $(a_2,\bar{w},a_1)$, $(a_1,\bar{w}',a_4)$ and $(a_4,\bar{w}'',a_2)$ respectively. In this case, the cost of $C$ is at least $$q_{2,\bar{w},1}+q_{1,\bar{w}',4}+q_{4,\bar{w}'',2}\ge \frac{1}{3}+\frac{1}{4}+\frac{1}{2}>1.$$
		\end{enumerate}
	\end{enumerate}
	
	In summary, any disk-like piece has cost at least $1$. Let $t_{i,\bar{w},j}$ be the normalized number of turns $(a_i,\bar{w},a_j)$ in a vector $x\in \mathcal{C}(c)$. Then the gluing conditions imply $t_{1,\bar{w},1}=t_{4,-\bar{w},2}$, $t_{1,\bar{w},4}=t_{3,-\bar{w},2}$ and $t_{2,\bar{w},1}=t_{4,-\bar{w},3}$. Therefore, the total cost
	\begin{eqnarray*}
		\sum_{i,\bar{w},j} q_{i,\bar{w},j}t_{i,\bar{w},j}&=&\left(\frac{1}{4}+1\right)\left(\sum_{\bar{w}\text{ even}}t_{1,\bar{w},1}+\sum_{\bar{w}}t_{1,\bar{w},4}\right)+\left(\frac{3}{4}+\frac{1}{2}\right)\sum_{\bar{w}\text{ odd}}t_{1,\bar{w},1}\\
		&+&\frac{1}{3}\sum_{\bar{w}}(t_{3,\bar{w},3}+t_{4,\bar{w},3})\\
		&=&\frac{5}{4}\sum_{\bar{w}}(t_{1,\bar{w},1}+t_{1,\bar{w},4})+\frac{1}{3}\sum_{\bar{w}}(t_{3,\bar{w},3}+t_{4,\bar{w},3}).
	\end{eqnarray*}
	Combining with the normalizing condition, we have
	$$\kappa_v(x)\le\sum_{i,\bar{w},j} q_{i,\bar{w},j}t_{i,\bar{w},j}=\frac{5}{4}+\frac{1}{3},$$
	by the duality method. Hence by Lemma \ref{lemma: compute by linprog}, we have $\scl_{\BS(2,3)}([a,t^2])\ge \frac{5}{24}$.
\end{proof}

If $M$ and $L$ are coprime, one can see that any reduced word of the form $g=a^{u_1}ta^{u_2}t\cdots a^{u_n}ta^{v_1}Ta^{v_2}T\ldots a^{v_n}T$ can be rewritten as a reduced word $g=a^{u}t^na^{v}T^n$, where $T=t^{-1}$. A trick using the Chinese remainder theorem shows $\scl_{\BS(M,L)}(g)=\scl_{\BS(M,L)}([a,t^n])$ for any such $g$. Thus it would be interesting to know how the sequence $\scl_{\BS(M,L)}([a,t^n])$ behaves as $n\to \infty$, where the complexity increases unboundedly. For example, does $\scl_{\BS(M,L)}([a,t^n])$ converges to some limit? If so, how fast does it converge?

One can lift $[a,t^n]$ to the infinite cyclic cover corresponding to $\ker h$, which is an infinite amalgam with presentation $\wtilde{G}(M,L)=\langle a_k, k\in\Z\ |\ a_k^M=a_{k+1}^L\rangle$. Then $g_n=a_0 a_n^{-1}$ is a lift of $[a,t^n]$ for each $n$. One can use techniques similar to the computations above to show that $\scl_{(\wtilde{G}(M,L),\langle a_0\rangle)}(g_n)$ converges \emph{exponentially} in $n$ to $1/2$. It is not clear whether a similar convergence holds for $\scl_{\BS(M,L)}([a,t^n])$. An exponential convergence seems unusual for scl of a family of words with linear word length growth, eg in free groups.

\subsection{Convergence theorem}
Throughout this subsection, let $F_2=\langle a,t\rangle$ and consider $\BS(M,L)$ as the quotient by imposing the relation $a^M=ta^L t^{-1}$. We say a family of $\BS(M,L)$ is a \emph{surgery family} if $d=\gcd(|M|,|L|)\to\infty$.

We observe from Remark \ref{rem: eg2 convergence} that 
$$\scl_{\BS(M,L)}(atta^{-1}t^{-1}+t^{-1})\to \scl_{F_{2}}(atta^{-1}t^{-1}+t^{-1}) =1/2$$ 
as $|M|,|L|\to \infty$. In contrast, Proposition \ref{prop: eg1} shows that for certain chains like $c=at^2+2t^{-1}$, the convergence depends on how $|M|,|L|$ go to infinity, governed by how $d=\gcd(|M|,|L|)$ behaves. The purpose of this subsection is to study this convergence phenomenon for arbitrary chains in surgery families.

To describe the convergence in detail, we say a sequence $\{x_d\}$ is \emph{quasirational} in $d$ if there exists $\pi\in\Z_+$ and a rational function $\phi_p\in \Q(n)$ for each $0\le p<\pi$ such that $x_{n\pi+p}=\phi_p(n)$.

\begin{thm}\label{thm: surgery}
	For any chain $c\in B_1^H(F_2)$, let $\bar{c}$ be its image in $\BS(M,L)$. If $d=\gcd(M,L)\to \infty$, then $\scl_{\BS(M,L)}(\bar{c})$ converges to $\scl_{F_2}(c)$. Moreover, for $M,L$ fixed, the sequence $\scl_{\BS(dM,dL)}(\bar{c})$ is eventually quasirational in $d$ and converges to $\scl_{F_2}(c)$.
\end{thm}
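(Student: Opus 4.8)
\emph{Plan.} The idea is to realize both $\scl_{F_2}(c)$ and $\scl_{\BS(M,L)}(\bar c)$ as optima of the linear programs of Sections \ref{sec: asym prom}--\ref{sec: scl by linprog} attached to the \emph{same} collection of arcs, and to compare these programs directly. First I would reduce to the case that every $g_i$ in $c=\sum_i r_ig_i$ is hyperbolic and that its image $\bar g_i$ is hyperbolic in $\BS(M,L)$: elliptic summands conjugate into $\langle a\rangle$ and are absorbed by Proposition \ref{prop: rel vertex}, while for $\gcd(M,L)$ large a tight loop $\gamma_i$ representing $g_i$ in $F_2$ stays tight in $\BS(M,L)$ with the same arcs, so $\bar c$ has complexity $\rho:=\rho(c)$ independently of $M,L$. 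Viewing $F_2=\langle a\rangle*_{\{1\}}$ as an HNN extension with trivial edge group, the machinery applies to it as well, now with $W_e=\{1\}$; so the polyhedron $\mathcal{C}(c)$ for $F_2$ uses turn data $(a_i,a_j)$ while $\mathcal{C}(\bar c)$ for $\BS(M,L)$ uses $(a_i,\bar w,a_j)$ with $\bar w\in W_e=\Z/|m|^\rho|\ell|^\rho\Z$. There is a forgetful linear map $\Pi$ (sum over $\bar w$) and a section $\sigma$ (put all mass on $\bar w=0$); both commute with $\bdry$ and preserve the gluing and normalizing conditions, $\Pi\sigma=\mathrm{id}$, and $|\Pi(x)|=|x|=\sum_ir_iA_i=:K$. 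Recall from Lemma \ref{lemma: compute by linprog} and Remark \ref{rmk: const} that in each case $\scl=\tfrac14K-\tfrac12\max\kappa_v$ over the corresponding polyhedron.

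One inequality is formal: $\sigma$ carries a disk-like vector of $F_2$ (a genuine disk, i.e. trivial winding) to a disk-like vector of $\BS(M,L)$, since the turn contributions $\iota(\tilde 0)=0$ vanish; hence $\kappa_v^{\BS}(\sigma(y))\ge\kappa_v^{F_2}(y)$ and so $\scl_{\BS(M,L)}(\bar c)\le\scl_{F_2}(c)$ (this is just monotonicity under $F_2\twoheadrightarrow\BS(M,L)$). For the reverse the key point is quantitative: let $\delta=\gcd(|M|,|L|)$ and let $P$ bound the exponents occurring in the words $g_i$; if $C$ is a disk-like piece of $\BS(M,L)$ with fewer than $\delta/P$ arcs, then $\Pi(x(C))$ is disk-like in $F_2$. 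Indeed the $F_2$-winding number of $\Pi(x(C))$ is the sum of the arc-exponents along the polygonal boundary, which is $w(x(C))$ minus the turn contributions; the former lies in $W(a)\subseteq\delta\Z$, and each turn contribution lies in $M\Z$ or $L\Z$, hence in $\delta\Z$, so that sum lies in $\delta\Z$; being bounded by $(\#\text{arcs})\cdot P<\delta$ in absolute value, it vanishes. Now take an optimal $x^*\in\mathcal{C}(\bar c)$ with an admissible expression $x^*=x'+\sum_jt_jd_j$, $\kappa_v^{\BS}(x^*)=\sum_jt_j$, and split the indices according to whether $d_j$ has $<\delta/P$ arcs. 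Since $\sum_jt_j\cdot(\#\text{arcs of }d_j)\le|x^*|=K$, the ``large'' indices carry total weight at most $PK/\delta$; for ``small'' indices $\Pi(d_j)$ is disk-like in $F_2$, and absorbing the large terms into the cone part of $\Pi(x^*)\in\mathcal{C}(c)$ yields $\kappa_v^{F_2}(\Pi(x^*))\ge\kappa_v^{\BS}(x^*)-PK/\delta$. Combining, $0\le\scl_{F_2}(c)-\scl_{\BS(M,L)}(\bar c)\le PK/(2\delta)$, which proves the first assertion with an explicit $O(1/\gcd(M,L))$ rate.

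For the second assertion, convergence is immediate from the first since $\gcd(|dM|,|dL|)=d\gcd(|M|,|L|)\to\infty$. For quasirationality I would use that, with $M,L$ fixed, essentially all the combinatorial data of the $\BS(dM,dL)$-program is independent of $d$: the arc set, the turn set $T_v$ (because $|m|^\rho|\ell|^\rho$ is fixed), the cone $\mathcal{C}_v$, the gluing and normalizing conditions, and hence the polyhedron $\mathcal{C}(\bar c)$ are all the same for every $d$. The only $d$-dependence is in the subgroups $W(a_i)=d\delta_0c_i\Z$ (with $\delta_0=\gcd(|M|,|L|)$ and $c_i$ fixed positive integers from (\ref{eqn: choose W_0})), and hence in the set $\mathcal{D}(v)$ of disk-like vectors and in $\kappa_v$, through the congruence conditions $w(x)\in d\delta_0c_i\Z$. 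As in the surgery theorems of Calegari \cite{Cal:sss} and the author \cite{Chen:sclfp}, the innermost disk-like vectors along a face of $\mathcal{C}_v$ are controlled by quantities of the form $\gcd(w,\,d\delta_0c_i)$, which are eventually periodic in $d$; choosing a period $\pi$ clearing all of these, one shows that on each residue class $d\equiv p\pmod{\pi}$ and for $d$ large the polyhedron $\conv(\mathcal{D}(v))+\mathcal{C}_v$ is cut out (using the finiteness of Lemma \ref{lemma: D'}) by finitely many inequalities whose data depend (quasi-)affinely on $d$. Then $\max_{x\in\mathcal{C}(\bar c)}\kappa_v^{(d)}(x)$ is the value of a parametric linear program over a fixed polytope in which the $d$-dependence occupies a single column of the constraint matrix; once the optimal basis stabilizes along the residue class (a standard parametric-LP fact, using compactness of $\mathcal{C}(\bar c)$), Cramer's rule expresses this value, and therefore $\scl_{\BS(dM,dL)}(\bar c)=\tfrac14K-\tfrac12\max\kappa_v^{(d)}$, as a rational function of $d$, hence of $n$ on that class. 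I expect this last step---proving that $\conv(\mathcal{D}(v))+\mathcal{C}_v$ is quasirational in $d$, handling the ``connected support'' requirement honestly---to be the main technical obstacle; by contrast the comparison argument behind the first assertion is essentially formal once the two linear programs are placed side by side.
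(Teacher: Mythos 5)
Your argument for the first assertion (convergence as $\gcd\to\infty$) is essentially the paper's own proof of Lemma~\ref{lemma: limit is F_2}: the key quantitative observation, that a disk-like piece of $\BS(M,L)$ whose arc-exponents do not sum to zero must have at least $d/N$ arcs because that sum lies in $d\Z$, is exactly what the paper uses to bound the number of non-stable disk-like pieces by $(nN/d)\sum r_iA_i$ and pass to the $F_2$-optimum $\kappa_v^*$. Your bookkeeping at the level of admissible expressions for $\kappa_v$ is a clean equivalent formulation, and the $O(1/d)$ rate you extract is the same implicit bound.

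For the quasirationality assertion, however, you have correctly identified the gap yourself, and it is genuine: everything in your sketch after ``the innermost disk-like vectors along a face of $\mathcal{C}_v$ are controlled by quantities of the form $\gcd(w,d\delta_0 c_i)$'' is a heuristic, not an argument, and the ``main technical obstacle'' you flag — proving $\conv(\mathcal{D}(v)^{(d)})+\mathcal{C}_v$ has quasirational data, while handling the connected-support requirement — is exactly the content that must be supplied. Two points in your sketch are also slightly off: you write $W(a_i)=d\delta_0 c_i\Z$ with varying $c_i$, which is Setup~1, whereas the simpler criterion ``$\supp(x)$ connected and $w(x)\in D_v$'' needed here uses Setup~2, where $W(a_i)=D_v$ for all $i$; and you treat $w(x)$ as $d$-independent, whereas the turn contributions to $w(x)$ are themselves linear in $d$ (since $o_e(\tilde w)=dm\tilde w$ and $t_e(\tilde w)=d\ell\tilde w$), so the $d$-dependence is not purely in the modulus. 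The paper closes the gap as follows. By Lemma~\ref{lemma: extremal rays}, the support of a vector depends only on the open face of $\mathcal{C}_v$ containing it, so one restricts to faces $\mathcal{F}$ with connected support and treats each separately. One then uses the embedding $\mathcal{E}_d\colon x\mapsto (x,\,w(x)/|D_v|)$, under which disk-like vectors in $\mathcal{F}$ are precisely the integer points of $\mathcal{E}_d(\mathcal{F})$; because both $w(x)$ and $|D_v|$ are affine linear in $d$, the extremal integral generators $\mathcal{E}_d(|D_v|\omega_i)=(|D_v|\omega_i,w(\omega_i))$ of $\mathcal{E}_d(\mathcal{F})$ are affine linear in $d$, and Calegari--Walker's integer-hull result (Lemma~\ref{lemma: CW}) gives that the vertex set of the integer hull is QIQ in $d$. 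Projecting back and combining over the finitely many faces yields Lemma~\ref{lemma: vertex QIQ}, after which your final parametric-LP/Cramer step (which the paper packages via Lemma~\ref{lemma: D'} and Lemma~\ref{lemma: compute by linprog}) does go through. So the skeleton of your argument is the right one, but the Calegari--Walker lemma and the $\mathcal{E}_d$ embedding are the specific tools needed to make it a proof.
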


Here $F_2$ is considered as the free HNN extension, that is, the graph of groups with a single vertex group $\Z$ and a single trivial edge group. Fix a rational chain $c=\sum r_ig_i\in B_1^H(F_2)$ with each $g_i$ hyperbolic represented by a tight loop $\gamma_i$. We can apply Theorem \ref{thm: rational} to compute $\scl_{F_2}(c)$, where disk-like pieces are the same as genuine disks since the edge group is trivial. Thus we notice that a piece in $\BS(M,L)$ becomes a disk-like piece in $F_2$ by ignoring winding numbers of turns if and only if the winding numbers of the arcs on the polygonal boundary sum to $0$. We refer to such pieces as \emph{stable} disk-like pieces.

When $d=\gcd(M,L)$ is sufficiently large, the cyclically reduced expression of $g_i$ is also a cyclically reduced in $\BS(M,L)$. More precisely, with
$$N\defeq\max_{a\in A_v} |w(a)|,$$
where $A_v$ is the set of arcs obtained by cutting $\uga=\{\gamma_i\}$, each $g_i$ is reduced when $d>N$. In particular, the image $\bar{g_i}$ of each $g_i$ in $\BS(M,L)$ is hyperbolic with a constant complexity $\rho(\bar{g_i})$ when $d>N$. 

With such large $d$, recall from Lemma \ref{lemma: compute by linprog} that $\scl_{\BS(M,L)}(\bar{c})$ is computed as an optimization on the space $\mathcal{C}(\bar{c})$ where we essentially maximize $\kappa_v$. Let $\kappa_v^*$ be the variant of $\kappa_v$ that counts the maximal number of \emph{stable} disk-like pieces. Then the optimization in Lemma \ref{lemma: compute by linprog} with $\kappa_v(x)$ replaced by $\kappa_v^*(x)$ computes $\scl_{F_2}(c)$ instead.

We prove the first assertion in Theorem \ref{thm: surgery}.
\begin{lemma}\label{lemma: limit is F_2}
	With notation as above, $\scl_{\BS(M,L)}(\bar{c})$ converges to $\scl_{F_2}(c)$ as $d\to \infty$.
\end{lemma}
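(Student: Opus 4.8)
The plan is to compare the two linear programs directly. Both $\scl_{\BS(M,L)}(\bar c)$ and $\scl_{F_2}(c)$ are computed on the \emph{same} polyhedron: since the edge group of $\BS(M,L)$ injects onto a finite-index subgroup of the edge group of $F_2$ only after quotienting, the relevant point is that the arcs $A_v$, the triple set $T_v$, the boundary map $\bdry$, and hence $\mathcal{C}_v$ and $\mathcal{C}(\bar c)=\mathcal{C}(c)$ are \emph{independent} of $d$ once $d>N$ (this uses that $D_e=m^{\rho(c)}\ell^{\rho(c)}\Z$ is independent of $d$, as noted in Subsection~\ref{subsec: sclBS setup}). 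The only thing that varies with $d$ is which integer vectors in $\mathcal{C}_v$ are disk-like: by formula~(\ref{eqn: w}), a piece in $\BS(dM,dL)$ is disk-like iff its winding number $w(x)$ lies in $W(a_i)$, and $W(a_i)\supset D_v=dm^{\rho(c)}\ell^{\rho(c)}\Z$ has index going to infinity in $\Z$ as $d\to\infty$. Thus the plan is: show that for every $x$, $\kappa_v(x)$ (computed using disk-like pieces in $\BS(dM,dL)$) converges to $\kappa_v^*(x)$ (computed using stable disk-like pieces, i.e.\ those with winding number exactly $0$) uniformly on $\mathcal{C}(\bar c)$, and conclude via Lemma~\ref{lemma: compute by linprog}.

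The two inequalities go as follows. For the easy direction, $\scl_{\BS(M,L)}(\bar c)\le\scl_{F_2}(c)$: every stable disk-like piece is in particular disk-like in $\BS(dM,dL)$ (its winding number is $0\in W(a_i)$), so $\kappa_v^*\le\kappa_v$ pointwise, hence the minimum in Lemma~\ref{lemma: compute by linprog} for $\BS(dM,dL)$ is $\le$ that for $F_2$; equivalently, push a simple relative admissible surface for $c$ in $F_2$ forward to $\BS(dM,dL)$ and apply monotonicity, or directly invoke $\kappa_v^*\le\kappa_v$. For the reverse inequality, one needs to argue that in $\BS(dM,dL)$ the extra disk-like pieces (those with nonzero winding number in $W(a_i)$) cannot help much when $d$ is large. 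The key estimate: if $d$ is a disk-like vector in $\mathcal{C}_v$ realized by a piece with nonzero winding number $w$, then $w\in W(a_i)$ forces $|w|\ge \lvert W(a_i)\rvert$-ish, which in turn forces the piece to have many corners (each arc on the polygonal boundary changes the winding number by a bounded amount $\le$ something like $2N$, while crossing edge spaces multiplies by $m/\ell$ — so after the transition maps the accumulated winding number is bounded in terms of the number of turns). Hence a disk-like vector with nonzero winding number has $\ell^1$-norm $\gtrsim c(d)\to\infty$. Since $\sum_v|x_v|$ is the constant $\sum_i r_i A_i$ on $\mathcal{C}(\bar c)$ (Remark~\ref{rmk: const}), any admissible expression can use only boundedly many such pieces, and in fact their total weight is $O(1/d)$; discarding them changes $\kappa_v$ by $O(1/d)$, giving $\kappa_v\le\kappa_v^*+O(1/d)$ uniformly. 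Then Lemma~\ref{lemma: compute by linprog} yields $\scl_{\BS(dM,dL)}(\bar c)\ge\scl_{F_2}(c)-O(1/d)$.

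Concretely I would carry out: (i) fix $d>N$ so that $\mathcal{C}(\bar c)=\mathcal{C}(c)$ and all complexities stabilize; (ii) record $\kappa_v^*\le\kappa_v$ and deduce the $\le$ half of convergence immediately; (iii) prove the quantitative bound: for a disk-like piece $C$ at $v$ with $w(C)\neq 0$, the number of turns of $C$ is at least a linear function of $\log_{|m/\ell|}$ of $|w(C)|$ — more precisely, bound $|w(C)|$ above by (number of turns)$\cdot\max(|m|,|\ell|)^{\rho}\cdot 2N$ using (\ref{eqn: w}) and the transition maps — so $w(C)\in W(a_i)$ and $w(C)\ne 0$ force (number of turns of $C$)$\ge \delta_d$ with $\delta_d\to\infty$ as $d\to\infty$; (iv) given a rational $x\in\mathcal{C}(\bar c)$ and an optimal admissible expression $x=x'+\sum t_j d_j$, split the $d_j$ into stable ones and unstable ($w\neq 0$) ones; the total norm bound $\sum_v|x_v|=\sum_i r_iA_i$ caps the total weight on unstable pieces by $(\sum_i r_iA_i)/\delta_d$; replacing each unstable $d_j$ by $x'$-type leftover gives an admissible expression into stable pieces of total weight $\ge\kappa_v(x)-(\sum_i r_iA_i)/\delta_d$, so $\kappa_v^*(x)\ge\kappa_v(x)-O(1/\delta_d)$; (v) plug into the formula of Lemma~\ref{lemma: compute by linprog} to get $|\scl_{\BS(dM,dL)}(\bar c)-\scl_{F_2}(c)|\le O(1/\delta_d)\to 0$. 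The main obstacle I anticipate is step (iii)/(iv): making the geometric statement ``large winding number forces many corners'' precise and uniform requires carefully tracking how the winding number of a piece is computed from the winding numbers of its turns via the transition maps $\tau_{P(a_v)}$ — one must ensure the exponential factors $(m/\ell)^k$ and the centrality of $W_v$ are used correctly so that the lower bound on the number of turns really does grow with $d$ (it grows like $\log d$ when $|m|\neq|\ell|$), and then confirm that a $\log d$ lower bound on corners of \emph{individual} unstable pieces, combined with the \emph{fixed} total number of corners on $\mathcal{C}(\bar c)$, indeed forces the aggregate unstable weight to be $O(1/\log d)\to 0$. (The quasirationality and limit statement for fixed $M,L$ with $d$ varying is then a separate, more refined analysis, presumably treated in the subsequent lemmas of this subsection.)
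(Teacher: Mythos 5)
Your overall strategy --- show that non-stable disk-like pieces need many turns, then combine with the fixed total turn count $\sum r_iA_i$ --- is the same as the paper's, but the key estimate in step~(iii) is incorrect. You assert $|w(C)|\le(\text{number of turns})\cdot\max(|m|,|\ell|)^{\rho}\cdot 2N$, a bound independent of $d$. This is false: by~(\ref{eqn: w}), $w(C)$ is a sum of arc contributions $w(a_j)$ and turn contributions $\iota(\tilde{w})$, where $\iota\in\{o_e,t_e\}$ is multiplication by $dm$ or $d\ell$, so a single turn can contribute on the order of $d\,m^{\rho+1}\ell^{\rho}$. Once the missing factor of $d$ is inserted in your upper bound, the conditions $w(C)\in D_v=dm^\rho\ell^\rho\Z$ and $w(C)\neq0$ no longer force the number of turns to diverge with $d$, and steps~(iv)--(v) collapse.

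The fix is to control the sum of \emph{arc} winding numbers $\sum w(a_j)$ rather than $w(C)$ itself. This satisfies $\lvert\sum w(a_j)\rvert\le kN$ with \emph{no} $d$-factor, and it is divisible by $d$: indeed $\sum w(a_j)=w(C)-\sum\iota(\tilde{w})$, and $w(C)\in D_v\subset d\Z$ (by~(\ref{eqn: choose D_v})) while every $\iota(\tilde{w})\in d\Z$. Hence either $\sum w(a_j)=0$ --- which is exactly the definition of a stable piece --- or $k\ge d/N$, a linear (not logarithmic) lower bound; this is the paper's key observation and it makes the counting in~(iv) work. Two related slips: you describe stable pieces as ``those with winding number exactly $0$,'' but stable actually means $\sum w(a_j)=0$ (the piece projects to a disk in $F_2$ once turn winding numbers are discarded), which neither implies nor is implied by $w(C)=0$ once turn contributions are present; and the appeal to transition maps $\tau_{P(a_v)}$ and ``$m/\ell$ multiplication'' to compute $w(C)$ is a misreading --- $w(C)$ is the explicit linear functional~(\ref{eqn: w}) along the polygonal boundary, while transition maps describe how winding numbers change under \emph{adjustment}, a separate mechanism that is not what is needed here.
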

\begin{proof}
	By monotonicity of scl, it suffices to show for any $\epsilon>0$, we have $\scl_{\BS(M,L)}(\bar{c})\ge \scl_{F_2}(c)-\epsilon$ for any $d$ sufficiently large. Let $S$ be any simple relative admissible surface for $\bar{c}$ in $\BS(M,L)$. With the integer $N$ defined above, note that each disk-like piece $C$ of $S$ is either stable or contains at least $k$ arcs if $d\ge kN$. In fact, the sum of winding numbers of arcs on $\bdry C$ is divisible by $d$ since both $w(C)$ (by equation (\ref{eqn: choose D_v})) and the contribution of each turn to $w(C)$ are divisible by $d$. Thus the sum is either $0$, in which case $C$ is stable, or has absolute value at least $d\ge kN$, where we must have at least $k$ arcs.
	
	Let $n>0$ be the degree of $S$. Then the number of stable disk-like pieces in $S$ is no more than $n\kappa_v^*(x)$ where $x=x(S)/n\in \mathcal{C}(c)$. By the observation above, the number of disk-like pieces in $S$ that are not stable cannot exceed $(nN/d)\sum r_iA_i$, where $A_i$ is the number of arcs that the edge space cut $\gamma_i$ into. Since $N$ and $\sum r_iA_i$ are fixed, for any $d$ sufficiently large, the total number of disk-like pieces in $S$ is no more than $n(\kappa_v^*(x)+\epsilon)$, from which the desired estimate follows.
\end{proof}

Now we fix coprime integers $m,\ell\neq0$ and focus on $\BS(dm,d\ell)$. To ease the notation, we assume $m,\ell>0$. With $d>N$, each $\bar{g}_i$ is reduced, thus $D_v=dm^{\rho(\bar{c})}\ell^{\rho(\bar{c})}\Z$ is linear in $d$, and $D_e=m^{\rho(\bar{c})}\ell^{\rho(\bar{c})}\Z$ does not depend on $d$. Therefore, the spaces $\mathcal{C}_v$ and $\mathcal{C}(\bar{c})$ are both eventually independent of $d$. What does depend on $d$ is the set of disk-like vectors and the function $\kappa_v$. Denote $\mathcal{D}(v)^{(d)}=\mathcal{D}(v)$ and $\kappa_v^{(d)}=\kappa_v$ to emphasize the dependence, which we investigate to prove the second assertion in Theorem \ref{thm: surgery}.

Throughout this subsection, we use Setup 2, where the criterion for disk-like vectors is simpler. Recall from Subsection \ref{subsec: sclBS setup} that each vector $x\in \mathcal{C}_v$ assigns non-negative weights to edges in the graph $Y$. An integer vector $x\in \mathcal{C}_v$ is disk-like if and only if its winding number $w(x)\in D_v$ and $\supp(x)\subset Y$ is connected.

\begin{lemma}\label{lemma: extremal rays}
	The extremal rays of the polyhedral cone $\mathcal{C}_v$ correspond to embedded cycles in $Y$. For each $x\in \mathcal{C}_v$, its support $\supp(x)$ only depends on the unique open face of $\mathcal{C}_v$ containing $x$.
\end{lemma}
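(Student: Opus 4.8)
The statement has two parts, both of which are really statements about the cone $\mathcal{C}_v = \R^{T_v}_{\ge0}\cap\ker\bdry$ together with the combinatorial structure of the directed graph $Y$, whose vertices are the arcs $A_v$ and whose directed edges are the triples in $T_v$. The key dictionary is that a vector $x\in\R^{T_v}_{\ge0}$ lies in $\mathcal{C}_v$ precisely when, viewing $x$ as an assignment of non-negative weights to the directed edges of $Y$, the equation $\bdry x=0$ holds; and $\bdry x=\sum_{(a_i,\bar w,a_j)}x_{(a_i,\bar w,a_j)}(a_j-a_i)$, so $\bdry x=0$ is exactly the condition that the weighted in-degree equals the weighted out-degree at every vertex of $Y$, i.e.\ $x$ is a non-negative \emph{circulation} on $Y$. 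So the plan is to recognize $\mathcal{C}_v$ as the cone of non-negative circulations on a directed graph and then invoke (and reprove in this setting) the standard flow-decomposition facts.

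First I would record the dictionary above explicitly and observe that $\supp(x)$, the subgraph of edges with positive weight, is a subgraph in which every vertex has in-degree equal to out-degree (being a restriction of a circulation to its support); hence $\supp(x)$ is a disjoint union of (not-necessarily-simple) directed closed walks, and therefore decomposes as an edge-disjoint union of directed simple cycles. This gives immediately that each simple cycle of $Y$, with weight $1$ on its edges and $0$ elsewhere, is an element of $\mathcal{C}_v$, and that these cycle-circulations positively span $\mathcal{C}_v$: any $x\in\mathcal{C}_v$ is a non-negative combination of simple-cycle circulations, obtained by repeatedly peeling off the smallest edge-weight along a directed cycle contained in the current support (this is the classical flow-decomposition argument, which terminates because each step kills at least one edge). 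For the extremal-ray claim I then argue that a simple cycle $C$ spans an extremal ray: if the circulation $x_C$ supported on $C$ were written as $x_C = x' + x''$ with $x',x''\in\mathcal{C}_v$ non-proportional, then $\supp(x')\cup\supp(x'')\subseteq C$, but a single simple cycle supports (up to scaling) only one circulation, since the ratios of edge-weights around the cycle are forced to all be equal by the flow-conservation equations; contradiction. Conversely any extremal ray, being in $\mathcal{C}_v$, is a non-negative combination of simple-cycle circulations by the decomposition, and extremality forces that combination to be a single term, so every extremal ray is of this form. This is where a little care is needed: I should phrase ``a simple cycle supports a unique circulation up to scale'' cleanly, noting that $Y$ may have multi-edges (parallel triples $(a_i,\bar w,a_j)$ with different $\bar w$) but that a \emph{simple} directed cycle uses each such parallel edge at most once, so the conservation equations genuinely pin down all ratios.

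For the second sentence — that $\supp(x)$ depends only on the open face of $\mathcal{C}_v$ containing $x$ — I would use the general polyhedral fact together with the circulation picture: two points in the relative interior of the same face of a rational polyhedral cone are strictly positive on exactly the same set of supporting extremal rays, equivalently they have the same set of ``tight'' facet inequalities, which here are exactly the coordinate inequalities $x_{(a_i,\bar w,a_j)}\ge 0$. Thus $\{(a_i,\bar w,a_j): x_{(a_i,\bar w,a_j)}>0\}$ is constant on the relative interior of a face, and that set is by definition $\supp(x)$. Concretely: the faces of $\mathcal{C}_v$ are the sets $\mathcal{C}_v\cap\{x_{(a_i,\bar w,a_j)}=0 : (a_i,\bar w,a_j)\in Z\}$ for subsets $Z\subseteq T_v$, and the relative interior of such a face consists of the $x$ in that slice with all remaining coordinates strictly positive — so $\supp(x)=T_v\setminus Z$ there.

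\textbf{Main obstacle.} The genuinely substantive point, rather than routine polyhedral bookkeeping, is the uniqueness statement hiding inside the extremal-ray claim: that the restriction of the conservation equations $\bdry x = 0$ to the edge set of a single embedded (simple) directed cycle has a one-dimensional solution space. For an honest simple cycle this is immediate (go around the cycle; each conservation equation equates two consecutive edge-weights, up to sign, and the signs work out because the cycle is coherently oriented in $Y$). But I must be careful that the ``cycles'' appearing here are directed cycles in $Y$ that are coherently oriented (all edges traversed forward), because $\supp(x)$ is a circulation support, not an arbitrary subgraph; once that is in place, the matrix of $\bdry$ restricted to a coherently-oriented simple cycle is (up to reindexing) the circulant $I - P$ for a cyclic permutation $P$, whose kernel is exactly the constants. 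I would spell this out as the crux and let the rest follow from the standard flow-decomposition and polyhedral-face arguments.
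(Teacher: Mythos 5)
Your argument is correct and takes essentially the same approach as the paper: decompose a non-negative circulation as a non-negative combination of simple-cycle circulations (the paper's ``positive weighted sum of finitely many cycles'') and identify extremal rays with embedded cycles, supplementing the paper's very terse account by explicitly proving the converse direction that an embedded cycle spans an extremal ray, via uniqueness of circulations on a simple cycle up to scale. One intermediate assertion is false as stated, though it does not undermine the proof: $\supp(x)$ need not have balanced \emph{unweighted} in- and out-degrees at each vertex, nor decompose as an \emph{edge-disjoint} union of simple directed cycles --- consider a vertex with two in-edges of weight $1$ and one out-edge of weight $2$, which is weighted-balanced but unweighted-imbalanced, and whose support is a union but not an edge-disjoint union of simple cycles through that vertex. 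The fact your peeling argument actually uses is the weaker and correct one: any nonzero non-negative circulation has some simple directed cycle in its support (follow positively-weighted out-edges until a vertex repeats), the peeling terminates because each step zeroes at least one coordinate, and the resulting decomposition exhibits $\supp(x)$ as a union of cycle supports, with no edge-disjointness claim --- which is exactly what the paper asserts.
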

\begin{proof}
	For each $x\in \mathcal{C}_v$, the requirement $\bdry x=0$ implies that $x$ corresponds to a positive weighted sum of finitely many cycles $\omega_i$ in $Y$. Moreover, we may assume each $\omega_i$ to be embedded since each cycle is a sum of embedded ones. Thus $x\neq0$ is on a extremal ray if and only if $x$ corresponds to $\alpha \omega$ for some $\alpha>0$ and an embedded cycle $\omega$. It follows that vectors in the same open face have the same support.
\end{proof}

We define the support of an open face $\mathcal{F}$ of $\mathcal{C}_v$ to be $\supp(x)$ for any $x\in \mathcal{F}$.

To see how the set of disk-like vectors depend on $d$, similar to the proof of \cite[Theorem 4.11]{Susse}, we consider the following embedding 
\begin{eqnarray*}
	\mathcal{E}_d: \R^{T_v} &\to& \R^{T_v}\times \R \\
	x&\mapsto & (x,w(x)/|D_v|)
\end{eqnarray*}
where $|D_v|=dm^{\rho(c)}\ell^{\rho(c)}$ and $w$ is defined as in the equation (\ref{eqn: w}) by fixing a lift $\tilde{w}\in \Z$ for each $\bar{w}\in W_e$.

Note that $w(x)$ is the sum of two parts. The first part is the (weighted) sum of winding numbers of arcs $a_i$ in $x$, which is independent of $d$. The second part is the sum of winding numbers contributed by turns, which is linear in $d$ since both $o_e(\tilde{w})=dm\tilde{w}$ and $t_e(\tilde{w})=d\ell\tilde{w}$ are. In particular, $w(x)$ is affine linear in $d$ for fixed $x$.

Observe that $\mathcal{E}_d(x)$ is an integer point if and only if $x$ itself is and $w(x)\in D_v$. Thus, for each open face $\mathcal{F}$ of $\mathcal{C}_v$ with connected support, $\mathcal{E}_d$ gives a bijection between the set of disk-like vectors in $\mathcal{F}$ and the set of integer points in $\mathcal{E}_d(\mathcal{F})$.

We are now in a position to apply the following result of Calegari--Walker \cite[Corollary 3.7]{CW:Inthull}. To state it, we need the following definition.

\begin{definition}
	A finite subset $S(d)$ of $\Z^k$ depending on $d$ is \emph{QIQ} if there exists some positive integer $\pi$ such that for each $0\le p<\pi$, the coordinates of elements in $S(n\pi+d)$ are integral polynomials in $n$. 
\end{definition}

\begin{lemma}[Calegari--Walker \cite{CW:Inthull}]\label{lemma: CW}
	Let $V(d)$ be a cone with integral extremal vectors affine linear in $d$. Then the vertex set of the integer hull (open or closed) of $V(n\pi+a)-0$ is QIQ.
\end{lemma}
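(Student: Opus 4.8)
\textbf{Proof proposal for Lemma~\ref{lemma: CW}.} The statement is quoted from Calegari--Walker \cite[Corollary~3.7]{CW:Inthull}, so strictly speaking nothing needs to be proved here; the plan is to indicate how one would deduce it from the main results of that paper, and to flag the one point that requires a small argument in the present setting. The idea is to reduce the assertion about integer hulls of a $d$-family of cones to the parametric integer programming framework of \cite{CW:Inthull}. First I would recall their setup: one is given a family of polyhedra whose defining inequalities have coefficients that are (quasi-)polynomial in the parameter $d$, and \cite[Theorem~3.5 / Corollary~3.7]{CW:Inthull} asserts that the vertices of the integer hull form a QIQ family, i.e.\ along each residue class $d \equiv p \pmod \pi$ the vertex coordinates are given by polynomials in $n$ where $d = n\pi + p$. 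So the task is simply to present $V(d) - 0$ (and its open variant) in a form to which that theorem applies.

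The key steps, in order, would be: (1) Write $V(d)$ as the cone on the convex hull of its extremal vectors $v_1(d), \ldots, v_s(d)$, each of which is by hypothesis an integral vector depending affine-linearly on $d$; this exhibits $V(d)$ as the image under a $d$-affine linear map of a fixed cone (the nonnegative orthant $\R^s_{\ge 0}$), or equivalently as the solution set of a system of linear inequalities whose coefficient matrix is affine linear in $d$ (pass to the facet description, noting the facet normals can be computed as determinants of submatrices of the $v_i(d)$, hence are polynomial in $d$). (2) Observe that removing the origin and/or passing to the ``open'' cone only changes which boundary faces are included, which in the Calegari--Walker formalism corresponds to replacing some non-strict inequalities by strict ones; their results cover both the open and closed integer hull, so this causes no difficulty. (3) Apply \cite[Corollary~3.7]{CW:Inthull} verbatim to conclude that the vertex set of the integer hull is QIQ with some period $\pi$. (4) If one wants the period to be uniform or wants to track it, note it is the lcm of the periods coming from the finitely many ``combinatorial types'' of the integer hull that occur as $d$ ranges over $\Z_+$, each governed by Bezout-type bounds on the coefficients.

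The main obstacle -- really the only nontrivial point -- is step~(1): one must make sure that the hypothesis ``integral extremal vectors affine linear in $d$'' is exactly the hypothesis under which \cite{CW:Inthull} operates, and in particular that the \emph{combinatorial structure} of the cone (which subsets of the $v_i(d)$ span faces) is itself eventually constant in $d$, or at least eventually periodic, so that one may work residue class by residue class. This follows because the sign of each relevant determinant $\det(v_{i_1}(d), \ldots, v_{i_k}(d))$ is a polynomial in $d$ and hence has eventually constant sign; thus for $d$ large the face lattice of $V(d)$ stabilizes, and within that regime the facet inequalities are genuinely (eventually) polynomial in $d$, putting us squarely in the scope of \cite{CW:Inthull}. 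Once this eventual stabilization is noted, the lemma is an immediate citation. In the application (Theorem~\ref{thm: surgery}) we will only use the lemma for $d$ past the threshold $N$ where the relevant cone $\mathcal{C}_v$ has already become independent of $d$, so the ``eventually'' in the statement is harmless.
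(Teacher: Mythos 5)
The paper offers no proof of this lemma at all—it is a verbatim citation of \cite[Corollary~3.7]{CW:Inthull}—and you correctly identify this at the outset, so there is nothing to compare against. Your supplementary sketch of how to recast $V(d)$ in the parametric integer-programming framework of Calegari--Walker (passing to a facet description with $d$-polynomial coefficients, noting eventual stabilization of the face lattice so that one can work residue class by residue class) is a sensible gloss on why the cited result applies in this setting, but it is material the paper leaves entirely implicit in the citation.
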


\begin{lemma}\label{lemma: vertex QIQ}
	The vertex set of $\conv(\mathcal{D}(v)^{(d)})+\mathcal{C}_v$ is eventually QIQ.
\end{lemma}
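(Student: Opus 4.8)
The plan is to decompose $\mathcal{C}_v$ into its finitely many open faces $\mathcal{F}$ and handle each one separately, so that we may assume a fixed connected support $\supp(\mathcal{F})$ (faces with disconnected support carry no disk-like vectors by Lemma \ref{lemma: extremal rays}, hence contribute nothing). On each such face we use the embedding $\mathcal{E}_d$ to identify the disk-like vectors in $\mathcal{F}$ with the integer points of the cone $\mathcal{E}_d(\mathcal{F})$ inside $\R^{T_v}\times\R$. The key observation, already recorded above, is that $w(x)$ is affine linear in $d$ for fixed $x$, so the extremal rays of the cone $\overline{\mathcal{E}_d(\mathcal{F})}$ have integral generators whose coordinates are affine linear in $d$ — this is exactly the hypothesis of Lemma \ref{lemma: CW} (Calegari--Walker). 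First I would spell out why the extremal rays of $\mathcal{E}_d(\mathcal{F})$ are images under $\mathcal{E}_d$ of extremal rays of $\mathcal{F}$ (embeddings of cones send extremal rays to extremal rays), pick integral generators of the rays of $\mathcal{F}$, and note their $\mathcal{E}_d$-images have the stated affine-linear form after clearing the denominator $|D_v| = dm^{\rho(c)}\ell^{\rho(c)}$.

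Applying Lemma \ref{lemma: CW} to each cone $\mathcal{E}_d(\mathcal{F})$ gives that the vertex set of the (open) integer hull $\conv(\mathcal{E}_d(\mathcal{F}) \cap \Z^{T_v+1} \setminus 0)$ is QIQ in $d$. Projecting back to $\R^{T_v}$ via the first-coordinate projection (which is integral and linear, hence preserves QIQ-ness of finite point sets), the vertex set of $\conv(\mathcal{D}(v)^{(d)} \cap \mathcal{F}) + (\text{rec.\ cone})$ restricted to the closure of $\mathcal{F}$ is QIQ. Next I would take the union over all open faces $\mathcal{F}$: a finite union of QIQ sets is QIQ (take the common period $\pi$ to be the lcm of the individual periods). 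Finally one must pass from this union of per-face integer hulls to $\conv(\mathcal{D}(v)^{(d)}) + \mathcal{C}_v$ itself and extract its vertex set. Here I would invoke Lemma \ref{lemma: D'}: for each $d > N$ there is a finite $D'^{(d)} \subset \mathcal{D}(v)^{(d)}$ with $\conv(\mathcal{D}(v)^{(d)}) + \mathcal{C}_v = \conv(D'^{(d)}) + \mathcal{C}_v$, and $D'^{(d)}$ can be taken inside the union of the per-face integer hull vertices (the minimal generating set of the monoid $\mathcal{D}(v)^{(d)} + \mathcal{C}_v$ lies among the integer-hull vertices on each face), so $\conv(\mathcal{D}(v)^{(d)}) + \mathcal{C}_v$ is cut out by the QIQ point set just produced; its own vertex set is then a QIQ-selection from that set.

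The main obstacle I anticipate is the bookkeeping at the last step: one must argue carefully that the vertices of the Minkowski sum $\conv(D'^{(d)}) + \mathcal{C}_v$ depend on $d$ in a QIQ fashion, not merely that some QIQ superset of them does. The subtlety is that as $d$ varies the combinatorial type of the polyhedron $\conv(\mathcal{D}(v)^{(d)}) + \mathcal{C}_v$ can change, so which points among the QIQ candidate set are actual vertices is not constant — but it is \emph{eventually periodic}, because whether a given affine-linear-in-$d$ candidate point is a vertex is decided by finitely many polynomial (in)equalities in $d$, each of which has eventually constant sign. Refining the period $\pi$ to absorb all these sign changes, on each residue class $p \bmod \pi$ the vertex set is genuinely a fixed finite collection of points with polynomial-in-$n$ coordinates, for $n$ large. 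This is why the statement says ``\emph{eventually} QIQ''. I would also need to double-check that the recession cone $\mathcal{C}_v$ being $d$-independent (for $d > N$, as established above via $D_e = m^{\rho(c)}\ell^{\rho(c)}\Z$) is genuinely used — it is, since otherwise the Minkowski summand would itself vary and the QIQ conclusion could fail.
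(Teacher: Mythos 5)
Your proposal follows the paper's strategy up to and including the application of Lemma \ref{lemma: CW} on each open face with connected support, then per-face projection. Where you diverge is the final step: to pass from the per-face data to the vertex set of $\conv(\mathcal{D}(v)^{(d)})+\mathcal{C}_v$, you invoke Lemma \ref{lemma: D'}, a minimal-generating-set argument, and then wrestle with ``QIQ-selection'' and eventual periodicity of sign patterns. The paper finishes more cleanly: the union over faces shows directly that the vertex set of $\conv(\mathcal{D}(v)^{(d)})$ is QIQ, and then it observes that every vertex of the Minkowski sum $\conv(\mathcal{D}(v)^{(d)})+\mathcal{C}_v$ is already a vertex of $\conv(\mathcal{D}(v)^{(d)})$, so no new vertices appear on adding $\mathcal{C}_v$ and no separate selection argument for that step is needed. (Recall that for a polytope $P$ and a cone $C$, every vertex of $P+C$ is a vertex of $P$, since a vertex of the sum is an extreme point and hence cannot be written $p+c$ with $c\neq 0$.) Your invocation of Lemma \ref{lemma: D'} is therefore superfluous, and the ``eventually constant sign of polynomial inequalities'' digression, while a reasonable safety net, is not where the paper places the weight — the only selection concern that actually arises is in combining over faces, which both arguments handle the same way. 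One small inaccuracy: you describe the projected object as ``$\conv(\mathcal{D}(v)^{(d)}\cap\mathcal{F})+(\text{rec.\ cone})$,'' but the clean intermediate statement is just that the vertex set of $\conv(\mathcal{D}(v)^{(d)}\cap\mathcal{F})$ is QIQ; the recession cone bookkeeping is absorbed into the Calegari--Walker integer-hull result and shouldn't be reintroduced at the projection step.
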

\begin{proof}
	Only consider large $d$ so that $\mathcal{C}_v$ does not depend on $d$. Fix an arbitrary open face $\mathcal{F}$ of $\mathcal{C}_v$ with connected support. Let $\{\omega_i\}_{i\in I}$ be finitely many embedded cycles in $Y$ representing the extremal rays on the boundary of $\mathcal{F}$. Denote the corresponding integer points in $\mathcal{C}_v$ also by $\omega_i$, $i\in I$. Then $\mathcal{E}_d(|D_v|\omega_i)=(|D_v|\omega_i,w(\omega_i))$ is an integer point depending affine linearly on $d$. Thus the vertex set of the convex hull of integer points in $\mathcal{E}_d(\mathcal{F})$ is QIQ by Lemma \ref{lemma: CW}. As we noticed earlier, these integer points are exactly the image of disk-like vectors in $\mathcal{F}$ under $\mathcal{E}_d$. Taking projection to the first coordinate shows that the vertex set of $\conv(\mathcal{D}(v)^{(d)}\cap \mathcal{F})$ is QIQ.
	
	Since each disk-like vector lies in one of the finitely many open faces of $\mathcal{C}_v$, we conclude that the vertex set of $\conv(\mathcal{D}(v)^{(d)})$ is also QIQ. The conclusion follows since each vertex of $\conv(\mathcal{D}(v)^{(d)})+\mathcal{C}_v$ is a vertex of $\conv(\mathcal{D}(v)^{(d)})$.
\end{proof}

\begin{proof}[Proof of Theorem \ref{thm: surgery}]
	We have proved the first assertion in Lemma \ref{lemma: limit is F_2}. Now adopt the notation above to prove the second assertion for $\BS(dm,d\ell)$.
	
	For $d$ sufficiently large, Lemma \ref{lemma: vertex QIQ} and the proof of Lemma \ref{lemma: D'} imply that, there is some positive integer $\pi$ so that for each $0\le p<\pi$, there are finitely many linear functions $\{f_i^{(n)}\}_{i\in I_p}$ where coefficients are rational functions in $n$, such that $\kappa_v^{n\pi+p}(x)=\min_{i\in I_p} f_i^{n}(x)$. Then $\scl_{\BS(dm,d\ell)}(\bar{c})$ is eventually quasirational by the linear programming problem formulated in the proof of Lemma \ref{lemma: compute by linprog}.
\end{proof}

\subsection{Implementation}
A direct implementation of the method introduced in Section \ref{sec: scl by linprog} to compute scl would probably result in an algorithm with run time doubly exponential on the word length (see \cite[Subsection 4.5]{Cal:sss}). However, in the case of Baumslag--Solitar groups, we have an algorithm computing $\scl_{\BS(M,L)}(c)$ with run time polynomial in the length of $c$ if the complexity $\rho(c)$ is fixed. This uses the idea developed by Walker \cite{Wal:scylla} to efficiently compute scl in free products of cyclic groups.

We use Setup 2 throughout out this subsection. Note that any piece has a multiple that is disk-like since $D_v$ is finite index in $G_v=\Z$. Thus we will restrict our attention to disk-like pieces only. To compute scl, it comes down to maximizing the number of disk-like pieces by finding the best combination. The idea is to cut disk-like pieces into small building blocks to reduced the complexity of enumeration.


We first transform winding numbers into lengths. For each piece $C$, we think of each arc (resp. turn) on the polygonal boundary as a segment consisting of $k$ unit intervals, where $k\in \{1,\ldots,|D_v|\}$ is the unique number congruent mod $|D_v|$ to the contribution of this arc (resp. turn) to $w(C)$. Then the length of $C$, ie the total number of unit intervals on its boundary, is divisible by $|D_v|$ since $C$ is disk-like.

It is easy to enumerate the possible types of unit intervals that appear in this way. For each arc $a_i\in A_v$, the number of intervals on it is the length of $a_i$, which does not exceed $|D_v|$. A similar bound holds for each turn from $a_i$ to $a_j$, and there are no more than $|D_v|$ different turns given $a_i,a_j$. Note that $|A_v|\le |c|$, where $|c|$ is the sum of word lengths of words involved in the chain $c$, so the total number $N$ of types of unit intervals is bounded above by $2|c|^2|D_v|^2$.

Suppose a (disk-like) piece $C$ has length $k|D_v|$ with $k\ge 1$. Choose $k$ consecutive unit intervals and form $(k-1)$ cuts to divide $C$ into $k$ \emph{blocks} so that each has length $|D_v|$ and each contains one of the $k$ unit intervals. See Figure \ref{fig: blocks} for an illustration. Then each block $B$ contains $s$ cuts with $s\in \{0,1,2\}$. The case $s=0$ appears only when the block itself is a disk-like piece of length $|D_v|$. In any case, let the orbifold Euler characteristic of a block $B$ be $\chi_o(B)=1-s/2$. Then $\sum\chi_o(B)=1$ where the summation is taken over the $k$ blocks $B$ obtained by cutting $C$.

\begin{figure}
	\centering
	\includegraphics[scale=01]{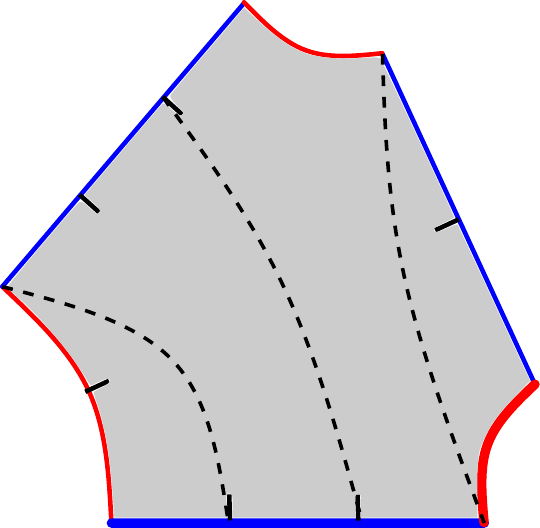}
	\caption{When $|D_v|=3$, a piece of total length $12$ can be cut into $k=4$ blocks each of length $|D_v|=3$, where the $4$ chosen consecutive unit intervals are thickened. The two blocks in the middle each contains $2$ genuine cuts, while the other two each contains $1$ genuine cut.}\label{fig: blocks}
\end{figure}

For each block obtained from $C$, label the $|D_v|$ unit intervals on it cyclically by $\{1,\ldots,|D_v|\}$, where the one labeled by $1$ is one of the $k$ chosen intervals on $C$.

For the $|D_v|$ unit intervals on a block $B$, two nearby intervals are either separated by a cut or not. If there is no cut in between, we think of it as a \emph{dummy cut}. Then each block consists of $|D_v|$ cuts, $s\in \{0,1,2\}$ of which are \emph{genuine}. Moreover, one of the two intervals connected by a genuine cut must be labeled by $1$. Each cut $q$ contributes to $\chi_o(B)$ by
$$\chi_o(q)\defeq\left\{
\begin{array}{cl}
1/|D_v|&\text{ if }q\text{ is dummy},\\
1/|D_v|-1/2 &\text{ if }q\text{ is genuine}.
\end{array}\right.$$

Now we can form a linear programming problem to compute the maximum of $\kappa_v(x)$ over $x\in \mathcal{C}(c)$, from which we can obtain $\scl(c)$. Each variable $x_q$ of the linear programming problem is the (normalized) number of a cut $q$ (dummy or genuine) that possibly appear. The objective function to be maximized is $\sum_q x_q\chi_o(q)$, which is linear in $x_q$.

Here each cut $q$ is encoded as a $5$-tuple
$$(I_b,I_k,I_{k+1},k,g),$$
where the binary variable $g=TRUE$ if the cut is genuine, $k\in\{1,\ldots, |D_v|\}$ (indices taken mod $|D_v|$) indicates that the cut connects the $k$-th and $(k+1)$-th intervals on a block, and $I_b,I_k,I_{k+1}$ represent the type of the intervals labeled by $1,k,k+1$ on the block respectively. This tuple satisfies the following restrictions:
\begin{enumerate}
	\item $g=FALSE$ if $k\neq 1$ or $|D_v|$;
	\item $I_k$ and $I_{k+1}$ must be \emph{consecutive} if $g=FALSE$: 
	If $I_k$ is the last interval on an arc $a_i$, then $I_{k+1}$ must be the first interval on a turn from $a_i$ to some $a_j$, otherwise $I_{k+1}$ must be the unique interval following $I_k$;
	\item $I_k=I_b$ if $k=1$; and
	\item $I_{k+1}=I_b$ if $k=|D_v|$;
\end{enumerate}

Besides the gluing and normalizing conditions, the linear programming problem here contains two more types of constraints.
\begin{enumerate}
	\item Boundary conditions that make sure cuts close up as blocks. This can be formulated to $\bdry_B(x)=0$ where $\bdry_B$ is defined as 
	$$\bdry_B(I_b,I_k,I_{k+1},k,g)\defeq(I_b,I_{k+1},k+1)-(I_b,I_{k},k)$$
	on basis elements and extends to a linear map to the abstract vector space with basis $\{(I_1,I_2,k): 1\le k\le |D_v|, I_1,I_2\text{ are types of intervals}\}$.
	\item Gluing conditions that make sure genuine cuts glue up with each other. This can be formulated similarly as the gluing conditions for turns. We say two pairs $(I,J)$ and $(I',J')$ of types of intervals can be glued if $I,J'$ are consecutive and $I',J$ are consecutive. We say $(I,J)$ and $(I',J')$ are equivalent if they both can be glued with some $(I'',J'')$. Then observe from the definition of being consecutive that if $(I,J)$ and $(I',J')$ are equivalent and $(I,J)$ can be glued with $(I'',J'')$, then $(I',J')$ can be glued with $(I'',J'')$ as well. Thus it makes sense to say whether two equivalence classes can be glued.
	
	Let $V$ be the vector space spanned by equivalence classes of pairs. Let $\#_P$ and $\#_N$ be linear maps to $V$ that vanish on $(I_b,I_k,I_{k+1},k,g)$ unless $g=TRUE$. When $g=TRUE$, if $k=1$, define $\#_P(I_b,I_k,I_{k+1},k,g)$ to be the image of $(I_k,I_{k+1})$ and define $\#_N(I_b,I_k,I_{k+1},k,g)\defeq 0$; Otherwise $k=|D_v|$ and we define $\#_N(I_b,I_k,I_{k+1},k,g)$ to be the image of $(I_k,I_{k+1})$ and define $\#_P(I_b,I_k,I_{k+1},k,g)\defeq0$. Then the gluing conditions are that the $\overline{(I,J)}$ coordinate of $\#_P(x)$ equals the $\overline{(I',J')}$ coordinate of $\#_Q(x)$ whenever the equivalence classes $\overline{(I,J)}$ and $\overline{(I',J')}$ can be glued.
\end{enumerate}

It is easy to show that the number of such tuples satisfying the restrictions is no more than $5|c|^4|D_v|^5=5|c|^4 d^5 |m|^{5\rho(c)} |\ell|^{5\rho(c)}$, which is polynomial in $|c|,d,m,\ell$ if the complexity $\rho(c)$ is bounded. This bounds the number of variables in the linear programming problem. Similar polynomial bounds on the number of constraints can be obtained. Fixing $\rho(c)$, this produces an algorithm that computes $\scl_{\BS(M,L)}(c)$ with time complexity polynomial in $|c|,|M|,|L|$ since linear programming problems can be solved by algorithms whose worst-case run time is polynomial \cite{LP-original}. Unfortunately, our algorithm does not run efficiently in practice, for example, $d^5 |m|^{5\rho(c)} |\ell|^{5\rho(c)}\approx6\times 10^7$ is already terribly large when $M=2$, $L=3$, and the complexity $\rho(c)=2$.

%
%
%
\bibliographystyle{gtart}

\end{document}